\documentclass{amsart}
\usepackage[utf8]{inputenc}
\usepackage{amsmath}
\usepackage{amssymb}
\usepackage{mathrsfs}
\usepackage{graphicx}

\usepackage[colorlinks]{hyperref}
\usepackage[svgnames, dvipsnames, usenames]{xcolor}
\hypersetup{urlcolor = RedViolet, linkcolor = RoyalBlue, citecolor = ForestGreen}

\usepackage{tikz}
\usetikzlibrary{matrix}
\usetikzlibrary{arrows}

\usepackage{partmac}

\theoremstyle{plain} 
\newtheorem{thm}{Theorem}[section]
\newtheorem{prop}[thm]{Proposition}
\newtheorem{lem}[thm]{Lemma}
\newtheorem{cor}[thm]{Corollary}
\theoremstyle{definition}
\newtheorem{defn}[thm]{Definition}
\newtheorem{rem}[thm]{Remark}
\newtheorem{ex}[thm]{Example}

\numberwithin{equation}{section}

\theoremstyle{plain}

\newenvironment{customthm}[1]
  {\innercustomthm}
  {\endinnercustomthm}

\newenvironment{sproof}{%
  \proof}{\endproof}

\renewcommand{\theta}{\vartheta}
\renewcommand{\phi}{\varphi}
\renewcommand{\epsilon}{\varepsilon}
\renewcommand{\subset}{\subseteq}
\renewcommand{\supset}{\supseteq}

\newcommand{\ttimes}{\mathbin{\ooalign{$\times$\cr\kern.2ex\raise.2ex\hbox{$\times$}}}}
\newcommand{\timess}{\mathbin{\ooalign{\kern.2ex$\times$\cr\raise.2ex\hbox{$\times$}}}}

\newcommand{\N}{\mathbb N}
\newcommand{\Z}{\mathbb Z}
\newcommand{\T}{\mathbb T}

\newcommand{\R}{\mathbb R}
\newcommand{\C}{\mathbb C}

\newcommand{\norm}[1]{\lVert {#1}\rVert}
\DeclareMathOperator{\Alt}{Alt}
\DeclareMathOperator{\GL}{GL}
\DeclareMathOperator{\Mor}{Mor}

\DeclareMathOperator{\spanlin}{span}
\DeclareMathOperator{\Lrot}{Lrot}
\DeclareMathOperator{\Rrot}{Rrot}

\DeclareMathOperator{\id}{id}
\DeclareMathOperator{\Tr}{Tr}
\DeclareMathOperator{\Irr}{Irr}
\DeclareMathOperator{\lcm}{lcm}
\newcommand{\Cat}{\mathscr{C}}
\newcommand{\Kat}{\mathscr{K}}
\newcommand{\W}{\mathscr{W}}

\newcommand{\Part}{\mathscr{P}}
\newcommand{\nlin}{_{N\mathchar `\-\mathrm{lin}}}

\newcommand{\tiltimes}{\mathbin{\tilde\times}}
\newcommand{\tilstar}{\mathbin{\tilde *}}

\def\globcol{
\Partition{
\Pline (2,1)(2,0)
\Pline (3,1)(3,0)
\Ppoint1 \Pw:2
\Ppoint1 \Pb:3
\Ppoint0 \Pb:2
\Ppoint0 \Pw:3
}}

\def\ppen{\penalty300 }
\let\col=\colon
\def\colon{\col\ppen}

\begin{document}
\title{Gluing compact matrix quantum groups}
\author{Daniel Gromada}
\address{Saarland University, Fachbereich Mathematik, Postfach 151150,
66041 Saarbr\"ucken, Germany}
\email{gromada@math.uni-sb.de}
\date{\today}
\subjclass[2010]{20G42 (Primary); 18D10, 46L65 (Secondary)}
\keywords{compact matrix quantum group, representation category, complexification, gluing}
\thanks{The author was supported by the collaborative research centre SFB-TRR~195 ``Symbolic Tools in Mathematics and their Application''. The article is a~part of the authors PhD thesis.}
\thanks{I would like to thank to my PhD supervisor Moritz Weber for numerous comments and suggestions regarding the text. I also thank to Adam Skalski and Pierre Tarrago for inspiring discussions on the topic.}

\begin{abstract}
We study glued tensor and free products of compact matrix quantum groups with cyclic groups -- so-called tensor and free complexifications. We characterize them by studying their representation categories and algebraic relations. In addition, we generalize the concepts of global colourization and alternating colourings from easy quantum groups to arbitrary compact matrix quantum groups. Those concepts are closely related to tensor and free complexification procedures. Finally, we also study a more general procedure of gluing and ungluing.
\end{abstract}

\maketitle
\section*{Introduction}

The subject of this article are compact matrix quantum groups as defined by Woronowicz in~\cite{Wor87}. A~lot of attention has recently been devoted to quantum groups possessing a~combinatorial description by \emph{categories of partitions}. Those were originally defined in \cite{BS09}. Since then, their full classification was obtained~\cite{RW16} and many generalizations were introduced \cite{CW16,Fre17,TW17,Ban18super,GW19}. 

Studying and classifying categories of partitions or generalizations thereof is useful for the theory of compact quantum groups for several reasons. The primary motivation is finding new examples of quantum groups since every category of partitions induces a~compact matrix quantum group. Those are then called \emph{easy} quantum groups. In addition, since the categories of partitions are supposed to model the representation categories of quantum groups, we immediately have a~lot of information about the representation theory of such quantum groups (see also \cite{FW16}).

Categories of partitions provide a~particularly nice way to describe the representation categories of quantum groups. Understanding the structure of partition categories and obtaining some classification results, we may obtain analogous statements also for the associated quantum groups. Such results can then be generalized and go beyond categories of partitions and easy quantum groups. Let us illustrate this on a~few examples.

The classification of ordinary non-coloured categories of partitions involves a~special class of so-called \emph{group-theoretical} categories. Those induce \emph{group-theoretical} quantum groups described by some normal subgroups $A\trianglelefteq\Z_2^N$. However, the latter definition turns out to be more general~-- not every group-theoretical quantum group can be described by a~category of partitions~\cite{RW15}. Another example are glued products, which were defined in~\cite{TW17} in order to interpret some classification result on two-coloured categories of partitions. The definition of glued products was inspired by partitions, but it is independent of the partition description. Last example comes from \cite{GW19}, where a~certain classification result for so-called categories of partitions with extra singletons was obtained. Some of the new categories were interpreted by some new $\Z_2$-extensions of quantum groups. In addition, this extension procedure was generalized to a~new product construction interpolating the free and the tensor product of quantum groups.

The goal of the current paper is to study some additional results obtained in previous works on coloured partition categories \cite{TW17,TW18,Gro18,GW19}. We reformulate those results purely in terms of quantum groups and their representation categories without referring to partitions. Below, we give a~detailed overview of the results of this paper. Let us start by recalling the above mentioned glued product construction.

Consider a~compact matrix quantum group $G=(C(G),v)$ and a cyclic group dual $\hat\Z_k=(z,C^*(\Z_k))$ (for $k=0$, we take $\hat\Z_k:=\hat\Z=\T$). We can construct the tensor product or the free product as
$$G\times\hat\Z_k=(v\oplus z,C(G)\otimes_{\rm max}C^*(\Z_k)),\qquad G*\hat\Z_k=(v\oplus z,C(G)*_{\C}C^*(\Z_k)).$$
For both quantum groups, we can consider the representation $vz=v\otimes z$. This representation may not be faithful, so it defines a quotient quantum group called the \emph{glued product}
$$G\tiltimes\hat\Z_k=(vz,C(G\tiltimes\hat\Z_k)),\qquad G\tilstar\hat\Z_k=(vz,C(G\tilstar\hat\Z_k)).$$
The glued tensor product with $\hat\Z_k$ is also called the \emph{tensor $k$-complexification}. Likewise the glued free product is called the \emph{free $k$-complexifiation}. As we already mentioned, this definition comes from \cite{TW17}.

Another important concept appearing throughout the whole article is the \emph{degree of reflection} -- defined in \cite{TW17} for categories of partitions, generalized in \cite{GW19} for arbitrary quantum groups, and further characterized in Sect.~\ref{secc.degref} of this article.

The main topic of this article is the characterization of the tensor and free complexifications. Let us start with the tensor case. The following theorem characterizes the tensor complexification in terms of algebraic relations, the associated representation category, and topological generation.

\begin{customthm}{A}[Theorem \ref{T.tenscomp2}]\label{T.A}
Consider a~compact matrix quantum group $G=(C(G),v)$, $k\in\N_0$. Denote by~$z$ the generator of $C^*(\Z_k)$ and by $u:=vz$ the fundamental representation of $G\tiltimes\hat\Z_k$. We have the following characterizations of $G\tiltimes\hat\Z_k$.
\begin{enumerate}
\item The ideal $I_{G\tiltimes\hat\Z_k}$ of algebraic relations in $C(G\tiltimes\hat\Z_k)$ is the $\Z_k$-homogeneous part of the ideal $I_G$ corresponding to $G$.
\item The representation category of $G\tiltimes\hat\Z_k$ looks as follows
$$\Mor(u^{\otimes w_1},u^{\otimes w_2})=\begin{cases}\Mor(v^{\otimes w_1},v^{\otimes w_2})&\text{if $c(w_2)-c(w_1)$ is a~multiple of $k$,}\\\{0\}&\text{otherwise.}\end{cases}$$
\item The quantum group $G\tiltimes\hat\Z_k$ is topologically generated by $G$ and $\hat\Z_k$.
\end{enumerate}
\end{customthm}

We also generalize the concept of \emph{global colourization} introduced in \cite{TW18} for categories of partitions. It turns out that this concept characterizes the tensor $k$-complexification of orthogonal quantum groups for $k=0$. The case $k\in\N$ remains open.
\begin{customthm}{B}[Theorem \ref{T.tenscomp}]
\label{T.B}
Consider $G\subset U^+(F)$ with $F\bar F=cI$, $c\in\R$. Then $G$ is globally colourized with zero degree of reflection if and only if $G=H\tiltimes\hat\Z$, where $H=G\cap O^+(F)$.
\end{customthm}

The above two theorems can be understood as a~generalization of the work \cite{Gro18} on globally-colourized categories of partitions to arbitrary quantum groups. In addition, we provide the irreducible representations of tensor complexifications in Proposition~\ref{P.tiltimesirrep}.

We continue by studying the free complexification. In this case, we do not have many results even for easy quantum groups. In the recent work \cite[Sec.~4.3]{GW19}, partitions with alternating colouring were introduced and linked to free $k$-complexifications for $k=2$. In this article, we show that the free $k$-complexification actually often do not depend on the number $k$. The following two results form an analogy to Theorems \ref{T.A} and~\ref{T.B}.

\begin{customthm}{C}[Theorem \ref{T.freecompid}]
\label{T.C}
Let $H$ be a~compact matrix quantum group with degree of reflection $k\neq 1$. Then all $H\tilstar\hat\Z_l$ coincide for all $l\in\N_0\setminus\{1\}$.
\begin{enumerate}
\item The ideal $I_{H\tilstar\hat\Z_l}$ of algebraic relations in $H\tilstar\hat\Z_l$ is generated by the alternating polynomials in~$I_H$.
\item The representation category $\Cat_{H\tilstar\hat\Z_l}$ corresponding to $H\tilstar\hat\Z_l$ is a~(wide) subcategory of the representation category $\Cat_H$ generated by the sets $C(\emptyset,(\wcol\bcol)^j):=\Cat_H(\emptyset,(\wcol\bcol)^j)$, $j\in\Z$.
\end{enumerate}
The above characterization holds also for $k=1$ and $l=0$.
\end{customthm}

\begin{customthm}{D}[Theorem \ref{T.freecompchar}]
\label{T.D}
Consider $G\subset U^+(F)$ with $F\bar F=c\,1_N$. Then $G$ is alternating and invariant with respect to the colour inversion if and only if it is of the form $G=H\tilstar\hat\Z$, where $H=G\cap O^+(F)$.
\end{customthm}

Finally, the glued tensor product and the glued free product, which are used to define the quantum group complexifications, can be also understood as a special case of some \emph{gluing procedure}. In Section \ref{sec.unglue}, we ask whether we can perform this procedure in the converse direction and \emph{unglue} some cyclic group $\hat\Z_k$ from a unitary quantum group. A particularly nice result can be obtained if we are ungluing $\hat\Z_2$. It is a generalization of \cite[Theorem 4.10]{GW19}.

\begin{customthm}{E}[Theorem \ref{T.gluecorr}]
\label{T.E}
There is a~one-to-one correspondence between
\begin{enumerate}
\item quantum groups $G\subset O^+(F)*\hat\Z_2$ with degree of reflection two and
\item quantum groups $\tilde G\subset U^+(F)$ that are invariant with respect to the colour inversion.
\end{enumerate}
This correspondence is provided by gluing and canonical $\Z_2$-ungluing.
\end{customthm}

In addition, we characterize coamenability and provide irreducible representations of the canonical $\Z_2$-ungluings. These results are applied to the new $\Z_2$-extensions introduced recently in \cite{GW19} as those are special examples of canonical $\Z_2$-ungluings.

\section{Preliminaries}
\label{sec.prelim}

\subsection{Graded algebras}
Through the whole paper, we denote by $\Z_k$ the cyclic group of order $k\in\N_0$ putting $\Z_k:=\Z$ for $k=0$.

A $\Z_k$-grading of a~$*$-algebra~$A$ is a~decomposition of the algebra into a~vector space direct sum
$$A=\bigoplus_{i\in\Z_k}A_i$$
such that the multiplication and involution of the algebra respect the operation on~$\Z_k$, that is,
$$A_iA_j\subset A_{ij},\quad A_i^*\subset A_{-i}.$$
The elements of the $i$-th part $a\in A_i$ are called \emph{$\Z_k$-homogeneous of degree~$i$}.

By definition, every element $f\in A$ uniquely decomposes as $f=\sum_{i\in\Z_k}f_i$ with $f_i\in A_i$. We call the elements~$f_i$ the \emph{homogeneous components} of~$f$.

An ideal $I\subset A$ is called \emph{$\Z_k$-homogeneous} if it contains with every element~$f$ all its homogeneous components~$f_i$. A~quotient of the algebra with respect to a~homogeneous ideal inherits the grading.

The definition of a~$\Z_k$-grading for C*-algebras is quite simple for $k\in\N$. In the case of the group $\Z$ or other groups, it gets a bit complicated and we will not mention it here. Let $A$ be a~C*-algebra. A~$\Z_k$-grading on $A$ is defined by a~\emph{grading automorphism}, that is, an automorphism $\alpha\colon A\to A$ satisfying $\alpha^k$. Its spectrum consists of $k$-th roots of unity and the corresponding eigenspaces can be identified with the homogeneous parts of $A$ satisfying the properties of the algebraic definition above.

If $A$ is a~$\Z_k$-graded $*$-algebra by the algebraic definition, we can define the grading automorphism by setting $\alpha(x)=e^{2\pi\,ij/k}x$ for $x\in A_k$. The grading automorphism can be then extended to the C*-envelope $C^*(A)$ by the universal property.

\subsection{Compact matrix quantum groups}
\label{secc.qgdef}
We provide here only a~brief overview of the notions concerning compact matrix quantum groups. For more information, see for example \cite{NT13,Tim08}.

Let $A$ be a C*-algebra, $u\in M_N(A)$, $N\in\N$. The pair $(A,u)$ is called a \emph{compact matrix quantum group} if
\begin{enumerate}
\item the elements $u_{ij}$ $i,j=1,\dots, N$ generate $A$,
\item the matrices $u$ and $u^t=(u_{ji})$ are invertible,
\item the map $\Delta\colon A\to A\otimes_{\rm min} A$ defined as $\Delta(u_{ij}):=\sum_{k=1}^N u_{ik}\otimes u_{kj}$ extends to a $*$-homomorphism.
\end{enumerate}

The $*$-subalgebra $O(G)$ generated by the elements $u_{ij}$ is dense in $A$ and generalizes the coordinate ring of $G$. It is actually a Hopf $*$-algebra, that is, it is closed under the above defined \emph{comultiplication} $\Delta$, it is further equipped with a \emph{counit} (a~$*$-homomorphism $\epsilon\colon O(G)\to\C$ mapping $u_{ij}\mapsto\delta_{ij}$), and an \emph{antipode} (an antihomomorphism mapping $u_{ij}\mapsto [u^{-1}]_{ij}$).

Two compact matrix quantum groups $G=(A,u)$ and $G'=(A',u')$ are considered to be {\em identical} if there is a $*$-isomorphism $O(G)\to O(G')$ mapping $u_{ij}\to u_{ij}'$. Note that the C*-algebras $A$ and $A'$ might not be isomorphic -- those might be two different completions of $O(G)$. To overcome this ambiguity, we work with the universal C*-algebra $C_{\rm u}(G):=C^*(O(G))$. Given a compact matrix quantum group $G=(A,u)$, its {\em maximal version} $G=(C_{\rm u}(G),u)$ is again a compact matrix quantum group. From now on, we will assume that every quantum group appearing in the paper is in its maximal version and denote $C(G):=C_{\rm u}(G)$.

The above notion of identical compact matrix quantum groups indeed generalizes the notion of matrix groups being the same (i.e. not only isomorphic, but also represented by the same matrices). Similarly, we can define $H=(C(H),v)$ to be a \emph{quantum subgroup} of $G=(C(G),u)$ if there is surjective $*$-homomorphism $O(G)\to O(H)$ (or $C(G)\to C(H)$) mapping $u_{ij}\mapsto v_{ij}$ assuming both matrices $u$ and $v$ have the same size.

On the other hand, two compact (matrix) quantum groups $G$ and $H$ are said to be \emph{isomorphic}, denoted $G\simeq H$, if there exists any $*$-isomorphism  $\phi\colon C(G)\to C(H)$ such that $\Delta_H\circ\phi=(\phi\otimes\phi)\circ\Delta_G$.

An important question is also how to \emph{construct} quantum subgroups. A~set $I\subset O(G)$ is called a~\emph{coideal} if
$$\Delta(I)\subset I\otimes O(G)+O(G)\otimes I\quad\text{and}\quad\epsilon(I)=0.$$
A coideal that is also a~$*$\hbox{-}ideal is called a~\emph{$*$\hbox{-}biideal}. A~\emph{Hopf $*$\hbox{-}ideal} is a~$*$\hbox{-}biideal that is invariant under the antipode, that is, $S(I)\subset I$. Hopf $*$\hbox{-}ideals are in a~one-to-one correspondence with quantum subgroups. That is, given $H\subset G$, the kernel of the surjective $*$\hbox{-}homomorphism $O(G)\to O(H)$ is a~Hopf $*$\hbox{-}ideal. Conversely, given any Hopf $*$\hbox{-}ideal $I\subset O(G)$, then the quotient $O(H):=O(G)/I$ is a Hopf algebra that defines a~quantum subgroup $H\subset G$.


\subsection{Representations of CMQGs}

For a compact matrix quantum group $G=(C(G),u)$, we say that $v\in M_n(C(G))$ is a representation of $G$ if $\Delta(v_{ij})=\sum_{k}v_{ik}\otimes v_{kj}$, where $\Delta$ is the comultiplication defined in the previous subsection. In particular, the matrix $u$ is a representation called the \emph{fundamental representation}.

A representation $v$ is called \emph{non-degenerate} if it is invertible as a matrix, it is called \emph{unitary} if it is unitary as a matrix, i.e. $\sum_k v_{ik}v_{jk}^*=\sum_k v_{ki}^*v_{kj}=\delta_{ij}$. Two representations $v$ and $w$ are called \emph{equivalent} if there is an invertible matrix $T$ such that $vT=Tw$.

For every compact quantum group it holds that every non-degenerate representation is equivalent to a unitary one. Hence, given a compact matrix quantum group $G=(C(G),u)$, we may assume that $u$ is unitary. At the same time $\bar u=(u_{ij}^*)$ is a non-degenerate representation, so there exists an invertible matrix $F$ such that $F\bar u F^{-1}$ is unitary. Consequently, any compact matrix quantum group $G$ is, up to similarity, a quantum subgroup of the \emph{universal unitary quantum group} $U^+(F)$ \cite{VDW96} for some $F\in M_N(\C)$, $N\in\N$, whose C*-algebra is defined by
$$C(U^+(F))=C^*(u_{ij};\;i,j=1,\dots,N\mid u\text{ and }F\bar uF^{-1}\text{ are unitary}).$$

Assuming $F\bar F=c\,1_N$ for some $c\in\R$, we also define the \emph{universal orthogonal quantum group} $O^+(F)$ \cite{Ban96} by
$$C(O^+(F))=C^*(u_{ij};\;i,j=1,\dots,N\mid u=F\bar uF^{-1}\text{ is unitary}).$$

For a pair of representations $u\in M_n(C(G))$, $v\in M_m(C(G))$, we denote by
$$\Mor(u,v):=\{T\colon\C^n\to\C^m\mid Tu=vT\}$$
the space of \emph{intertwiners} between $u$ and $v$. A representation $u$ is called \emph{irreducible} if $\Mor(u,u)=\C\cdot 1$. It holds that any representation is a direct sum of irreducible ones.

For a given quantum group $G$, we denote by $\Irr G$ the set of equivalence classes of $G$. For $\alpha\in\Irr G$, we denote by $u^\alpha$ some unitary representative of the class $\alpha$. It holds that the matrix elements of all the $u^\alpha$, $\alpha\in\Irr G$ form a linear basis of $O(G)$.

\begin{rem}
Let $G$ and $H$ be compact (matrix) quantum groups such that $C(H)\subset C(G)$ and $\Delta_H=\Delta_G|_{C(H)}$. We may say that $H$ is a \emph{quotient} of $G$. Then
$$\Irr H=\{\alpha\in\Irr G\mid [u^\alpha]_{ij}\in C(H)\;\forall i,j\}\subset\Irr G.$$
Indeed, any representation of $H$ is by definition a representation of $G$. The notion of irreducibility does not change if we enlarge the algebra.
\end{rem}

\subsection{Grading on quantum group function spaces}
\label{secc.grading}
Given a compact matrix quantum group $G=(C(G),u)$, we denote by
$$I_G:=\{f\in\C\langle x_{ij},x_{ij}^*\rangle\mid f(u_{ij},u_{ij}^*)=0\}$$
the ideal determining the algebras $O(G)=\C\langle x_{ij},x_{ij}^*\rangle/I_G$ and $C(G)=C^*(O(G))$.

There is a natural structure of a $\Z_k$-grading on the algebra $\C\langle x_{ij},x_{ij}^*\rangle$ given by associating degree one to the variables $x_{ij}$, and associating degree minus one to the variables $x_{ij}^*$. In this article, by a $\Z_k$-grading we will always mean this particular grading.

If the ideal $I_G$ is homogeneous, then the $*$-algebra $O(G)$ inherits this grading. Moreover, this grading passes also to the \emph{fusion semiring} of irreducible representations in the following sense. For any $\alpha\in\Irr G$, there is $d_\alpha\in\Z_k$ such that all the matrix entries of $u^\alpha$ are $\Z_k$ homogeneous of degree $d_\alpha$. We will call $d_\alpha$ the \emph{degree} of the irreducible $u^\alpha$.

\section{Representation categories}

The main point of this section is to formulate the Tannaka--Krein duality for unitary compact matrix quantum groups. The Tannaka--Krein duality for quantum groups was first formulated by Woronowicz \cite{Wor88}. Essentially it says that any compact quantum group can be recovered from its representation category. Since then, many formulations of this statement appeared -- from very categorical ones such as in \cite{NT13} to very concrete ones such as \cite{Mal18}. We will stick here to the latter approach reformulating it a bit in the spirit of \cite{Fre17} to fit into our setting.

All the concepts presented in this section are well known to the experts. Therefore, we try to keep the section very brief. On the other hand, since the topic is quite new and rapidly developing, it is hard to give some general reference here. Some of the concrete notation and formulations of definitions and propositions are actually author's original. We refer to the author's PhD thesis \cite{Gro20} for a more detailed discussion of the concepts. See also \cite{Banbook}.

\subsection{Two-coloured categories}

Consider a compact matrix quantum group $G=(C(G),u)\subset U^+(F)$, $F\in\GL(N,\C)$. Denote $u^{\wcol}:=u$, $u^{\bcol}:=F\bar uF^{-1}$. Denote by $\W$ the free monoid over the alphabet $\{\wcol,\bcol\}$. For any word $w\in\W$, we denote $u^{\otimes w}$ the corresponding tensor product of the representations $u^{\wcol}$ and $u^{\bcol}$.

For a pair of words $w_1,w_2\in\W$, denote
\begin{align*}
\Cat_G(w_1,w_2):\!&=\Mor(u^{\otimes w_1},u^{\otimes w_2})\\&=\{T\colon(\C^N)^{\otimes |w_1|}\to(\C^N)^{\otimes |w_2|}\mid Tu^{\otimes w_1}=u^{\otimes w_2}T\}.
\end{align*}
Such a collection of vector spaces forms a rigid monoidal $*$-category in the following sense
\begin{enumerate}
\item For $T\in\Cat_G(w_1,w_2)$, $T'\in\Cat_G(w_1',w_2')$, we have $T\otimes T'\in\Cat_G(w_1w_1',w_2w_2')$.
\item For $T\in\Cat_G(w_1,w_2)$, $S\in\Cat_G(w_2,w_3)$, we have $ST\in\Cat_G(w_1,w_3)$.
\item For $T\in\Cat_G(w_1,w_2)$, we have $T^*\in\Cat_G(w_2,w_1)$
\item For every word $w\in\W$, we have $1_N^{\otimes |w|}\in\Cat_G(w,w)$.
\item There exist vectors $\xi_{\pairpart[wb]}\in\Cat_G(\emptyset,\wcol\bcol)$ and $\xi_{\pairpart[bw]}\in\Cat_G(\emptyset,\bcol\wcol)$ called the \emph{duality morphisms} such that
\begin{equation}
\label{eq.conjeq}
(\xi_{\pairpart[wb]}^*\otimes 1_{\C^N})(1_{\C^N}\otimes\xi_{\pairpart[bw]})=1_{\C^N},\qquad(\xi_{\pairpart[bw]}^*\otimes 1_{\C^N})(1_{\C^N}\otimes\xi_{\pairpart[wb]})=1_{\C^N}.
\end{equation}
\end{enumerate}

For the last point, we can write explicit formulae
\begin{equation}
\label{eq.F}
[\xi_{\pairpart[wb]}]_{ij}=F_{ji},\qquad [\xi_{\pairpart[bw]}]_{ij}=[\bar F^{-1}]_{ji}.
\end{equation}

\begin{defn}
Consider a natural number $N\in\N$. Let $\Cat(w_1,w_2)$ be a collection of vector spaces of linear maps $(\C^N)^{\otimes |w_1|}\to(\C^N)^{\otimes |w_2|}$ satisfying the conditions (1)--(5) above. Then we call $\Cat$ a \emph{two-coloured representation category}. For any collection of sets $C(w_1,w_2)$ of linear maps $(\C^N)^{\otimes |w_1|}\to(\C^N)^{\otimes |w_2|}$ satisfying (5), we denote by $\langle C\rangle$ the smallest category containing $C$. We say that $C$ \emph{generates} this category.
\end{defn}

The justification for this name is given by the following formulation of the Tannaka--Krein duality, which comes from \cite{Fre17}.

\begin{thm}[Woronowicz--Tannaka--Krein duality for CMQG]
\label{T.TK}
Let $\Cat$ be a two-coloured representation category. Then there exists a unique compact matrix quantum group $G$ such that $\Cat=\Cat_G$. This quantum group is determined by the ideal
$$I_G=\spanlin\{[Tu^{\otimes w_1}-u^{\otimes w_2}T]_{\mathbf{ji}}\mid T\in\Cat(w_1,w_2)\}.$$
\end{thm}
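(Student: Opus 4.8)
The plan is to prove this in two directions, reconstructing the quantum group from the category and then verifying uniqueness.

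For the construction, I would first define $I_G$ by the displayed formula, i.e. as the span of all matrix entries $[Tu^{\otimes w_1} - u^{\otimes w_2}T]_{\mathbf{ji}}$ over all $T \in \Cat(w_1,w_2)$ and all multi-indices. The key first step is to check that this $I_G$ is a Hopf $*$-ideal in $\C\langle x_{ij}, x_{ij}^*\rangle$ — or more precisely in the free $*$-algebra modulo the relations making $u$ and $F\bar u F^{-1}$ unitary, so that the quotient sits inside $C(U^+(F))$. Here I would use the categorical axioms (1)--(4): closure under tensor product gives that $I_G$ is a two-sided ideal (multiplying an intertwiner relation by further $u$-entries corresponds to tensoring $T$ with identity morphisms $1_N^{\otimes|w|}$, which lie in $\Cat$ by axiom (4)); closure under composition and adjoints (axioms (2),(3)) together with the explicit duality morphisms $\xi_{\pairpart[wb]}, \xi_{\pairpart[bw]}$ from \eqref{eq.F} handle the $*$-structure and ensure $\bar u$ behaves correctly; and the comultiplication $\Delta(x_{ij}) = \sum_k x_{ik}\otimes x_{kj}$ applied to a relation $[Tu^{\otimes w_1} - u^{\otimes w_2}T]_{\mathbf{ji}}$ lands in $I_G \otimes O + O \otimes I_G$ by a direct computation, while the counit kills it since $\epsilon(u_{ij}) = \delta_{ij}$ turns $Tu^{\otimes w_1} - u^{\otimes w_2}T$ into $T - T = 0$. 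The antipode invariance follows from the $\bar u$-unitarity relation expressing $S(u_{ij})$ through entries of $F\bar u F^{-1}$. This yields a compact matrix quantum group $G = (C(G), u)$ with $C(G) = C^*(\C\langle x_{ij},x_{ij}^*\rangle / I_G)$.

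Next I would show $\Cat_G = \Cat$. One inclusion, $\Cat \subseteq \Cat_G$, is immediate: by the very definition of $I_G$, every $T \in \Cat(w_1,w_2)$ satisfies $Tu^{\otimes w_1} = u^{\otimes w_2}T$ in $O(G)$, hence $T \in \Cat_G(w_1,w_2)$. The reverse inclusion $\Cat_G \subseteq \Cat$ is the substantive point, and I expect this to be the main obstacle. This is exactly where the honest content of Tannaka--Krein lives: one must show that imposing only the relations coming from $\Cat$ does not accidentally create new intertwiners. The standard route is via the conditional expectation / Haar state: build the Haar functional on $O(G)$, decompose $O(G)$ into matrix coefficients of irreducibles, and show that the multiplicities and the spaces $\Mor(u^{\otimes w_1}, u^{\otimes w_2})$ are controlled by $\Cat$ through a Frobenius-reciprocity argument — any intertwiner for $G$ can be averaged against the Haar state to produce an element already built from the generating morphisms and the duality morphisms, hence lying in $\Cat$. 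Alternatively, and perhaps more in the spirit of the concrete references \cite{Mal18, Fre17} cited in the text, one argues by a dimension count: the pair $(\C\langle x\rangle/I_G, \Cat)$ satisfies the hypotheses of the reconstruction theorem, and since $\Cat$ is already a full representation category, the reconstructed category cannot be strictly larger. I would invoke the version of this result from \cite{Fre17} essentially as a black box for this half, citing it, since the excerpt explicitly says the theorem "comes from \cite{Fre17}."

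Finally, uniqueness: if $G'$ is another compact matrix quantum group with $\Cat_{G'} = \Cat$, then $G$ and $G'$ have the same fundamental representation of the same size $N$, and the same space of intertwiners between all tensor powers. I would argue that $I_{G'} = I_G$: the inclusion $I_G \subseteq I_{G'}$ holds because each generating relation $[Tu^{\otimes w_1} - u^{\otimes w_2}T]_{\mathbf{ji}}$ with $T \in \Cat = \Cat_{G'}$ vanishes on the generators of $O(G')$; for the reverse, one uses that $\Cat_{G'} = \Cat = \Cat_G$ and the first part of the argument showing that the category determines all relations — any relation holding in $O(G')$ is a consequence of intertwiner relations, hence already lies in $I_G$. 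Therefore $O(G) \cong O(G')$ compatibly with $u \mapsto u'$, i.e. $G = G'$ as compact matrix quantum groups. The only delicate point to flag is that "$I_{G'}$ is generated by intertwiner relations" is itself a form of Tannaka--Krein and should be attributed rather than reproved; I would structure the writeup so that the genuinely new verifications (the Hopf $*$-ideal check and the easy inclusion $\Cat \subseteq \Cat_G$) are done explicitly, and the deep direction is cited.
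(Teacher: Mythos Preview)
Your proposal follows essentially the same skeleton as the paper's sketch: verify that $I_G$ is a Hopf $*$-ideal using the category axioms, observe the easy inclusion $\Cat\subset\Cat_G$, and cite the literature for the hard inclusion $\Cat_G\subset\Cat$ and for uniqueness. Two points of comparison are worth noting.

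First, you skip a step the paper treats explicitly. The formula for $I_G$ involves $u^{\bcol}=F\bar uF^{-1}$, so before writing it down you must fix~$F$, and $F$ is determined by the duality morphisms $\xi_{\pairpart[wb]},\xi_{\pairpart[bw]}$ via Eq.~\eqref{eq.F}. But axiom~(5) does not pin these down uniquely, so a~priori different choices could yield different ideals and different quantum groups. The paper addresses this: given alternative duality morphisms $\tilde\xi_{\pairpart[wb]},\tilde\xi_{\pairpart[bw]}$ with associated matrix $\tilde F$, one computes $(\tilde\xi_{\pairpart[bw]}^*\otimes 1)(1\otimes\xi_{\pairpart[wb]})=F\tilde F^{-1}\in\Cat(\bcol,\bcol)$, which forces $u^{\bcol}=\tilde F\bar u\tilde F^{-1}$ in $C(G)$ and hence $G=\tilde G$ by symmetry. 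Your write-up should include this, since otherwise the object $I_G$ is not well defined from the data of $\Cat$ alone.

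Second, for the hard inclusion $\Cat_G\subset\Cat$ and uniqueness, the paper invokes the double commutant theorem (following \cite{Mal18}) rather than the Haar-state/averaging route you sketch. Both are legitimate, but the double commutant argument is more direct here: one shows that the commutant of the algebra generated by $\Cat(w,w)$ inside $\mathrm{End}((\C^N)^{\otimes|w|})$ coincides with the image of $O(G)$ acting on that space, whence the intertwiner spaces agree. Your uniqueness paragraph is also slightly circular---the claim ``any relation holding in $O(G')$ is a consequence of intertwiner relations'' is itself Tannaka--Krein. The paper's framing is cleaner: from the construction one has $G\supset\tilde G$ automatically for any $\tilde G$ with $\Cat_{\tilde G}=\Cat$, and the reverse containment is bundled into the same double commutant step that gives $\Cat_G=\Cat$.
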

We give only a sketch of proof here. For a detailed explanation, see \cite[Theorem~3.4.6]{Gro18}. The proof closely follows the proof formulated for orthogonal quantum groups in \cite{Mal18}.
\begin{sproof}
In order to give a sense to the formula for $I_G$, we need to specify the matrix $F$, so that the matrix $u^{\bcol}$ is well defined. We fix the duality morphisms satisfying Eqs.~\eqref{eq.conjeq} and define $F$ according to Eqs.~\eqref{eq.F}. Note that the duality morphisms are not defined uniquely by Eqs.~\eqref{eq.conjeq}. Part of the ``uniqueness'' statement is that the resulting quantum group does not depend on the particular choice of $F$.

It is straightforward to check that $I_G$ is a biideal. We obviously have $I_G\supset I_{U^+(F)}$. Then we can check that $I_G/I_{U^+(F)}\subset O(U^+(F))$ is a Hopf $*$-ideal. Hence $I_G$ defines a compact quantum group. (See \cite{Mal18,Gro20} for details.)

Now, let $\tilde\xi_{\pairpart[wb]}$, $\tilde\xi_{\pairpart[bw]}$ be alternative solutions of \eqref{eq.conjeq} and let $\tilde F$ be the alternative matrix and $\tilde G$ the alternative resulting quantum group. Then we have
$$(\tilde\xi_{\pairpart[bw]}^*\otimes 1_{\C^N})(1_{\C^N}\otimes\xi_{\pairpart[wb]})=F\tilde F^{-1}\in\Cat_G(\bcol,\bcol)=\Mor(u^{\bcol},u^{\bcol}).$$
This means that $u^{\bcol}=\tilde FF^{-1}u^{\bcol}F\tilde F^{-1}=\tilde F\bar u\tilde F^{-1}$ and hence $G\subset\tilde G$. From symmetry, we have $G=\tilde G$.

It remains to prove that we indeed have $\Cat_G=\Cat$ (from construction, we can actually easily see that $\Cat_G\supset\Cat$) and that $G$ is a unique quantum group with this property, that is, if $\Cat_{\tilde G}=\Cat$ for some quantum group $\tilde G\subset U^+(F)$, then surely $G=\tilde G$ (again, from construction, we obviously have $G\supset\tilde G$). This can be proven using the double commutant theorem, see \cite{Mal18,Gro20}.
\end{sproof}


\begin{prop}
Let $G\subset U^+(F)$ be a compact matrix quantum group. Suppose that the associated category $\Cat_G$ is generated by some collection $C(w_1,w_2)$. Then $I_G\subset\C\langle x_{ij},x_{ij}^*\rangle$ as an ideal is generated by
$$\{[Tx^{\otimes w_1}-x^{\otimes w_2}T]_{\mathbf{ji}}\mid T\in C(w_1,w_2)\}.$$
\end{prop}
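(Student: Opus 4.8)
The plan is to show both inclusions between the ideal $I_G$ and the ideal $J$ generated by the given polynomials $\{[Tx^{\otimes w_1}-x^{\otimes w_2}T]_{\mathbf{ji}}\mid T\in C(w_1,w_2)\}$. The inclusion $J\subset I_G$ is immediate: each generator of $C$ is a morphism in $\Cat_G$, i.e.\ $Tu^{\otimes w_1}=u^{\otimes w_2}T$ in $C(G)$, so substituting $x_{ij}\mapsto u_{ij}$, $x_{ij}^*\mapsto u_{ij}^*$ annihilates every generator of $J$, and hence $J\subset I_G$ since $I_G$ is an ideal. The content is therefore the reverse inclusion $I_G\subset J$.

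For $I_G\subset J$, I would invoke the Tannaka--Krein theorem (Theorem~\ref{T.TK}), which gives $I_G=\spanlin\{[Tu^{\otimes w_1}-u^{\otimes w_2}T]_{\mathbf{ji}}\mid T\in\Cat_G(w_1,w_2)\}$ with $\Cat_G=\langle C\rangle$. So it suffices to prove that for every $T\in\langle C\rangle(w_1,w_2)$, the polynomial $[Tx^{\otimes w_1}-x^{\otimes w_2}T]_{\mathbf{ji}}$ lies in $J$. Equivalently, consider the quotient $*$-algebra $B:=\C\langle x_{ij},x_{ij}^*\rangle/J$ with its matrix $\tilde x=(x_{ij}+J)$; I want to show that $T\tilde x^{\otimes w_1}=\tilde x^{\otimes w_2}T$ for \emph{all} $T\in\langle C\rangle$, knowing it holds by construction for all $T\in C$. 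This is a structural induction on how morphisms are built in the category: I would check that the set of $T$ satisfying $T\tilde x^{\otimes w_1}=\tilde x^{\otimes w_2}T$ is closed under the five category operations --- tensor product $T\otimes T'$, composition $ST$, adjoint $T^*$, identities $1_N^{\otimes|w|}$, and contains the duality morphisms $\xi_{\pairpart[wb]},\xi_{\pairpart[bw]}$. For tensor products, compositions, and identities this is a direct matrix computation using $\tilde x^{\otimes w_1w_1'}=\tilde x^{\otimes w_1}\otimes\tilde x^{\otimes w_1'}$ (with the usual leg reindexing). For the duality morphisms one uses that $J\supset I_{U^+(F)}$, since the relations defining $U^+(F)$ --- unitarity of $u$ and of $F\bar u F^{-1}$ --- are exactly the intertwiner relations for $1_N\in\Cat(\wcol\wcol,\wcol\wcol)$-type data together with $\xi_{\pairpart[wb]},\xi_{\pairpart[bw]}$; more carefully, these duality morphisms and $1_N$ generate $\Cat_{U^+(F)}$ and one already knows (from the setup around Eqs.~\eqref{eq.conjeq}--\eqref{eq.F}) that $I_{U^+(F)}$ is the biideal they cut out, so $\xi_{\pairpart[wb]}\tilde x^{\otimes\emptyset}=\tilde x^{\otimes\wcol\bcol}\xi_{\pairpart[wb]}$ etc.\ hold in $B$.

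The adjoint step is the one requiring a little care: from $T\tilde x^{\otimes w_1}=\tilde x^{\otimes w_2}T$ one must deduce $T^*\tilde x^{\otimes w_2}=\tilde x^{\otimes w_1}T^*$, and this is not formal --- it uses that $\tilde x^{\otimes w}$ is, in the appropriate sense, ``invertible up to conjugation by the $F$-morphisms,'' i.e.\ one passes to adjoints and then uses the duality morphisms (which we have just shown intertwine correctly) to turn $(\tilde x^{\otimes w})^*$ back into $\tilde x^{\otimes\bar w}$-type expressions. Concretely, $[\tilde x^{\bcol}]=F[\tilde x^{\wcol}]^*F^{-1}$ holds in $B$ (again because $J\supset I_{U^+(F)}$), so adjoints of words in $\wcol,\bcol$ are again, up to the fixed matrices $F,\bar F^{-1}$, words in $\wcol,\bcol$, and the intertwiner relation is preserved. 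I expect this adjoint/duality bookkeeping to be the main obstacle --- not deep, but the place where one genuinely needs the structural facts about $U^+(F)$ rather than pure category formalism. Once all five closure properties are verified, $\langle C\rangle$ is contained in the set of $T$ with $T\tilde x^{\otimes w_1}=\tilde x^{\otimes w_2}T$, which unwinds to $[Tx^{\otimes w_1}-x^{\otimes w_2}T]_{\mathbf{ji}}\in J$ for all $T\in\Cat_G$, giving $I_G\subset J$ and hence equality.
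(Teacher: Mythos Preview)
Your proof is correct and follows essentially the same route as the paper: show the easy inclusion $J\subset I_G$, then pass to the quotient $\C\langle x_{ij},x_{ij}^*\rangle/J$ and verify that the collection of maps $T$ satisfying $T\tilde x^{\otimes w_1}=\tilde x^{\otimes w_2}T$ is itself a two-coloured representation category, hence contains $\langle C\rangle=\Cat_G$, which gives $I_G\subset J$ via Theorem~\ref{T.TK}. The paper writes out only the composition check and dismisses the tensor product and involution as ``similar''; you are right to flag that the involution step is the one that actually needs the duality morphisms (equivalently, the relations of $U^+(F)$) in $J$ --- this is implicitly available because the definition of a generating collection $C$ requires axiom~(5), so $\xi_{\pairpart[wb]},\xi_{\pairpart[bw]}\in C$ and hence the $U^+(F)$-relations lie in $J$, making $\tilde x^{\wcol}$ and $\tilde x^{\bcol}$ unitary in the quotient.
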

\begin{proof}
Denote by $I$ the ideal generated by $C$ as formulated above. Obviously, we have $I\subset I_G$. To prove the opposite inclusion, it is enough to prove that
$$\Cat(w_1,w_2):=\{T\colon (\C^N)^{\otimes |w_1|}\to(\C^N)^{\otimes |w_2|}\mid Tx^{\otimes w_1}-x^{\otimes w_2}T\in I\}$$
form a category. Then, since obviously $C\subset\Cat$ and hence $\Cat_G\subset\Cat$, we must have $I_G\subset I$.

So, denote $A:=\C\langle x_{ij},x_{ij}^*\rangle/I$ and by $v_{ij}$ denote the images of $x_{ij}$ by the natural homomorphism. Taking $T_1\in\Cat(w_1,w_2)$, $T_2\in\Cat(w_2,w_3)$. Then
$$T_2T_1v^{\otimes w_1}=T_2v^{\otimes w_2}T_1=v^{\otimes w_3}T_2T_1,$$
so $T_2T_1\in\Cat(w_1,w_3)$. For tensor product and involution, the proof is similar.
\end{proof}

\begin{rem}
\label{R.OGcat}
Recall the universal orthogonal quantum group $O^+(F)\subset U^+(F)$, which is defined by the relation $u=F\bar uF^{-1}$. The relation can also be written as $u^{\wcol}=u^{\bcol}$. Consequently, we have $u^{\otimes w}=u^{\otimes |w|}$ for any $w\in\W$, so only the length of the word $w$ matters. For any $G\subset O^+(F)$, we define
$$\Cat_G(k,l):=\Mor(u^{\otimes k},u^{\otimes l})=\Cat_G(w_1,w_2),$$
where $w_1,w_2$ are any words with $|w_1|=k$ and $|w_2|=l$.
\end{rem}

\subsection{Representation categories of quantum subgroups, intersections, and topological generation}

In this section, we would like to briefly explain the concepts of the quantum group intersection and topological generation. Together with quantum subgroups, we relate those notions with the associated ideals of algebraic relations $I_G$ and the representation categories $\Cat_G$. See also \cite[Sects. 2.3.4, 2.5.4, 2.5.5, 3.4.5]{Gro20} for more detailed discussion.

\begin{prop}
\label{P.subgr}
Consider $G,H\subset U^+(F)$. Then the following are equivalent.
\begin{enumerate}
\item $H\subset G$,
\item $I_H\supset I_G$,
\item $\Cat_H(w_1,w_2)\supset\Cat_G(w_1,w_2)$ for all $w_1,w_2\in\W$.
\end{enumerate}
\end{prop}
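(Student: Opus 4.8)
The plan is to prove the cycle of implications $(1)\Rightarrow(2)\Rightarrow(3)\Rightarrow(1)$, using the Tannaka--Krein duality (Theorem~\ref{T.TK}) as the main tool to pass back and forth between the ideal description and the category description. All three objects $G$, $H$ are quantum subgroups of the same $U^+(F)$, so the ambient matrix $F$ (hence the meaning of $u^{\bcol}$, and the duality morphisms) is fixed once and for all; this is what makes the statement clean.

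\textbf{$(1)\Rightarrow(2)$.} Suppose $H\subset G$, i.e.\ there is a surjective $*$-homomorphism $\pi\colon O(G)\to O(H)$ with $\pi(u_{ij})=v_{ij}$, where $u,v$ are the fundamental representations of $G,H$. Precomposing with the canonical quotient maps $\C\langle x_{ij},x_{ij}^*\rangle\to O(G)$ and $\C\langle x_{ij},x_{ij}^*\rangle\to O(H)$, any $f\in I_G$ evaluates to $0$ at $(u_{ij},u_{ij}^*)$, hence $\pi$ applied to that gives $f(v_{ij},v_{ij}^*)=0$, i.e.\ $f\in I_H$. Thus $I_H\supset I_G$. (Equivalently: $I_H$ is the kernel of $\C\langle x_{ij},x_{ij}^*\rangle\to O(H)$, which factors through $\C\langle x_{ij},x_{ij}^*\rangle\to O(G)\to O(H)$, so it contains $I_G=\ker(\C\langle x_{ij},x_{ij}^*\rangle\to O(G))$.)

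\textbf{$(2)\Rightarrow(3)$.} Fix $T\colon(\C^N)^{\otimes|w_1|}\to(\C^N)^{\otimes|w_2|}$ with $T\in\Cat_G(w_1,w_2)$, i.e.\ $Tu^{\otimes w_1}=u^{\otimes w_2}T$ as matrices over $O(G)$. Each entry $[Tu^{\otimes w_1}-u^{\otimes w_2}T]_{\mathbf{ji}}$ is, by construction, the image in $O(G)$ of a noncommutative polynomial in $x_{ij},x_{ij}^*$ that therefore lies in $I_G$; note here I use that $u^{\bcol}=F\bar uF^{-1}$ is literally a matrix of such polynomials in the $x_{ij}^*$, with coefficients from the fixed $F$, so $u^{\otimes w}$ is polynomial in $x_{ij},x_{ij}^*$ for every colour word $w$. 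By (2) these polynomials lie in $I_H$, hence their images in $O(H)$ vanish, i.e.\ $Tv^{\otimes w_1}=v^{\otimes w_2}T$, so $T\in\Cat_H(w_1,w_2)$. This gives $\Cat_H(w_1,w_2)\supset\Cat_G(w_1,w_2)$ for all $w_1,w_2\in\W$.

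\textbf{$(3)\Rightarrow(1)$.} This is the step where Tannaka--Krein does the work. Given (3), I would show $I_G\subset I_H$, which by the correspondence between Hopf $*$-ideals and quantum subgroups (stated in Section~\ref{secc.qgdef}) yields $H\subset G$; one must also check $I_G$ is a Hopf $*$-ideal in $O(U^+(F))$, but that is exactly the content making $G$ a quantum subgroup of $U^+(F)$, which is given. By Theorem~\ref{T.TK}, $I_G$ is spanned by the entries $[Tu^{\otimes w_1}-u^{\otimes w_2}T]_{\mathbf{ji}}$ over all $T\in\Cat_G(w_1,w_2)$ (viewing these as elements of $O(U^+(F))$, i.e.\ as polynomials in the generators modulo $I_{U^+(F)}$); likewise for $I_H$ with $\Cat_H$. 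Since $\Cat_G(w_1,w_2)\subset\Cat_H(w_1,w_2)$ for all $w_1,w_2$ by (3), every spanning element of $I_G$ is among the spanning elements of $I_H$, so $I_G\subset I_H$, and we are done. The one subtlety to be careful about is that Theorem~\ref{T.TK} is stated for the ideal inside $\C\langle x_{ij},x_{ij}^*\rangle$ but the comparison is cleanest inside $O(U^+(F))=\C\langle x_{ij},x_{ij}^*\rangle/I_{U^+(F)}$; since both $I_G$ and $I_H$ contain $I_{U^+(F)}$ (both $G,H\subset U^+(F)$), passing to the quotient loses nothing and the inclusion $I_G\subset I_H$ upstairs is equivalent to it downstairs.

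\textbf{Main obstacle.} The only real content is $(3)\Rightarrow(1)$, and there the heavy lifting is entirely outsourced to the Tannaka--Krein duality (Theorem~\ref{T.TK}): once one accepts that $I_G$ is \emph{exactly} the span of the intertwiner relations coming from $\Cat_G$ (not merely that it contains them), the inclusion of categories forces the inclusion of ideals immediately. The implications $(1)\Rightarrow(2)$ and $(2)\Rightarrow(3)$ are routine diagram-chasing with the defining quotient maps; the only thing worth stating explicitly in them is that $u^{\bcol}$, and hence $u^{\otimes w}$ for arbitrary $w\in\W$, is a genuine $*$-polynomial expression in the fundamental generators with fixed scalar coefficients from $F$, so that ``$T$ intertwines'' is literally a system of polynomial relations that is transported by any $*$-homomorphism sending generators to generators.
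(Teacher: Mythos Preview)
Your proof is correct and follows essentially the same approach as the paper's: the paper simply states that $(1)\Leftrightarrow(2)$ is immediate from the definition of quantum subgroup and $(2)\Leftrightarrow(3)$ follows from Tannaka--Krein duality (Theorem~\ref{T.TK}), and your cycle $(1)\Rightarrow(2)\Rightarrow(3)\Rightarrow(1)$ unpacks exactly these implications with the appropriate detail. In particular, your observation that $u^{\otimes w}$ is a genuine $*$-polynomial in the generators (via the fixed $F$) and your use of the span description of $I_G$ from Theorem~\ref{T.TK} for the step $(3)\Rightarrow(1)$ are precisely what the paper's terse citations encode.
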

\begin{proof}
The equivalence $(1)\Leftrightarrow (2)$ follows directly from the definition of quantum subgroup. The equivalence $(2)\Leftrightarrow (3)$ follows from Tannaka--Krein duality (Thm.~\ref{T.TK}).
\end{proof}

Consider $H_1,H_2\subset U^+(F)$. Their \emph{intersection} $H_1\cap H_2$ is the largest compact matrix quantum group contained in both $H_1$ and $H_2$. This notion was recently heavily used especially in the work of Teodor Banica. See \cite{Banbook}.

\begin{prop}
Consider $G,H_1,H_2\subset U^+(F)$. Then the following are equivalent.
\begin{enumerate}
\item $G=H_1\cap H_2$,
\item $I_G=I_{H_1}+I_{H_2}$,
\item $\Cat_G=\langle\Cat_{H_1},\Cat_{H_2}\rangle$.
\end{enumerate}
\end{prop}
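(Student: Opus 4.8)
The plan is to prove the three-way equivalence by first establishing $(1)\Leftrightarrow(2)$ directly from the definitions and the correspondence between quantum subgroups and ideals, and then deriving $(2)\Leftrightarrow(3)$ via Tannaka--Krein duality (Theorem~\ref{T.TK}), exactly as in the proof of Proposition~\ref{P.subgr}.

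For $(1)\Leftrightarrow(2)$: By Proposition~\ref{P.subgr}, the condition $G\subset H_1$ is equivalent to $I_G\supset I_{H_1}$, and likewise $G\subset H_2$ iff $I_G\supset I_{H_2}$. Hence $G$ is contained in both $H_1$ and $H_2$ if and only if $I_G\supset I_{H_1}+I_{H_2}$. Now $I_{H_1}+I_{H_2}$ is again a $*$-biideal (a sum of two $*$-biideals is one), and it is invariant under the antipode, so it is a Hopf $*$-ideal; by Tannaka--Krein it corresponds to a genuine quantum subgroup $K\subset U^+(F)$ with $I_K=I_{H_1}+I_{H_2}$. The condition ``$G$ is the \emph{largest} quantum subgroup contained in both $H_1$ and $H_2$'' then translates to ``$I_G$ is the \emph{smallest} ideal containing $I_{H_1}+I_{H_2}$,'' i.e. $I_G=I_{H_1}+I_{H_2}=I_K$, which by Proposition~\ref{P.subgr} (again via the injectivity direction) forces $G=K=H_1\cap H_2$. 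So I need to check that $I_{H_1}+I_{H_2}$ defines a quantum subgroup and that maximality on the quantum-group side matches minimality on the ideal side; both are routine once one observes the sum of Hopf $*$-ideals is a Hopf $*$-ideal.

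For $(2)\Leftrightarrow(3)$: This is immediate from Tannaka--Krein duality. Unwinding the formula $I_G=\spanlin\{[Tu^{\otimes w_1}-u^{\otimes w_2}T]_{\mathbf{ji}}\mid T\in\Cat(w_1,w_2)\}$ from Theorem~\ref{T.TK}, the ideal $I_{H_1}+I_{H_2}$ is precisely the ideal spanned by the intertwiner relations coming from $\Cat_{H_1}\cup\Cat_{H_2}$; but by the Proposition just before this one, the ideal generated by a generating collection $C$ of a category equals the ideal of the corresponding quantum group, so $I_{H_1}+I_{H_2}=I_K$ where $\Cat_K=\langle\Cat_{H_1},\Cat_{H_2}\rangle$. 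Combining with $(1)\Leftrightarrow(2)$, condition $(3)$ is equivalent to $I_G=I_{H_1}+I_{H_2}$, hence to $(2)$. Alternatively one can argue purely categorically: $\langle\Cat_{H_1},\Cat_{H_2}\rangle$ is the smallest representation category containing both $\Cat_{H_1}$ and $\Cat_{H_2}$, which by Proposition~\ref{P.subgr}(3) corresponds to the largest quantum group contained in both, namely $H_1\cap H_2$.

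The only mild subtlety---the main thing to be careful about---is the direction-reversing nature of the correspondences: ``largest subgroup'' $\leftrightarrow$ ``smallest ideal containing the sum'' $\leftrightarrow$ ``smallest category containing the union,'' and in particular that $I_{H_1}+I_{H_2}$ is already a Hopf $*$-ideal (so no further closure is needed) and that $\langle\Cat_{H_1},\Cat_{H_2}\rangle$ is genuinely a representation category in the sense of the Definition, so that Theorem~\ref{T.TK} applies to it. Once these bookkeeping points are in place, the equivalences follow formally from Proposition~\ref{P.subgr} and Theorem~\ref{T.TK}.
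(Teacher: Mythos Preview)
Your proposal is correct and follows essentially the same approach as the paper: translate the order-reversing correspondence of Proposition~\ref{P.subgr} into ``largest subgroup $\leftrightarrow$ smallest ideal $\leftrightarrow$ smallest category'' and read off the three characterisations. You add a detail the paper leaves implicit, namely that $I_{H_1}+I_{H_2}$ is already a Hopf $*$-ideal (so no further closure is needed and it genuinely defines a quantum subgroup), which is a welcome clarification but not a different route.
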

\begin{proof}
The equivalence follows from Proposition~\ref{P.subgr}. $G$ being the largest quantum group contained in $H_1$ and $H_2$ is equivalent to $I_G$ being the smallest ideal containing $I_{H_1}$ and $I_{H_2}$, which is equivalent to $\Cat_G$ being the smallest category containing $\Cat_{H_1}$ and $\Cat_{H_2}$.
\end{proof}

Consider $H_1,H_2\subset U^+(F)$. The smallest quantum group $G$ containing both $H_1$ and $H_2$ is said to be \emph{topologically generated} by $H_1$ and $H_2$. We denote it by $G=\langle H_1,H_2\rangle$. This notion goes back to \cite{Chi15,BCV17}.

\begin{prop}
Consider $G,H_1,H_2\subset U^+(F)$. Then the following are equivalent.
\begin{enumerate}
\item $G=\langle H_1, H_2\rangle$,
\item $I_G$ is the largest Hopf $*$-ideal in $I_{H_1}\cap I_{H_2}$,
\item $\Cat_G(w_1,w_2)=\Cat_{H_1}(w_1,w_2)\cap\Cat_{H_2}(w_1,w_2)$ for all $w_1,w_2\in\W$.
\end{enumerate}
\end{prop}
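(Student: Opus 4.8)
The claim is a three-way equivalence characterizing topological generation $G=\langle H_1,H_2\rangle$ via ideals and via representation categories. My plan is to prove $(1)\Leftrightarrow(2)$ directly from the definition of topological generation together with the correspondence between quantum subgroups and Hopf $*$-ideals recalled in the preliminaries, and then to prove $(1)\Leftrightarrow(3)$ (or equivalently $(2)\Leftrightarrow(3)$) using Tannaka--Krein duality (Theorem~\ref{T.TK}) and Proposition~\ref{P.subgr}.

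For $(1)\Leftrightarrow(2)$: by Proposition~\ref{P.subgr}, a quantum group $K\subset U^+(F)$ contains both $H_1$ and $H_2$ if and only if $I_K\subset I_{H_1}$ and $I_K\subset I_{H_2}$, i.e. $I_K\subset I_{H_1}\cap I_{H_2}$. However, $I_K$ is not an arbitrary subset of $I_{H_1}\cap I_{H_2}$: it must be a Hopf $*$-ideal (containing $I_{U^+(F)}$), and conversely every such Hopf $*$-ideal arises from a quantum subgroup of $U^+(F)$ by the correspondence recalled in Section~\ref{secc.qgdef}. So $\{K\subset U^+(F) : H_1,H_2\subset K\}$ corresponds bijectively, in an order-reversing way, to $\{J : J \text{ a Hopf }*\text{-ideal}, J\subset I_{H_1}\cap I_{H_2}\}$. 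Hence $G$ being the \emph{smallest} such $K$ is equivalent to $I_G$ being the \emph{largest} such $J$. The only point requiring a word is that the largest Hopf $*$-ideal contained in $I_{H_1}\cap I_{H_2}$ genuinely exists: the sum of any family of Hopf $*$-ideals contained in a fixed biideal is again a Hopf $*$-ideal contained in it (sums of $*$-biideals are $*$-biideals, sums of antipode-invariant sets are antipode-invariant), so one may take the sum of all of them; equivalently, $\langle H_1,H_2\rangle$ exists because $U^+(F)$ itself contains both $H_i$.

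For $(2)\Leftrightarrow(3)$: apply Tannaka--Krein. By Proposition~\ref{P.subgr}, $\Cat_{H_1}(w_1,w_2)\cap\Cat_{H_2}(w_1,w_2)$ for all $w_1,w_2$ is precisely the largest collection of intertwiner spaces dominated simultaneously by $\Cat_{H_1}$ and $\Cat_{H_2}$; one checks directly that the intersection of two (two-coloured) representation categories is again a representation category — closure under composition, tensor product, and adjoints, as well as the presence of the duality morphisms $\xi_{\pairpart[wb]},\xi_{\pairpart[bw]}$ and the identities $1_N^{\otimes|w|}$, all follow because each of $\Cat_{H_1}$ and $\Cat_{H_2}$ has these properties and intersection preserves equations of the form $Tu^{\otimes w_1}=u^{\otimes w_2}T$. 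So $\Cat:=\Cat_{H_1}\cap\Cat_{H_2}$ (meaning the pointwise intersection) is a representation category, and by Theorem~\ref{T.TK} it equals $\Cat_{\tilde G}$ for a unique $\tilde G\subset U^+(F)$, with $I_{\tilde G}$ given by the span in the theorem. Now $\tilde G\subset H_i$ for $i=1,2$ (since $\Cat_{\tilde G}\supset$ is false — rather $\Cat_{\tilde G}=\Cat\supset$ is also false; the correct statement is $\Cat_{H_i}(w_1,w_2)\supset\Cat(w_1,w_2)$, so by Proposition~\ref{P.subgr}, $H_i\subset$ — wait, I need: larger category means smaller group). Let me restate: since $\Cat(w_1,w_2)=\Cat_{H_1}(w_1,w_2)\cap\Cat_{H_2}(w_1,w_2)\subset\Cat_{H_i}(w_1,w_2)$, Proposition~\ref{P.subgr}$(3)\Rightarrow(1)$ gives $\tilde G\supset H_i$, so $\tilde G$ contains both; and any $K\supset H_1,H_2$ has $\Cat_K\subset\Cat_{H_i}$ hence $\Cat_K\subset\Cat=\Cat_{\tilde G}$, so $K\supset\tilde G$. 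Therefore $\tilde G=\langle H_1,H_2\rangle=G$, and $\Cat_G=\Cat_{\tilde G}=\Cat_{H_1}\cap\Cat_{H_2}$, which is $(3)$; the equivalence with $(2)$ follows since under Tannaka--Krein the data $I_G$, $\Cat_G$ determine each other.

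\textbf{Main obstacle.} The routine verifications (that intersections of Hopf $*$-ideals are Hopf $*$-ideals and that intersections of representation categories are representation categories) are where the actual content sits, but they are genuinely mechanical. The one subtlety deserving care is keeping the direction of the order-reversal straight throughout — larger ideal $\leftrightarrow$ smaller group $\leftrightarrow$ larger category — and, relatedly, checking that ``largest Hopf $*$-ideal inside $I_{H_1}\cap I_{H_2}$'' is well-posed, since $I_{H_1}\cap I_{H_2}$ is itself a biideal but need not be a Hopf $*$-ideal (it need not be antipode-invariant in general), so the largest Hopf $*$-ideal it contains can be strictly smaller. This is exactly the phenomenon that makes topological generation interesting, and it is the only place where the statement is more than a formal transport along the dualities.
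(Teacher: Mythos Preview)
Your argument is correct and follows essentially the same route as the paper, which simply invokes Proposition~\ref{P.subgr} and the order-reversing correspondences $H\subset G \Leftrightarrow I_H\supset I_G \Leftrightarrow \Cat_H\supset\Cat_G$; you have merely supplied the routine verifications (that the pointwise intersection of two representation categories is again one, and that a largest Hopf $*$-ideal inside $I_{H_1}\cap I_{H_2}$ exists) that the paper leaves implicit. The only issue is cosmetic: the stream-of-consciousness passage where you work out the direction of the order reversal mid-proof should be excised, keeping only the final, correct statement.
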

\begin{proof}
Again, the equivalence follows from Proposition~\ref{P.subgr}. $G$ being the smallest quantum group containing $H_1$ and $H_2$ is equivalent to $I_G$ being the largest ideal contained in $I_{H_1}$ and $I_{H_2}$, which is equivalent to $\Cat_G$ being the largest category contained in $\Cat_{H_1}$ and $\Cat_{H_2}$.
\end{proof}

\subsection{Frobenius reciprocity}
\label{secc.Frobenius}
We define an involution on the set of two colours $\{\wcol,\bcol\}$ mapping $\wcol\mapsto\bcol$, $\bcol\mapsto\wcol$. We extend this operation as a~homomorphism on the monoid $\W$, denote it by bar $w\mapsto\bar w$, and call it the \emph{colour inversion}. We denote by $w^*$ the colour inversion composed with reflection on $w$ (that is, reading the word backwards).

Consider a two-coloured representation category $\Cat$ and fix the duality morphisms $\xi_{\pairpart[wb]}$, $\xi_{\pairpart[bw]}$. We define a map $\Rrot\colon\Cat(w_1,w_2a)\to\Cat(w_1\bar a,w_2)$ for $a\in\{\wcol,\bcol\}$ by $T\mapsto (1_{(\C^N)^{|w_2|}}\otimes\xi^*)(T\otimes 1_{\C^N})$ with $\xi=\xi_{\pairpart[wb]}$ if $a=\wcol$ and $\xi=\xi_{\pairpart[bw]}$ if $a=\bcol$. We call this map the \emph{right rotation}.

This map has an inverse $\Rrot^{-1}\colon\Cat(w_1a,w_2)\to\Cat(w_1,w_2\bar a)$ given by $T\mapsto (T\otimes 1_{\C^N})(1_{(\C^N)^{\otimes |w_1|}}\otimes\xi)$. Similarly, we can define the left rotation $\Lrot\colon \Cat(aw_1,w_2)\to\Cat(w_1,\bar aw_2)$. As a consequence, we have the following.

\begin{prop}
Let $\Cat$ be a two-coloured representation category. Then $\Cat$ is generated by the collection $\Cat(\emptyset,w)$ with $w$ running through $\W$.
\end{prop}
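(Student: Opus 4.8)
The plan is to use Frobenius reciprocity, i.e.\ the rotation maps $\Lrot$ and $\Rrot$ introduced just above, to show that every morphism space $\Cat(w_1,w_2)$ is already contained in the category generated by the spaces of the form $\Cat(\emptyset,w)$. The key observation is that the rotations are bijections between morphism spaces, and that the category generated by $\{\Cat(\emptyset,w)\mid w\in\W\}$ is, by definition, closed under the categorical operations, in particular under tensoring with the duality morphisms $\xi_{\pairpart[wb]},\xi_{\pairpart[bw]}$ (which lie in $\Cat(\emptyset,\wcol\bcol)$, $\Cat(\emptyset,\bcol\wcol)$ and so are among the generators) and under composition and tensor products.

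First I would fix a morphism $T\in\Cat(w_1,w_2)$ with $w_1$ nonempty, say $w_1=aw_1'$ for a single colour $a\in\{\wcol,\bcol\}$, and apply the left rotation $\Lrot\colon\Cat(aw_1',w_2)\to\Cat(w_1',\bar a w_2)$. Iterating, after $|w_1|$ steps I obtain a morphism $\Lrot^{|w_1|}(T)\in\Cat(\emptyset,\bar{w_1}^{\mathrm{rev}}w_2)=\Cat(\emptyset,w_1^*w_2)$, which lies in the distinguished family of generators. Then I would argue conversely: $\Lrot^{-1}$ is given by tensoring with a duality morphism and composing (as spelled out in Sect.~\ref{secc.Frobenius}), so $T=\Lrot^{-|w_1|}(\Lrot^{|w_1|}(T))$ is obtained from an element of $\Cat(\emptyset,w_1^*w_2)$ by finitely many applications of tensoring with $\xi_{\pairpart[wb]}$ or $\xi_{\pairpart[bw]}$ (or their adjoints) and composing with identity morphisms $1_{\C^N}^{\otimes k}$. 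Since the generated category $\langle\Cat(\emptyset,w)\mid w\in\W\rangle$ contains the duality morphisms and is closed under $*$, $\otimes$, and composition, it contains $T$. This proves $\Cat=\langle\Cat(\emptyset,w)\mid w\in\W\rangle$, i.e.\ $\Cat$ is generated by that collection.

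The main thing to be careful about — rather than a deep obstacle — is bookkeeping: one must check that $\Lrot^{-1}$ really is expressible purely in terms of the generating data (tensor with a fixed duality morphism, then compose with an identity block), which is exactly the formula $T\mapsto(T\otimes 1_{\C^N})(1^{\otimes|w_1|}_{\C^N}\otimes\xi)$ recorded above, and that the colour of $\xi$ at each step is dictated correctly by the colour being rotated (so that the conjugate-equation identities \eqref{eq.conjeq} make $\Lrot$ and $\Lrot^{-1}$ mutually inverse). One should also note the degenerate case $w_1=\emptyset$, where there is nothing to prove since $T\in\Cat(\emptyset,w_2)$ is already a generator. No genuinely hard step is expected; the proposition is essentially a formal consequence of the existence of the duality morphisms and the bijectivity of the rotation maps.
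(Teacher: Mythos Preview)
Your proposal is correct and takes essentially the same approach as the paper, whose proof is the single line $\Cat(w_1,w_2)=\Rrot^{|w_1|}\Cat(\emptyset,w_1w_2^*)$, using right rotations where you use left rotations. One minor slip: the explicit formula you cite for the inverse rotation is the one recorded for $\Rrot^{-1}$, not $\Lrot^{-1}$ (the left version tensors with $\xi$ and composes on the other side), but this does not affect the argument.
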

\begin{proof}
We have $\Cat(w_1,w_2)=\Rrot^{|w_1|}\Cat(\emptyset,w_1w_2^*)$.
\end{proof}

\begin{cor}
\label{C.idealgen}
Let $G$ be a compact quantum group. Then the ideal $I_G$ is generated by relations of the form $u^{\otimes w}\xi=\xi$.
\end{cor}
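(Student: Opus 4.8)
The plan is to deduce this corollary from the proposition that $\Cat_G$ is generated by the collection $\Cat_G(\emptyset,w)$, $w\in\W$, combined with the version of Tannaka--Krein that gives a generating set for the ideal $I_G$. First I would recall from the Proposition preceding this corollary that if $\Cat_G$ is generated by a collection $C(w_1,w_2)$, then $I_G$ is generated as an ideal by the elements $[Tx^{\otimes w_1}-x^{\otimes w_2}T]_{\mathbf{ji}}$ with $T\in C(w_1,w_2)$. Applying this with $C(w_1,w_2)=\Cat_G(\emptyset,w)$ (so that $w_1=\emptyset$ and $w_2=w$), the ideal $I_G$ is generated by the entries of $x^{\otimes w}\xi-\xi$ as $\xi$ runs over $\Cat_G(\emptyset,w)$ and $w$ runs over $\W$; note that when $w_1=\emptyset$ the term $Tx^{\otimes w_1}$ is just the scalar-free composition $\xi$, viewed as a map $\C\to(\C^N)^{\otimes|w|}$, i.e. a vector.

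Next I would make explicit that ``relations of the form $u^{\otimes w}\xi=\xi$'' means precisely: for each $w\in\W$ and each $\xi\in\Cat_G(\emptyset,w)$, the matrix identity $u^{\otimes w}\xi=\xi$ holds in $O(G)$, which unpacked coordinatewise says $\sum_{\mathbf i}[u^{\otimes w}]_{\mathbf{ji}}\xi_{\mathbf i}=\xi_{\mathbf j}$ for all multi-indices $\mathbf j$. These are exactly the relations whose left-minus-right sides are the generators of $I_G$ identified in the previous paragraph. So the corollary is essentially a restatement of the Proposition specialised to the generating collection furnished by the Frobenius-reciprocity proposition, together with the observation that $\xi\in\Mor(u^{\otimes\emptyset},u^{\otimes w})=\Mor(1,u^{\otimes w})$ is the same thing as a vector $\xi$ with $u^{\otimes w}\xi=\xi$.

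The one genuinely substantive point — though it is already contained in the cited results — is the passage from ``these elements generate the kernel of $\C\langle x_{ij},x_{ij}^*\rangle\to O(G)$ as an ideal'' to a clean statement purely about $O(G)$; but since $I_G$ is by definition that kernel, there is nothing more to check. I would therefore expect the proof to be a two-line argument: invoke the Proposition with the collection $C(\emptyset,w):=\Cat_G(\emptyset,w)$, which generates $\Cat_G$ by the preceding Frobenius-reciprocity proposition, and observe that the resulting generators $[Tx^{\otimes\emptyset}-x^{\otimes w}T]_{\mathbf j}$ are exactly the entries of $\xi-x^{\otimes w}\xi$. The main (minor) obstacle is purely bookkeeping: being careful that $x^{\otimes\emptyset}=1$ and that $T\colon\C\to(\C^N)^{\otimes|w|}$ is identified with its image vector $\xi$, so that the two-sided intertwiner relation collapses to the one-sided relation $u^{\otimes w}\xi=\xi$.

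\begin{proof}
By the preceding proposition, the category $\Cat_G$ is generated by the collection $C$ given by $C(\emptyset,w):=\Cat_G(\emptyset,w)$ for $w\in\W$ and $C(w_1,w_2):=\emptyset$ whenever $w_1\neq\emptyset$. Applying the Proposition on generators of $I_G$ to this collection, the ideal $I_G\subset\C\langle x_{ij},x_{ij}^*\rangle$ is generated by the elements $[Tx^{\otimes\emptyset}-x^{\otimes w}T]_{\mathbf j}$ with $w\in\W$ and $T\in\Cat_G(\emptyset,w)$. Identifying $T\colon\C\to(\C^N)^{\otimes|w|}$ with the vector $\xi:=T(1)$ and using $x^{\otimes\emptyset}=1$, these generators are exactly the entries of $\xi-x^{\otimes w}\xi$. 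Hence $I_G$ is generated by the relations $u^{\otimes w}\xi=\xi$, $\xi\in\Cat_G(\emptyset,w)=\Mor(u^{\otimes\emptyset},u^{\otimes w})$.
\end{proof}
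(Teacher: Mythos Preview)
Your proof is correct and is exactly the argument the paper has in mind: the corollary is stated without proof because it follows immediately by combining the preceding Frobenius-reciprocity proposition (that $\Cat_G$ is generated by the spaces $\Cat_G(\emptyset,w)$) with the earlier proposition describing generators of $I_G$ in terms of generators of $\Cat_G$. Your bookkeeping about $x^{\otimes\emptyset}=1$ and the identification of $T$ with the vector $\xi=T(1)$ is precisely the content of the collapse to the one-sided relation, and nothing more is needed.
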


Some authors denote $\mathop{\rm Fix}(u^{\otimes w}):=\Mor(1,u^{\otimes w})=\Cat_G(\emptyset,w)$. Such particular intertwiners $\xi\in\Mor(1,u^{\otimes w})$ satisfying $u^{\otimes w}\xi=\xi$ are called the \emph{fixed points} of $u^{\otimes w}$.

Not only that the collection $\Cat(\emptyset,w)$ already determines the whole category $\Cat$. We can even characterize representation categories in terms of those fixed point spaces by introducing some alternative operations. The following essentially reformulates the operations defined in \cite{Gro18}.

\begin{defn}
\label{D.Frobenius}
Consider a~two-coloured representation category $\Cat$. Denote for simplicity $\xi_{\wcol}:=\xi_{\pairpart[wb]}$ and $\xi_{\bcol}:=\xi_{\pairpart[bw]}$. We define the following operations on the sets $\Cat(\emptyset,w)$.
\begin{itemize}
\item If $a_i$ and $a_{i+1}$ have opposite colours, we define the \emph{contraction}:
\begin{gather*}
\Pi_i\colon\Cat(\emptyset,a_1\cdots a_k)\to\Cat(\emptyset,a_1\cdots a_{i-1}a_{i+2}\cdots a_k),\\
\Pi_i\eta:=(1_N\otimes\cdots\otimes 1_N\otimes \xi_{a_i}^*\otimes 1_N\otimes \cdots\otimes 1_N)\eta.
\end{gather*}
Pictorially,
$$\Pi_i\eta=
\BigPartition{
\Pline(0.5,1)(8.5,1)
\Pline(0.5,0.5)(8.5,0.5)
\Pline(0.5,0.5)(0.5,1)
\Pline(8.5,0.5)(8.5,1)
\Ptext(4.5,0.75){$\eta$}
\Pline(1,0.5)(1,0.2)
\Ptext(2,0.3){$\cdots$}
\Pline(3,0.5)(3,0.2)
\Pblock 0.5to0.3:4,5
\Ptext(4.5,0.1){$\scriptstyle\xi_{a_i}^*$}
\Pline(6,0.5)(6,0.2)
\Ptext(7,0.3){$\cdots$}
\Pline(8,0.5)(8,0.2)}.$$
On elementary tensors, it acts as
$$\Pi_i(\eta_1\otimes\cdots\otimes\eta_k)=(\eta_{i+1}^tF\eta_i)\,\eta_1\otimes\cdots\otimes\eta_{i-1}\otimes\eta_{i+2}\otimes\cdots\otimes\eta_k.$$
\item We define the \emph{rotation}:
\begin{gather*}
R\colon\Cat(\emptyset,a_1\cdots a_k)\to\Cat(\emptyset,a_ka_1\cdots a_{k-1}),\qquad R:=\Lrot\circ\Rrot,\quad\hbox{so}\\
R\eta=(1_N\otimes\cdots\otimes 1_N\otimes\xi_{a_k}^*)(1_N\otimes\eta\otimes 1_N)\xi_{a_k}.
\end{gather*}
Pictorially,
$$R\eta=
\BigPartition{
\Pline(0.5,0.75)(6.5,0.75)
\Pline(0.5,0.25)(6.5,0.25)
\Pline(0.5,0.25)(0.5,0.75)
\Pline(6.5,0.25)(6.5,0.75)
\Ptext(3.5,0.5){$\eta$}
\Pline(1,0.25)(1,-0.1)
\Pline(2,0.25)(2,-0.1)
\Ptext(3,0){$\cdots$}
\Pline(4,0.25)(4,-0.1)
\Pline(5,0.25)(5,-0.1)
\Pblock 0.25to0.1:6,7
\Pblock 0.25to1:0,7
\Pline(0,0.25)(0,-0.1)
\Ptext(6.5,-0.1){$\scriptstyle\xi_{a_k}^*$}
\Ptext(3.5,1.2){$\scriptstyle\xi_{a_k}$}
}.$$
On elementary tensors, it acts as
$$R(\eta_1\otimes\cdots\otimes\eta_k)=(F\bar F\eta_k)\otimes\eta_1\otimes\cdots\otimes\eta_{k-1}.$$
Note that the rotation is obviously invertible with $R^{-1}=\Rrot^{-1}\circ\Lrot^{-1}\colon\Cat(\emptyset,a_1\cdots a_k)\to\Cat(\emptyset,a_2\cdots a_ka_1)$.
\item We define the \emph{reflection}:
\begin{gather*}
\star\colon\Cat(\emptyset,a_1\cdots a_k)\to\Cat(\emptyset,a_k\cdots a_1)\\
\eta^\star:=\Rrot^{-k}\eta^*=(\eta^*\otimes 1_{N_{a_k}}\otimes\cdots\otimes 1_{N_{a_1}})\xi_{a_1\cdots a_k},
\end{gather*}
where $\xi_{a_1\cdots a_k}$ is the duality morphism associated to the object $a_1\cdots a_k$.
Pictorially,
$$R\eta=
\BigPartition{
\Pline(0.5,0.5)(5.5,0.5)
\Pline(0.5,0)(5.5,0)
\Pline(0.5,0)(0.5,0.5)
\Pline(5.5,0)(5.5,0.5)
\Ptext(3,0.25){$\eta^*$}
\Pblock 0.5to0.6:5,6
\Pblock 0.5to0.7:4,7
\Pblock 0.5to0.9:2,9
\Pblock 0.5to1:1,10
\Pline(6,0.5)(6,0)
\Pline(7,0.5)(7,0)
\Ptext(3,0.6){$\cdots$}
\Ptext(8,0.3){$\cdots$}
\Pline(9,0.5)(9,0)
\Pline(10,0.5)(10,0)
}.$$
On elementary tensors, it acts as
$$(\eta_1\otimes\cdots\otimes\eta_k)^\star=(F^{a_k}\bar\eta_k)\otimes\cdots\otimes(F^{a_1}\bar\eta_1).$$
\end{itemize}
\end{defn}

\begin{prop}
\label{P.Frobenius}
For any two-coloured representation category $\Cat$, the collection of sets $\Cat(\emptyset,w)$, $w\in\W$ is closed under tensor products, contractions, rotations, inverse rotations, and reflections. Conversely, for any collection of vector spaces $\Cat(w)\subset(\C^N)^{\otimes |w|}$ that is closed under tensor products, contractions, rotations, inverse rotations, and reflections and satisfies axiom~(5) of two-coloured representation categories, the sets
$$\Cat(w_1,w_2):=\{\Rrot^{|w_1|}\xi\mid \xi\in\Cat(w_2w_1^*)\}=\{\Lrot^{-|w_1|}p\mid p\in\Cat(w_1^*w_2)\}$$
form a~two-coloured representation category.
\end{prop}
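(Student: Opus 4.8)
The plan is to prove the two directions of the proposition separately, exploiting the fact that the rotations $\Rrot$, $\Lrot$ and the duality morphisms $\xi_{\wcol}$, $\xi_{\bcol}$ are the only ingredients involved in all five operations.

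\emph{First direction: $\Cat$ a representation category $\Rightarrow$ the fixed-point collection is closed under the five operations.} Each operation is built from the categorical structure of $\Cat$. Closure under tensor products is axiom~(1). For contractions $\Pi_i$, I would observe that $1_N\otimes\cdots\otimes\xi_{a_i}^*\otimes\cdots\otimes 1_N$ is by axioms~(3)--(5) a morphism in $\Cat(a_1\cdots a_k, a_1\cdots a_{i-1}a_{i+2}\cdots a_k)$, so composing it with $\eta\in\Cat(\emptyset,a_1\cdots a_k)$ lands in $\Cat(\emptyset,\,a_1\cdots a_{i-1}a_{i+2}\cdots a_k)$ by axiom~(2); the formula on elementary tensors follows from~\eqref{eq.F}. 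For the rotation $R=\Lrot\circ\Rrot$ and its inverse, and for the reflection $\star=\Rrot^{-k}\circ(\,\cdot\,)^*$, the relevant rotation maps $\Rrot$, $\Lrot$, $\Rrot^{-1}$, $\Lrot^{-1}$ were already shown in Section~\ref{secc.Frobenius} to carry morphism spaces of $\Cat$ to morphism spaces of $\Cat$, and axiom~(3) handles the adjoint in $\star$; the formulas on elementary tensors are again a direct computation from~\eqref{eq.F} and~\eqref{eq.conjeq}. None of this is hard.

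\emph{Second direction: a collection closed under the five operations, satisfying axiom~(5), yields a representation category.} This is the substantive part. Given such a collection $\{\Cat(w)\}$, I define $\Cat(w_1,w_2):=\{\Rrot^{|w_1|}\xi\mid\xi\in\Cat(w_2w_1^*)\}$ and must verify axioms~(1)--(5). Axiom~(5) for the new $\Cat(\cdot,\cdot)$ follows from axiom~(5) for $\Cat(\cdot)$ together with the definition. The equality of the two descriptions, $\{\Rrot^{|w_1|}\xi\}=\{\Lrot^{-|w_1|}p\mid p\in\Cat(w_1^*w_2)\}$, should come from a bookkeeping identity relating iterated left and right rotations through the reflection-type symmetry, using that $\Cat(w)$ is closed under $\star$ and under $R^{\pm1}$. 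For axioms~(2)--(4) — closure under composition, tensor product, and adjoint of the bivariant spaces — the strategy is to translate each statement, via the rotation isomorphisms $\Rrot^{|w_1|}$, back into a statement purely about the univariant spaces $\Cat(w)$: e.g.\ composition $ST$ with $T\in\Cat(w_1,w_2)$, $S\in\Cat(w_2,w_3)$ corresponds, after rotating all legs to the bottom, to a combination of tensoring the corresponding fixed vectors with identities and then applying a sequence of contractions $\Pi_i$ to pair off the $w_2$ and $w_2^*$ strands; closure of $\Cat(\cdot)$ under tensor products and contractions then gives the result. Likewise the adjoint $T\mapsto T^*$ on $\Cat(w_1,w_2)$ corresponds under rotation to the reflection $\star$ on $\Cat(w_2w_1^*)$ up to a further rotation. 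I would set up the correspondence once and carefully, fixing the conventions for $\Rrot$, $\Lrot$ from Section~\ref{secc.Frobenius}, and then each axiom becomes a short diagrammatic argument.

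\emph{Main obstacle.} The real work — and the place where sign/convention errors lurk — is verifying closure under composition in the second direction: one must express the composite morphism, after the rotation identification, as an explicit sequence of tensor products with identities followed by contractions, and check that the bends introduced by $\Rrot^{|w_1|}$ on the two factors cancel correctly against the duality relations~\eqref{eq.conjeq}. This is essentially the content of the corresponding lemmas in \cite{Gro18}, and I would either reproduce that diagrammatic computation or cite it; everything else (tensor products, adjoints, axiom~(5), and the whole first direction) is routine once the rotation dictionary is in place.
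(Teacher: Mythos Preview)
Your proposal is correct and follows essentially the same approach as the paper: the first direction is dismissed as immediate from the definitions, and the converse is handled by expressing each category operation (tensor product, involution, composition) on $\Cat(w_1,w_2)$ in terms of the five operations on the fixed-point spaces via the rotation dictionary. The paper is slightly more explicit in that it writes down the three key identities $\Lrot^{-k}\xi\otimes\Rrot^{k'}\eta=\Lrot^{-k}\Rrot^{k'}(\xi\otimes\eta)$, $(\Rrot^k\xi)^*=\Rrot^l\xi^\star$, and $(\Rrot^l\eta)(\Rrot^k\xi)=\Rrot^k\Pi_{m+1}\cdots\Pi_{m+l}(\eta\otimes\xi)$ outright rather than describing them in prose, but your identification of composition as the nontrivial step (expressed via tensoring and a string of contractions) matches exactly.
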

\begin{proof}
The first part of the proposition follows from the fact that all the new operations are defined using the category operations of tensor product, composition, and involution.

The converse statement is proved by expressing the category operations in terms of the new operations:
\begin{align*}
\Lrot^{-k}\xi\otimes\Rrot^{k'}\eta&=\Lrot^{-k}\Rrot^{k'}(\xi\otimes\eta),\\
(\Rrot^k\xi)^*&=\Rrot^l\xi^\star,\\
(\Rrot^l\eta)(\Rrot^k\xi)&=\Rrot^k\Pi_{m+1}\Pi_{m+2}\cdots\Pi_{m+l}(\eta\otimes\xi),
\end{align*}
where we assume that $\xi\in\Cat(w_1w_2)$ and $\eta\in\Cat(w_1'w_2')$ (for the first row), resp.\ $\eta\in\Cat(w_2w_3)$ (for the last row).
\end{proof}

\subsection{Free and tensor product}
\label{secc.prods}

The following two constructions were defined by Wang.

\begin{prop}[\cite{Wan95free}]
Let $H_1=(C(H_1),v_1)$ and $H_2=(C(H_2),v_2)$ be compact matrix quantum groups. Then $H_1*H_2:=(C(H_1)*_\C C(H_2),v_1\oplus v_2)$ is a compact matrix quantum group. For the co-multiplication we have that
$$\Delta_*([v_1]_{ij})=\Delta_{H_1}([v_1]_{ij}),\quad\Delta_*([v_2]_{kl})=\Delta_{H_2}([v_2]_{kl}).$$
\end{prop}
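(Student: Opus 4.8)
The plan is to verify the three defining axioms of a compact matrix quantum group for the pair $(A,u)$ with $A:=C(H_1)*_\C C(H_2)$ and $u:=v_1\oplus v_2$, and then to read off the comultiplication formula from the construction. Write $N_i$ for the size of $v_i$, let $\iota_i\colon C(H_i)\to A$ denote the canonical unital inclusions, and note that $u$ is the block-diagonal matrix of size $N_1+N_2$ carrying $\iota_1(v_1)$ and $\iota_2(v_2)$ on the diagonal.

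Axioms (1) and (2) are routine. For (1): by its universal property $A$ is generated as a C*-algebra by $\iota_1(C(H_1))\cup\iota_2(C(H_2))$, and since each $C(H_i)$ is generated by the entries of $v_i$, the entries of $u$ generate $A$. For (2): as $v_i$ and $v_i^t$ are invertible in $M_{N_i}(C(H_i))$, so are $\iota_i(v_i)$ and $\iota_i(v_i^t)$ in $M_{N_i}(A)$, hence the block-diagonal matrices $u=\iota_1(v_1)\oplus\iota_2(v_2)$ and $u^t=\iota_1(v_1^t)\oplus\iota_2(v_2^t)$ are invertible in $M_{N_1+N_2}(A)$.

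The substance is axiom (3), the existence of $\Delta$. First I would form, for $i=1,2$, the unital $*$-homomorphism
$$\psi_i:=(\iota_i\otimes_{\rm min}\iota_i)\circ\Delta_{H_i}\colon C(H_i)\to A\otimes_{\rm min}A,$$
using the bifunctoriality of the minimal tensor product on C*-algebras to make sense of $\iota_i\otimes_{\rm min}\iota_i$. Then I would invoke the universal property of the free product $*_\C$ — it is the coproduct in the category of unital C*-algebras — to glue $\psi_1$ and $\psi_2$ into a single unital $*$-homomorphism $\Delta\colon A\to A\otimes_{\rm min}A$ with $\Delta\circ\iota_i=\psi_i$. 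Finally I would check on generators that $\Delta$ is the map required by the definition: for $i,j$ in the first block, $\Delta(u_{ij})=\psi_1([v_1]_{ij})=(\iota_1\otimes\iota_1)\bigl(\sum_k[v_1]_{ik}\otimes[v_1]_{kj}\bigr)=\sum_k u_{ik}\otimes u_{kj}$, the block structure of $u$ killing all cross terms; the second block is symmetric, and for the vanishing off-diagonal entries of $u$ the identity $0=\sum_k u_{ik}\otimes u_{kj}$ again holds for block-structure reasons. The relation $\Delta\circ\iota_i=(\iota_i\otimes\iota_i)\circ\Delta_{H_i}$ is then exactly the asserted compatibility $\Delta_*([v_1]_{ij})=\Delta_{H_1}([v_1]_{ij})$ together with its analogue for $v_2$.

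The only genuinely delicate ingredient is the bifunctoriality of $\otimes_{\rm min}$, i.e.\ that the $*$-homomorphisms $\iota_i\colon C(H_i)\to A$ induce a $*$-homomorphism $\iota_i\otimes_{\rm min}\iota_i$ between the minimal tensor products; this is a standard fact about C*-algebras which I would simply cite. Everything else is a direct application of universal properties. Since the axiom list in Section~\ref{secc.qgdef} does not demand coassociativity or a counit explicitly, nothing further is needed for the statement as phrased, although both would follow by the same gluing argument (or from Woronowicz's general theory).
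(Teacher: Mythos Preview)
Your proof is correct and follows the standard route: verify the three axioms directly, constructing $\Delta$ via the universal property of the unital free product as coproduct in the category of unital C*-algebras. This is essentially Wang's original argument.

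The paper itself does not prove this proposition; it is stated with a citation to \cite{Wan95free} and no proof is given. So there is nothing in the paper to compare against beyond noting that your argument matches the approach one finds in Wang's paper. One minor remark: since the paper works throughout with universal C*-algebras (see Section~\ref{secc.qgdef}), the free product $*_\C$ here is implicitly the full (universal) free product, which is what makes the coproduct universal property available; you use this correctly but do not make it explicit.
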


\begin{prop}[\cite{Wan95tensor}]
Let $H_1=(C(H_1),v_1)$ and $H_2=(C(H_2),v_2)$ be compact matrix quantum groups. Then $H_1\times H_2:=(C(H_1)\otimes_{\rm max}C(H_2),v_1\oplus v_2)$ is a compact matrix quantum group. For the co-multiplication we have that
$$\Delta_\times([v_1]_{ij}\otimes 1)=\Delta_{H_1}([v_1]_{ij}),\quad\Delta_\times(1\otimes [v_2]_{kl})=\Delta_{H_2}([v_2]_{kl}).$$
\end{prop}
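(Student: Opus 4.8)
The plan is to verify directly the three defining conditions of a~compact matrix quantum group from Section~\ref{secc.qgdef} for the pair $(A,u)$ with $A:=C(H_1)\otimes_{\rm max}C(H_2)$ and $u:=v_1\oplus v_2$, and then to read off the comultiplication formulas. First I would fix the canonical $*$-homomorphisms $\iota_1\colon C(H_1)\to A$, $a\mapsto a\otimes 1$, and $\iota_2\colon C(H_2)\to A$, $b\mapsto 1\otimes b$, whose ranges commute and jointly generate $A$, and regard $u$ as the block-diagonal matrix over $A$ whose non-zero entries are $u_{ij}=\iota_1([v_1]_{ij})$ on the first block and $u_{kl}=\iota_2([v_2]_{kl})$ on the second. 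Condition~(1) then holds because the $u_{ij}$ generate $\iota_1(C(H_1))$ and $\iota_2(C(H_2))$, which together generate $A$ (their algebraic span is the algebraic tensor product, dense in the maximal completion). Condition~(2) holds because $u$ and $u^t=v_1^t\oplus v_2^t$ are block-diagonal with invertible blocks, hence invertible.

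The substantive point is condition~(3). By functoriality of the minimal tensor product, each $\iota_i$ induces a~$*$-homomorphism $\iota_i\otimes\iota_i\colon C(H_i)\otimes_{\rm min}C(H_i)\to A\otimes_{\rm min}A$, and composing with $\Delta_{H_i}$ yields $*$-homomorphisms $\tilde\Delta_i:=(\iota_i\otimes\iota_i)\circ\Delta_{H_i}\colon C(H_i)\to A\otimes_{\rm min}A$. I would then check that the ranges of $\tilde\Delta_1$ and $\tilde\Delta_2$ commute: the range of $\tilde\Delta_1$ lies in the canonical copy of $\iota_1(C(H_1))\otimes_{\rm min}\iota_1(C(H_1))$ and that of $\tilde\Delta_2$ in $\iota_2(C(H_2))\otimes_{\rm min}\iota_2(C(H_2))$, and on the corresponding generating elementary tensors the two orders of multiplication in $A\otimes_{\rm min}A$ give the same answer, ultimately because $\iota_1(C(H_1))$ and $\iota_2(C(H_2))$ commute in $A$. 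The universal property of the \emph{maximal} tensor product then yields a~unique $*$-homomorphism $\Delta_\times\colon A\to A\otimes_{\rm min}A$ with $\Delta_\times\circ\iota_1=\tilde\Delta_1$ and $\Delta_\times\circ\iota_2=\tilde\Delta_2$; these two identities are precisely the displayed formulas for $\Delta_\times$, with $\Delta_{H_1}([v_1]_{ij})$ and $\Delta_{H_2}([v_2]_{kl})$ read inside $A\otimes_{\rm min}A$ via $\iota_i\otimes\iota_i$.

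Finally I would verify that $\Delta_\times$ is a~comultiplication for $u$: evaluating on a~generator with $i,j$ in the first block gives $\Delta_\times(u_{ij})=\tilde\Delta_1([v_1]_{ij})=(\iota_1\otimes\iota_1)\bigl(\sum_k[v_1]_{ik}\otimes[v_1]_{kj}\bigr)=\sum_k u_{ik}\otimes u_{kj}$ (sum over $k$ in the first block), using that $v_1$ is a~representation of $H_1$; the same computation works on the second block, and for mixed indices both sides vanish because $u$ is block-diagonal, so $\Delta_\times(u_{ij})=\sum_k u_{ik}\otimes u_{kj}$ for all $i,j$. I expect the only nontrivial step to be the passage from the algebraic tensor product to an honest $*$-homomorphism defined on all of $A$; this is exactly where the \emph{maximal} tensor product is needed, via its universal property for pairs of $*$-homomorphisms with commuting ranges, while everything else is routine bookkeeping with block-diagonal matrices. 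If one wants them, Woronowicz's cancellation/density conditions follow from invertibility of $u$ and $u^t$, coassociativity of $\Delta_\times$ is checked on the $u_{ij}$ and reduces to that of $\Delta_{H_1}$ and $\Delta_{H_2}$, and $A$ really is the maximal version of the resulting quantum group, since $O(H_1\times H_2)=O(H_1)\odot O(H_2)$ and $\otimes_{\rm max}$ realizes the enveloping C*-algebra of an algebraic tensor product.
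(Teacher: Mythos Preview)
Your argument is correct: the direct verification of the three axioms from Section~\ref{secc.qgdef}, together with the use of the universal property of the maximal tensor product to construct $\Delta_\times$ from the pair of commuting-range $*$-homomorphisms $\tilde\Delta_1,\tilde\Delta_2$, is exactly the standard proof. The only point worth flagging is that the paper does not give its own proof of this proposition at all---it simply cites Wang's original article \cite{Wan95tensor} and moves on---so there is nothing to compare your approach against. Your write-up is in fact a faithful reconstruction of Wang's argument, with the one substantive analytic step (extending from the algebraic tensor product to all of $C(H_1)\otimes_{\rm max}C(H_2)$) correctly isolated and justified.
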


The tensor product of quantum groups is a generalization of the group direct product. The free product should also be seen as some free version of the direct product. Note in particular that the free product of compact quantum groups does not generalize the group free product since the freeness occurs in the C*-algebra multiplication not in the quantum group comultiplication. For this reasons, some authors call it rather \emph{dual free product}.

We will focus here mainly on products of quantum groups $H=(C(H),v)$ with cyclic group duals $\hat\Z_k$. Denote by $z$ the generator of the C*-algebra $C^*(\Z_k)$, so the fundamental representation of $H*\hat\Z_k$ or $H\times\hat\Z_k$ is of the form $u:=v\oplus z$. Now, we would like to describe the corresponding representation category. For our purposes, it will be convenient not to restrict only to the intertwiner spaces between tensor products of $u^{\wcol}$ and $u^{\bcol}$, but to keep track of the blocks $v$ and $z$.

Consider $G\subset U^+(F)*\Z_k$ for some $k\in\N_0$ with fundamental representation of the form $u=v\oplus z$, where $z$ is one-dimensional. Denote by $N$ the size of $v$. We define a~monoid $\W_k$ with generators $\qcol,\Qcol,\tcol,\Tcol$ and relations $\tcol\Tcol=\Tcol\tcol=\emptyset$, $\tcol^k=\emptyset$, where $\emptyset$ is the monoid identity. We denote $u^{\qcol}:=v^{\wcol}$, $u^{\Qcol}:=v^{\bcol}$, $u^{\tcol}:=z$, $u^{\Tcol}=z^*$, so $u^{\otimes w}$ is the tensor product of the corresponding representations. We denote by $[w]$ the number of white and black squares in a given word $w$ (which is well defined in contrast with the overall length of $w$).

Then we can associate to $G$ the following category
\begin{align*}
\Cat_G(w_1,w_2):\!&=\Mor(u^{\otimes w_1},u^{\otimes w_2})\\&=\{T\colon(\C^N)^{\otimes [w_1]}\to(\C^N)^{\otimes [w_2]}\mid Tu^{\otimes w_1}=u^{\otimes w_2}T\},
\end{align*}
where $w_1,w_2\in\W_k$.

The axiomatization of a two-coloured representation category extends to this case as follows.

\begin{defn}
Consider a natural number $N\in\N$ and $k\in\N_0$. A~collection $\Cat(w_1,w_2)$, $w_1,w_2\in\W_k$ of vector spaces of linear maps $(\C^N)^{\otimes [w_1]}\to(\C^N)^{\otimes [w_2]}$ is called a~\emph{$\Z_k$-extended representation category} if it satisfies the following
\begin{enumerate}
\item For $T\in\Cat_G(w_1,w_2)$, $T'\in\Cat_G(w_1',w_2')$, we have $T\otimes T'\in\Cat_G(w_1w_1',w_2w_2')$.
\item For $T\in\Cat_G(w_1,w_2)$, $S\in\Cat_G(w_2,w_3)$, we have $ST\in\Cat_G(w_1,w_3)$.
\item For $T\in\Cat_G(w_1,w_2)$, we have $T^*\in\Cat_G(w_2,w_1)$
\item For every word $w\in\W_k$, we have $1_N^{\otimes [w]}\in\Cat_G(w,w)$.
\item There exist vectors $\xi_{\pairpart[qQ]}\in\Cat_G(\emptyset,\qcol\Qcol)$ and $\xi_{\pairpart[Qq]}\in\Cat_G(\emptyset,\Qcol\qcol)$ called the \emph{duality morphisms} such that
\begin{equation}
\label{eq.conjeq}
(\xi_{\pairpart[qQ]}^*\otimes 1_{\C^N})(1_{\C^N}\otimes\xi_{\pairpart[Qq]})=1_{\C^N},\qquad(\xi_{\pairpart[Qq]}^*\otimes 1_{\C^N})(1_{\C^N}\otimes\xi_{\pairpart[qQ]})=1_{\C^N}.
\end{equation}
\end{enumerate}
\end{defn}

In other words, $\Z_k$-extended categories are rigid monoidal $*$-categories with $\W_k$ being the monoid of objects and morphisms realized by linear maps. Note that it is not necessary to assume the existence of duality morphisms for the triangles $\tcol$ since we automatically have $\Cat(\emptyset,\tcol\Tcol)=\Cat(\emptyset,\Tcol\tcol)=\Cat(\emptyset,\emptyset)\owns 1$ being the duality morphism.

We can reformulate many results mentioned above to this $\Z_k$-extended case. In particular, the Tannaka--Krein duality associates a~compact matrix quantum group $G\subset U^+(F)*\hat\Z_k$ to any $\Z_k$-extended representation category.

Also the Frobenius reciprocity holds for $\Z_k$-extended representation categories and the operations on the fixed point spaces can be defined in a~similar way. In particular, the contraction $\Cat(\emptyset,w_1\qcol\Qcol w_2)\to\Cat(\emptyset,w_1w_2)$ or $\Cat(\emptyset,w_1\Qcol\qcol w_2)\to\Cat(\emptyset,w_1w_2)$ is defined the same way as in Sect.~\ref{secc.Frobenius}. On the other hand the contraction $\Cat(\emptyset,w_1\tcol\Tcol w_2)\to\Cat(\emptyset,w_1w_2)$ is simply the identity since already on the level of objects we have $w_1\tcol\Tcol w_2=w_1w_2$. Similarly, the rotation of squares is defined the same way as the rotation of circles in Sect.~\ref{secc.Frobenius} and will be denoted by $R$. The rotation of triangles is then simply the identity. Consequently, we have $R^{[w_2]}\colon\Cat(\emptyset,w_1w_2)\to\Cat(\emptyset,w_2w_1)$.


A way how to model $\Z_2$-extended representation categories using partitions is described in \cite{GW19}. Many new examples of quantum groups were obtained using this approach. In particular, several new product constructions interpolating the free and tensor product.

\section{Glued products}

In \cite{TW17,Gro18,GW19}, the representation categories of glued products of orthogonal easy quantum groups with cyclic groups were studied. Here, we revisit the theory dropping the easiness assumption.

\subsection{Gluing procedure}

\begin{defn}
\label{D.gluedver}
Let $G$ be a compact matrix quantum group with fundamental representation of the form $u=v_1\oplus v_2$. Denote by $\tilde A$ the C*-subalgebra of $C(G)$ generated by elements of the form $[v_1]_{ij}[v_2]_{kl}$. Then $\tilde G:=(\tilde A,v_1\otimes v_2)$ is a compact matrix quantum group called the \emph{glued version} of $G$.
\end{defn}

A particular example of the gluing procedure is the \emph{glued free product} $H_1\tilstar H_2$ defined as the glued version of the free product $H_1*H_2$ and the \emph{glued tensor product} $H_1\tiltimes H_2$ defined as the glued version of the tensor product $H_1\times H_2$. Those two definitions were first formulated in \cite{TW17}.

Again, we will be interested in particular in the case when $u=v\oplus z$, where $z$ is one-dimensional. The glued free product $H\tilstar\hat\Z_k$ is also called the \emph{free $k$-complexification} of $H$ and the glued tensor product $H\tiltimes\hat\Z_k$ is called the \emph{tensor $k$-complexification}. The free complexification was studied already by Banica in \cite{Ban99,Ban08}.

\begin{rem}
The glued version $\tilde G$ of a quantum group $G$ is by definition a quotient of $G$. It may happen that the elements $[v_1]_{ij}[v_2]_{kl}$ already generate the whole C*-algebra $C(G)$, so $C(\tilde G)=C(G)$. In this case, we have that $\tilde G$ is isomorphic to $G$. However, the $\tilde G$ and $G$ are still distinct as compact \emph{matrix} quantum groups since their fundamental representations are different.

The same holds in particular for the glued products and for the complexifications. Considering $G=(C(G),v)$ and $\hat\Z_k=(z,C^*(\Z_k))$, we have that $G\tiltimes\hat\Z_k\simeq G\times\hat\Z_k$ or $G\tilstar\hat\Z_k\simeq G*\hat\Z_k$ if the elements $v_{ij}z$ actually generate the whole algebra $C(G)\otimes_{\rm max}C^*(\Z_k)$, resp.\ $C(G)*_{\C}C^*(\Z_k)$.
\end{rem}

Now, we are going to characterize the representation categories of the glued versions.

\begin{defn}
\label{D.gluew}
Let us fix $k\in\N_0$. Then for any word $w\in\W$ we associate its \emph{glued version} $\tilde w\in\W_k$ mapping $\wcol\mapsto\qcol\tcol$, $\bcol\mapsto\Tcol\Qcol$.
\end{defn}

\begin{prop}
\label{P.repglue}
Consider $G\subset U^+(F)*\hat\Z_k$ with fundamental representation $u=v\oplus z$. Let $\Cat_G$ be the associated $\Z_k$-extended representation category. Let $\tilde G$ be the glued version of $G$. Then
$$\Cat_{\tilde G}(w_1,w_2)=\Cat_G(\tilde w_1,\tilde w_2)$$
for every $w_1,w_2\in\W$. That is $\Cat_{\tilde G}$ is a~full subcategory of $\Cat_G$ given by considering the glued words only. The ideal associated to $\tilde G$ can be described as
$$I_{\tilde G}=\{f\in\C\langle \tilde x_{ij},\tilde x_{ij}^*\rangle\mid f(x_{ij}z,z^*x_{ij}^*)\in I_G\}\simeq I_G\cap\C\langle x_{ij}z,z^*x_{ij}^*\rangle.$$
\end{prop}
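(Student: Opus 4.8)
The plan is to prove the two assertions of Proposition~\ref{P.repglue} in order: first the description of the representation category $\Cat_{\tilde G}$, and then deduce the description of the ideal $I_{\tilde G}$ from it together with Tannaka--Krein duality.

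\textbf{Step 1: The category of the glued version.} By Definition~\ref{D.gluedver}, the fundamental representation of $\tilde G$ is $v_1\otimes v_2 = v\otimes z$, whose white version is $u^{\wcol}=v^{\wcol}\otimes z = u^{\otimes(\qcol\tcol)} = u^{\otimes\tilde{\wcol}}$ and whose black version is $F(\overline{v\otimes z})F^{-1} = (v^{\bcol})\otimes z^* = u^{\otimes(\Tcol\Qcol)} = u^{\otimes\tilde{\bcol}}$ (here one uses that the conjugation matrix for $v\otimes z$ may be taken to be $F$ itself since $z$ is one-dimensional unitary, and $\bar z = z^*$). Therefore, for any word $w\in\W$, the representation $(v\otimes z)^{\otimes w}$ of $\tilde G$ is literally the representation $u^{\otimes\tilde w}$ restricted to $\tilde A = C(\tilde G)\subset C(G)$. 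Since intertwiner spaces do not change when we pass to a quantum subgroup generated by the relevant entries --- more precisely, $T\in\Mor(u^{\otimes\tilde w_1},u^{\otimes\tilde w_2})$ is a relation among the entries of $u$, and it holds in $C(\tilde G)$ iff it holds in $C(G)$ because the relevant entries $[v_1]_{ij}[v_2]_{kl}$ lie in $\tilde A$ --- we get $\Cat_{\tilde G}(w_1,w_2) = \Mor_{\tilde G}((v\otimes z)^{\otimes w_1},(v\otimes z)^{\otimes w_2}) = \Mor_G(u^{\otimes\tilde w_1},u^{\otimes\tilde w_2}) = \Cat_G(\tilde w_1,\tilde w_2)$. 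The one subtlety to check here is that the ambient $F$-matrix used to define $\Cat_{\tilde G}$ as a two-coloured category is consistent with the one coming from $\Cat_G$; this is exactly the remark about $z$ being one-dimensional, so that $\overline{v\otimes z}$ is conjugated by the same $F$.

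\textbf{Step 2: Fullness.} The statement that $\Cat_{\tilde G}$ is a \emph{full} subcategory of $\Cat_G$ (in the sense that its morphism spaces are exactly those of $\Cat_G$ between glued objects) is immediate from Step~1: the glued words $\tilde w$ form a submonoid of $\W_k$, and over that submonoid the morphism spaces agree on the nose. There is nothing further to prove, though it is worth remarking that not every object of $\W_k$ is glued (e.g.\ a single $\qcol$ is not), which is why ``full subcategory'' rather than ``the whole category'' is the right statement.

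\textbf{Step 3: The ideal.} For the ideal description, recall $O(\tilde G) = \C\langle\tilde x_{ij},\tilde x_{ij}^*\rangle/I_{\tilde G}$ where $\tilde x_{ij}\mapsto [v_1]_{ij}[v_2]=x_{ij}z$. The surjection $\C\langle\tilde x_{ij},\tilde x_{ij}^*\rangle\to\tilde A\subset C(G)$, $\tilde x_{ij}\mapsto x_{ij}z$, has kernel exactly $\{f\mid f(x_{ij}z,z^*x_{ij}^*)=0\text{ in }C(G)\} = \{f\mid f(x_{ij}z,z^*x_{ij}^*)\in I_G\}$, where in the last equality I identify $f(x_{ij}z,z^*x_{ij}^*)$ with its preimage polynomial in $\C\langle x_{ij},x_{ij}^*,z,z^*\rangle/I_{\hat\Z_k}$-relations; this gives the first description of $I_{\tilde G}$. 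The isomorphism $I_{\tilde G}\simeq I_G\cap\C\langle x_{ij}z,z^*x_{ij}^*\rangle$ then comes from the substitution map $\tilde x_{ij}\mapsto x_{ij}z$ being injective on the subalgebra it generates at the level of free algebras modulo the cyclic-group relations --- i.e.\ the assignment $\tilde x_{ij}\leftrightarrow x_{ij}z$, $\tilde x_{ij}^*\leftrightarrow z^*x_{ij}^*$ is a bijection between $\C\langle\tilde x_{ij},\tilde x_{ij}^*\rangle$ and the subalgebra $\C\langle x_{ij}z,z^*x_{ij}^*\rangle$ of $O(U^+(F)*\hat\Z_k)$-lift, and it carries $I_{\tilde G}$ onto $I_G\cap\C\langle x_{ij}z,z^*x_{ij}^*\rangle$.

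\textbf{Main obstacle.} The genuinely delicate point is Step~3: one must be careful about what ring the ``intersection'' $I_G\cap\C\langle x_{ij}z,z^*x_{ij}^*\rangle$ is taken in, since $x_{ij}z$ are not free variables but elements of $O(G)$ subject to the $\hat\Z_k$-relations ($z^k=1$, $z$ central with... no, not central --- $z$ commutes with nothing a priori in the free product, but $zz^*=z^*z=1$ and $z^k=1$). One cleanly way to handle this is to work inside $O(U^+(F)*\hat\Z_k)$: the subalgebra generated by $\{x_{ij}z, z^*x_{ij}^*\}$ is canonically isomorphic to the free $*$-algebra $\C\langle\tilde x_{ij},\tilde x_{ij}^*\rangle$ (no relations, since $U^+(F)$ itself has none beyond those making $u, F\bar uF^{-1}$ unitary, and those involve the $x_{ij}$ not in gluable form --- actually one should double-check there is no hidden relation; the safe route is to verify the word-length/$\Z_k$-grading argument: a nontrivial relation in the gluable variables would, upon specialization, give a relation in $U^+(F)$ of a specific graded type, and $O(U^+(F))$ is known to be ``as free as possible''). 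Granting this identification, the intersection is unambiguous and the displayed isomorphism is just the restriction of the quotient map. I would therefore spend most of the write-up making this identification precise, and treat Steps~1--2 as short.
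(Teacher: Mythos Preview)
Your proposal is correct and follows essentially the same route as the paper: both arguments reduce to the observation $\tilde v^{\otimes w}=u^{\otimes\tilde w}$ for the category part, and to the definition of $I_{\tilde G}$ as the kernel of the substitution $\tilde x_{ij}\mapsto v_{ij}z$ for the ideal part. The one difference is emphasis: the paper treats the ``$\simeq$'' as a purely formal identification and does not justify it separately, so the issue you flag as the main obstacle (freeness of the subalgebra generated by $x_{ij}z$, $z^*x_{ij}^*$) is not addressed there at all---you can safely treat it the same way rather than spending most of the write-up on it.
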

\begin{proof}
Denote by $\tilde v=vz$ the fundamental representation of~$\tilde G$. Consider a~word $w$ and its glued version $\tilde w$. Directly from the definitions of $\tilde v$ and $\tilde w$, we have $\tilde v^{\otimes w}=u^{\otimes \tilde w}$. So,
$$\Cat_{\tilde G}(w_1,w_2)=\Mor(\tilde v^{\otimes w_1},\tilde v^{\otimes w_2})=\Mor(u^{\otimes\tilde w_1},u^{\otimes\tilde w_2})=\Cat_G(\tilde w_1,\tilde w_2).$$
For the ideal, we have
\begin{align*}
I_{\tilde G}&=\{f\in\C\langle\tilde x_{ij},\tilde x_{ij}^*\rangle\mid 0=f(\tilde v_{ij},\tilde v_{ij}^*)=f(v_{ij}z,z^*v_{ij}^*)\text{ in $C(\tilde G)\subset C(G)$}\}\\&=\{f\in\C\langle\tilde x_{ij},\tilde x_{ij}^*\rangle\mid f(x_{ij}z,z^*x_{ij}^*)\in I_G\}.\qedhere
\end{align*}

\end{proof}


\subsection{Glued version and projective version}
\label{secc.proj}

As a side remark, let us relate the new notion of \emph{glued version} with already existing notion of \emph{projective version}.

\begin{defn}[\cite{BV10}]
Let $G\subset U^+(F)$ be a compact matrix quantum group. We define its \emph{projective version} as $PG:=(C(PG),u^{\wcol}\otimes u^{\bcol})$, where $C(PG)$ is the C*-subalgebra of $C(G)$ generated by the elements $u_{ij}^{\wcol}u_{kl}^{\bcol}$.
\end{defn}

\begin{prop}
Consider a compact matrix quantum group $G$ with fundamental representation of the form $v_1\oplus v_2$. Denote by $G':=(C(G),v_1^{\wcol}\oplus v_2^{\bcol})\simeq G$. Let $\tilde G$ be the glued version of $G$. Then $\tilde G\simeq PG'$.
\end{prop}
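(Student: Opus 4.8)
The plan is to unwind both definitions and observe that the two constructions produce literally the same C*-subalgebra of $C(G)$ equipped with the same fundamental representation. First I would spell out what $PG'$ is: by definition $G' = (C(G), v_1^{\wcol}\oplus v_2^{\bcol})$, so its fundamental representation has white part $w^{\wcol} := v_1^{\wcol}\oplus v_2^{\bcol}$ and black part $w^{\bcol} = F\bar w^{\wcol} F^{-1}$, where $F$ is the matrix making $w^{\bcol}$ unitary; since $w^{\wcol}$ is block diagonal with blocks $v_1^{\wcol}$ and $v_2^{\bcol}$, the matrix $F$ (and hence $w^{\bcol}$) is block diagonal as well, with $w^{\bcol} = v_1^{\bcol}\oplus v_2^{\wcol}$ for a suitable choice of duality morphisms. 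Then $C(PG')$ is generated by the entries of $w^{\wcol}\otimes w^{\bcol}$, i.e.\ by the products $[w^{\wcol}]_{ab}[w^{\bcol}]_{cd}$; expanding the block structure, these are exactly the products of an entry of $v_1^{\wcol}$ or $v_2^{\bcol}$ with an entry of $v_1^{\bcol}$ or $v_2^{\wcol}$.

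Next I would compare this with $\tilde G$. By Definition~\ref{D.gluedver}, $C(\tilde G)$ is the C*-subalgebra generated by the entries of $v_1\otimes v_2$, i.e.\ by products $[v_1]_{ij}[v_2]_{kl}$, and the fundamental representation is $v_1\otimes v_2$. Here one must be slightly careful: in $G$ the fundamental representation $u = v_1\oplus v_2$ is plain (the colours $\wcol,\bcol$ attached to $v_1,v_2$ in the statement of $G'$ are a relabelling), so $[v_1]_{ij}$, $[v_2]_{kl}$ and their adjoints all appear, and the generating set of $C(\tilde G)$ includes $[v_1]_{ij}[v_2]_{kl}$ together with adjoints and, via the C*-algebra structure and the intertwiner relations, also $[v_1]_{ij}^*[v_2]_{kl}$, etc. The point is that after the relabelling $v_1\rightsquigarrow v_1^{\wcol}$, $v_2\rightsquigarrow v_2^{\bcol}$, the generators $[v_1]_{ij}[v_2]_{kl}$ of $C(\tilde G)$ become precisely the generators $[w^{\wcol}]_{(ik)}[w^{\bcol}]_{(?)}$ of $C(PG')$ — and conversely — so the two subalgebras of $C(G)$ coincide. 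It then remains to match the fundamental representations: $v_1\otimes v_2 = v_1^{\wcol}\otimes v_2^{\bcol}$ is, under the isomorphism $G'\simeq G$, sent to $w^{\wcol}\otimes w^{\bcol}$ restricted to the summands actually present, which is the fundamental representation of $PG'$. Hence the map $O(\tilde G)\to O(PG')$ sending generators to generators is a $*$-isomorphism intertwining the comultiplications, giving $\tilde G\simeq PG'$.

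The main obstacle, and the step deserving the most care, is the bookkeeping of block structure and colour labels: one has to verify that $w^{\bcol} = v_1^{\bcol}\oplus v_2^{\wcol}$ (not some off-diagonal mixture), which hinges on choosing the duality morphisms $\xi_{\pairpart[wb]},\xi_{\pairpart[bw]}$ for $G'$ to be block diagonal — this is legitimate because the duality morphisms for a direct sum of representations can always be taken as the direct sum of duality morphisms for the summands, and by the ``uniqueness'' part of Theorem~\ref{T.TK} the resulting quantum group does not depend on that choice. Once that is pinned down, everything else is a routine identification of generators. I would therefore structure the proof as: (i) fix block-diagonal duality morphisms for $G'$ and deduce $w^{\bcol} = v_1^{\bcol}\oplus v_2^{\wcol}$; (ii) compute the generating set of $C(PG')$ and observe it equals that of $C(\tilde G)$ inside $C(G)$; (iii) check the fundamental representations agree and conclude $\tilde G\simeq PG'$ as compact matrix quantum groups.
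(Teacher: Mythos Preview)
Your overall strategy --- identify $C(\tilde G)$ and $C(PG')$ as the same C*-subalgebra of $C(G)$ --- is the paper's strategy too, but you skip the only non-trivial step and you misstate the conclusion.

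The fundamental representation of $PG'$ is the full tensor product $w^{\wcol}\otimes w^{\bcol}=(v_1^{\wcol}\oplus v_2^{\bcol})\otimes(v_1^{\bcol}\oplus v_2^{\wcol})$, which has \emph{four} blocks $v_1^{\wcol}\otimes v_1^{\bcol}$, $v_1^{\wcol}\otimes v_2^{\wcol}$, $v_2^{\bcol}\otimes v_1^{\bcol}$, $v_2^{\bcol}\otimes v_2^{\wcol}$, whereas $C(\tilde G)$ is generated by the entries of the single block $v_1\otimes v_2=v_1^{\wcol}\otimes v_2^{\wcol}$. So the two generating sets are \emph{not} the same, and the ``relabelling $v_2\rightsquigarrow v_2^{\bcol}$'' does not help: $v_2^{\wcol}$ and $v_2^{\bcol}$ are different elements of $C(G)$, and the relabelling in the definition of $G'$ only changes which matrix is called fundamental, not the elements themselves. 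Your sentence ``via the C*-algebra structure and the intertwiner relations, also $[v_1]_{ij}^*[v_2]_{kl}$, etc.'' is exactly the content of the proof, and it does not follow from generalities: from products $[v_1]_{ij}[v_2]_{kl}$ and their adjoints you only build alternating words like $[v_1][v_2][v_2]^*[v_1]^*\cdots$, never a bare $[v_1]^*[v_2]$. The paper obtains the missing blocks by an explicit contraction with the duality morphisms, e.g.
\[
v_1^{\wcol}\otimes v_1^{\bcol}=(\id\otimes\xi^{(2)*}\otimes\id)\bigl(v_1^{\wcol}\otimes v_2^{\wcol}\otimes v_2^{\bcol}\otimes v_1^{\bcol}\bigr)(\id\otimes\xi^{(2)}\otimes\id)/\norm{\xi^{(2)}}^2,
\]
using that $\xi^{(2)*}(v_2^{\wcol}\otimes v_2^{\bcol})\xi^{(2)}=\norm{\xi^{(2)}}^2$; without this, or an equivalent argument, the inclusion $C(PG')\subset C(\tilde G)$ is unproven.

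Finally, your step (iii) cannot work as stated: the two fundamental representations have different sizes, so they certainly do not ``agree''. What makes the conclusion $\tilde G\simeq PG'$ legitimate is the paper's notion of isomorphism (a $*$-isomorphism of the algebras intertwining the comultiplications), and here the identity on $C(G)$ restricts to such an isomorphism once you know the two subalgebras coincide. So drop step (iii), and replace your handwave in step (ii) by the actual duality-morphism computation.
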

\begin{proof}
By definition, we have that $PG'$ is determined by a C*-subalgebra $C(PG')\subset C(G)$ generated by matrix elements of the fundamental representation of $PG'$, which is of the form $(v_1^{\wcol}\otimes v_1^{\bcol})\oplus(v_1^{\wcol}\otimes v_2^{\wcol})\oplus(v_2^{\bcol}\otimes v_1^{\bcol})\oplus(v_2^{\bcol}\otimes v_2^{\wcol})$. In contrast, $C(\tilde G)\subset C(G)$ is generated only by $v_1^{\wcol}\otimes v_2^{\wcol}$. We need to show that $C(\tilde G)$ and $C(PG')$ coincide as subalgebras of $C(G)$. We can express
\begin{align*}
v_2^{\bcol}\otimes v_1^{\bcol}&=(v_1\otimes v_2)^{\bcol},\\
v_1^{\wcol}\otimes v_1^{\bcol}&=(\id\otimes\xi_{\pairpart[wb]}^{(2)*}\otimes\id)(v_1^{\wcol}\otimes v_2^{\wcol}\otimes v_2^{\bcol}\otimes v_1^{\bcol})(\id\otimes\xi_{\pairpart[wb]}^{(2)}\otimes\id)/\norm{\xi_{\pairpart[wb]}^{(2)}}^2,\\
v_2^{\bcol}\otimes v_2^{\wcol}&=(\id\otimes\xi_{\pairpart[bw]}^{(1)*}\otimes\id)(v_2^{\bcol}\otimes v_1^{\bcol}\otimes v_1^{\wcol}\otimes v_2^{\wcol})(\id\otimes\xi_{\pairpart[bw]}^{(1)}\otimes\id)/\norm{\xi_{\pairpart[bw]}^{(1)}}^2,
\end{align*}
where $\xi_{\pairpart[wb]}=\xi_{\pairpart[wb]}^{(1)}\oplus\xi_{\pairpart[wb]}^{(2)}$ and $\xi_{\pairpart[bw]}=\xi_{\pairpart[bw]}^{(1)}\oplus\xi_{\pairpart[bw]}^{(2)}$ are the duality morphisms corresponding to $v_1\oplus v_2$.
\end{proof}

\begin{rem}
Often it happens that $G$ and $G'$ are identical as compact matrix quantum groups. For instance, in the case of tensor product or free product with $\hat\Z_k$. So, we can write
$$H\tiltimes\hat\Z_k\simeq P(H\times\hat\Z_k),\qquad H\tilstar\hat\Z_k\simeq P(H*\hat\Z_k).$$
\end{rem}

\subsection{Degree of reflection}
\label{secc.degref}

We characterize the notion of the degree of reflection, which was introduced in \cite{TW18} in the categorical language and in \cite{GW19} for general compact matrix quantum groups.

\begin{defn}[\cite{TW18}]
For a word $w\in\W$, we define $c(w)$ to be the number of white circles $\wcol$ in $w$ minus the number of black circles $\bcol$ in $w$.
\end{defn}

Recall that given a quantum group $G=(C(G),u)$, we can construct a quantum subgroup of $G$ -- so called \emph{diagonal subgroup} -- imposing the relation $u_{ij}=0$ for all $i\neq j$. If we, in addition, impose the relation $u_{ii}=u_{jj}$ for all $i$ and $j$, we get a quantum group corresponding to a C*-algebra generated by a single unitary. Therefore, it must be a dual of some cyclic group.

\begin{defn}[\cite{GW19}]
Let $G$ be a quantum group and denote by $\hat\Gamma$ the quantum subgroup of $G$ given by $u_{ij}=0$, $u_{ii}=u_{jj}$ for all $i\neq j$. The order of the cyclic group $\Gamma$ is called the \emph{degree of reflection} of $G$. If the order is infinite, we set the degree of reflection to zero.
\end{defn}


\begin{lem}
\label{L.degref}
Let $G=(C(G),u)$ be a compact matrix quantum group, $k\in\N_0$. The following are equivalent.
\begin{enumerate}\setcounter{enumi}{-1}
\item The number $k$ divides the degree of reflection of $G$.
\item The mapping $u_{ij}\mapsto \delta_{ij} z$ extends to a $*$-homomorphism $\phi\colon C(G)\to C^*(\hat\Z_k)$.
\item For any $w\in\W$, $\Mor(1,u^{\otimes w})\neq\{0\}$ only if $c(w)$ is a multiple of $k$.
\item The ideal $I_G$ is $\Z_k$-homogeneous.
\item We have $G=G\tiltimes\hat\Z_k$.
\end{enumerate}
\end{lem}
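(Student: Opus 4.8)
The natural strategy is to prove a cycle of implications, say $(0)\Rightarrow(1)\Rightarrow(2)\Rightarrow(3)\Rightarrow(4)\Rightarrow(0)$, using the dictionary between quantum subgroups, ideals, and representation categories that was set up in Propositions~\ref{P.subgr} and following, together with the Tannaka--Krein duality (Theorem~\ref{T.TK}). First I would unwind the definition of the degree of reflection: the diagonal subgroup with $u_{ii}=u_{jj}$ is the dual of the cyclic group $\hat\Gamma$ where $\Gamma$ has order equal to the degree of reflection, and a quantum subgroup of $G$ of the form $\hat\Z_k$ with fundamental representation $\delta_{ij}z$ exists precisely when $k$ divides that order (for $k=0$ this is the condition that $\Gamma$ is infinite, i.e.\ the degree of reflection is zero or, per the convention, we must be careful: $k=0$ divides only $0$). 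So $(0)\Leftrightarrow(1)$ is essentially the definition, once one observes that a $*$-homomorphism $C(G)\to C^*(\hat\Z_k)$ sending $u_{ij}\mapsto\delta_{ij}z$ is exactly a surjection onto the Hopf algebra of $\hat\Z_k$ realized as a diagonal quantum subgroup.

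For $(1)\Rightarrow(2)$: given the $*$-homomorphism $\phi\colon C(G)\to C^*(\Z_k)$, apply it to any intertwiner relation. If $\xi\in\Mor(1,u^{\otimes w})$, then applying $\phi$ entrywise to $u^{\otimes w}\xi=\xi$ gives $z^{c(w)}\xi=\xi$ (since $\phi(u^{\otimes w})$ is the scalar matrix $z^{c(w)}$ on the relevant tensor factors, using $\phi(u_{ij})=\delta_{ij}z$ and $\phi(u_{ij}^*)=\delta_{ij}z^*$), which forces $c(w)\equiv 0 \pmod k$ unless $\xi=0$. For $(2)\Rightarrow(3)$: by Corollary~\ref{C.idealgen} the ideal $I_G$ is generated by relations of the form $u^{\otimes w}\xi=\xi$ with $\xi\in\Mor(1,u^{\otimes w})$; each such generator is $\Z_k$-homogeneous of degree $c(w)$ (each entry of $u^{\otimes w}\xi-\xi$ is a sum of monomials all of $\Z_k$-degree $c(w)$, together with the degree-$c(w)$ element $-\xi_{\mathbf i}$ which is nonzero only in the homogeneous component of that degree — and by $(2)$ this is consistent since $\xi\neq 0$ forces $c(w)\equiv 0$, so in fact each generator is homogeneous of degree $0$, hence trivially $I_G$ is $\Z_k$-homogeneous). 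I should be slightly careful here: the cleanest phrasing is that $I_G$ is generated by elements that are each $\Z_k$-homogeneous, and an ideal generated by homogeneous elements is homogeneous.

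For $(3)\Rightarrow(4)$: this is where Theorem~\ref{T.A}(1), or rather its proof, comes in — the ideal $I_{G\tiltimes\hat\Z_k}$ is the $\Z_k$-homogeneous part of $I_G$, so if $I_G$ is already homogeneous then $I_G=I_{G\tiltimes\hat\Z_k}$ and hence $G=G\tiltimes\hat\Z_k$ as compact matrix quantum groups (same matrix, same relations). Alternatively, and more self-containedly, one computes directly from Proposition~\ref{P.repglue} that $\Cat_{G\tiltimes\hat\Z_k}(w_1,w_2)=\Cat_G(\tilde w_1,\tilde w_2)$; when $I_G$ is $\Z_k$-homogeneous, the degree grading on $\Irr G$ from Section~\ref{secc.grading} shows that the only intertwiners between $u^{\otimes \tilde w_1}$ and $u^{\otimes\tilde w_2}$ in $G\times\hat\Z_k$ survive exactly when the degrees match, recovering $\Cat_G$ on the nose. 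Finally $(4)\Rightarrow(0)$ is easy: $G\tiltimes\hat\Z_k$ by construction admits $\hat\Z_k$ as the diagonal quantum subgroup obtained by killing off-diagonal generators and identifying the diagonal ones (the $z$ block), so $k$ divides its degree of reflection, and $G=G\tiltimes\hat\Z_k$ transports this back to $G$.

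\textbf{Main obstacle.} The delicate points are all bookkeeping around the edge case $k=0$ (where "$k$ divides" means "equals $0$", and $\hat\Z_0=\T$), and making the implication $(3)\Rightarrow(4)$ genuinely rigorous without circularly invoking Theorem~\ref{T.A} if that theorem's proof in turn relies on this lemma. The safe route is to prove $(3)\Rightarrow(4)$ directly via the representation-category description in Proposition~\ref{P.repglue}: show that for $\Z_k$-homogeneous $I_G$, the glued words $\tilde w_1,\tilde w_2$ produce exactly the same intertwiner spaces as the original words $w_1,w_2$ for $G$ itself, using that $c(\tilde w)$ tracks the net power of $z$ and that homogeneity of $I_G$ means the $z$-grading is already "built in" to $G$. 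I expect that verifying this identification of categories — i.e.\ that no new morphisms appear and none are lost — is the technical heart of the argument.
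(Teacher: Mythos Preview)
Your cycle $(0)\Rightarrow(1)\Rightarrow(2)\Rightarrow(3)\Rightarrow(4)\Rightarrow(0)$ and the arguments for $(0)\Leftrightarrow(1)$, $(1)\Rightarrow(2)$, and $(2)\Rightarrow(3)$ match the paper's proof essentially verbatim. Your worry about circularity is well-founded: Theorem~\ref{T.tenscomp2} does indeed come after this lemma and uses it, so you cannot invoke it here.

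The one place you diverge from the paper is $(3)\Rightarrow(4)$, and there you are making your life harder than necessary. The paper bypasses representation categories entirely and gives a two-line algebraic argument: show directly that $u_{ij}\mapsto u_{ij}z$ extends to a $*$-isomorphism $C(G)\to C(G\tiltimes\hat\Z_k)$. For existence of the homomorphism, take any $f\in I_G$; by $\Z_k$-homogeneity of $I_G$ you may assume $f$ is homogeneous of some degree $l$, and then (using that $v_{ij}$ commutes with $z$ in the tensor product) $f(u_{ij}z)=f(u_{ij})z^l=0$. Surjectivity is immediate from the definition of $C(G\tiltimes\hat\Z_k)$. Injectivity follows because the projection $C(G)\otimes_{\rm max}C^*(\Z_k)\to C(G)$ onto the first tensor factor restricts to an inverse. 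Your proposed route through Proposition~\ref{P.repglue} would also work, but it requires first computing $\Cat_{G\times\hat\Z_k}$ and then checking that gluing recovers $\Cat_G$ exactly---more moving parts than the direct computation. Similarly, for closing the cycle the paper does $(4)\Rightarrow(1)$ via the slick formula $\phi:=(\epsilon\otimes\id)\circ\alpha$ (with $\alpha$ the isomorphism $C(G)\to C(G\tiltimes\hat\Z_k)\subset C(G)\otimes_{\rm max}C^*(\Z_k)$ from (4)), which immediately gives $\phi(u_{ij})=\epsilon(u_{ij})z=\delta_{ij}z$; your $(4)\Rightarrow(0)$ is the same idea in different clothing.
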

\begin{proof}
$(0)\Leftrightarrow(1)$: If $k_0$ is the degree of reflection of $G$, then directly by the definition there is a~$*$\hbox{-}homomorphism $\phi\colon C(G)\to C^*(\hat\Z_{k_0})$. Such an homomorphism obviously exists also if $k$ is a divisor of $k_0$. By definition, $\hat\Z_{k_0}$ is the largest group with this property, so $k$ must be a~divisor of $k_0$.

$(1)\Rightarrow (2)$: Take any $\xi\in\Mor(1,u^{\otimes w})$, so $u^{\otimes w}\xi=\xi$. Applying the homomorphism~$\phi$, we get $z^{c(w)}\xi=\xi$. If $\xi\neq 0$, we must have $z^{c(w)}=1$, so $c(w)$ is a~multiple of~$k$.

$(2)\Rightarrow (3)$: By Corollary~\ref{C.idealgen}, $I_G$~is generated by the relations $u^{\otimes w}\xi=\xi$, $\xi\in\Mor(1,u^{\otimes w})$. Since the entries of $u^{\otimes w}$ are monomials of degree $c(w)$, the relations $u^{\otimes w}\xi=\xi$ are $\Z_{c(w)}$-homo\-ge\-neous (of degree zero). Consequently, they are also $\Z_k$-homogeneous and hence generate a~$\Z_k$-homogeneous ideal.

$(3)\Rightarrow(4)$: We need to show that $u_{ij}\mapsto u_{ij}z$ extends to a~$*$\hbox{-}isomorphism $C(G)\to C(G\tiltimes\hat\Z_k)$. To prove that it extends to a~homomorphism, take any $f\in I_G$. Suppose $f$ is $\Z_k$-homogeneous of degree~$l$. Then, since $u_{ij}$ and $z$ commute, we have $f(u_{ij}z)=f(u_{ij})z^l=0$. It is surjective directly from definition. For injectivity, note that the projection to the first tensor component $C(G)\otimes_{\rm max}C^*(\Z_k)\to C(G)$ restricts to the inverse of~$\alpha$.

$(4)\Rightarrow (1)$: We define $\phi:=(\epsilon\otimes\id)\alpha$, where $\epsilon$ is the counit of~$G$ and $\alpha$~is the $*$\hbox{-}homomorphism $C(G)\to C(G)\otimes C^*(\hat\Z_k)$. Then indeed $\phi(u_{ij})=\epsilon(u_{ij})z=\delta_{ij}z$.
\end{proof}

As a~consequence, we have the following four equivalent characterizations of the degree of reflection.

\begin{prop}
\label{P.degref}
Let $G$ be a compact matrix quantum group, $k\in\N_0$. The following are equivalent.
\begin{enumerate}
\item The number $k$ is the degree of reflection of $G$.
\item We have $\{c(w_2)-c(w_1)\mid \Cat_G(w_1,w_2)\neq\{0\}\}=k\Z$.
\item The number $k$ is the largest such that $I_G$ is $\Z_k$-homogeneous.
\item The number $k$ is the largest such that $G=G\tiltimes\hat\Z_k$.
\end{enumerate}
In items (3) and (4), we consider zero to be larger than every natural number (equivalently, consider the order defined by ``is a multiple of'').
\end{prop}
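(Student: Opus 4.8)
The plan is to reduce everything to Lemma~\ref{L.degref}, which already establishes the ``divisibility'' form of each of the relevant properties. Write $k_0\in\N_0$ for the degree of reflection of $G$. By Lemma~\ref{L.degref}, for every $l\in\N_0$ the property ``$I_G$ is $\Z_l$-homogeneous'' (item~(3) there) and the property ``$G=G\tiltimes\hat\Z_l$'' (item~(4) there) each hold if and only if $l$ divides $k_0$, where (as in the statement) $0$ is divisible by every element of $\N_0$ and divides only~$0$. Hence in both cases the set of admissible $l$ is exactly the set of divisors of $k_0$, whose maximum in the divisibility order is $k_0$ itself. Therefore conditions~(3) and~(4) of the proposition each assert precisely that $k=k_0$, which is condition~(1); this gives $(1)\Leftrightarrow(3)$ and $(1)\Leftrightarrow(4)$.

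For $(1)\Leftrightarrow(2)$ I would consider the set
$$D:=\{c(w_2)-c(w_1)\mid\Cat_G(w_1,w_2)\neq\{0\}\}\subset\Z$$
and first show it is a subgroup: $0\in D$ by taking $w_1=w_2=\emptyset$ and the identity morphism; $D=-D$ because $T\in\Cat_G(w_1,w_2)$ forces $T^*\in\Cat_G(w_2,w_1)$ and $c(w_1)-c(w_2)=-(c(w_2)-c(w_1))$; and $D+D\subset D$ because $T\otimes T'\in\Cat_G(w_1w_1',w_2w_2')$ while $c$ is additive under concatenation of words. Thus $D=k'\Z$ for a unique $k'\in\N_0$. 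Next, by Frobenius reciprocity (Section~\ref{secc.Frobenius}) we have $\Cat_G(w_1,w_2)\cong\Cat_G(\emptyset,w_1w_2^*)=\Mor(1,u^{\otimes w_1w_2^*})$ with $c(w_1w_2^*)=c(w_1)-c(w_2)$; since every word arises as $w_1w_2^*$ and $D=-D$, it follows that $D=\{c(w)\mid\Mor(1,u^{\otimes w})\neq\{0\}\}$. Now the equivalence $(0)\Leftrightarrow(2)$ of Lemma~\ref{L.degref} says: $l$ divides $k_0$ if and only if $\Mor(1,u^{\otimes w})\neq\{0\}$ implies $l\mid c(w)$ for all $w$, i.e.\ if and only if $D\subset l\Z$, i.e.\ if and only if $l\mid k'$. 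Since this holds for every $l\in\N_0$, the numbers $k_0$ and $k'$ have the same set of divisors, hence $k_0=k'$ and $D=k_0\Z$. Because a subgroup of $\Z$ has a unique nonnegative generator, condition~(2) (``$D=k\Z$'') is equivalent to $k=k_0$, hence to condition~(1).

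I do not expect a genuine obstacle: the mathematical content is carried entirely by Lemma~\ref{L.degref} together with the elementary observation that $D$ is a subgroup of $\Z$. The only points needing care are bookkeeping-level, namely verifying that ``the largest $k$'' in items~(3) and~(4) is well defined (it is, since the set of valid $k$ is the downward-closed set of divisors of $k_0$ and has $k_0$ as a divisibility-maximum) and keeping the convention ``$0$ is larger than every natural number'' consistent with the degree of reflection being $0$ when $\Gamma$ is infinite and with $\hat\Z_0=\T$ in~(4).
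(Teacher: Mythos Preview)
Your proposal is correct and follows essentially the same approach as the paper: both arguments reduce everything to Lemma~\ref{L.degref} by taking the maximal~$k$ in the divisibility order, and both verify that the set $D$ in~(2) is a subgroup of~$\Z$ using closure of $\Cat_G$ under tensor product and involution. You spell out a few steps the paper leaves implicit (the Frobenius reciprocity identification of $D$ with $\{c(w)\mid\Mor(1,u^{\otimes w})\neq\{0\}\}$, and the ``same divisors implies equal'' argument for $k_0=k'$), but the logic is the same.
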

\begin{proof}
We just take the maximal~$k$ (in the above mentioned sense) satisfying the equivalent conditions in Lemma~\ref{L.degref}. For~(2) note that the set $\{c(w_2)-c(w_1)\mid\Cat_G(w_1,w_2)\neq\{0\}\}$ is indeed a~subgroup of~$\Z$. The fact that $\{c(w_2)-c(w_1)\}$ is closed under addition follows from $\Cat_G$ being closed under the tensor product. The fact that $\{c(w_2)-c(w_1)\}$ is closed under subtraction follows from $\Cat_G$ being closed under the involution. The statement~(2) in Lemma~\ref{L.degref} can be formulated as $\{c(w_2)-c(w_1)\mid \Cat_G(w_1,w_2)\neq\{0\}\}\subset k\Z$. Taking the maximal~$k$, we gain the equality.
\end{proof}

\begin{prop}
\label{P.degrefOG}
Consider $G\subset O^+(F)$. Then one of the following is true.
\begin{enumerate}
\item The degree of reflection of $G$ is one and $\Cat_G(0,k)\neq\{0\}$ for some odd $k\in\N_0$.
\item The degree of reflection of $G$ is two and $\Cat_G(k,l)=\{0\}$ for every $k+l$ odd.
\end{enumerate}
\end{prop}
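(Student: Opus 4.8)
The plan is to derive the dichotomy from the degree of reflection $k_0$ of $G$, using two facts already available: since $G\subset O^+(F)$ we have $u^{\wcol}=u^{\bcol}$, so by Remark~\ref{R.OGcat} the spaces $\Cat_G(w_1,w_2)$ depend only on the word lengths, written $\Cat_G(k,l)$; and by Proposition~\ref{P.degref} the degree of reflection $k_0$ is characterised by
$$S:=\{\,c(w_2)-c(w_1)\mid \Cat_G(w_1,w_2)\neq\{0\}\,\}=k_0\Z.$$

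The first step is to show $k_0\in\{1,2\}$. Because $u^{\wcol}=u^{\bcol}$, the identity matrix $1_N$ belongs to $\Mor(u^{\bcol},u^{\wcol})=\Cat_G(\bcol,\wcol)$, and since $c(\wcol)-c(\bcol)=2$ this gives $2\in S$. Consequently $k_0$ divides $2$, and $k_0\neq0$ because $0\cdot\Z=\{0\}$ does not contain $2$; hence $k_0=1$ or $k_0=2$. It then remains to translate each value of $k_0$ into the stated property of the spaces $\Cat_G(k,l)$, and for that I would record the elementary parity fact that $c(w)$ and $|w|$ differ by an even number (namely twice the number of black points of $w$), so that $c(w_2)-c(w_1)$ has the same parity as $|w_1|+|w_2|$.

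If $k_0=2$, then $S=2\Z$ contains no odd integer, so $\Cat_G(w_1,w_2)\neq\{0\}$ forces $|w_1|+|w_2|$ even; equivalently $\Cat_G(k,l)=\{0\}$ whenever $k+l$ is odd, which is case~(2). If $k_0=1$, then $S=\Z$, so in particular $1\in S$: there exist words $w_1,w_2$ with $\Cat_G(w_1,w_2)\neq\{0\}$ and $c(w_2)-c(w_1)=1$, whence $m:=|w_1|+|w_2|$ is odd. Applying the Frobenius rotation isomorphism $\Cat_G(w_1,w_2)=\Rrot^{|w_1|}\Cat_G(\emptyset,w_1w_2^*)$ of Section~\ref{secc.Frobenius} (colours being irrelevant here), I conclude $\Cat_G(0,m)\neq\{0\}$ with $m$ odd, which is case~(1). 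Since $k_0$ is a single number, the two cases are mutually exclusive, so exactly one holds.

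I do not anticipate a real obstacle: the argument is essentially bookkeeping with the relation $u^{\wcol}=u^{\bcol}$, word lengths, and the rotation operations established earlier. The only point that needs to be stated with some care is the exclusion of $k_0=0$ --- that the degree of reflection is genuinely $1$ or $2$ rather than merely a divisor of $2$ in the formal ``is a multiple of'' order --- and this is exactly what the membership $2\in S$ secures.
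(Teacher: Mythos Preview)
Your proof is correct and follows essentially the same route as the paper: use Remark~\ref{R.OGcat} to reduce to word lengths, exhibit an element witnessing $2\in S$ to force $k_0\mid 2$, and then read off the two cases from Proposition~\ref{P.degref} via the parity relation between $c(w)$ and $|w|$. The paper uses $\xi_{\pairpart[wb]}\in\Cat_G(\emptyset,\wcol\wcol)$ rather than your $1_N\in\Cat_G(\bcol,\wcol)$ to get $2\in S$, and is terser about the parity/Frobenius step, but the argument is the same.
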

\begin{proof}
Recall from Remark~\ref{R.OGcat} that the intertwiner spaces $\Cat_G(w_1,w_2)$ depend only on the length of the words $w_1$ and~$w_2$ for $G\subset O^+(F)$. This allowed us to introduce the notation $\Cat_G(k,l)$. Now the proposition follows from Proposition~\ref{P.degref}. First of all, the degree of reflection must be a divisor of two (that is, either one or two) since we have $\Cat_G(\emptyset,\wcol\wcol)=\Cat_G(\emptyset,\wcol\bcol)\owns\xi_{\pairpart[wb]}\neq 0$. Then $G$ has degree of reflection one if and only if $\Cat_G(\emptyset,w)=\Cat_G(0,|w|)\neq\{0\}$ for some word $w$ with $c(w)=1$. Such a~word with $c(w)=1$ must be of odd length.
\end{proof}

\subsection{Global colourization}
\label{secc.globcol}

The notion of globally-colourized categories was introduced in \cite{TW18} and studied in more detail in \cite{Gro18}. Here, we reformulate the results in the non-easy case.

\begin{defn}
\label{D.globcol}
A compact matrix quantum group $G=(C(G),u)$ is called \emph{globally colourized} if the following holds in $C(G)$
\begin{equation}
\label{eq.globcol}
u_{ij}u_{kl}^*=u_{kl}^*u_{ij}.
\end{equation}
for all possible indices $i,j,k,l$.
\end{defn}

Assuming $G\subset U^+(F)$, this can be equivalently expressed using the entries of the unitary representations $u^{\wcol}=u$ and $u^{\bcol}=F\bar uF^{-1}$ as
\begin{equation}
\label{eq.globcol2}
u_{ij}^{\wcol}u_{kl}^{\bcol}=u_{ij}^{\bcol}u_{kl}^{\wcol}.
\end{equation}

\begin{prop}
\label{P.globcol}
A compact matrix quantum group $G=(C(G),u)$ is globally colourized if and only if for every $w_1, w_2, w_1', w_2'\in\W$ satisfying $|w_1'|=|w_1|$, $|w_2'|=|w_2|$, $c(w_2')-c(w_1')=c(w_2)-c(w_1)$ we have
$$\Mor(u^{\otimes w_1'},u^{\otimes w_2'})=\Mor(u^{\otimes w_1},u^{\otimes w_2}).$$
\end{prop}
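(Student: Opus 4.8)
The plan is to work with the matrix form of global colourization. Fixing $G\subset U^+(F)$ as in the ambient setup, Equation~\eqref{eq.globcol2} says exactly that the $((i,k),(j,l))$-entries of $u^{\wcol}\otimes u^{\bcol}$ and of $u^{\bcol}\otimes u^{\wcol}$ coincide, so $G$ is globally colourized if and only if $u^{\wcol}\otimes u^{\bcol}=u^{\bcol}\otimes u^{\wcol}$ as matrices over $C(G)$. Granting this, the backward implication is immediate: assuming the stated equality of intertwiner spaces, apply it to $w_1=w_2=\wcol\bcol$, $w_1'=\bcol\wcol$, $w_2'=\wcol\bcol$ (the length conditions and both $c$-differences, which equal $0$, are satisfied); since $1_N^{\otimes 2}\in\Mor(u^{\otimes\wcol\bcol},u^{\otimes\wcol\bcol})$, we get $1_N^{\otimes 2}\in\Mor(u^{\otimes\bcol\wcol},u^{\otimes\wcol\bcol})$, i.e.\ $u^{\bcol}\otimes u^{\wcol}=u^{\wcol}\otimes u^{\bcol}$.

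For the forward direction I would isolate two operations that preserve intertwiner spaces. \emph{Permutation:} applying $u^{\wcol}\otimes u^{\bcol}=u^{\bcol}\otimes u^{\wcol}$ to two adjacent tensor legs shows $u^{\otimes w}=u^{\otimes w'}$ whenever $w'$ arises from $w$ by transposing two neighbouring letters; iterating, $u^{\otimes w}$ depends only on the numbers of white and black letters of $w$, hence $\Mor(u^{\otimes w_1},u^{\otimes w_2})$ depends only on $(|w_1|,c(w_1),|w_2|,c(w_2))$. \emph{Colour shift:} for any representations $p,p'$ of $G$,
$$\Mor(u^{\wcol}\otimes p,u^{\wcol}\otimes p')=\Mor(u^{\bcol}\otimes p,u^{\bcol}\otimes p').$$
Indeed, if $T$ lies in the left-hand space then $1_{\C^N}\otimes T$ intertwines $u^{\bcol}\otimes u^{\wcol}\otimes p$ with $u^{\bcol}\otimes u^{\wcol}\otimes p'$; since $u^{\bcol}\otimes u^{\wcol}=u^{\wcol}\otimes u^{\bcol}$, the map $1_{\C^N}\otimes T$ in fact intertwines $u^{\wcol}\otimes(u^{\bcol}\otimes p)$ with $u^{\wcol}\otimes(u^{\bcol}\otimes p')$; writing this intertwining relation entrywise, each side carries a leading factor $u^{\wcol}_{ab}$, and multiplying on the left by $(u^{\wcol}_{ab})^*$ and summing over $a$ (using $\sum_a(u^{\wcol}_{ab})^*u^{\wcol}_{ab}=1$ from unitarity of $u^{\wcol}$) cancels it and yields $T\in\Mor(u^{\bcol}\otimes p,u^{\bcol}\otimes p')$. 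The reverse inclusion is the same with the colours exchanged, $u^{\bcol}$ being unitary as well and the defining relation being symmetric in $\wcol$ and $\bcol$.

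It remains to check that these two operations connect all quadruples satisfying the hypotheses. By permutations I may replace each word by a sorted reordering, so assume $w_i=\wcol^{p_i}\bcol^{q_i}$ and $w_i'=\wcol^{p_i'}\bcol^{q_i'}$; the hypotheses then read $p_1+q_1=p_1'+q_1'$, $p_2+q_2=p_2'+q_2'$ and $p_2-p_1=p_2'-p_1'$. Taking $p_1\le p_1'$ without loss of generality and $\Delta:=p_1'-p_1=p_2'-p_2\ge 0$, I apply the colour shift $\Delta$ times: one step permutes a black letter to the front of both $w_1$ and $w_2$, uses the displayed equality with $p=u^{\otimes s_1}$, $p'=u^{\otimes s_2}$, and permutes back, turning one $\bcol$ into one $\wcol$ in each word; the needed black letters are present throughout, the black counts never dropping below $q_i'+1\ge 1$. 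After $\Delta$ steps $w_i$ has become $w_i'$, so the intertwiner spaces agree.

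The crux is the colour-shift lemma. Frobenius reciprocity only yields a canonical \emph{isomorphism} between $\Mor(u^{\wcol}\otimes p,u^{\wcol}\otimes p')$ and $\Mor(u^{\bcol}\otimes p,u^{\bcol}\otimes p')$, while the statement demands that they coincide as subspaces of the same space of linear maps; the device of tensoring with an identity, using that global colourization is an honest matrix equality rather than merely an intertwining condition, and then cancelling the unitary $u^{\wcol}$ is what upgrades the isomorphism to an equality.
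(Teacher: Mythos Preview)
Your proof is correct. The backward implication is handled identically in the paper. For the forward implication, your argument and the paper's diverge.

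The paper first observes (as you do) that $u^{\wcol}\otimes u^{\bcol}=u^{\bcol}\otimes u^{\wcol}$ forces the identity into $\Mor(u^{\otimes w_2},u^{\otimes w_2'})$ whenever $|w_2|=|w_2'|$ and $c(w_2)=c(w_2')$, i.e.\ the representations $u^{\otimes w}$ themselves depend only on $(|w|,c(w))$. It then asserts that, by Frobenius reciprocity, it suffices to establish the equality of intertwiner spaces for $w_1=w_1'=\emptyset$; this reduction is stated without further detail. In contrast, you bypass Frobenius reciprocity altogether and prove an explicit \emph{colour-shift} lemma,
\[
\Mor(u^{\wcol}\otimes p,u^{\wcol}\otimes p')=\Mor(u^{\bcol}\otimes p,u^{\bcol}\otimes p'),
\]
by tensoring with an extra leg, using the matrix equality $u^{\wcol}\otimes u^{\bcol}=u^{\bcol}\otimes u^{\wcol}$, and then cancelling the unitary factor via $\sum_a(u^{\wcol}_{ab})^*u^{\wcol}_{ab}=1$. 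This is genuinely different: it is more elementary (no rotation maps, no duality morphisms) and fully self-contained. What the paper's route buys is brevity and a categorical viewpoint; what your route buys is that every step is a concrete computation with matrices over $C(G)$, and in particular the passage from ``same $c$-difference'' to ``same intertwiner space'' is made completely explicit rather than deferred to Frobenius reciprocity. Your combinatorial endgame (sorting the words and iterating the shift $\Delta$ times, checking that enough black letters remain) is a nice touch that makes the reduction transparent.
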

\begin{proof}
The equality~\eqref{eq.globcol2} can be also expressed as $u^{\wcol}\otimes u^{\bcol}=u^{\bcol}\otimes u^{\wcol}$, so it is equivalent to saying that the identity is an intertwiner between $u^{\wcol}\otimes u^{\bcol}$ and $u^{\bcol}\otimes u^{\wcol}$. From this, the right-left implication follows directly.

For the left-right implication, from Frobenius reciprocity, it is enough to show the equality for $w_1=w_1'=\emptyset$. It is easy to infer that if the identity is in $\Mor(u^{\wcol}\otimes u^{\bcol},\ppen u^{\bcol}\otimes u^{\wcol})$, we must also have the identity in  $\Mor(u^{\bcol}\otimes u^{\wcol},u^{\wcol}\otimes u^{\bcol})$ and hence also in $\Mor(u^{\otimes w_2},u^{\otimes w_2'})$, and $\Mor(u^{\otimes w_2'},u^{\otimes w_2})$, which implies the desired equality.
\end{proof}

Consider $H\subset O_N^+(F)$. It is easy to check that the tensor complexification $H\tiltimes\hat\Z_k$ is a~globally colourized quantum group with degree of reflection~$k$ for every $k\in\N_0$. In the following theorem, we prove the converse for $k=0$.

\begin{thm}
\label{T.tenscomp}
Consider $G\subset U^+(F)$ with $F\bar F=c\,1_N$, $c\in\R$. Then $G$ is globally colourized with zero degree of reflection if and only if $G=H\tiltimes\hat\Z$, where $H=G\cap O^+(F)$.
\end{thm}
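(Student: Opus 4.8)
The plan is to prove both implications by combining the categorical characterization of global colourization (Proposition~\ref{P.globcol}) with the description of the tensor complexification's representation category that will appear as Theorem~\ref{T.A}(2) (equivalently, one can argue directly). Throughout, write $H = G \cap O^+(F)$, so that by Remark~\ref{R.OGcat} the intertwiner spaces $\Cat_H(w_1,w_2)$ depend only on the lengths $|w_1|,|w_2|$, and by Proposition~\ref{P.degrefOG} the degree of reflection of $H$ is one or two. The key bookkeeping observation is that $H\tiltimes\hat\Z$ has fundamental representation $u = vz$ where $v$ is the fundamental representation of $H$ and $z$ generates $C^*(\Z) = C(\T)$; its category, by the glued-version computation (Proposition~\ref{P.repglue}) together with the product category of $H\times\hat\Z$, consists of the intertwiners $\Cat_H(\tilde w_1,\tilde w_2)$ read through the glued words, which after unwinding gives $\Mor(u^{\otimes w_1}, u^{\otimes w_2}) = \Cat_H(|w_1|,|w_2|)$ whenever $c(w_2) = c(w_1)$ (the $z$-powers must match since $z$ generates a copy of $\hat\Z$ with no relations), and $\{0\}$ otherwise.

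For the ``if'' direction, suppose $G = H\tiltimes\hat\Z$. The degree of reflection is zero: by the category description above, $\Mor(1,u^{\otimes w}) \neq \{0\}$ forces $c(w) = 0$, so by Proposition~\ref{P.degref}(2) the degree of reflection is the largest $k$ with $k\Z \subseteq \{0\}$, i.e.\ $k = 0$. Global colourization follows from Proposition~\ref{P.globcol}: given words with $|w_1'| = |w_1|$, $|w_2'| = |w_2|$ and $c(w_2') - c(w_1') = c(w_2) - c(w_1)$, both $\Mor(u^{\otimes w_1},u^{\otimes w_2})$ and $\Mor(u^{\otimes w_1'},u^{\otimes w_2'})$ are either simultaneously $\{0\}$ (when $c(w_2)\neq c(w_1)$, equivalently $c(w_2')\neq c(w_1')$) or simultaneously equal to $\Cat_H(|w_1|,|w_2|) = \Cat_H(|w_1'|,|w_2'|)$, using that $H \subseteq O^+(F)$ so only lengths matter. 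This gives the equality of intertwiner spaces, hence $G$ is globally colourized.

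For the ``only if'' direction, assume $G \subseteq U^+(F)$ is globally colourized with zero degree of reflection. First I would identify $H := G \cap O^+(F)$ and show $G \subseteq H\tiltimes\hat\Z$ and $G \supseteq H\tiltimes\hat\Z$ via Proposition~\ref{P.subgr}, i.e.\ by comparing categories. For the inclusion $\Cat_{H\tiltimes\hat\Z} \supseteq \Cat_G$ (which gives $G \subseteq H\tiltimes\hat\Z$): by global colourization (Proposition~\ref{P.globcol}) every $\Cat_G(w_1,w_2)$ with $c(w_1) = c(w_2)$ equals $\Cat_G(w_1',w_2')$ for the all-white/black words of the same $c$-balanced type, and since zero degree of reflection forces $\Cat_G(w_1,w_2) = \{0\}$ when $c(w_1)\neq c(w_2)$ (Proposition~\ref{P.degref}(2)), the category $\Cat_G$ is exactly determined by the ``diagonal'' spaces $\Cat_G(k\cdot{\wcol}, k\cdot{\wcol})$ — but these, via rotations, are governed by $\Cat_G(\emptyset, w)$ with $c(w) = 0$, which by global colourization again reduce to $\Cat_G(\emptyset, ({\wcol}{\bcol})^j)$, and one checks these land inside $H$'s category read through glued words. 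For the reverse inclusion $\Cat_G \supseteq \Cat_{H\tiltimes\hat\Z}$ (i.e.\ $G \supseteq H\tiltimes\hat\Z$): note $H = G\cap O^+(F) \subseteq G$, and $\hat\Z \subseteq G$ because the diagonal subgroup of $G$ is $\hat\Z$ (zero degree of reflection); then $H\tiltimes\hat\Z$, being a quotient of $H\times\hat\Z = H * \hat\Z / (\text{commutation})$, is topologically generated by $H$ and $\hat\Z$ inside $G$ once one verifies the relevant commutation relations hold in $C(G)$ — and the commutation of $v_{ij}$ with the central unitary $z$ is precisely where global colourization enters.

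The main obstacle I anticipate is the careful verification that the diagonal subgroup $\hat\Gamma$ of $G$ is genuinely $\hat\Z$ (not just that its order is infinite in the abstract sense of the degree-of-reflection definition) and that it sits inside $G$ compatibly with $H$ so that the glued tensor product $H\tiltimes\hat\Z$ is realized on the nose as a quantum subgroup with the correct fundamental representation $u = vz$; equivalently, the delicate point is matching the $\Z$-grading on $O(G)$ (available since zero degree of reflection makes $I_G$ homogeneous, Lemma~\ref{L.degref}) with the tensor factorization $O(H) \otimes \C[z,z^{-1}]$, which requires showing the degree-$0$ part of $O(G)$ is exactly $O(H)$ — this uses both the homogeneity of the grading and global colourization to strip colours off words, reducing everything to the orthogonal category of $H$. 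Once that structural identification is in place, the equality $G = H\tiltimes\hat\Z$ follows formally.
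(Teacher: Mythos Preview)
Your proposal has a genuine gap, and it stems from a confusion about the direction of the category--subgroup correspondence. By Proposition~\ref{P.subgr}, the inclusion $\Cat_{H\tiltimes\hat\Z}\supseteq\Cat_G$ is equivalent to $H\tiltimes\hat\Z\subseteq G$, \emph{not} to $G\subseteq H\tiltimes\hat\Z$; likewise $\Cat_G\supseteq\Cat_{H\tiltimes\hat\Z}$ is equivalent to $G\subseteq H\tiltimes\hat\Z$. Once the parentheticals are corrected, both of your arguments actually establish the \emph{same} easy inclusion $H\tiltimes\hat\Z\subseteq G$. Your first argument shows $\Cat_G(w_1,w_2)\subseteq\Cat_H(w_1,w_2)=\Cat_{H\tiltimes\hat\Z}(w_1,w_2)$ whenever $c(w_1)=c(w_2)$, which is immediate from $H\subset G$; your second argument shows $\langle H, E\tiltimes\hat\Z\rangle\subseteq G$ via topological generation. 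Neither touches the hard inclusion $G\subseteq H\tiltimes\hat\Z$, which amounts to showing $\Cat_H(|w_1|,|w_2|)\subseteq\Cat_G(w_1,w_2)$ whenever $c(w_1)=c(w_2)$. Since $\Cat_H=\langle\Cat_G,\,1_N\colon\wcol\to\bcol\rangle$, elements of $\Cat_H$ are built from $\Cat_G$-morphisms together with the extra generator $1_N\in\Mor(\wcol,\bcol)$, and there is no obvious reason why such a composite, even when its source and target have matching $c$-values, should already lie in $\Cat_G$. Your final paragraph correctly identifies this as the obstacle but does not resolve it.

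The paper proves the hard inclusion by an entirely different, algebraic route. It constructs the surjection $\alpha\colon C(G)\to C(H\tiltimes\hat\Z)$, $u_{ij}\mapsto v_{ij}z$ (this is the easy direction), and then proves injectivity of $\alpha$ by building an auxiliary $*$-homomorphism $\beta\colon C(H)\otimes_{\rm max}C^*(\Z)\to M_2(C(G))$ sending
\[
z\mapsto\begin{pmatrix}0&y\\1&0\end{pmatrix},\qquad v_{ij}\mapsto\begin{pmatrix}0&u_{ij}^{\wcol}\\u_{ij}^{\bcol}&0\end{pmatrix},\qquad y:=\xi^*(u\otimes u)\xi,
\]
with $\xi$ the normalized duality morphism. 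Global colourization is used to show $y$ is unitary and $u^{\bcol}y=u^{\wcol}=yu^{\bcol}$; the hypothesis $F\bar F=c\,1_N$ is used to verify $v'^{\bcol}=v'$. Then $\beta\circ\alpha$ is the diagonal embedding $C(G)\hookrightarrow M_2(C(G))$, hence $\alpha$ is injective. This $2\times 2$-matrix trick is the missing idea in your sketch; the purely categorical approach you outline does not appear to furnish a substitute for it, and indeed Remark~\ref{R.tenscomp} indicates that even the author could not push the categorical comparison through for general degree of reflection.
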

\begin{proof}
We denote by $u$, $v$,~$z$ the fundamental representations of $G$, $H$, and~$\hat\Z_k$, respectively. The quantum group~$H$ is the quantum subgroup of~$G$ defined by the relation $v^{\wcol}=v^{\bcol}$. As mentioned above, the right-left implication is clear since $v_{ij}$ commute with $z$, so
$$u_{ij}^{\wcol}u_{kl}^{\bcol}=v_{ij}zz^*v_{kl}=z^*v_{ij}v_{kl}z=u_{ij}^{\bcol}u_{kl}^{\wcol}.$$

Now, let us prove the left-right implication. First, we show that there is a~surjective $*$\hbox{-}homomorphism
$$\alpha\colon C(G)\to C(H\tiltimes\hat\Z)\subset C(H)\otimes_{\rm max}C^*(\Z)$$
mapping $u_{ij}\mapsto u_{ij}':=v_{ij}z$. To show this, take any element $f\in I_G$. Since $I_G$ is $\Z$-homogeneous, we can assume that $f$ is also $\Z$-homogeneous of some degree~$l$. Then $f(u_{ij}')=f(v_{ij}z)=f(v_{ij})z^l=0$. This proves the existence of such a~homomorphism. Its surjectivity is obvious.

Now it remains to prove that $\alpha$ is injective and hence is a~$*$\hbox{-}isomorphism. Denote by $\xi\in\Mor(1,u^{\wcol}\otimes u^{\bcol})\subset\C^N\otimes\C^N$ the tensor with entries $\xi_{ij}={1\over\sqrt{\Tr(F^*F)}}F_{ji}$, which is normalized so that $\xi^*\xi=1$. We construct a~$*$\hbox{-}homomorphism
$$\beta\colon C(H)\otimes_{\rm max}C^*(\Z)\to M_2(C(G))$$
mapping
$$z\mapsto z':=\begin{pmatrix}0&y\\ 1&0\end{pmatrix},\quad v_{ij}\mapsto v_{ij}':=\begin{pmatrix}0&u_{ij}^{\wcol}\\ u_{ij}^{\bcol}&0\end{pmatrix},\quad y:=\xi^*(u\otimes u)\xi.$$
To prove the existence of such a~homomorphism, we need the following.

Using the fact that $\xi\xi^*\in\Mor(u^{\wcol}\otimes u^{\bcol},u^{\wcol}\otimes u^{\bcol})=\Mor(u\otimes u,u\otimes u)$ (the equality follows from global colourization thanks to Proposition~\ref{P.globcol}), we derive
$$yy^*=\xi^*(u\otimes u)\xi\xi^*(u^*\otimes u^*)\xi=\xi^*\xi\xi^*(uu^*\otimes uu^*)\xi=1$$
and similarly $y^*y=1$. From this, we can also deduce $z'z'^*=z'^*z'=1$.

Using the fact that $1_N\otimes\xi^*\in\Mor(u^{\wcol}\otimes u^{\wcol}\otimes u^{\bcol},u^{\wcol})=\Mor(u^{\bcol}\otimes u^{\wcol}\otimes u^{\wcol},u^{\wcol})$, we derive
$$u^{\bcol}y=(1_N\otimes\xi^*)(u^{\bcol}\otimes u^{\wcol}\otimes u^{\wcol})(1_N\otimes\xi)=u^{\wcol}$$
and similarly $yu^{\bcol}=u^{\wcol}$. This allows us to see that $v_{ij}'z'=z'v_{ij}'=u_{ij}\,1_2$.

Now, it only remains to show that all relations of the generators~$v_{ij}$ are satisfied by~$v_{ij}'$. For this, note that $I_H$ is generated by the relations $v^{\wcol}=v^{\bcol}$ and the ideal~$I_G$. For the first part, we use the assumption $F\bar F=c\,1_N$ to derive
$$v'^{\bcol}:=(1_2\otimes F)\begin{pmatrix}0&\bar u^{\bcol}\\\bar u^{\wcol}&0\end{pmatrix}(1_2\otimes F^{-1})=\begin{pmatrix}0&F\bar F u\bar F^{-1}F^{-1}\\ F\bar u F^{-1}&0\end{pmatrix}=\begin{pmatrix}0&u^{\wcol}\\ u^{\bcol}&0\end{pmatrix}=v'.$$
For the second part, take any $f\in I_G$. Assume it is $\Z$-homogeneous of degree~$i$. Then we have
$$f(v_{ij}')=f(u_{ij}z'^*)=f(u_{ij})z'^{-i}=0.$$

This concludes the proof of existence of~$\beta$. Now, noticing that $\beta\circ\alpha$ is the embedding of~$C(G)$ into diagonal matrices over~$C(G)$, we see that $\alpha$ must be injective.
\end{proof}

\begin{rem}
\label{R.tenscomp}
We leave the situation for general degree of reflection $k\in\N$ open. Modifying the proof, it is actually easy to show that, for any globally colourized~$G$ with degree of reflection~$k$, we have
$$H\tiltimes\hat\Z_k\subset G\subset H\tiltimes\hat\Z.$$
However, we were unable to prove the inclusion $G\subset H\tiltimes\hat\Z_k$. This problem is actually equivalent to proving a stronger version of Proposition~\ref{P.globcol}: Consider $G$ globally colourized with degree of reflection~$k$. Taking $w_1,w_2\in\W$ with $|w_1|=|w_2|$, does it hold that $\Mor(1,u^{\otimes w_1})=\Mor(1,u^{\otimes w_2})$ whenever $c(w_1)\equiv c(w_2)$ modulo~$k$?
\end{rem}

\subsection{Tensor complexification}
\label{secc.tenscomp}

In this section, we study the tensor complexification with respect to representation categories and algebraic relations.

\begin{thm}
\label{T.tenscomp2}
Consider a compact matrix quantum group $G=(C(G),v)$, $k\in\nobreak\N_0$. Denote by $z$ the generator of $C^*(\Z_k)$ and by $u:=vz$ the fundamental representation of $G\tiltimes\hat\Z_k$. We have the following characterizations of $G\tiltimes\hat\Z_k$.
\begin{enumerate}
\item The ideal $I_{G\tiltimes\hat\Z_k}$ is the $\Z_k$-homogeneous part of $I_G$. That is,
$$I_{G\tiltimes\hat\Z_k}=\{f\in I_G\mid f_i\in I_G \text{ for every $i\in\Z_k$}\}.$$
\item The representation category of $G\tiltimes\hat\Z_k$ looks as follows
$$\Mor(u^{\otimes w_1},u^{\otimes w_2})=\begin{cases}\Mor(v^{\otimes w_1},v^{\otimes w_2})&\text{if }c(w_2)-c(w_1)\text{ is a multiple of }k,\\\{0\}&\text{otherwise.}\end{cases}$$
\item $G\tiltimes\hat\Z_k$ is topologically generated by $G$ and $\hat\Z_k$. More precisely, $G\tiltimes\hat\Z_k=\langle G,E\tiltimes\hat\Z_k\rangle$, where $E$ denotes the trivial group of the appropriate size, so $E\tiltimes\hat\Z_k$ is the quantum group $\hat\Z_k$ with the representation $z\oplus\cdots\oplus z=z\, 1_N$.
\end{enumerate}
\end{thm}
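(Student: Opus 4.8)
The plan is to establish the three items in order, exploiting the fact that they are all reformulations of one another via the dictionary between ideals, representation categories, and topological generation set up in Section~2, together with the description of $I_{\tilde G}$ from Proposition~\ref{P.repglue}.

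\textbf{Step 1 (the ideal).} By Proposition~\ref{P.repglue} applied to $G\times\hat\Z_k$, whose fundamental representation is $v\oplus z$, we have
$$I_{G\tiltimes\hat\Z_k}=\{f\in\C\langle\tilde x_{ij},\tilde x_{ij}^*\rangle\mid f(v_{ij}z,z^*v_{ij}^*)\in I_{G\times\hat\Z_k}\},$$
so the task reduces to computing $I_{G\times\hat\Z_k}\cap\C\langle v_{ij}z,z^*v_{ij}^*\rangle$ inside $C(G)\otimes_{\rm max}C^*(\Z_k)$. Since $z$ is a central unitary with $z^k=1$ and is free of relations otherwise, an element $g(v_{ij})z^m\in C(G)\otimes_{\rm max}C^*(\Z_k)$ vanishes if and only if $g(v_{ij})=0$ in $C(G)$, i.e.\ $g\in I_G$. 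Decomposing a noncommutative polynomial $f$ in the variables $\tilde x_{ij}=x_{ij}z$ into its $\Z_k$-homogeneous components, $f(v_{ij}z,z^*v_{ij}^*)=\sum_{m\in\Z_k}f_m(v_{ij})\,z^m$, and using that the $z^m$ are linearly independent over $C(G)$, we get $f(v_{ij}z,z^*v_{ij}^*)\in I_{G\times\hat\Z_k}$ iff $f_m(v_{ij})=0$ for all $m$, iff every homogeneous component $f_m$ lies in $I_G$. This is exactly the claimed description of $I_{G\tiltimes\hat\Z_k}$ as the $\Z_k$-homogeneous part of $I_G$ (identifying $\tilde x_{ij}$ with $x_{ij}$, which only shifts the overall grading).

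\textbf{Step 2 (the category).} This follows from Step 1 by Tannaka--Krein (Theorem~\ref{T.TK}), but it is cleaner to argue directly. By Proposition~\ref{P.repglue}, $\Cat_{G\tiltimes\hat\Z_k}(w_1,w_2)=\Cat_{G\times\hat\Z_k}(\tilde w_1,\tilde w_2)$ where $\tilde w$ is the glued word; and $u^{\otimes\tilde w}=(v\oplus z)^{\otimes\tilde w}$. An intertwiner of $v\oplus z$ restricted to the relevant graded pieces must intertwine $v^{\otimes w_1}\otimes z^{c(\tilde w_1)}$ with $v^{\otimes w_2}\otimes z^{c(\tilde w_2)}$; since $z^m$ acts as a scalar unitary on the quantum group $\hat\Z_k$ and $v$, $z$ sit in tensor-commuting algebras, such a space is nonzero only when the $z$-degrees match modulo $k$, i.e.\ $c(w_2)\equiv c(w_1)\pmod k$, in which case it equals $\Mor(v^{\otimes w_1},v^{\otimes w_2})$. (Here I use $c(\tilde w)=c(w)$, which is immediate from Definition~\ref{D.gluew}.) Care is needed when $k\neq 0$ because different words $w$ may have the same $\Z_k$-class and one must check the spaces genuinely coincide and are not merely isomorphic; this is handled by the explicit matching $u^{\otimes w}=(vz)^{\otimes w}$ on the nose.

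\textbf{Step 3 (topological generation).} Using the characterization of topological generation, $\langle G, E\tiltimes\hat\Z_k\rangle$ is the quantum group whose category is $\Cat_G\cap\Cat_{E\tiltimes\hat\Z_k}$ computed inside $\Cat_{U^+(F)*\hat\Z_k}$ with fundamental representation $u=vz$ (note that over this ambient group, $v=uz^*$ makes sense, so both $G$ and $E\tiltimes\hat\Z_k$ are naturally quantum subgroups of $G\tiltimes\hat\Z_k$'s ambient universal group with the $u=vz$ presentation). For $G$, viewed with representation $vz$, the morphism spaces are $\Mor(u^{\otimes w_1},u^{\otimes w_2})$ with the $z$-parts automatically cancelling, i.e.\ all of $\Mor(v^{\otimes w_1},v^{\otimes w_2})$ whenever the $z$-powers agree in $C^*(\Z)$... more precisely $G$ as a subgroup of the $k$-complexification ambient group contributes the full $\Mor(v^{\otimes w_1},v^{\otimes w_2})$ modulo the relation that killed nothing, whereas $E\tiltimes\hat\Z_k$ contributes the constraint that $\Cat(w_1,w_2)\neq\{0\}$ forces $c(w_2)\equiv c(w_1)\pmod k$ (its degree of reflection being exactly $k$, by Proposition~\ref{P.degref} applied to $\hat\Z_k$). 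Intersecting these two categories yields precisely the category of item (2), hence by Theorem~\ref{T.TK} the topologically generated quantum group is $G\tiltimes\hat\Z_k$. Alternatively and more robustly: by item~(1), $I_{G\tiltimes\hat\Z_k}$ is the largest $\Z_k$-homogeneous ideal inside $I_G$ (rephrased in the $u=vz$ variables, $I_G$ becomes $I_{G\times E}$-type data and $I_{E\tiltimes\hat\Z_k}$ is generated by off-diagonal and $\Z_k$-reduction relations); one checks $I_G\cap I_{E\tiltimes\hat\Z_k}$ has largest Hopf $*$-ideal equal to $I_{G\tiltimes\hat\Z_k}$, which is the content of the topological-generation criterion.

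\textbf{Main obstacle.} The routine parts are the grading bookkeeping; the genuinely delicate point is Step~3, specifically setting up $G$ and $E\tiltimes\hat\Z_k$ as quantum subgroups of a \emph{common} ambient unitary quantum group with fundamental representation $u$ so that the intersection/topological-generation propositions literally apply. One must identify $G$ inside $U^+(F)*\hat\Z_k$ via $u\mapsto vz$ (so that $G$'s relations become the $\Z_k$-homogeneous consequences together with the tautology $v=uz^*$) and then verify that $\Cat_G$ in these coordinates is exactly $\{T:c(w_2)\equiv c(w_1)\ \text{or no constraint}\}$ with the correct morphism spaces — essentially re-deriving item~(2) for $G$ itself with $k$ replaced by the "no constraint" case $k=0$-like behaviour of the unglued group, and then observing that adjoining the degree-of-reflection-$k$ group $E\tiltimes\hat\Z_k$ precisely imposes the $\bmod\ k$ constraint. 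Once item~(2) is in hand, this last verification is short, so I would prove (1), then (2), then deduce (3) from (2) via Tannaka--Krein and the topological-generation criterion.
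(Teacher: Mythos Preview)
Your approach to (1) and (2) is essentially the paper's: decompose $f(v_{ij}z,z^*v_{ij}^*)=\sum_l f_l(v_{ij})z^l$ and use linear independence of the $z^l$ for (1); for (2) the paper is slightly more direct than you, simply writing $u^{\otimes w}=(vz)^{\otimes w}=v^{\otimes w}z^{c(w)}$ (centrality of $z$ in the tensor product) and reading off the intertwiner spaces, then invoking Lemma~\ref{L.degref} for the vanishing case. Your detour through Proposition~\ref{P.repglue} and the $\Z_k$-extended category is the alternative route the paper relegates to a remark, and it works, but it obscures the one-line computation.

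Where you diverge from the paper is in Step~3, and here you have manufactured a difficulty that does not exist. You worry about embedding $G$ and $E\tiltimes\hat\Z_k$ in a common ambient group ``with fundamental representation $u=vz$'' inside $U^+(F)*\hat\Z_k$. But all three quantum groups $G$, $E\tiltimes\hat\Z_k$, and $G\tiltimes\hat\Z_k$ already have $N\times N$ fundamental representations ($v$, $z\,1_N$, and $vz$ respectively) and hence all sit inside the \emph{same} $U^+(F)$; no passage to $U^+(F)*\hat\Z_k$ is needed. The paper's proof of (3) is then two lines: compute
\[
\Cat_{E\tiltimes\hat\Z_k}(w_1,w_2)=\begin{cases}\text{all linear maps}&\text{if }c(w_2)-c(w_1)\in k\Z,\\\{0\}&\text{otherwise},\end{cases}
\]
and observe that intersecting this with $\Cat_G(w_1,w_2)=\Mor(v^{\otimes w_1},v^{\otimes w_2})$ gives exactly the category in (2). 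Your ``main obstacle'' paragraph, with its talk of $v=uz^*$ and largest Hopf $*$-ideals, is unnecessary once you realise the ambient group is just $U^+(F)$.
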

\begin{proof}
 For~(1), we can express
$$f(u_{ij},u_{ij}^*)=f(v_{ij}z,z^*v_{ij}^*)=\sum_{l\in\Z_k}f_l(v_{ij}z,z^*v_{ij}^*)=\sum_{l\in\Z_k}f_l(v_{ij},v_{ij}^*)z^l,$$
where $f=\sum_lf_l$ is the decomposition of into the homogeneous components $f_l$ of degree $l$. If all $f_l\in I_G$, so $f_l(v_{ij},v_{ij}^*)=0$, we have $f(v_{ij}z,z^*v_{ij}^*)=0$, so $f\in I_{G\tiltimes\hat\Z_k}$. Conversely, if there is some $l\in\Z_k$ such that $f_l\not\in I_G$, then $f(v_{ij}z,z^*v_{ij}^*)\neq 0$ and hence $f\not\in I_{G\tiltimes\hat\Z_k}$.

For~(2), first we prove that $\Mor(1,u^{\otimes w})=\Mor(1,v^{\otimes w})$ if $c(w)\in k\Z$. Indeed, we have $u^{\otimes w}=(vz)^{\otimes w}=z^{c(w)}v^{\otimes w}=v^{\otimes w}$. Secondly, the fact that $\Mor(1,u^{\otimes w})=\{0\}$ if $c(w)\not\in k\Z$ follows from Lemma~\ref{L.degref}.

To prove (3), note that the category corresponding to $G\tiltimes\hat\Z_k$ (given by~(2)) is indeed the intersection of the category $\Cat_G$ and the category $\Cat_{E\tiltimes\hat\Z_k}$, whose morphism spaces are given by
$$\Mor((z\,1_N)^{\otimes w_1},(z\,1_N)^{\otimes w_2})=\begin{cases}\C^N&\text{if $c(w_2)-c(w_1)$ is a~multiple of $k$},\\\{0\}&\text{otherwise}.\end{cases}$$
\end{proof}

\begin{rem}
An alternative proof of the proposition above could go as follows. One can easily see that the $\Z_k$-extended category associated to $G\times\hat\Z_k$ looks as follows
$$\Cat_{G\times\hat\Z_k}(w_1,w_2)=\begin{cases}\Cat_G(w_1',w_2')&\text{if }t(w_2)-t(w_1)\text{ is a multiple of }k,\\\{0\}&\text{otherwise,}\end{cases}$$
where $w_1',w_2'\in\W$ are created from $w_1,w_2\in\W_k$ mapping $\qcol\mapsto\wcol$, $\Qcol\mapsto\bcol$, $\tcol\mapsto\emptyset$, $\Tcol\mapsto\emptyset$ and by~$t(w)$ we mean the number of white triangles~$\tcol$ minus the number of black triangles~$\Tcol$ in~$w$ (which is a~well-defined element of~$\Z_k$). The item~(2) of the proposition then follows from Proposition~\ref{P.repglue}.
\end{rem}

\begin{rem}
\label{R.itertimes}
As a consequence of Theorem \ref{T.tenscomp2}, we have that
$$(G\tiltimes\hat\Z_k)\tiltimes\hat\Z_l=G\tiltimes\hat\Z_{\lcm(k,l)}$$
A direct proof of this statement was formulated already in \cite[Proposition 8.2]{TW17}.
\end{rem}

\begin{lem}
\label{L.tiltimeszk}
Let $G\subset U^+(F)$ be a~quantum group with degree of reflection~$k$. Denote by $v$ its fundamental representation. Consider $l\in\N_0$ and denote by $z$ the generator of $C^*(\Z_l)$. Then $z^k\in C(G\tiltimes\hat\Z_l)$ for every $l$. Consequently, $z^{nk_0}\in C(G\tiltimes\hat\Z_l)$ for every $n\in\N_0$, where $k_0:=\gcd(k,l)$.
\end{lem}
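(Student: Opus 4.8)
The plan is to exhibit $z^k$ explicitly as a matrix element of a representation of $G \tiltimes \hat\Z_l$, or rather of its C*-algebra, using the fact that the degree of reflection of $G$ is $k$. Recall that $C(G\tiltimes\hat\Z_l)$ is the C*-subalgebra of $C(G)\otimes_{\rm max}C^*(\Z_l)$ generated by the elements $u_{ij}=v_{ij}z$. Since $k$ is the degree of reflection of $G$, by Proposition~\ref{P.degref} and Lemma~\ref{L.degref} there is a word $w\in\W$ with $c(w)=k$ and a nonzero $\xi\in\Mor(1,v^{\otimes w})$, i.e.\ $v^{\otimes w}\xi=\xi$. The idea is that the matrix elements of $u^{\otimes w}$ all lie in $C(G\tiltimes\hat\Z_l)$ (they are monomials in the $u_{ij}$ and $u_{ij}^*$), and since the entries of $v^{\otimes w}$ are $\Z$-homogeneous of degree $c(w)=k$, we get $u^{\otimes w}=z^{k}\,v^{\otimes w}$. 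Applying this to $\xi$ gives $u^{\otimes w}\xi=z^k v^{\otimes w}\xi=z^k\xi$.

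So first I would fix such a $w$ and such a $\xi\neq 0$, and pick indices $\mathbf{i}$ so that $\xi_{\mathbf i}\neq 0$. From the relation $u^{\otimes w}\xi=z^k\xi$, reading off the $\mathbf i$-th entry we obtain $\sum_{\mathbf j}[u^{\otimes w}]_{\mathbf{ij}}\,\xi_{\mathbf j}=z^k\,\xi_{\mathbf i}$. The left-hand side is an element of $C(G\tiltimes\hat\Z_l)$ since every $[u^{\otimes w}]_{\mathbf{ij}}$ is a monomial of length $|w|$ in the generators $u_{ab}$ and their adjoints. Dividing by the nonzero scalar $\xi_{\mathbf i}$ shows $z^k\in C(G\tiltimes\hat\Z_l)$. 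For the consequence, observe that the degree of reflection condition also gives, for $k=0$, nothing new (then $z^0=1$ is trivially there and $k_0=\gcd(0,l)=l$... so one should handle the cases carefully), but for $k\in\N$ we have $z^k\in C(G\tiltimes\hat\Z_l)$; meanwhile $z^l=1\in C(G\tiltimes\hat\Z_l)$ trivially as $z$ has order $l$ (for $l\in\N$; for $l=0$, $z$ has infinite order and we only know powers $z^{nk}$). Then $k_0=\gcd(k,l)$ is an integer combination $k_0=ak+bl$, so $z^{k_0}=(z^k)^a(z^l)^b\in C(G\tiltimes\hat\Z_l)$, and hence $z^{nk_0}\in C(G\tiltimes\hat\Z_l)$ for all $n\in\N_0$.

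The main obstacle, and the point needing most care, is the boundary behaviour when $k=0$ or $l=0$. If $k=0$ (the degree of reflection is ``infinite''), then by our convention there is no finite $k$ with a fixed point of nonzero weight, and the statement $z^k=z^0=1\in C(G\tiltimes\hat\Z_l)$ is trivially true but vacuous; the ``consequence'' with $k_0=\gcd(0,l)=l$ then just says $z^{nl}\in C(G\tiltimes\hat\Z_l)$, which holds since $z^l=1$ (or is already in the algebra). If $l=0$, then $C^*(\Z_0)=C^*(\Z)=C(\T)$ and $z$ is a free unitary of infinite order, so we do not have $z^l=1$; here the conclusion is only the weaker $z^{nk}\in C(G\tiltimes\hat\Z)$ for $n\in\N_0$, consistent with $\gcd(k,0)=k$. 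So I would phrase the argument to first establish $z^k\in C(G\tiltimes\hat\Z_l)$ in general (this is the real content, via the fixed-point vector $\xi$), then separately note $z^l\in C(G\tiltimes\hat\Z_l)$ when $l\neq0$, and finally combine via the Bézout identity for $k_0=\gcd(k,l)$, taking the convention $\gcd(k,0)=k$ and $\gcd(0,l)=l$ into account.

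Writing this out:

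\begin{proof}
Let $k$ be the degree of reflection of $G$. If $k=0$ the statement $z^k=1\in C(G\tiltimes\hat\Z_l)$ is trivial, so assume $k\in\N$. By Proposition~\ref{P.degref}, there is a word $w\in\W$ with $c(w)=k$ such that $\Mor(1,v^{\otimes w})\neq\{0\}$; fix a nonzero $\xi\in\Mor(1,v^{\otimes w})$, so that $v^{\otimes w}\xi=\xi$. Write $u:=vz$ for the fundamental representation of $G\tiltimes\hat\Z_l$. Since $v$ and $z$ commute and the entries of $v^{\otimes w}$ are monomials that are $\Z$-homogeneous of degree $c(w)=k$, we have $u^{\otimes w}=(vz)^{\otimes w}=z^{k}v^{\otimes w}$, whence
$$u^{\otimes w}\xi=z^{k}v^{\otimes w}\xi=z^{k}\xi.$$
Choose a multi-index $\mathbf i$ with $\xi_{\mathbf i}\neq0$ and read off the $\mathbf i$-th component of this identity:
$$\sum_{\mathbf j}[u^{\otimes w}]_{\mathbf{ij}}\,\xi_{\mathbf j}=z^{k}\,\xi_{\mathbf i}.$$
Each entry $[u^{\otimes w}]_{\mathbf{ij}}$ is a monomial of length $|w|$ in the generators $u_{ab}$ of $C(G\tiltimes\hat\Z_l)$ and their adjoints, hence lies in $C(G\tiltimes\hat\Z_l)$; dividing by the scalar $\xi_{\mathbf i}$ gives $z^{k}\in C(G\tiltimes\hat\Z_l)$.

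For the consequence, put $k_0:=\gcd(k,l)$ (with the convention $\gcd(k,0)=k$). If $l\in\N$ then $z^{l}=1\in C(G\tiltimes\hat\Z_l)$, and choosing $a,b\in\Z$ with $ak+bl=k_0$ we obtain $z^{k_0}=(z^{k})^{a}(z^{l})^{b}\in C(G\tiltimes\hat\Z_l)$. If $l=0$ then $k_0=k$ and $z^{k_0}=z^{k}\in C(G\tiltimes\hat\Z)$ by the first part. In either case $z^{nk_0}=(z^{k_0})^{n}\in C(G\tiltimes\hat\Z_l)$ for every $n\in\N_0$.
\end{proof}
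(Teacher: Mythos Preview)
Your proof is correct and follows essentially the same approach as the paper's: both use Proposition~\ref{P.degref} to find a nonzero $\xi\in\Mor(1,v^{\otimes w})$ with $c(w)=k$, then exploit $(vz)^{\otimes w}=v^{\otimes w}z^k$ to extract $z^k$. The only cosmetic differences are that the paper normalizes $\xi$ and writes $z^k=\xi^*(vz)^{\otimes w}\xi$ directly rather than picking a nonzero coordinate, and for the consequence it simply notes $\{z^{nk}\}_{n\in\N_0}=\{z^{nk_0}\}_{n\in\N_0}$ in $C^*(\Z_l)$ instead of invoking B\'ezout.
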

\begin{proof}
From Proposition~\ref{P.degref}, we can find a~vector $\xi\in\Mor(1,v^{\otimes w})$ with $c(w)=k$ and $\norm{\xi}=1$. Recall that $C(G\tiltimes\hat\Z_l)$ is generated by the elements $v_{ij}z$ and that $v_{ij}$ commute with $z$, so
$$C(G\tiltimes\hat\Z_l)\owns \xi^*(vz)^{\otimes w}\xi=\xi^*v^{\otimes w}\xi\,z^{c(w)}=z^k.$$
Consequently, $z^{nk}\in C(G\tiltimes\hat\Z_k)$ for every $n$ and obviously $\{z^{nk}\}_{n\in\N_0}=\{z^{nk_0}\}_{n\in\N_0}$.
\end{proof}

\begin{prop}
\label{P.tiltimesiso}
Let $G\subset U^+(F)$ be a~quantum group with degree of reflection~$k$. Consider a~number $l\in\N_0$. Then $G\tiltimes\Z_l\simeq G\times\Z_l$ if and only if $k$ is coprime with~$l$.
\end{prop}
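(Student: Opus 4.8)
The plan is to prove the two implications separately. The implication ``$\gcd(k,l)=1\Rightarrow G\tiltimes\hat\Z_l\simeq G\times\hat\Z_l$'' will follow immediately from Lemma~\ref{L.tiltimeszk}; the converse will require a grading argument to exhibit a strict inclusion of C*-algebras, followed by a comparison of the two representation categories to turn strictness into genuine non-isomorphism.

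First I would treat the ``if'' direction. Assume $\gcd(k,l)=1$, so that $k_0:=\gcd(k,l)=1$. By Lemma~\ref{L.tiltimeszk} (with $n=1$) we get $z=z^{k_0}\in C(G\tiltimes\hat\Z_l)$, and since $z$ is unitary also $v_{ij}=(v_{ij}z)z^*\in C(G\tiltimes\hat\Z_l)$ for all $i,j$. Hence $C(G\tiltimes\hat\Z_l)$ contains all the $v_{ij}$ together with $z$ and therefore equals $C(G)\otimes_{\rm max}C^*(\Z_l)=C(G\times\hat\Z_l)$; the identity is then the required isomorphism of quantum groups.

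For the ``only if'' direction I would argue the contrapositive: assume $d:=\gcd(k,l)>1$ and show $G\tiltimes\hat\Z_l\not\simeq G\times\hat\Z_l$. The first step is to locate $O(G\tiltimes\hat\Z_l)$ inside $O(G)\otimes\C[\Z_l]$ using a bigrading. Since $G$ has degree of reflection $k$, the ideal $I_G$ is $\Z_k$-homogeneous (Lemma~\ref{L.degref}), so $O(G)=\bigoplus_{a\in\Z_k}O(G)_a$ with $v_{ij}\in O(G)_1$; together with the tautological $\Z_l$-grading of $\C[\Z_l]$ this turns $O(G)\otimes\C[\Z_l]$ into a $(\Z_k\times\Z_l)$-graded $*$-algebra in which $v_{ij}$ has bidegree $(1,0)$ and $z$ has bidegree $(0,1)$. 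As $O(G\tiltimes\hat\Z_l)$ is the $*$-subalgebra generated by the elements $v_{ij}z$, all of bidegree $(1,1)$, it is contained in the direct sum of those homogeneous components whose bidegree lies in the subgroup $\langle(1,1)\rangle\le\Z_k\times\Z_l$. But $(0,1)\in\langle(1,1)\rangle$ would require an $n\in\Z$ with $n\equiv0\pmod k$ and $n\equiv1\pmod l$, which by the Chinese remainder theorem forces $d\mid1$; so $z\notin O(G\tiltimes\hat\Z_l)$. Since $z\in C(G\tiltimes\hat\Z_l)$ would, exactly as in the ``if'' direction, force $C(G\tiltimes\hat\Z_l)=C(G\times\hat\Z_l)$ and hence $z\in O(G\tiltimes\hat\Z_l)$, we conclude $z\notin C(G\tiltimes\hat\Z_l)$, i.e.\ the inclusion $C(G\tiltimes\hat\Z_l)\subsetneq C(G\times\hat\Z_l)$ is proper.

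Finally I would upgrade the proper inclusion to non-isomorphism. Combining Theorem~\ref{T.tenscomp2} (which describes the representation category, whence the irreducibles) with the fact that, viewed inside $C(G\times\hat\Z_l)$, the representation $u^{\otimes w}$ of $G\tiltimes\hat\Z_l$ is $v^{\otimes w}$ tensored with $z^{c(w)}$, one computes
$$\Irr(G\tiltimes\hat\Z_l)=\{(\alpha,j)\mid\alpha\in\Irr(G),\ j\in\Z_l,\ d_\alpha\equiv j\pmod d\},$$
which is a proper sub-fusion ring of $\Irr(G\times\hat\Z_l)=\Irr(G)\times\Z_l$ (for instance $z$ itself is not among them when $d>1$). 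Since an isomorphism of compact quantum groups induces an isomorphism of fusion rings, and in particular of their groups of one-dimensional representations, it suffices to check that these two fusion rings are not isomorphic. When $G$ has finitely many irreducible representations this is immediate by counting: $|\Irr(G\tiltimes\hat\Z_l)|=|\Irr(G)|\,l/d<|\Irr(G)|\,l=|\Irr(G\times\hat\Z_l)|$. In general I would instead compare the group of one-dimensional representations, which for $G\times\hat\Z_l$ is $\Lambda(G)\times\Z_l$ and for $G\tiltimes\hat\Z_l$ is $\{(\chi,j)\mid d_\chi\equiv j\pmod d\}$, or equivalently the universal grading groups of the two representation categories, which are genuinely different for $d>1$. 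The hard part is precisely this last step when $G$ has infinitely many one-dimensional representations, where the cardinality count is inconclusive and one has to argue with the structure of the grading group itself; all the remaining steps are routine.
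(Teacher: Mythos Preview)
Your ``if'' direction is correct and matches the paper verbatim: Lemma~\ref{L.tiltimeszk} puts $z$ into $C(G\tiltimes\hat\Z_l)$, and then the two algebras coincide.

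For the ``only if'' direction the paper takes a different and much shorter route. Instead of staying at level~$l$, it picks any common divisor $d>1$ of $k$ and~$l$ and passes through the surjection $C(G\times\hat\Z_l)\to C(G\times\hat\Z_d)$. This surjection carries $C(G\tiltimes\hat\Z_l)$ onto $C(G\tiltimes\hat\Z_d)$, so from $G\tiltimes\hat\Z_l\simeq G\times\hat\Z_l$ one deduces $G\tiltimes\hat\Z_d\simeq G\times\hat\Z_d$. But $d\mid k$, so Lemma~\ref{L.degref} gives $G\tiltimes\hat\Z_d=G$, and the paper simply records the contradiction $G\simeq G\times\hat\Z_d$ for $d>1$. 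Your $(\Z_k\times\Z_l)$-bigrading argument is correct and, specialised to $d$ in place of $l$ (where it becomes trivially the observation that the subalgebra generated by the $v_{ij}z$ sits in the diagonal of $\Z_k\times\Z_d$), it is exactly what makes that last contradiction visible; so you already possess the essential ingredient.

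The genuine gap in your write-up is the final paragraph. Once your bigrading shows the inclusion $C(G\tiltimes\hat\Z_l)\subsetneq C(G\times\hat\Z_l)$ is strict, you are done in the sense in which the paper proves and uses the result (cf.\ the Remark preceding Definition~\ref{D.gluedver} and the shape of both halves of the paper's proof). The attempt to upgrade strictness to ``no abstract isomorphism of compact quantum groups'' via fusion rings is unnecessary for the paper's purposes, and, as you yourself acknowledge, it is not complete: when $G$ has infinitely many one-dimensional representations your cardinality and grading-group comparisons do not obviously distinguish $\Lambda(G)\times\Z_l$ from its index-$d$ subgroup. If one insists on the strong reading of $\simeq$, note that the paper's reduction step is no better justified (an abstract isomorphism need not be compatible with the quotient $\hat\Z_l\to\hat\Z_d$), so this is not a difficulty the paper actually resolves either.
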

\begin{proof}
Assume we have $G\tiltimes\hat\Z_l\simeq G\times\hat\Z_l$. Suppose $d$ is a~divisor of both $k$ and~$l$. Then we must have also $G\tiltimes\hat\Z_d\simeq G\times\hat\Z_d$. But from Lemma~\ref{L.degref}, we have that $G\tiltimes\hat\Z_d=G$, which is a~contradiction unless $d=1$.

For the converse, denote by~$v$ the fundamental representation of~$G$ and by~$z$ the generator of~$C^*(\Z_l)$. It is enough to show that we have $z\in C(G\tiltimes\hat\Z_l)\subset C(G)\otimes_{\rm max}C^*(\Z_l)$ since this already implies the equality of the C*-algebras. This follows directly from Lemma~\ref{L.tiltimeszk}.
\end{proof}

\begin{rem}
If $l$ is not coprime with~$k$, but $l_0:=l/\gcd(k,l)$ is coprime with~$k$, we can use Remark~\ref{R.itertimes}, Lemma~\ref{L.degref}, and Proposition~\ref{P.tiltimesiso} to obtain
$$G\tiltimes\Z_l=(G\tiltimes\Z_{\gcd(k,l)})\tiltimes\Z_{l_0}=G\tiltimes\Z_{l_0}\simeq G\times\Z_{l_0}.$$
\end{rem}

Finally, we are going to characterize irreducible representations of the tensor complexification. Note that the irreducibles of the standard tensor product $G\times\hat\Z_l$ (or $G\times H$ in general) was obtained already by Wang in~\cite{Wan95tensor}.

\begin{prop}
\label{P.tiltimesirrep}
Let $G\subset U^+(F)$ be a quantum group with degree of reflection $k$. Consider arbitrary $l\in\N_0$. Then $G\tiltimes\hat\Z_l$ has the following complete set of mutually inequivalent irreducible representations
\begin{equation}
\label{eq.tiltimesirrep}
\{u^\alpha z^{ki+d_\alpha}\mid \alpha\in\Irr H,\;i=0,\dots,l_0-1\},
\end{equation}
where $l_0=l/\gcd(k,l)$ and $z$ is the generator of $C^*(\Z_k)$.
\end{prop}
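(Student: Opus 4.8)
The plan is to exhibit $G\tiltimes\hat\Z_l$ as a quotient of $G\times\hat\Z_l$ and to read off its irreducibles from Wang's classification. Write $v$ for the fundamental representation of $G$ and $z$ for the generator of $C^*(\Z_l)$. Since $G$ has degree of reflection $k$, the algebra $O(G)$ is $\Z_k$-graded by Lemma~\ref{L.degref}, and for every $\alpha\in\Irr G$ we fix a unitary representative $u^\alpha$ whose matrix entries are $\Z_k$-homogeneous of the degree $d_\alpha$ (Section~\ref{secc.grading}). By \cite{Wan95tensor} a complete set of pairwise inequivalent irreducibles of $G\times\hat\Z_l$ is $\{u^\alpha z^j\mid\alpha\in\Irr G,\ j\in\Z_l\}$, where $u^\alpha z^j$ has matrix entries $[u^\alpha]_{mn}z^j$. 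Now $C(G\tiltimes\hat\Z_l)\subset C(G\times\hat\Z_l)$ is the subalgebra generated by the $v_{mn}z$ and the comultiplication restricts to it, so the Remark from the preliminaries (Section~\ref{sec.prelim}) describing $\Irr$ of a quotient gives
$$\Irr(G\tiltimes\hat\Z_l)=\bigl\{\,u^\alpha z^j\ \bigm|\ \alpha\in\Irr G,\ j\in\Z_l,\ [u^\alpha]_{mn}z^j\in C(G\tiltimes\hat\Z_l)\ \forall\, m,n\,\bigr\},$$
and for the representations in this list irreducibility and mutual inequivalence are inherited from $G\times\hat\Z_l$ (these are conditions on scalar intertwining matrices). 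So everything reduces to determining, for each $\alpha\in\Irr G$, the set of exponents $j\in\Z_l$ for which $u^\alpha z^j$ descends to $G\tiltimes\hat\Z_l$.

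For the necessary condition I would argue by grading. If $u^\alpha z^j$ descends then its entries lie in the Hopf $*$-subalgebra $O(G\tiltimes\hat\Z_l)\subset O(G)\otimes\C[\Z_l]$, the latter being $\Z_k\times\Z_l$-graded via the $\Z_k$-grading of $O(G)$ (available by Lemma~\ref{L.degref}) and the canonical grading of $\C[\Z_l]$. Here $O(G\tiltimes\hat\Z_l)=\langle v_{mn}z,\,(v_{mn}z)^*\rangle$ is a graded subalgebra generated by homogeneous elements of degrees $(1,1)$ and $(-1,-1)$, hence its support is contained in the cyclic subgroup $\langle(1,1)\rangle\leq\Z_k\times\Z_l$. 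Since $[u^\alpha]_{mn}z^j$ is homogeneous of degree $(d_\alpha,j)$ and nonzero for a suitable $(m,n)$, membership in $O(G\tiltimes\hat\Z_l)$ forces $(d_\alpha,j)\in\langle(1,1)\rangle$, i.e.\ by the Chinese remainder theorem $j\equiv d_\alpha\pmod{\gcd(k,l)}$, equivalently $j\in d_\alpha+\langle k\rangle$ in $\Z_l$.

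For the converse I would check that each $u^\alpha z^{d_\alpha+ki}$ does descend. Every $\alpha\in\Irr G$ occurs as a subrepresentation of some $v^{\otimes w}$, $w\in\W$; then $c(w)\equiv d_\alpha\pmod k$, say $c(w)=d_\alpha+mk$, and the projection $P\in\Mor(v^{\otimes w},v^{\otimes w})$ onto this copy of $\alpha$ also lies in $\Mor\bigl((vz)^{\otimes w},(vz)^{\otimes w}\bigr)$, since $(vz)^{\otimes w}=v^{\otimes w}z^{c(w)}$ and $z$ is central. Hence $u^\alpha z^{d_\alpha+mk}$ is a subrepresentation of the tensor power $(vz)^{\otimes w}$ of the fundamental representation of $G\tiltimes\hat\Z_l$, so $[u^\alpha]_{mn}z^{d_\alpha+mk}\in C(G\tiltimes\hat\Z_l)$ for all $m,n$. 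By Lemma~\ref{L.tiltimeszk} we have $z^k\in C(G\tiltimes\hat\Z_l)$, hence also $z^{*k}=(z^k)^*$ and $z^{nk}$ for every $n\in\Z$; multiplying, $[u^\alpha]_{mn}z^{d_\alpha+nk}\in C(G\tiltimes\hat\Z_l)$ for all $n\in\Z$. Therefore $u^\alpha z^{d_\alpha+nk}$ is a representation of $G\tiltimes\hat\Z_l$ for every $n\in\Z$, and as $n$ varies the exponent runs exactly through the $l_0=l/\gcd(k,l)$ values $d_\alpha+ki\in\Z_l$, $i=0,\dots,l_0-1$, each attained once: indeed $ki\equiv ki'\pmod l$ with $i,i'\in\{0,\dots,l_0-1\}$ forces $i=i'$, because $\gcd\bigl(l_0,k/\gcd(k,l)\bigr)=1$.

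Combining the two directions with the quotient Remark yields exactly $\Irr(G\tiltimes\hat\Z_l)=\{u^\alpha z^{d_\alpha+ki}\mid\alpha\in\Irr G,\ i=0,\dots,l_0-1\}$, and these are pairwise inequivalent since the pairs $\bigl(\alpha,(d_\alpha+ki)\bmod l\bigr)$ are pairwise distinct. I expect no genuine obstacle here; the only care is needed in the grading/Chinese-remainder bookkeeping of the necessity step and in the degenerate conventions when $l=0$ (so $\Z_l=\Z$, $\langle k\rangle=k\Z$, and ``$i=0,\dots,l_0-1$'' is to be understood as $i$ running over a transversal of $\Z/k\Z$). Everything else is immediate from Wang's theorem, the quotient Remark, and Lemmas~\ref{L.degref} and~\ref{L.tiltimeszk}.
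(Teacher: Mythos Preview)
Your proof is correct and follows essentially the same approach as the paper: both reduce to Wang's classification of $\Irr(G\times\hat\Z_l)$, use the quotient Remark to pick out those irreducibles with entries in $C(G\tiltimes\hat\Z_l)$, and invoke Lemma~\ref{L.tiltimeszk} for the sufficiency direction. The only notable variation is in the necessity step: you argue directly via the $\Z_k\times\Z_l$-grading on $O(G)\otimes\C[\Z_l]$ and the support of the subalgebra generated by degree-$(1,1)$ elements, whereas the paper observes that any irreducible of $G\tiltimes\hat\Z_l$ sits inside some $(vz)^{\otimes w}=v^{\otimes w}z^{c(w)}$ and reads off $d_\alpha\equiv c(w)\pmod k$, $n\equiv c(w)\pmod l$ from there---the two arguments encode the same constraint. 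One small slip: in your $l=0$ remark, the index $i$ should range over all of $\Z$ (since $l_0=0$ means $\Z_{l_0}=\Z$), not merely a transversal of $\Z/k\Z$.
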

\begin{proof}
Since $G$ has degree of reflection~$k$, the ideal $I_G$ is $\Z_k$-homogeneous by Proposition~\ref{P.degref}. This means that the algebra $O(G)$ is $\Z_k$-graded assigning degree one to $v_{ij}$ and degree minus one to $v_{ij}^*$, where $v$ is the fundamental representation of~$G$. Consequently, the entries of any irreducible representation $u^{\alpha}$, $\alpha\in\Irr G$ are $\Z_k$-homogeneous of some degree $d_\alpha$ (recall Sect.~\ref{secc.grading}).

By Theorem~\ref{T.tenscomp2}, $I_{G\tiltimes\hat\Z_l}$ is the $\Z_l$-homogeneous part of $I_G$. Consequently, $I_{G\tiltimes\hat\Z_l}$ is $\Z_{\lcm(k,l)}$-homogeneous and $O(G\tiltimes\hat\Z_l)$ is $\Z_{\lcm(k,l)}$-graded. However, this time the degree is computed with respect to the variables $u_{ij}:=v_{ij}z$.

The irreducible representations of the standard tensor product $G\times\hat\Z_l$ were described by Wang in \cite{Wan95tensor}. Namely, those are exactly all $u^\alpha z^n$ with $\alpha\in\Irr G$, $n=0,\dots,l-1$. We just have to choose those whose matrix entries are elements of $C(G\tiltimes\hat\Z_l)\subset C(G\times\hat\Z_l)$. That is, we need to determine all the pairs $(\alpha,n)$ such that $u^\alpha z^n$ is a~matrix with entries in $C(G\tiltimes\hat\Z_l)\subset C(G\times\hat\Z_l)$.

We first prove that every irreducible of $G\tiltimes\hat\Z_l$ is equivalent to one from Eq.~\eqref{eq.tiltimesirrep}. As we just mentioned, it must be of the form $u^\alpha z^n$ for some $\alpha,n$. Since it is a~representation of $G\tiltimes\hat\Z_l$, it must be a~subrepresentation of $u^{\otimes w}=v^{\otimes w}z^{c(w)}$ for some $w\in\W$. Consequently, $u^\alpha$ is a~subrepresentation of $v^{\otimes w}$, so $d_\alpha\equiv c(w)$ modulo~$k$. In addition, we must also have $n\equiv c(w)$ modulo~$l$. As a~consequence, $n\equiv d_\alpha$ modulo~$k_0:=\gcd(k,l)$. Thus, we must have $n=k_0i+d_\alpha$ for some $i\in\Z$. Obviously, $\{z^{k_0i+d_\alpha}\}_{i\in\Z}=\{z^{ki+d_\alpha}\}_{i=1}^{l_0}$.

For the converse inclusion, we need to show that the entries of $u^\alpha z^{ki+d_\alpha}$ are elements of $C(G\tiltimes\hat\Z_l)$ for every $\alpha,i$. Since $u^\alpha$ is an irreducible representation of~$G$, it must be a~subrepresentation of $v^{\otimes w}$ for some $w\in\W$. Consequently, $u^\alpha z^{c(w)}$ is a~subrepresentation of $u^{\otimes w}=v^{\otimes w}z^{c(w)}$. Hence, it is a~representation of $G\tiltimes\hat\Z_l$. From Lemma~\ref{L.tiltimeszk}, it follows that also $u^\alpha z^{ki+c(w)}$ is a~representation of $G\tiltimes\hat\Z_l$. Since $d_\alpha\equiv c(w)$ modulo~$k$, this is equivalent to considering representations $u^\alpha u^{ki+d_\alpha}$.
\end{proof}

\subsection{Free complexification}
\label{secc.freecomp}

The goal of this section is to characterize the representation categories of the free complexifications, that is, the quantum groups $H\tilstar\hat\Z_l$. For the free complexification, we do not have many results yet even in the easy case. In~\cite{TW17}, the two-coloured categories corresponding to free complexifications of free orthogonal easy quantum groups are provided. For us, the motivating result is \cite[Proposition 4.21]{GW19} linking the free complexification by~$\Z_2$ with the category $\Alt\Cat$ generated by alternating coloured partitions. This proposition was proven with the help of categories of partitions with extra singletons describing the free product with~$\Z_2$ and the functor~$F$ describing the gluing procedure. Also here, we will make use of the $\Z_l$-extended representation categories describing the free product $H*\hat\Z_l$ and then we will glue the factors and apply Proposition~\ref{P.repglue} to find the corresponding representation category. An interesting result is that the free complexification $H\tilstar\hat\Z_l$ actually does not depend on the number $l$ unless the degree of reflection of $H$ equals to one.

\begin{defn}
A monomial of even length of the form $x_{i_1j_1}x^*_{i_2j_2}x_{i_3j_3}x^*_{i_4j_4}\cdots\in\C\langle x_{ij},x^*_{ij}\rangle$, where the variables with and without star alternate, is called \emph{alternating}. A~linear combination of alternating monomials, where either all start with non-star variable or all start with star variable, is called an {\em alternating polynomial}. A~quantum group $G$ is called {\em alternating} if $I_G$ is generated by alternating polynomials.
\end{defn}

Considering a~compact matrix quantum group $G\subset U^+(F)$ with unitary fundamental representation~$u$, recall the notation $u^{\wcol}:=u$, $u^{\bcol}:=F\bar uF^{-1}$. So, the relations of~$G$ can be alternatively expressed by polynomials in variables $u_{ij}^{\wcol}$, $u_{ij}^{\bcol}$ instead of $u_{ij}$ and~$u_{ij}^*$. Since the transformation between those two sets of variables is linear, the definition of an alternating quantum group can be stated in unchanged form also using the alternative ideal.

\begin{lem}
\label{L.Gtilstar0}
Let $H$ be a~compact matrix quantum group, $k,l\in\N_0$ such that $\gcd(k,l)\neq 1$ (using the convention $\gcd(0,k)=k$). Then we have
$$H\tilstar\hat\Z=(H\tiltimes\hat\Z_k)\tilstar\hat\Z_l.$$
\end{lem}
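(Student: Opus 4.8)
The plan is to show both inclusions of quantum subgroups using the characterizations from Proposition~\ref{P.subgr}, working at the level of ideals of algebraic relations. Write $G := H \tiltimes \hat\Z_k$, so that the statement to prove is $G \tilstar \hat\Z_l = H \tilstar \hat\Z$. Note first that $G$ is a quotient of $H$ (it has the same underlying non-matrix quantum group, since $z$ ranges over a cyclic group of order $k$ coprime-or-not to nothing in particular, but in any case $C(G) \subset C(H) \otimes_{\mathrm{max}} C^*(\Z_k)$ with $v_{ij} \mapsto v_{ij}z$). By Lemma~\ref{L.degref}, $G$ has degree of reflection divisible by $k$. Both sides are glued free products, so by Proposition~\ref{P.repglue} their representation categories are the full subcategories of the corresponding $\Z_l$- resp.\ $\Z$-extended categories of $G * \hat\Z_l$ resp.\ $H * \hat\Z$ obtained by restricting to glued words $\tilde w$; equivalently, by Proposition~\ref{P.repglue}, the ideals satisfy $I_{G \tilstar \hat\Z_l} \simeq I_{G * \hat\Z_l} \cap \C\langle x_{ij}z, z^*x_{ij}^*\rangle$ and similarly for the right-hand side. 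So the task reduces to comparing which morphisms (equivalently, which relations among the products $v_{ij}z$) survive.

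The key structural input is that in a glued free product $H' \tilstar \hat\Z_m$ the fundamental representation is $v'z$ where $z$ generates $C^*(\Z_m)$, and a relation among the $v'_{ij}z$ corresponds, after expanding, to an element of $I_{H'}$ that is ``balanced'' by powers of $z$; because $z$ is \emph{free} from $C(H')$, only relations that are genuinely built from alternating words survive (this is exactly the content behind Theorem~\ref{T.freecompid} and the alternating-polynomial description). Concretely: a word in the $v'_{ij}z$ and their adjoints, when we track the $\Z$-grading in $z$, forces the $v'$-part to be alternating between starred and unstarred, \emph{and} forces the total $z$-degree to be counted modulo $m$. Thus $I_{H' \tilstar \hat\Z_m}$ is generated by the alternating polynomials in $I_{H'}$ together with whatever identifications $z^m = 1$ impose — but if all surviving relations are alternating and hence $\Z$-homogeneous of degree $0$ in $z$ after balancing, the value of $m$ is invisible, \emph{unless} $H'$ itself contributes a relation of odd reflection degree (degree of reflection $1$), which is precisely the excluded case. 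Here I would invoke the fact that $G = H \tiltimes \hat\Z_k$ has degree of reflection a multiple of $k$, hence (when $\gcd(k,l) \neq 1$, so in particular $k \neq 1$) not equal to $1$; this is what lets Theorem~\ref{T.freecompid} apply to $G$ and conclude $G \tilstar \hat\Z_l = G \tilstar \hat\Z$.

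So the cleaner route is: (i) by Theorem~\ref{T.freecompid} applied to $G = H \tiltimes \hat\Z_k$ (whose degree of reflection is divisible by $k$, hence $\neq 1$ because $\gcd(k,l)\neq 1$ forces $k \neq 1$ — if $k = 1$ then $\gcd(1,l) = 1$, contradiction; and also $l \neq 1$ for the same reason), all $G \tilstar \hat\Z_l$ for $l \in \N_0 \setminus\{1\}$ coincide with $G \tilstar \hat\Z$; (ii) hence it suffices to prove $G \tilstar \hat\Z = H \tilstar \hat\Z$, i.e.\ $(H \tiltimes \hat\Z_k) \tilstar \hat\Z = H \tilstar \hat\Z$. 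For (ii), I would compare representation categories via Proposition~\ref{P.repglue}: the category of $(H \tiltimes \hat\Z_k) \tilstar \hat\Z$ consists of morphisms between glued words in the representation $vz w$ (with $z$ of order $k$, $w$ generating $C^*(\Z)$), but since the glued word identifies the $z$-power and $w$-power slots symmetrically, and $w$ already has infinite order, the constraint coming from $z$ having order $k$ is swallowed: the morphism spaces on glued words only see $\Cat_H$ together with the alternating-word constraint, identically to $H \tilstar \hat\Z$. Equivalently, on the level of ideals, $I_{H \tilstar \hat\Z}$ is generated by the alternating polynomials in $I_H$ (Theorem~\ref{T.freecompid} with $l = 0$, valid since for the identity $H \tilstar \hat\Z = H \tilstar \hat\Z_0$ we may use the ``$k=1$, $l=0$'' clause or the $\Z$-version directly), and the alternating polynomials in $I_{H \tiltimes \hat\Z_k}$ are the same set, because $I_{H\tiltimes\hat\Z_k}$ is the $\Z_k$-homogeneous part of $I_H$ (Theorem~\ref{T.tenscomp2}(1)) and an alternating polynomial is automatically $\Z$-homogeneous, hence lies in $I_H$ iff it lies in its $\Z_k$-homogeneous part.

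The main obstacle I anticipate is part (ii): making precise that ``gluing a cyclic group $\hat\Z_k$ first and then freely complexifying by $\hat\Z$'' produces exactly the same alternating relations as freely complexifying $H$ directly, i.e.\ checking that the $\Z_k$-homogeneity restriction imposed by the intermediate tensor $k$-complexification does not discard any alternating generator and does not create new ones. This requires a careful bookkeeping argument: an alternating polynomial $f \in I_{H \tilstar \hat\Z}$'s defining relation is $u^{\otimes w}\xi = \xi$ for suitable $w, \xi$ with the alternating pattern forcing $c(w) = 0$ (up to the $\Z$ grading), so $f$ is $\Z$-homogeneous of degree $0$, hence lands in the $\Z_k$-homogeneous part of $I_H$ for free — and conversely the alternating generators of $I_{H \tiltimes \hat\Z_k}$, being alternating, are degree-$0$ homogeneous and so already belong to $I_H$. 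I would present this comparison explicitly using Corollary~\ref{C.idealgen} and the Frobenius-reciprocity rotation operations from Section~\ref{secc.Frobenius}, reducing every relation to the fixed-point form $u^{\otimes w}\xi = \xi$ and tracking the circle count $c(w)$.
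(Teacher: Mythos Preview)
Your proposal has a genuine circularity problem: you invoke Theorem~\ref{T.freecompid} repeatedly (both in step (i) to conclude $G\tilstar\hat\Z_l=G\tilstar\hat\Z$ and in step (ii) for the alternating-polynomial description of $I_{H\tilstar\hat\Z}$), but in the paper Lemma~\ref{L.Gtilstar0} is logically \emph{prior} to Theorem~\ref{T.freecompid} and is in fact used in its proof. Specifically, the proof of Theorem~\ref{T.freecompid} handles the case $l=0$ by writing ``for $k\neq 1$ and $l=0$, we have by Lemma~\ref{L.Gtilstar0} that $H\tilstar\hat\Z=H\tilstar\hat\Z_k$'', and the case $k=1$, $l=0$ again by Lemma~\ref{L.Gtilstar0}. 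Since your argument needs precisely the $l=0$ conclusion of Theorem~\ref{T.freecompid} (you compare everything to $G\tilstar\hat\Z$ and to $H\tilstar\hat\Z$), you are assuming what you are trying to prove.

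The paper's proof avoids this by working directly at the C*-algebra level, with no appeal to the later structure theory. It writes down a surjective $*$-homomorphism
\[
\alpha\colon C(H)*_\C C^*(\Z)\to (C(H)\otimes_{\rm max}C^*(\Z_k))*_\C C^*(\Z_l),\qquad v_{ij}\mapsto v_{ij},\quad z\mapsto sr,
\]
and then proves injectivity by exhibiting an explicit matrix-model homomorphism $\beta$ into $M_d\bigl(C(H)*_\C C^*(\Z)\bigr)$, where $d\neq 1$ is a common divisor of $k$ and $l$; one checks $\beta\circ\alpha$ is injective. The hypothesis $\gcd(k,l)\neq 1$ is used exactly to guarantee such a $d$ exists, and this is where the argument lives --- not in any categorical bookkeeping. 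Your approach, even if the circularity were broken, never explains where the hypothesis $\gcd(k,l)\neq 1$ enters beyond forcing $k,l\neq 1$, whereas the paper's construction makes its role transparent.
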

\begin{proof}
We denote by $v$,~$z$, $r$,~$s$ the fundamental representations of $H$,~$\hat\Z$, $\hat\Z_k$, and~$\hat\Z_l$, respectively. We need to find a~$*$\hbox{-}isomorphism $C(H\tilstar\hat\Z)\to C((H\tiltimes\hat\Z_k)\tilstar\hat\Z_l)$ mapping $v_{ij}z\mapsto v_{ij}sr$.

First, we see that there exists a~$*$\hbox{-}homomorphism
$$\alpha\colon C(H)*_\C C^*(\Z)\to (C(H)\otimes_{\rm max}C^*(\Z_k))*_\C C^*(\Z_l)$$
mapping
$$v_{ij}\mapsto v_{ij},\qquad z\mapsto sr.$$
since $sr$ is a~unitary.

This $*$\hbox{-}homomorphism then restricts to a~surjective $*$\hbox{-}homomorphism of the form we are looking for. It remains to prove that it is injective. To prove this, we construct a~$*$\hbox{-}homo\-morphism
$$\beta\colon (C(H)\otimes_{\rm max}C^*(\Z_k))*_\C C^*(\Z_l)\to M_d\bigl(C(H)*_\C C^*(\Z)\bigr)$$
mapping$$
v_{ij}\mapsto\begin{pmatrix}v_{ij}&&0\\&\ddots&\\0&&v_{ij}\end{pmatrix},\quad
s\mapsto\begin{pmatrix}&&&1\\1&&&\\&\ddots&&\\&&1&\end{pmatrix},\quad
r\mapsto\begin{pmatrix}&z^*&&&\\&&1&&\\&&&\ddots&\\&&&&1\\z&&&&\end{pmatrix},
$$
where $d\neq 1$ is some common divisor of $k$ and $l$.

We can check that the images satisfy all the defining relations between the generators, so such a~homomorphism indeed exists. Now, we can see that $\beta\circ\alpha$ is injective, so $\alpha$~must be injective. Thus, the restriction of~$\alpha$ we are interested in is also injective.
\end{proof}

\begin{prop}
\label{P.starind}
Let $H$ be a~compact matrix quantum group and $l\in\N$. Then the $\Z_l$-ex\-tended category $\Cat_{H*\hat\Z_l}$ is generated by the collection $C(\iota(w_1),\iota(w_2)):=\Cat_H(w_1,w_2)$, where $\iota\colon\W\to\W_l$ is the injective homomorphism mapping $\wcol\mapsto\qcol$, $\bcol\mapsto\Qcol$. Moreover, we have the following inductive description. If $w\in\W_k$ contains no triangles, i.e.\ $w=\iota(w')$ for some $w'\in\W$, then
$$\Cat_{H*\hat\Z_l}(\emptyset,w)=\Cat_H(\emptyset,w').$$
Otherwise,
$$\Cat_{H*\hat\Z_l}(\emptyset,w)=\left\{R^{[w_0]}(\xi_1\otimes\cdots\otimes\xi_l)\Biggm|\begin{matrix}w=w_0\tcol w_1\tcol\cdots\tcol w_l\\\xi_i\in\Cat_{H*\hat\Z_l}(\emptyset,w_i),\;i=1,\dots,l-1\\\xi_l\in\Cat_{H*\hat\Z_l}(\emptyset,w_lw_0)\end{matrix}\right\}.$$
\end{prop}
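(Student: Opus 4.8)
The plan is to use the $\Z_l$-extended Tannaka--Krein duality together with Frobenius reciprocity for $\Z_l$-extended categories, exactly as set up in Section~2. Since $C(H*\hat\Z_l)=C(H)*_\C C^*(\Z_l)$ and the fundamental representation is $u=v\oplus z$, the first task is to pin down all the fixed point spaces $\Cat_{H*\hat\Z_l}(\emptyset,w)$ for $w\in\W_l$; by the Frobenius reciprocity statement (the $\Z_l$-extended analogue of Proposition~\ref{P.Frobenius}) and by $R^{[w_2]}\colon\Cat(\emptyset,w_1w_2)\to\Cat(\emptyset,w_2w_1)$, knowing these spaces determines the whole category, and the claim that $\Cat_H$ (pushed forward along $\iota$) generates $\Cat_{H*\hat\Z_l}$ will follow once the inductive formula is verified.

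\medskip

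\textbf{Step 1: the inclusion $\supseteq$.} First I would check that the right-hand sides of both displayed formulas really land in $\Cat_{H*\hat\Z_l}(\emptyset,w)$. For the triangle-free case this is immediate: if $\xi\in\Cat_H(\emptyset,w')$ then $v^{\otimes w'}\xi=\xi$, and since $u^{\otimes\iota(w')}=v^{\otimes w'}$ (the triangle blocks $z,z^*$ contribute nothing on triangle-free words because $z^{k}=1$ only enters through relations, but more simply $u^{\qcol}=v^{\wcol}$, $u^{\Qcol}=v^{\bcol}$ by definition), we get $\xi\in\Cat_{H*\hat\Z_l}(\emptyset,\iota(w'))$. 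For the general case, I need that a tensor product $\xi_1\otimes\cdots\otimes\xi_l$ with $\xi_i$ a fixed point of $u^{\otimes w_i}$ becomes, after inserting triangles $\tcol$ between the blocks, a fixed point of $u^{\otimes(w_0\tcol w_1\tcol\cdots\tcol w_l)}$; the point is that $z^{\otimes l}$ acting on the $l$ inserted triangles is multiplication by $z^l=1$ in $C^*(\Z_l)$, so the triangles are ``invisible'' to the fixed-point condition provided there are exactly $l$ of them. Then $R^{[w_0]}$ rotates the $w_0$ part around, staying inside the category by Frobenius reciprocity. This direction is essentially bookkeeping.

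\medskip

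\textbf{Step 2: the inclusion $\subseteq$ --- the main obstacle.} The hard part is showing every fixed point of $u^{\otimes w}$ decomposes as claimed. The natural tool is the explicit structure of the free product C*-algebra: by the theory of free products (Wang, and the computation of $\Irr(H*\hat\Z_l)$ in terms of alternating words in $\Irr H$ and $\Irr\hat\Z_l=\Z_l$), the matrix coefficients of $u^{\otimes w}$ decompose along alternating strings, and a fixed vector must be supported on the ``trivial'' alternating components. Concretely, I would argue on the level of representations: $u^{\otimes w}$ with $w=w_0\tcol w_1\tcol\cdots$ decomposes using the fusion rules for free products, where the triangles $\tcol$ carry the $\hat\Z_l$-part $z$ and force the $v$-blocks $w_0,\dots,w_l$ to be ``separated'' so that a fixed point can only arise by picking, in each block, a fixed point (or more precisely a component pairing adjacent blocks via $R^{[w_0]}$) --- this is what the formula with $\xi_i\in\Cat_{H*\hat\Z_l}(\emptyset,w_i)$ for $i<l$ and $\xi_l\in\Cat_{H*\hat\Z_l}(\emptyset,w_lw_0)$ encodes, the asymmetry in the last index coming precisely from cyclic rotation by $R^{[w_0]}$. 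Alternatively --- and this may be cleaner --- one induces on the number of triangles in $w$: the base case is triangle-free (handled by general free-product category theory, i.e.\ the fact that $\Mor$ spaces in a free product are spanned by ``planar'' combinations of $\Mor$ spaces in the factors), and the inductive step strips off one triangle at a time using a contraction $\Cat(\emptyset,\cdots\tcol\Tcol\cdots)\to\Cat(\emptyset,\cdots)$ together with the relation $\tcol^l=\emptyset$ to control where the remaining triangles sit.

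\medskip

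\textbf{Step 3: generation.} Finally, once the fixed-point spaces are identified, the statement that $C(\iota(w_1),\iota(w_2))=\Cat_H(w_1,w_2)$ generates $\Cat_{H*\hat\Z_l}$ follows formally: every $\Cat_{H*\hat\Z_l}(\emptyset,w)$ is built from the $\Cat_H(\emptyset,w_i')$ by tensor products, rotations $R$, and contractions against the duality morphisms for $\tcol$ (which are just the identity, by the remark in Section~2.7 that $\Cat(\emptyset,\tcol\Tcol)=\Cat(\emptyset,\emptyset)\ni 1$) and for $\qcol\Qcol$, i.e.\ against the morphisms $\xi_{\pairpart[wb]},\xi_{\pairpart[bw]}$ already present in any representation category. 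I expect Step~2 to be where essentially all the work lies; the cleanest route is probably to invoke the known description of the representation category of a dual free product (the morphism spaces are the ``free product'' of the morphism spaces of the factors in the sense of e.g.\ \cite{Wan95free} and subsequent work), specialized to $H*\hat\Z_l$ where the second factor's category is trivial apart from the grading by $\Z_l$, and then translate that description into the inductive shape stated here.
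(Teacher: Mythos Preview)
Your proposal is workable in principle but takes a substantially harder route than the paper, and the order in which you attack the two claims is reversed in a way that makes you miss the key simplification.

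The paper proves the \emph{generation} statement first, and this is essentially a one-liner via Tannaka--Krein: if $\Cat$ denotes the $\Z_l$-extended category generated by $C$, then the quantum group associated to $\Cat$ is by construction the subgroup of $U^+(F)*\hat\Z_l$ determined by the relations of $H$ on the $v$-block and no extra relations on $z$ --- which is exactly $H*\hat\Z_l$. So $\Cat_{H*\hat\Z_l}=\langle C\rangle$ from the outset, with no analysis of fixed points in the free product needed.

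With that in hand, the inductive formula is proven by a purely combinatorial argument: the inclusion $\supset$ is immediate because the right-hand side is built from $C$ by category operations; for $\subset$ one just checks that the right-hand side (call it $\tilde\Cat$) is itself closed under tensor products, contractions, rotations, and reflections, hence is a category containing $C$, hence contains $\langle C\rangle=\Cat_{H*\hat\Z_l}$. This closure check is an induction on the length of $w$, and while it has several cases, each is mechanical bookkeeping with the rotation $R$.

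Your Step~2 --- directly decomposing an arbitrary fixed vector of $u^{\otimes w}$ using the free-product fusion rules or the Wang description of $\Irr(H*\hat\Z_l)$ --- would get there, but it imports exactly the heavy machinery that the paper's argument is designed to avoid. The point is that once you know $\Cat_{H*\hat\Z_l}$ is \emph{generated} by $C$, you never need to look inside $C(H)*_\C C^*(\Z_l)$ again: the question ``what does the generated category look like?'' is answered by showing the candidate description is closed under the generating operations. Your Step~3 then becomes the starting point rather than the conclusion.
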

\begin{proof}
Let $\Cat$ be the $\Z_l$-extended category generated by~$C$. Then the associated quantum group~$G=(C(G),v\oplus z)$ is a~quantum subgroup of $U^+(F)*\hat\Z_l$ defined by the relations of~$H$ for~$v$ and no relations for~$z$ (except for $zz^*=z^*z=1=z^l$). But this is exactly the free product $H*\hat\Z_l$. Now, it remains to prove that $\Cat$ is given by the above described recursion.

The inclusion~$\supset$ follows from the fact that $\Cat$ has to be closed under the category operations. To check the inclusion~$\subset$, it is enough to check that the right-hand side defines a~category. That is, we need to check that it is closed under tensor products, contractions, rotations, inverse rotations, and reflections as defined in Sections \ref{secc.Frobenius} and~\ref{secc.prods}. Checking this is straightforward using induction. Nevertheless it may become a~bit lengthy to check all the details. We will do it here for the rotation and tensor product.

So, denote the sets given by the inductive description by~$\tilde\Cat$. If we take words $w$ without triangles, that is, $w=\iota(w')$, then the sets $\tilde\Cat(\emptyset,w)=\Cat_H(\emptyset,w')$ are closed under all the operations since $\Cat_H$ is a~category. To show closedness under rotations in general, we do an induction on the length of the word~$w$. So, consider an element $\xi\in\tilde\Cat(\emptyset,w)$ with $w=w_0\tcol w_1\tcol\cdots\tcol w_l$, so it is of the form $\xi=R^{[w_0]}(\xi_1\otimes\cdots\otimes\xi_l)$. First, suppose that $w_l$ is not empty and denote by~$x$ its last letter. Then we directly have $R\xi=R^{[w_0]+1}(\xi_1\otimes\cdots\otimes\xi_l)\in\tilde\Cat(\emptyset,Rw)$. For the case $w_l=\emptyset$, the last letter of $w$ is a~triangle. So, we need to check that $\tilde\Cat(\emptyset,\tcol w_0\tcol w_1\tcol\cdots\tcol w_{l-1})\owns\xi=(R^{[w_0]}\xi_l)\otimes\xi_1\otimes\cdots\otimes\xi_{l-1}$. This is true thanks to the fact that $R^{[w_0]}\xi_l\in\tilde\Cat(\emptyset,w_0)$ by induction. For the inverse rotations, the proof goes exactly the same way.

Now, we can also prove closedness under the tensor product. Take $\xi\in\tilde\Cat(\emptyset,w)$, $\eta\in\tilde\Cat(\emptyset,w')$. We will do the induction on the length of~$w$. Actually, we can assume that $|w|\ge|w'|$ since we can swap the factors by rotation: $\eta\otimes\xi=R^{[w']}(\xi\otimes R^{-[w']}\eta)$. So, assume $w=w_0\tcol w_1\tcol\cdots\tcol w_l$, so $\xi$ is of the form $\xi=R^{[w_0]}(\xi_1\otimes\cdots\otimes\xi_l)\in\tilde\Cat(\emptyset,w_0\tcol w_1\tcol\cdots\tcol w_l)$.
Then we have
\begin{align*}
                                \xi_l&\in\tilde\Cat(\emptyset,w_lw_0)  &&\text{by assumption,}\\
                         R^{[w_0]}\xi_l&\in\tilde\Cat(\emptyset,w_0w_l)  &&\text{$\tilde\Cat$ closed u.\ rotations,}\\
              R^{[w_0]}\xi_l\otimes\eta&\in\tilde\Cat(\emptyset,w_0w_lw')&&\text{by induction,}\\
\tilde\xi_l=R^{-[w_0]}(R^{[w_0]}\xi_l\otimes\eta)&\in\tilde\Cat(\emptyset,w_lw'w_0)&&\text{$\tilde\Cat$ closed u.\ inv.\ rot.,}\\
\xi\otimes\eta=R^{[w_0]}(\xi_1\otimes\dots\otimes\xi_{l-1}\otimes\tilde\xi_l)&\in\tilde\Cat(\emptyset,w_0\tcol w_1\tcol\cdots\tcol w_lw')&&\text{by definition of $\tilde\Cat$.}
\end{align*}
\end{proof}

\begin{lem}
\label{L.starind}
We can arrange the recursion of the above proposition in such a way that the words $w_1,\dots,w_{l-1}$ contain no triangles, so we have
$$\Cat_{H*\hat\Z_l}(\emptyset,w)=\left\{R^{[w_0]}(\xi_1\otimes\cdots\otimes\xi_l)\Biggm|\begin{matrix}w=w_0\tcol w_1\tcol\cdots\tcol w_l\\\xi_i\in\Cat_H(\emptyset,w_i'),\;i=1,\dots,l-1\\\xi_l\in\Cat_{H*\hat\Z_l}(\emptyset,w_lw_0)\end{matrix}\right\}.$$
\end{lem}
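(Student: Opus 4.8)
The plan is to take the recursion from Proposition~\ref{P.starind} and, starting from a general expression $\xi=R^{[w_0]}(\xi_1\otimes\cdots\otimes\xi_l)$ with $w=w_0\tcol w_1\tcol\cdots\tcol w_l$, repeatedly apply the same recursion to those factors $\xi_i$ (for $i=1,\dots,l-1$) whose index word $w_i$ still contains triangles, absorbing the newly produced triangle-free pieces into the $w_0$-block via rotations. The point is that each $w_i$ with $i<l$ is shorter than $w$, so induction on $|w|$ is available; only the last factor $\xi_l\in\Cat_{H*\hat\Z_l}(\emptyset,w_lw_0)$ is left unexpanded, which is exactly what the statement allows.

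Concretely, I would argue as follows. Induct on the number of triangles in $w$ (equivalently on $|w|$). If $w$ is triangle-free there is nothing to do. Otherwise write $w=w_0\tcol w_1\tcol\cdots\tcol w_l$ and $\xi=R^{[w_0]}(\xi_1\otimes\cdots\otimes\xi_l)$ as in Proposition~\ref{P.starind}. Pick the smallest index $i\in\{1,\dots,l-1\}$ with $w_i$ containing a triangle (if none, we are already done). Apply the inductive hypothesis to $\xi_i\in\Cat_{H*\hat\Z_l}(\emptyset,w_i)$: it can be written as $\xi_i=R^{[u_0]}(\eta_1\otimes\cdots\otimes\eta_l)$ where $w_i=u_0\tcol u_1\tcol\cdots\tcol u_l$, the words $u_1,\dots,u_{l-1}$ are triangle-free, $\eta_j\in\Cat_H(\emptyset,u_j')$ for $j<l$, and $\eta_l\in\Cat_{H*\hat\Z_l}(\emptyset,u_lu_0)$. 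Substituting this back into $\xi=R^{[w_0]}(\xi_1\otimes\cdots\otimes\xi_i\otimes\cdots\otimes\xi_l)$ and using the same rotation/tensor bookkeeping that appears in the proof of Proposition~\ref{P.starind} (rotations commute with $\otimes$ up to reshuffling the block boundaries, and $R^{[w_0]}$ distributes appropriately), the block structure of $w$ refines: the triangles inside $w_i$ become genuine separating triangles of $w$, the triangle-free pieces $u_1',\dots,u_{l-1}'$ become new factor-words sitting strictly between two triangles, and $u_0$ (triangle-free) together with $\eta_l$'s tail gets merged with the surrounding blocks. Carrying out this substitution we obtain $\xi=R^{[w_0']}(\xi_1'\otimes\cdots\otimes\xi_l')$ for a new decomposition $w=w_0'\tcol w_1'\tcol\cdots\tcol w_l'$ with strictly fewer triangles distributed among $w_1',\dots,w_{l-1}'$; applying the inductive hypothesis once more (or iterating) clears all of them, leaving $\xi_1',\dots,\xi_{l-1}'\in\Cat_H(\emptyset,(w_i')')$ triangle-free and $\xi_l'\in\Cat_{H*\hat\Z_l}(\emptyset,w_l'w_0')$, which is the asserted form. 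The reverse inclusion $\supseteq$ is immediate since every element of the right-hand side is manifestly of the shape allowed by Proposition~\ref{P.starind}.

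The main obstacle I expect is purely bookkeeping: verifying that when one substitutes the inner decomposition of $\xi_i$ into the outer one, the rotations $R^{[w_0]}$ and $R^{[u_0]}$ and the tensor factors recombine into a single clean expression $R^{[w_0']}(\xi_1'\otimes\cdots\otimes\xi_l')$ with the triangle that originally separated $w_{i-1}$ from $w_i$ (and $w_i$ from $w_{i+1}$) now correctly positioned, and with the number of ``$\tcol$-separated slots'' still equal to $l$ after merging $u_0$ and $u_l$ appropriately with neighbours. This is exactly the kind of identity-manipulation already done in the proof of Proposition~\ref{P.starind} (closedness under tensor products and rotations), so no new idea is needed — one just has to be careful that when $u_0$ or $u_l$ or some $w_j$ is empty the merging of adjacent blocks does not change $l$. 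I would state the merging step as a small sublemma (``if $\eta\in\tilde\Cat(\emptyset,w_0)$ is triangle-free then $\eta\otimes R^{-[w_0]}\xi$-type rearrangements reduce the triangle count of the unexpanded factor'') and leave the remaining cases to the reader, in keeping with the level of detail of the surrounding arguments.
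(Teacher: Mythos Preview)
Your proposal is correct and follows essentially the same route as the paper's proof: induct, pick some $w_i$ ($1\le i\le l-1$) that still contains triangles, apply the inductive hypothesis to $\xi_i$, and absorb everything except the triangle-free inner pieces $u_1,\dots,u_{l-1}$ into the new $w_0'$- and $w_l'$-blocks. The paper carries out precisely the bookkeeping you flag as the obstacle, and in fact shows that a single substitution already yields triangle-free middle factors (no further iteration is needed): writing $w_i=a_0\tcol a_1\tcol\cdots\tcol a_l$, one has
\[
\xi=R^{[w_0\cdots w_{i-1}a_0]}(\eta_1\otimes\cdots\otimes\eta_{l-1}\otimes\tilde\eta_l),\qquad
\tilde\eta_l=R^{-[a_0]}\bigl(R^{[a_0]}\eta_l\otimes\xi_{i+1}\otimes\cdots\otimes\xi_l\otimes R^{-[w_1]}\xi_1\otimes\cdots\otimes R^{-[w_{i-1}]}\xi_{i-1}\bigr),
\]
so the new middle words are exactly $a_1,\dots,a_{l-1}$. (One small slip: you call $u_0$ triangle-free, but the inductive hypothesis does not guarantee this; it is harmless since $u_0$ is absorbed into the rotation prefix anyway.)
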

\begin{proof}
We prove this by induction. Take an arbitrary word $w\in\W_l$ and suppose that the above description works for any shorter word. Now consider an element $\xi\in\Cat_{H*\hat\Z_l}(\emptyset,w)$, so it is of the form $\xi=R^{[w_0]}(\xi_1\otimes\cdots\otimes\xi_l)$ corresponding to the decomposition $w=w_0\tcol w_1\tcol\cdots\tcol w_l$. Suppose now that $w_i$ contains some triangles for some $i\in\{1,\dots,{l-1}\}$. By induction hypothesis, we can write $\xi_i=R^{[a_0]}(\eta_1\otimes\cdots\otimes\nobreak\eta_l)$ corresponding to $w_i=a_0\tcol a_1\tcol\cdots\tcol a_l$, where $a_1,\dots,a_{l-1}$ contain no triangles, so $\eta_i\in\Cat_H(\emptyset,a_i')$. But this means that we can write also
$$\xi=R^{[w_0\cdots w_{i-1}a_0]}(\eta_1\otimes\cdots\otimes\eta_{l-1}\otimes\tilde\eta_l),$$
where
\begin{align*}
\tilde\eta_l&=R^{-[a_0]}(R^{[a_0]}\eta_l\otimes\xi_{i+1}\otimes\cdots\otimes\xi_l\otimes R^{-[w_1]}\xi_1\otimes\cdots\otimes R^{-[w_{i-1}]}\xi_{i-1})\\&\in\Cat_{H*\hat\Z_l}(\emptyset,a_l\tcol w_{i+1}\tcol\cdots\tcol w_lw_0\tcol w_1\tcol\cdots\tcol w_{i-1}\tcol a_0).\qedhere
\end{align*}
\end{proof}

In the following theorem, we describe the representation category of the free complexification. In the formulation, we use the following notation. Given an element $w\in\W$ or $w\in\W_l$, we use negative powers to indicate the colour inversion, that is, $w^{-j}=\bar w^j$. For example, $(\wcol\bcol)^{-2}=(\bcol\wcol)^2=\bcol\wcol\bcol\wcol$.

\begin{thm}
\label{T.freecompid}
Let $H$ be a~compact matrix quantum group with degree of reflection $k\neq 1$. Then all $H\tilstar\hat\Z_l$ coincide for all $l\in\N_0\setminus\{1\}$. The ideal $I_{H\tilstar\hat\Z_l}$ is generated by the alternating polynomials in~$I_H$. The representation category $\Cat_{H\tilstar\hat\Z_l}$ is a~(wide) subcategory of the representation category $\Cat_H$ generated by the sets $C(\emptyset,(\wcol\bcol)^j):=\Cat_H(\emptyset,(\wcol\bcol)^j)$, $j\in\Z$. This also holds if $k=1$ and $l=0$.
\end{thm}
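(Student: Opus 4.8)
The plan is to reduce the whole statement to a computation of fixed-point spaces and then feed the glued words into the inductive description of $\Cat_{H*\hat\Z_l}$ supplied by Proposition~\ref{P.starind} and Lemma~\ref{L.starind}. Write $v$ for the fundamental representation of $H$, $z$ for the generator of $C^*(\Z_l)$ and $u=vz$ for the fundamental representation of $H\tilstar\hat\Z_l$. By Proposition~\ref{P.repglue} we have $\Cat_{H\tilstar\hat\Z_l}(w_1,w_2)=\Cat_{H*\hat\Z_l}(\tilde w_1,\tilde w_2)$ for the glued words of Definition~\ref{D.gluew}, so by Frobenius reciprocity it suffices to describe $\Cat_{H\tilstar\hat\Z_l}(\emptyset,w)=\Cat_{H*\hat\Z_l}(\emptyset,\tilde w)$ for $w\in\W$. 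Since $\widetilde{(\wcol\bcol)^j}=(\qcol\Qcol)^j$ after cancelling the pairs $\tcol\Tcol$, the no-triangle case of Proposition~\ref{P.starind} already gives $\Cat_{H\tilstar\hat\Z_l}(\emptyset,(\wcol\bcol)^j)=\Cat_H(\emptyset,(\wcol\bcol)^j)$, so these are genuine fixed-point spaces of $H\tilstar\hat\Z_l$.

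The combinatorial core is the observation that, after cancelling the $\tcol\Tcol$ pairs, the maximal triangle-free subwords of $\tilde w$ are in bijection with the maximal alternating subwords of $w$ and coincide with them under $\qcol\leftrightarrow\wcol$, $\Qcol\leftrightarrow\bcol$. Thus the recursion of Lemma~\ref{L.starind} builds $\Cat_{H*\hat\Z_l}(\emptyset,\tilde w)$ out of the spaces $\Cat_H(\emptyset,s)$ for alternating words $s$ using only tensor products and rotations, whence $\Cat_{H\tilstar\hat\Z_l}$ is the subcategory of $\Cat_H$ generated by all $\Cat_H(\emptyset,s)$ with $s$ alternating. By Proposition~\ref{P.degref}, $\Cat_H(\emptyset,s)=\{0\}$ unless $c(s)\in k\Z$; as an alternating word satisfies $|c(s)|\le 1$, the hypothesis $k\neq 1$ leaves only the balanced words $s=(\wcol\bcol)^j$ (with $(\wcol\bcol)^{-j}=(\bcol\wcol)^j$). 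Hence $\Cat_{H\tilstar\hat\Z_l}$ is the wide subcategory of $\Cat_H$ generated by $\Cat_H(\emptyset,(\wcol\bcol)^j)$, $j\in\Z$, a description with no reference to $l$; and since a balanced run has a triangle-free glued version, the triangle bookkeeping in the recursion decouples from $l$, so the resulting spaces are the same for every $l\ge 2$ and all $H\tilstar\hat\Z_l$, $l\ge 2$, coincide. For $l=0$ one picks $l'\ge 2$ with $\gcd(k,l')\neq 1$ (possible since $k\neq 1$), uses $H=H\tiltimes\hat\Z_k$ from Lemma~\ref{L.degref}, and applies Lemma~\ref{L.Gtilstar0} to get $H\tilstar\hat\Z=(H\tiltimes\hat\Z_k)\tilstar\hat\Z_{l'}=H\tilstar\hat\Z_{l'}$, already covered.

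The ideal statement then follows from the category description via Corollary~\ref{C.idealgen} and the proposition following Theorem~\ref{T.TK}: $I_{H\tilstar\hat\Z_l}$ is generated by the entries of $u^{\otimes(\wcol\bcol)^j}\xi-\xi$ with $\xi\in\Cat_H(\emptyset,(\wcol\bcol)^j)$, and substituting $u_{ij}=v_{ij}z$ and cancelling the consecutive $z,z^*$ that occur in these alternating products identifies them with the alternating polynomials lying in $I_H$, and conversely every alternating polynomial in $I_H$ arises this way. The case $k=1$, $l=0$ is handled by rerunning the argument in the $\Z$-extended setting ($\hat\Z=\T$): now $c(s)\in k\Z$ is no constraint, so odd runs may occur, but the absence of the relation $\tcol^l=\emptyset$ forces $\Cat_{H\tilstar\hat\Z}(\emptyset,w)=\{0\}$ whenever $c(w)\neq 0$, and a parity analysis of the run decomposition shows that the remaining odd runs pair off, so the $H$-content still enters only in balanced combinations and the same final description results.

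The main obstacle is this combinatorial core: extracting a clean description from the recursion of Proposition~\ref{P.starind}/Lemma~\ref{L.starind} applied to a glued word $\tilde w$, verifying that no morphism outside the category generated by the balanced fixed points is produced, and that the answer genuinely does not depend on $l$. The case $l=0$ requires its own treatment, because $\W_0$ lacks the relation $\tcol^l=\emptyset$, which changes the shape of the free-product recursion.
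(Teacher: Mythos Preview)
Your approach is essentially the paper's: reduce to fixed-point spaces via Proposition~\ref{P.repglue}, then feed the glued words into the recursion of Proposition~\ref{P.starind}/Lemma~\ref{L.starind}, and use the degree-of-reflection constraint to discard non-balanced alternating runs. The paper does the $\subset$ direction by a clean induction on $|w|$ (one application of Lemma~\ref{L.starind} strips off $l-1$ balanced runs $w_1',\dots,w_{l-1}'$ and leaves a strictly shorter glued word $\hat w$ with glued version $w_0w_l$), which is exactly what your ``run decomposition'' sketch is gesturing at; the $l$-independence then comes from the final category description, not from any $l$-independence of the recursion itself.

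The one genuine gap is your treatment of the case $k=1$, $l=0$. You propose to ``rerun the argument in the $\Z$-extended setting'', but Proposition~\ref{P.starind} and Lemma~\ref{L.starind} are stated and proved only for $l\in\N$: the decomposition $w=w_0\tcol w_1\tcol\cdots\tcol w_l$ relies on the relation $\tcol^l=\emptyset$, which is absent in $\W_0$. Your ``parity analysis of the run decomposition'' would therefore need a separate recursion for the $\Z$-extended category, which you do not supply. The paper avoids this entirely: it applies Lemma~\ref{L.Gtilstar0} with $k=2$ (not the degree of reflection of $H$) to write $H\tilstar\hat\Z=(H\tiltimes\hat\Z_2)\tilstar\hat\Z_{l'}$ for some finite even $l'$, observes that $H\tiltimes\hat\Z_2$ has degree of reflection $\neq 1$, applies the already-proved case, and then notes $\Cat_{H\tiltimes\hat\Z_2}(\emptyset,(\wcol\bcol)^j)=\Cat_H(\emptyset,(\wcol\bcol)^j)$ since $c((\wcol\bcol)^j)=0$. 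This reduction is short and requires no new combinatorics; you should use it instead of the direct analysis.
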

\begin{proof}
Let $I\subset\C\langle x_{ij},x_{ij}^*\rangle$ be the ideal generated by the alternating polynomials in~$I_H$. Denote by $u_{ij}=v_{ij}z$ the fundamental representation of $H\tilstar\hat\Z_l$. To prove that $I\subset I_{H\tilstar\hat\Z_l}$, take any alternating polynomial $f\in I_H$. If all monomials in~$f$ start with a~non-star variable, we have $f(v_{ij}z)=f(v_{ij})=0$; if all monomials start with a~star variable, then $f(v_{ij}z)=z^*f(v_{ij})z=0$. In both cases, we have proven that $f\in I_{H\tilstar\hat\Z_l}$. The opposite inclusion $I\supset I_{H\tilstar\hat\Z_l}$ will follow from the statement about representation categories as all the relations corresponding to the elements of~$C$ are alternating.

Note that it is enough to prove the statement for $k\neq 1$ and $l\neq 0$. Indeed, for $k\neq 1$ and $l=0$, we have by Lemma~\ref{L.Gtilstar0} that $H\tilstar\hat\Z=H\tilstar\hat\Z_k$. For $k=1$, $l=0$ we use Lemma~\ref{L.Gtilstar0} to express $H\tilstar\hat\Z=(H\tiltimes\hat\Z_2)\tilstar\Z_l$. Since $c((\wcol\bcol)^j)=0\in 2\Z$ for every~$j$, we have $\Cat_H(\emptyset,(\wcol\bcol)^j)=\Cat_{H\tiltimes\hat\Z_2}(\emptyset,(\wcol\bcol)^j)$.

So, let $\Cat$ be the two-coloured representation category generated by~$C$. We need to prove that $\Cat_{H\tilstar\hat\Z_l}(\emptyset,w)=\Cat(\emptyset,w)$ for every $w\in\W$. In order to do that, we will use Proposition~\ref{P.repglue}, whose statement can be, in this case, formulated as
\begin{equation}
\label{eq.freecompid}
\Cat_{H\tilstar\hat\Z_l}(\emptyset,w)=\Cat_{H*\hat\Z_l}(\emptyset,\tilde w),
\end{equation}
where $\tilde w\in\W_l$ is the glued version of $w\in\W$.

Let us start with the easier inclusion~$\supset$. Since $\Cat_{H\tilstar\hat\Z_l}$ is a~category, it is enough to show that $\Cat_{H\tilstar\hat\Z_l}(\emptyset,w)\supset C(\emptyset,w)$ for every $w=(\wcol\bcol)^j$, $j\in\Z$. Note that the glued version of~$w$ is in this case $\tilde w=(\qcol\tcol\Tcol\Qcol)^j=(\qcol\Qcol)^j$. Combining Proposition~\ref{P.starind} and Equation~\eqref{eq.freecompid}, we have
$$C(\emptyset,(\wcol\bcol)^j)=\Cat_H(\emptyset,(\wcol\bcol)^j)=\Cat_{H*\hat\Z_k}(\emptyset,(\qcol\Qcol)^j)=\Cat_{H\tilstar\hat\Z_l}(\emptyset,(\wcol\bcol)^j).$$

We will prove the opposite inclusion~$\subset$ by induction on the length of~$w$. Take some
$$\xi\in\Cat_{H\tilstar\hat\Z_l}(\emptyset,w)=\Cat_{H*\hat\Z_l}(\emptyset,\tilde w).$$
Suppose $\xi\neq 0$. According to Lemma~\ref{L.starind}, we can assume that $\tilde w=w_0\tcol w_1\tcol\cdots\tcol w_l$, where $w_1,\dots,w_{l-1}$ contain no triangles, and then $\xi=R^{w_0}(\xi_1\otimes\cdots\otimes\xi_l)$ with $\xi_i\in\Cat_H(\emptyset,w_i')$ and $\xi_l\in\Cat_{H*\hat\Z_l}(\emptyset,w_lw_0)$. Since $\tilde w$ is the glued version of~$w$, this means that in all the words $w_1,\dots,w_{l-1}$ the colours alternate (two consecutive white squares would necessarily have a white triangle~$\tcol$ between them, two consecutive black squares would have $\Tcol=\tcol^{l-1}$ between them). Moreover, since we assume $\xi_i\neq 0$, we must have $c(w_i')\in k\Z$ and, since $k\neq 1$, this means that the $w_i$'s are of even length. So, $w_i=(\qcol\Qcol)^{j_i}$, $w_i'=(\wcol\bcol)^{j_i}$.

Finally note that if we delete $\wcol\bcol$ or $\bcol\wcol$ from some word~$w$, its glued version will be given by deleting $\qcol\Qcol$ resp.\ $\Qcol\qcol$. In particular, denote by~$\hat w$ the element~$w$ after deleting all the subwords $w_1',\dots,w_{l-1}'$. Its glued version is then $w_0\tcol^l w_l=w_0w_l$. Using the induction hypothesis, this finishes the proof as we have
$$\xi=R^{w_0}(\xi_1\otimes\cdots\otimes\xi_l)$$
with
\begin{align*}
\xi_i&\in\Cat_H(\emptyset,w_i')=\Cat_H(\emptyset,(\wcol\bcol)^{j_i})=C(\emptyset,(\wcol\bcol)^{j_i})\qquad\text{for $i=1,\dots,l-1$}\\
R^{w_0}\xi_l&\in\Cat_{H*\hat\Z_2}(\emptyset,w_0w_l)=\Cat_{H\tilstar\hat\Z_2}(\emptyset,\hat w)=\Cat(\emptyset,\hat w).\qedhere
\end{align*}
\end{proof}

We may ask what happens if we iterate those free complexifications. The following statement was again already formulated in \cite{TW17}; however, without a~proof. (Note that it generalizes Lemma~\ref{L.Gtilstar0} dropping the assumption $\gcd(k,l)\neq 1$.)

\begin{prop}
Let $H$ be a~compact matrix quantum group, $k,l\in\N_0\setminus\{1\}$. Then
$$(H\tiltimes\hat\Z_k)\tilstar\hat\Z_l=(H\tilstar\hat\Z_k)\tilstar\hat\Z_l=H\tilstar\Z.$$
\end{prop}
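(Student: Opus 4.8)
The plan is to reduce everything to Theorem~\ref{T.freecompid}, whose applicability hinges on the degree of reflection being $\neq 1$. So the first step is to check that for $k\in\N_0\setminus\{1\}$ both $H\tiltimes\hat\Z_k$ and $H\tilstar\hat\Z_k$ have degree of reflection $\neq 1$. For $H\tiltimes\hat\Z_k$ this follows from Remark~\ref{R.itertimes} and Lemma~\ref{L.degref}: $(H\tiltimes\hat\Z_k)\tiltimes\hat\Z_k=H\tiltimes\hat\Z_{\lcm(k,k)}=H\tiltimes\hat\Z_k$, so $k$ divides its degree of reflection (in the sense of Proposition~\ref{P.degref}), which is therefore $\neq1$. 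For $H\tilstar\hat\Z_k$ I would use that the counit $\epsilon_H\colon C(H)\to\C$ and $\id\colon C^*(\Z_k)\to C^*(\Z_k)$ induce, by the universal property of the free product, a $*$-homomorphism $C(H)*_\C C^*(\Z_k)\to C^*(\Z_k)$; its restriction to $C(H\tilstar\hat\Z_k)$ sends the fundamental representation $vz$ to $z\,1_N$, i.e.\ $u_{ij}\mapsto\delta_{ij}z$, so by Lemma~\ref{L.degref} the number $k$ divides the degree of reflection of $H\tilstar\hat\Z_k$, hence it is again $\neq1$.

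For the first equality: since $H\tiltimes\hat\Z_k$ has degree of reflection $\neq1$, Theorem~\ref{T.freecompid} gives $(H\tiltimes\hat\Z_k)\tilstar\hat\Z_l=(H\tiltimes\hat\Z_k)\tilstar\hat\Z$ for every $l\in\N_0\setminus\{1\}$, and then Lemma~\ref{L.Gtilstar0}, applied with second index $0$ (so that $\gcd(k,0)=k\neq1$), gives $(H\tiltimes\hat\Z_k)\tilstar\hat\Z=H\tilstar\hat\Z$.

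For the second equality I would compare the ideals of algebraic relations. Since $H\tilstar\hat\Z_k$ has degree of reflection $\neq1$, Theorem~\ref{T.freecompid} says $I_{(H\tilstar\hat\Z_k)\tilstar\hat\Z_l}$ is generated by the alternating polynomials lying in $I_{H\tilstar\hat\Z_k}$, while applying the same theorem to $H$ with $l=0$ (using its $k=1$ clause when $H$ has degree of reflection~$1$) shows $I_{H\tilstar\hat\Z}$ is generated by the alternating polynomials lying in $I_H$. So it suffices to show that these two sets of alternating polynomials coincide. By Proposition~\ref{P.repglue}, $I_{H\tilstar\hat\Z_k}$ consists of those $f$ with $f(v_{ij}z,z^*v_{ij}^*)=0$ in $C(H*\hat\Z_k)$, where $v$ is the fundamental representation of~$H$ and $z$ the generator of $C^*(\Z_k)$. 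If $f$ is alternating, then the factors $zz^*=1$ cancel along the alternating pattern, so $f(v_{ij}z,z^*v_{ij}^*)$ equals $f(v_{ij},v_{ij}^*)$ if $f$ starts with a non-star variable and $z^*f(v_{ij},v_{ij}^*)z$ if it starts with a star variable; since $C(H)$ embeds into $C(H*\hat\Z_k)$ as a free factor and $z$ is unitary, either of these vanishes iff $f\in I_H$. Hence the alternating polynomials in $I_{H\tilstar\hat\Z_k}$ are exactly those in $I_H$, so $I_{(H\tilstar\hat\Z_k)\tilstar\hat\Z_l}=I_{H\tilstar\hat\Z}$ and the two compact matrix quantum groups are identical.

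The main obstacle is the second equality: there is no analogue of Lemma~\ref{L.Gtilstar0} to invoke, so one must genuinely feed $H\tilstar\hat\Z_k$ into Theorem~\ref{T.freecompid} and unwind the gluing description of the relations (Proposition~\ref{P.repglue}), while being careful both about the proof that $H\tilstar\hat\Z_k$ has degree of reflection $\neq1$ and about the degenerate value $k=0$, where $\hat\Z_k=\hat\Z$ and some intermediate statements (e.g.\ Proposition~\ref{P.starind}) are only stated for $k\in\N$. An alternative for the last paragraph, staying on the categorical side, is to prove $\Cat_{H\tilstar\hat\Z_k}(\emptyset,(\wcol\bcol)^j)=\Cat_H(\emptyset,(\wcol\bcol)^j)$ via Propositions~\ref{P.repglue} and~\ref{P.starind} and then quote the categorical half of Theorem~\ref{T.freecompid} together with Woronowicz--Tannaka--Krein duality (Theorem~\ref{T.TK}).
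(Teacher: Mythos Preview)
Your proof is correct and close in spirit to the paper's, both hinging on Theorem~\ref{T.freecompid}; the differences are in the auxiliary tools. For the first equality you pass through Lemma~\ref{L.Gtilstar0} (with second index~$0$), whereas the paper stays on the categorical side and simply observes, via Theorem~\ref{T.tenscomp2}, that $\Cat_{H\tiltimes\hat\Z_k}(\emptyset,(\wcol\bcol)^j)=\Cat_H(\emptyset,(\wcol\bcol)^j)$ because $c((\wcol\bcol)^j)=0$. For the second equality you work with the ideal description and show that the alternating polynomials in $I_{H\tilstar\hat\Z_k}$ coincide with those in $I_H$; the paper instead uses the categorical half of Theorem~\ref{T.freecompid} and the tautological equality $\Cat_{H\tilstar\hat\Z_k}(\emptyset,(\wcol\bcol)^j)=\Cat_H(\emptyset,(\wcol\bcol)^j)$ (which is part of the statement of Theorem~\ref{T.freecompid} itself). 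Your explicit verification that $H\tiltimes\hat\Z_k$ and $H\tilstar\hat\Z_k$ have degree of reflection divisible by~$k$ is a detail the paper leaves implicit; the alternative categorical argument you sketch at the end is essentially the paper's own proof.
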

\begin{proof}
The second equality follows directly from Theorem~\ref{T.freecompid}~-- we see that iterating the operation on the categories for the second time cannot change it since $\Cat_H(\emptyset,(\wcol\bcol)^j)=\Cat_{H\tilstar\hat\Z_k}(\emptyset,(\wcol\bcol)^j)$. For the first equality, we use, in addition, Theorem~\ref{T.tenscomp2}. Since $c((\wcol\bcol)^j)=\nobreak 0$, we have $\Cat_{H\tiltimes\hat\Z_k}(\emptyset,(\wcol\bcol)^j)=\Cat_H(\emptyset,(\wcol\bcol)^j)$.
\end{proof}

Again, we can ask in what situations does it happen that the glued free product $H\tilstar\hat\Z_l$ is isomorphic to the standard one. Obviously, the necessary condition is that $H$ has degree of reflection one since $H\tilstar\hat\Z_l\simeq H*\hat\Z_l$ implies $H\tiltimes\hat\Z_l\simeq H\times\hat\Z_l$ and here we can use Proposition~\ref{P.tiltimesiso}. We can formulate the converse in the case of globally-colourized quantum groups $H$ (in particular, if $H\subset O^+(F)$).

\begin{prop}
\label{P.Gfreeprodsim}
Let $H$ be a globally colourized compact matrix quantum group with degree of reflection one. Then $H\tilstar\hat\Z_k\simeq H*\hat\Z_k$ for every $k\in\N_0$.
\end{prop}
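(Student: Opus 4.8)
The plan is to establish the isomorphism $H\tilstar\hat\Z_k\simeq H*\hat\Z_k$ by showing that the generator $z$ of $C^*(\Z_k)$ already lies in the glued subalgebra $C(H\tilstar\hat\Z_k)\subset C(H)*_\C C^*(\Z_k)$; once we know this, together with the fact that $C(H\tilstar\hat\Z_k)$ obviously contains all the $v_{ij}z$ (hence all $v_{ij}=(v_{ij}z)z^*$), we get that the glued subalgebra is all of $C(H)*_\C C^*(\Z_k)$, and the isomorphism follows (the comultiplications automatically match since the inclusion is a CMQG morphism in the appropriate sense). So the entire proof reduces to producing $z$ inside $C(H\tilstar\hat\Z_k)$.

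To do this I would exploit the hypothesis that the degree of reflection of $H$ is one. By Proposition~\ref{P.degref}, this means there is a word $w\in\W$ with $c(w)=1$ and a nonzero vector $\xi\in\Mor(1,v^{\otimes w})$, which we normalize so that $\norm{\xi}=1$ (i.e.\ $\xi^*\xi=1$, using that $v^{\otimes w}$ is unitary so $\xi^*\xi$ is a scalar). Now in $C(H)*_\C C^*(\Z_k)$ the fundamental representation of $H\tilstar\hat\Z_k$ is $u=vz$, and since the $v_{ij}$ and $z$ live in freely amalgamated factors, the entries of $u^{\otimes w}$ are not simply $z^{c(w)}v^{\otimes w}$ as in the tensor case — this is exactly where global colourization enters. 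Global colourization~\eqref{eq.globcol} says $v_{ij}v_{kl}^*=v_{kl}^*v_{ij}$, and combined with the relations $zz^*=z^*z=1$ this should let me reorder any alternating product of $v$'s and $z^{\pm1}$'s coming from the entries of $u^{\otimes w}$ so as to collect all the $z$'s to one side. More precisely, I would compute $\xi^*u^{\otimes w}\xi\in C(H\tilstar\hat\Z_k)$ and argue, using that $w$ is a word in $\W$ where the colours encode alternating star/non-star patterns and that global colourization allows the $v$-part to commute past the $z$-part, that this equals $z^{c(w)}\,\xi^*v^{\otimes w}\xi=z^{c(w)}=z$. The key computational point is that an entry of $u^{\otimes w}$ is a product of the form $v_{i_1j_1}^{\epsilon_1}z^{\epsilon_1}v_{i_2j_2}^{\epsilon_2}z^{\epsilon_2}\cdots$ with $\epsilon_s=\pm1$ according to the colour of the $s$-th letter, and global colourization together with the commutation of $z^{-1}$ past $v$ lets one shuffle this into $\bigl(\prod_s v_{i_sj_s}^{\epsilon_s}\bigr)z^{\sum_s\epsilon_s}$, with $\sum_s\epsilon_s=c(w)=1$.

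The main obstacle I anticipate is precisely this reordering step: one has to be careful that pushing a $z^{-1}$ (coming from a black letter) past a $v^{\wcol}$ or $v^{\bcol}$ is legitimate — $z$ commutes with everything in $C(H)$ trivially only if we are in the tensor product, but here we are in the \emph{free} product, so $z$ does \emph{not} commute with $v_{ij}$ a priori. The resolution must be that we are computing inside the \emph{glued} subalgebra and the relevant products, after applying the intertwiner relation $v^{\otimes w}\xi=\xi$ and global colourization $v^{\wcol}_{ij}v^{\bcol}_{kl}=v^{\bcol}_{ij}v^{\wcol}_{kl}$, telescope: intuitively, global colourization is exactly the statement that $v^{\wcol}\otimes v^{\bcol}=v^{\bcol}\otimes v^{\wcol}$, so an alternating tensor word in $v^{\wcol},v^{\bcol}$ can be rearranged to put all $v^{\wcol}$ first and all $v^{\bcol}$ last, and in the free product $v^{\wcol}_{ij}=v_{ij}z\cdot z^{-1}$... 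I would need to track this carefully, most likely by first reducing to the case where $w$ is, say, $\wcol$ itself possibly after the tensor-complexification trick: note that if $H$ has degree of reflection one then by Proposition~\ref{P.degrefOG}-type reasoning (or directly) there may even be $\xi\in\Mor(1,v^{\wcol})$, i.e.\ a word of length one, drastically simplifying the bookkeeping — though in general $c(w)=1$ only forces odd length. Alternatively, one can try to mimic the matrix-dilation argument of Lemma~\ref{L.Gtilstar0} and Theorem~\ref{T.tenscomp}: construct an explicit $*$-homomorphism $\beta\colon C(H)*_\C C^*(\Z_k)\to M_d(C(H\tilstar\hat\Z_k))$ sending $v_{ij}$ and $z$ to suitable matrices built from $u^{\otimes w}$-entries and the normalized $\xi$, and verify $\beta$ inverts the canonical surjection $\alpha\colon C(H)*_\C C^*(\Z_k)\to C(H\tilstar\hat\Z_k)$ after composition; this packaging may be cleaner than the direct reordering and is the approach I would ultimately pursue, with the verification that $\beta$ respects the relations of $H$ (using that they are preserved under the grading shift induced by $z^{\pm1}$) and the relations $z^k=1$, $zz^*=z^*z=1$ being the content to check.
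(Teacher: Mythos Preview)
Your overall strategy is exactly right and matches the paper: show that $z\in C(H\tilstar\hat\Z_k)$ by producing it as $\xi^*u^{\otimes w}\xi$ for a suitable word $w$ and a normalized fixed vector $\xi$ coming from the hypothesis that the degree of reflection is one. You also correctly identify the crucial obstacle: in the free product $C(H)*_\C C^*(\Z_k)$ the element $z$ does \emph{not} commute with the $v_{ij}$, so you cannot simply ``shuffle'' a product $v^{\epsilon_1}_{i_1j_1}z^{\epsilon_1}v^{\epsilon_2}_{i_2j_2}z^{\epsilon_2}\cdots$ into $(\prod v^{\epsilon_s}_{i_sj_s})z^{\sum\epsilon_s}$.

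What is missing is the actual resolution of this obstacle, and your proposed route---using global colourization to commute $z$ past $v$---does not work, because global colourization is a relation purely inside $C(H)$ (namely $v_{ij}^{\wcol}v_{kl}^{\bcol}=v_{ij}^{\bcol}v_{kl}^{\wcol}$) and says nothing about how $z$ interacts with $v_{ij}$ in the free product. The key idea, which you circled around but did not land on, is to use global colourization \emph{on the word}, not on the product: by Proposition~\ref{P.globcol}, global colourization implies $\Mor(1,v^{\otimes w})=\Mor(1,v^{\otimes\tilde w})$ whenever $|w|=|\tilde w|$ and $c(w)=c(\tilde w)$. So the \emph{same} $\xi$ is a fixed vector for $v^{\otimes\tilde w}$ where $\tilde w=\wcol\bcol\wcol\bcol\cdots\wcol$ is the alternating word of length~$|w|$. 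Now compute $u^{\otimes\tilde w}$ (not $u^{\otimes w}$): since $u^{\wcol}=v^{\wcol}z$ and $u^{\bcol}=z^*v^{\bcol}$, the alternating pattern makes adjacent $z$'s cancel by unitarity,
\[
u^{\otimes\tilde w}=(v^{\wcol}z)(z^*v^{\bcol})(v^{\wcol}z)\cdots(v^{\wcol}z)=v^{\otimes\tilde w}z,
\]
with no commutation required at all. Then $\xi^*u^{\otimes\tilde w}\xi=\xi^*v^{\otimes\tilde w}\xi\,z=\xi^*\xi\,z=z$, and you are done. The matrix-dilation alternative you propose is unnecessary here.
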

\begin{proof}
Denote by~$v$ the fundamental representation of~$H$ and by~$z$ the generator of~$C^*(\Z_k)$. Again, it is enough to show that we have $z\in C(H\tiltimes\hat\Z_k)\subset C(H)\otimes_{\rm max}C^*(\Z_k)$ since this already implies the equality of the C*-algebras. From Proposition~\ref{P.degref}, we can find a~vector $\xi\in\Mor(1,v^{\otimes w})$ with $c(w)=1$ and $\norm{\xi}=1$. Since $H$~is globally colourized, we have $\Mor(1,v^{\otimes w})=\Mor(1,v^{\otimes \tilde w})$, where $\tilde w=\wcol\bcol\wcol\bcol\dots\wcol$, $|\tilde w|=|w|$. For such a~word, we have $(vz)^{\otimes\tilde w}=(v^\wcol z)(z^* v^{\bcol})(v^\wcol z)\cdots (v^\wcol z)=v^{\otimes\tilde w}z$, so
\[\xi^*(vz)^{\otimes\tilde w}\xi=\xi^*(v^{\otimes\tilde w}z)\xi=\xi^*\xi\,z=z.\qedhere\]
\end{proof}

\subsection{Free complexification of orthogonal quantum groups}
\label{secc.freecomp2}

In this section, we will study more in detail the free complexification $H*\hat\Z_k$ with $H\subset O^+(F)$. Recall that we define $O^+(F)\subset U^+(F)$ only for $F$ satisfying $F\bar F=c\,1_N$ for some $c\in\R$. We will use this assumption in the whole section.

\begin{defn}
\label{D.colinv}
A~quantum group $G=(C(G),u)\subset U^+(F)$ with $F\bar F=c\,1_N$ is called \emph{invariant with respect to the colour inversion} if the map $u_{ij}\mapsto [F\bar uF^{-1}]_{ij}$ extends to a~$*$-isomorphism.
\end{defn}

Let us explain a~bit this definition. First of all, note that the required $*$-homo\-morphism maps
$$u_{ij}^\wcol\mapsto u_{ij}^{\bcol},\qquad u_{ij}^{\bcol}\mapsto u_{ij}^{\wcol}.$$
Indeed, the first assignment is exactly the definition. For the second one, we have
$$u_{ij}^{\bcol}=[F\bar uF^{-1}]_{ij}\mapsto [F\bar F u\bar F^{-1}F^{-1}]_{ij}=u_{ij}$$
thanks to the assumption $F\bar F=c\,1_N$. In the Kac case $F=1_N$, the homomorphism maps $u_{ij}\mapsto u_{ij}^*$. But let us stress that for general elements of $C(G)$ the homomorphism does not coincide with the $*$-operation (simply because the $*$ is not a~homomorphism).

Secondly, we have the following alternative formulations.

\begin{prop}
\label{P.colinv}
Consider $G=(C(G),u)\subset U^+(F)$ with $F\bar F=c\,1_N$. Then the following are equivalent.
\begin{enumerate}
\item $C(G)$ has an automorphism $u_{ij}^{\wcol}\leftrightarrow u_{ij}^{\bcol}$. That is, $G$ is invariant w.r.t.\ the colour inversion.
\item $I_G$ is invariant w.r.t.\ $x_{ij}^{\wcol}\leftrightarrow x_{ij}^{\bcol}$. More precisely, we mean one of the following equivalent conditions.
\begin{enumerate}
\item $I_G$ is invariant w.r.t.\ the $*$-homomorphism mapping $x_{ij}^{\wcol}\mapsto x_{ij}^{\bcol}$
\item $I_G$ is invariant w.r.t.\ the homomorphism mapping $x_{ij}^{\wcol}\mapsto x_{ij}^{\bcol}$ and $x_{ij}^{\bcol}\mapsto x_{ij}^{\wcol}$.
\end{enumerate}
\item $\Cat_G$ is invariant w.r.t.\ $\wcol\leftrightarrow\bcol$. That is, $\Cat_G(\bar w_1,\bar w_2)=\Cat_G(w_1,w_2)$.
\end{enumerate}
\end{prop}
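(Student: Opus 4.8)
The plan is to reduce all three conditions to the single statement that one explicit involution $\sigma$ of the free $*$-algebra $\C\langle x_{ij},x_{ij}^*\rangle$ preserves the ideal $I_G$, and then to read this off on the level of $C(G)$ (giving~(1)) and on the level of $\Cat_G$ (giving~(3)) through Tannaka--Krein duality (Theorem~\ref{T.TK}). Since $F$ is invertible, the elements $x_{ij}^{\wcol}:=x_{ij}$ together with $x_{ij}^{\bcol}:=[F\bar xF^{-1}]_{ij}$ again freely generate $\C\langle x_{ij},x_{ij}^*\rangle$, so there is a unique algebra homomorphism $\sigma$ exchanging $x_{ij}^{\wcol}\leftrightarrow x_{ij}^{\bcol}$, and it is visibly an involution. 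The one genuinely technical point -- and the place where the standing hypothesis $F\bar F=c\,1_N$ is used essentially -- is that $\sigma$ is in fact a $*$-homomorphism: writing out both $\sigma(x_{ij}^*)$ and $\sigma(x_{ij})^*=(x_{ij}^{\bcol})^*$ in terms of $F$ and $\bar F$, the required equality $\sigma(x_{ij}^*)=\sigma(x_{ij})^*$ is seen to amount to $(F\bar F)^{-1}x(F\bar F)=x$, which holds because $F\bar F$ is a nonzero scalar matrix. The same computation (the one already indicated right after Definition~\ref{D.colinv}) shows conversely that a $*$-homomorphism with $x_{ij}^{\wcol}\mapsto x_{ij}^{\bcol}$ automatically sends $x_{ij}^{\bcol}\mapsto x_{ij}^{\wcol}$. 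Hence (2a) and (2b) refer to one and the same map $\sigma$, and both say precisely that $I_G$ is $\sigma$-invariant -- and $\sigma(I_G)\subseteq I_G$ already suffices for this, since $\sigma^2=\id$.

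For $(1)\Leftrightarrow(2)$: the assignment $u_{ij}\mapsto[F\bar uF^{-1}]_{ij}$ of Definition~\ref{D.colinv} is, pulled back along the quotient map $q\colon\C\langle x_{ij},x_{ij}^*\rangle\to O(G)$, just the composite $q\circ\sigma$ (using that $\sigma$ is a $*$-homomorphism to handle the starred generators). It therefore extends through $O(G)\subseteq C(G)$ to a $*$-endomorphism of $C(G)$ if and only if $q\circ\sigma$ kills $I_G$, i.e.\ $\sigma(I_G)\subseteq I_G$, that is, iff~(2) holds; and once it does, it restricts to an involution on the generators, hence is a $*$-automorphism of $C(G)$, which is~(1).

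For $(2)\Leftrightarrow(3)$ I would introduce the \emph{colour-inverted} quantum group $G'$, defined by $\Cat_{G'}(w_1,w_2):=\Cat_G(\bar w_1,\bar w_2)$. One checks this is a two-coloured representation category: axioms (1)--(4) are inherited from $\Cat_G$ because colour inversion is a monoid homomorphism of $\W$, and for~(5) the duality morphisms of $G'$ are those of $G$ with the two colours swapped, so the associated matrix from~\eqref{eq.F} is $F'=\bar F^{-1}$, which by $F\bar F=c\,1_N$ is a nonzero scalar multiple of $F$; hence $U^+(F')=U^+(F)$ and $G'\subseteq U^+(F)$ is well defined. By Corollary~\ref{C.idealgen}, $I_G$ is generated by the relations $u^{\otimes w}\xi=\xi$ with $\xi\in\Cat_G(\emptyset,w)$; applying $\sigma$, which entrywise turns the matrix $x^{\otimes w}$ into $x^{\otimes\bar w}$, the generators of $\sigma(I_G)$ become the relations $u^{\otimes w}\xi=\xi$ with $\xi\in\Cat_G(\emptyset,\bar w)=\Cat_{G'}(\emptyset,w)$, so $\sigma(I_G)=I_{G'}$. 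Consequently $I_G$ is $\sigma$-invariant iff $I_G=I_{G'}$, iff $G=G'$ (Proposition~\ref{P.subgr}), iff $\Cat_G=\Cat_{G'}$, i.e.\ $\Cat_G(w_1,w_2)=\Cat_G(\bar w_1,\bar w_2)$ for all $w_1,w_2$ -- which is~(3).

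I expect the only real obstacle to be the technical point in the first paragraph -- that $\sigma$ is $*$-preserving and self-inverse on the generators -- since this is the one essential use of $F\bar F=c\,1_N$ (its reappearance in the $(2)\Leftrightarrow(3)$ step, ensuring that $G'$ still lives in $U^+(F)$, is the same fact restated). Everything else is formal manipulation of the ideal--category correspondence.
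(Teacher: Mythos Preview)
Your proof is correct and follows essentially the same line as the paper's, which runs the cycle $(1)\Leftrightarrow(2\mathrm a)$, $(1)\Rightarrow(3)$, $(3)\Rightarrow(2\mathrm b)$, $(2\mathrm b)\Rightarrow(1)$ via the universal property of $C(G)$ and Tannaka--Krein. The one organizational difference worth flagging is your treatment of $(2)\Leftrightarrow(3)$: you introduce the auxiliary colour-inverted quantum group $G'$ with $\Cat_{G'}(w_1,w_2):=\Cat_G(\bar w_1,\bar w_2)$ and show $\sigma(I_G)=I_{G'}$, whereas the paper argues $(1)\Rightarrow(3)$ more directly by applying the automorphism of $C(G)$ to the intertwining relation $Tu^{\otimes w_1}=u^{\otimes w_2}T$. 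Your route makes explicit that the duality matrix for $G'$ is $F'=\bar F^{-1}=c^{-1}F$, so $G'$ really does sit in the same $U^+(F)$; the paper's route is marginally quicker since it never needs to verify that $\Cat_{G'}$ is itself a representation category. Both rely on the same essential computation (the one you isolate in your first paragraph) that $F\bar F=c\,1_N$ forces the colour-swap to be an involutive $*$-homomorphism.
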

\begin{proof}
The equivalence $\rm (1)\Leftrightarrow(2a)$ follows from the universal property of $C(G)$.

For $\rm (1)\Rightarrow(3)$, take $T\in\Cat_G(w_1,w_2)$, so $Tu^{\otimes w_1}=u^{\otimes w_2}T$. Applying the automorphism, we get $Tu^{\otimes\bar w_1}=u^{\otimes\bar w_2}T$, so $T\in\Cat_G(\bar w_1,\bar w_2)$.

For $\rm (3)\Rightarrow(2b)$, we use the Tannaka--Krein, namely the fact that $I_G$ is spanned by the relations of the form $Tx^{\otimes w_1}=x^{\otimes w_2}T$. Those relations are invariant with respect to the homomorphism $x^{\wcol}\mapsto x^{\bcol}$, $x^{\bcol}\mapsto x^{\wcol}$ since this homomorphism maps $x^{w}\mapsto x^{\bar w}$. Consequently, the whole ideal $I_G$ must be invariant with respect to this homomorphism.

The implication $\rm (2b)\Rightarrow(1)$ again follows from the universal property of $C(G)$. We get that $u^{\wcol}_{ij}\mapsto u^{\bcol}_{ij}$, $u^{\bcol}_{ij}\mapsto u^{\wcol}_{ij}$ extends to a~homomorphism $C(G)\to C(G)$. Using the assumption $F\bar F=c\,1_N$, we can show this actually must be a~$*$-homomorphism.
\end{proof}

As an example, note that all the universal unitary quantum groups $U^+(F)$ with $F\bar F=c\,1_N$ have this property. In addition, any quantum group $G\subset O^+(F)$ has this property.

\begin{thm}
\label{T.freecompchar}
Consider $G\subset U^+(F)$ with $F\bar F=c\,1_N$. Then $G$ is alternating and invariant with respect to the colour inversion if and only if it is of the form $G=H\tilstar\hat\Z$, where $H=G\cap O^+(F)$.
\end{thm}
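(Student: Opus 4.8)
The plan is to treat the two implications separately. For the backward implication, suppose $G=H\tilstar\hat\Z$ with $H=G\cap O^+(F)$; in particular $H\subset O^+(F)$, so by Proposition~\ref{P.degrefOG} its degree of reflection is $1$ or $2$ and Theorem~\ref{T.freecompid} applies with $l=0$. It gives that $I_G=I_{H\tilstar\hat\Z}$ is generated by the alternating polynomials in $I_H$, so $G$ is alternating. Moreover $H\subset O^+(F)$ is invariant with respect to the colour inversion (as noted after Proposition~\ref{P.colinv}), so by Proposition~\ref{P.colinv} the ideal $I_H$ is preserved by the homomorphism $\tau\colon x^{\wcol}_{ij}\leftrightarrow x^{\bcol}_{ij}$; since $\tau$ maps alternating polynomials to alternating polynomials it preserves the above generating set of $I_G$, hence $I_G$, and by Proposition~\ref{P.colinv} again $G$ is invariant with respect to the colour inversion.

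For the forward implication, put $H:=G\cap O^+(F)\subset O^+(F)$, with fundamental representation $v$, so that $I_H$ is generated by $I_G$ together with the relations $v^{\wcol}=v^{\bcol}$. Writing $z$ for the generator of $C^*(\Z)$, so that $C(H\tilstar\hat\Z)\subset C(H)*_\C C^*(\Z)$ is generated by the entries of $vz$, I want to show $u_{ij}\mapsto v_{ij}z$ is a $*$-isomorphism $C(G)\to C(H\tilstar\hat\Z)$. That it extends to a surjective $*$-homomorphism $\alpha$ uses that $G$ is alternating: it is enough to see each alternating $f\in I_G$ vanishes on $vz$, and as $I_G\subset I_H$ such $f$ vanish on $v$, while $f(v_{ij}z,z^*v_{ij}^*)$ equals $f(v_{ij},v_{ij}^*)$ if the monomials of $f$ begin with a non-star variable and $z^*f(v_{ij},v_{ij}^*)z$ if they begin with a star variable — zero either way, the $z$'s and $z^*$'s cancelling in pairs by the alternation.

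For injectivity I would construct, in the spirit of the proof of Theorem~\ref{T.tenscomp}, a $*$-homomorphism $\beta$ with $\beta\circ\alpha$ injective, the colour-inversion automorphism taking over the role of global colourization. Invariance under the colour inversion supplies a $*$-automorphism $\sigma$ of $C(G)$ with $\sigma(u^{\wcol}_{ij})=u^{\bcol}_{ij}$ and, using $F\bar F=c\,1_N$, $\sigma(u^{\bcol}_{ij})=u^{\wcol}_{ij}$, so $\sigma^2=\id$. Let $\iota\colon C(G)\to M_2(C(G))$ be the faithful embedding $c\mapsto\mathrm{diag}(c,\sigma(c))$ and let $\zeta\in M_2(\C)$ be the coordinate flip, a self-adjoint unitary with $\zeta\,\iota(c)\,\zeta=\iota(\sigma(c))$. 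Define $\beta\colon C(H)*_\C C^*(\Z)\to M_2(C(G))$ by $z\mapsto\zeta$ and $v_{ij}\mapsto v'_{ij}:=\iota(u_{ij})\zeta=\left(\begin{smallmatrix}0&u^{\wcol}_{ij}\\u^{\bcol}_{ij}&0\end{smallmatrix}\right)$, the same matrix as in the proof of Theorem~\ref{T.tenscomp}. Since the free product imposes no cross relations, $\beta$ exists as soon as $\zeta$ is unitary and $v'$ satisfies $I_H$: the matrices $v'$ and $F\bar{v'}F^{-1}$ are unitary by direct computation, $F\bar{v'}F^{-1}=v'$ follows from $F\bar F=c\,1_N$ exactly as in Theorem~\ref{T.tenscomp}, and for alternating $f\in I_G$ one computes $f(v'_{ij},(v'_{ij})^*)$ to be $\iota(f(u_{ij},u_{ij}^*))$ or $\iota(\sigma(f(u_{ij},u_{ij}^*)))$ according as the monomials of $f$ begin with a non-star or a star variable (the $\zeta$'s again pairing off), both zero; so $v'$ satisfies $I_G$, hence $I_H$. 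Finally $\beta(v_{ij}z)=v'_{ij}\zeta=\iota(u_{ij})\zeta^2=\iota(u_{ij})$, so $\beta\circ\alpha=\iota$ is injective, $\alpha$ is an isomorphism, and $G=H\tilstar\hat\Z$.

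The hard part is the construction of $\beta$, where the free case genuinely departs from the tensor case: in the tensor setting one is forced to take a nontrivial scalar $y=\xi^*(u\otimes u)\xi$ (to make the images of $v$ and $z$ commute) whose good behaviour rests on global colourization, whereas the free product removes the commutation constraint and $z'$ may be taken to be the constant flip $\zeta$ — but then one must instead verify that the off-shape generator $v'$ still satisfies the whole ideal $I_G$, and this is precisely where both hypotheses on $G$ enter: alternation reduces the check to alternating relations (so that the $\zeta$'s cancel in pairs) and colour-inversion invariance provides $\sigma$ (so that the relations beginning with a star variable are annihilated as well).
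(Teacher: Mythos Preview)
Your proof is correct and follows essentially the same strategy as the paper: both directions are argued in the same way, and for injectivity the paper constructs exactly the same $\beta$ with $v'_{ij}=\left(\begin{smallmatrix}0&u^{\wcol}_{ij}\\u^{\bcol}_{ij}&0\end{smallmatrix}\right)$ and $z'=\left(\begin{smallmatrix}0&1\\1&0\end{smallmatrix}\right)$. Your packaging via the colour-inversion automorphism $\sigma$ and the twisted diagonal embedding $\iota(c)=\mathrm{diag}(c,\sigma(c))$ is a clean conceptual explanation of why $\beta\circ\alpha$ is injective (namely $\beta\circ\alpha=\iota$), but the underlying computation is identical to the paper's.
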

\begin{proof}
The right-left implication follows from Theorem~\ref{T.freecompid}: The fact that $H\tilstar\hat\Z$ is alternating is precisely the statement of Theorem~\ref{T.freecompid}. As we mentioned above, $H\subset O^+(F)$ is surely invariant with respect to the colour inversion. According to Proposition~\ref{P.colinv}, this is equivalent to saying that the associated category $\Cat_H$ is invariant with respect to the colour inversion. In particular, we must have $\Cat_H(\emptyset,(\wcol\bcol)^j)=\Cat_H(\emptyset,(\bcol\wcol)^j)$, which are the generators of $\Cat_{H\tilstar\hat\Z}$ according to Theorem~\ref{T.freecompid}. Consequently, also $H\tilstar\hat\Z$ must be invariant with respect to the colour inversion.

In order to prove the left-right implication, we construct a~surjective $*$-homo\-mor\-phism
$$\alpha\colon C(G)\to C(H\tilstar\Z)$$
mapping $u_{ij}\mapsto u_{ij}':=v_{ij}z$. To prove that such a~homomorphism exists, take any alternating element $f\in I_G$. Since $H\subset G$, we have $f(v_{ij})=0$. We need to prove that $f(u_{ij}')=0$. If all terms of $f$ start with a~non-star variable, then $f(u_{ij}')=f(v_{ij}z)=f(v_{ij})=0$; if all terms start with a~star variable, then $f(u_{ij}')=z^*f(v_{ij})z=0$.

It remains to prove that $\alpha$~is injective. To do that, we define a~$*$\hbox{-}homomorphism
$$\beta\colon C(H)*_\C C^*(\Z)\to M_2(C(G))$$
mapping
$$v_{ij}\mapsto v_{ij}':=\begin{pmatrix}0&u_{ij}^{\wcol}\\ u_{ij}^{\bcol}&0\end{pmatrix},\quad z\mapsto z':=\begin{pmatrix}0&1\\ 1&0\end{pmatrix}.$$
We immediately see that indeed $z'z'^*=z'^*z'=1$. In exactly the same way as in the proof of Theorem~\ref{T.tenscomp}, we also prove that $v'^{\bcol}:=(1_2\otimes F)\bar v'(1_2\otimes F^{-1})=v'=:v'^{\wcol}$. Finally, take $f\in I_G$ and, for convenience, use the representation in variables $u_{ij}^{\wcol}$ and~$u_{ij}^{\bcol}$. Suppose $f(x_{ij}^{\wcol},x_{ij}^{\bcol})$ is alternating such that all variables start with~$x_{ij}^{\wcol}$. We have
$$f(v_{ij}',v_{ij}')=\begin{pmatrix}f(u_{ij}^{\wcol},u_{ij}^{\bcol})&0\\ 0&f(u_{ij}^{\bcol},u_{ij}^{\wcol})\end{pmatrix}=0,$$
where $f(u_{ij}^{\wcol},u_{ij}^{\bcol})=0$ directly by $f\in I_G$ and $f(u_{ij}^{\bcol},u_{ij}^{\wcol})$ by invariance under the colour inversion.

Since obviously $\beta\circ\alpha$ is injective, we have proven that $\alpha$ is a~$*$\hbox{-}isomorphism.
\end{proof}

Considering a~quantum group $H=(C(H),v)\subset O^+(F)$, we have $H\tiltimes\hat\Z_k\subset H\tiltimes\hat\Z\subset H\tilstar\hat\Z$. We express those subgroups in terms of relations in the variables $u_{ij}=v_{ij}z$. Of course, those subgroups are given by the relations $v_{ij}z=zv_{ij}$ and $z^k=1$, but those may not be well-defined in $C(H\tilstar\hat\Z)$ as we may not have $z\in C(H\tilstar\hat\Z)$.

\begin{prop}
\label{P.tiltenrel}
Consider $H\subset O^+(F)$. Then $H\tiltimes\hat\Z$ is a~quantum subgroup of $H\tilstar\hat\Z$ given by the relation
\begin{equation}
\label{eq.freetotensor}
u_{ij}^{\wcol}u_{kl}^{\bcol}=u_{ij}^{\bcol}u_{kl}^{\wcol}.
\end{equation}
For $k\in\N$, $H\tiltimes\hat\Z_{2k}$ is a~quantum subgroup of $H\tiltimes\hat\Z$ with respect to the relation
\begin{equation}
\label{eq.0tok}
u_{i_1j_1}^{\wcol}\cdots u_{i_kj_k}^{\wcol}=u_{i_1j_1}^{\bcol}\cdots u_{i_kj_k}^{\bcol}.
\end{equation}
\end{prop}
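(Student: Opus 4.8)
The plan is to prove both statements by the same scheme: first check that the displayed relation already holds in the smaller quantum group (so the smaller group is a quantum subgroup of the one cut out by the relation), then bound the representation category of the latter from below using Theorem~\ref{T.tenscomp2}(2) to compute the target category, and conclude equality. Since $H\subset O^+(F)$ I write $v^{\wcol}=v^{\bcol}=:v$. In $H\tiltimes\hat\Z$ and in $H\tiltimes\hat\Z_{2k}$ the fundamental representation is $u=vz$ with $z$ central, hence $u^{\wcol}=vz$ and $u^{\bcol}=F\bar uF^{-1}=vz^{*}$; a direct computation then gives $u^{\wcol}_{ij}u^{\bcol}_{kl}=v_{ij}v_{kl}=u^{\bcol}_{ij}u^{\wcol}_{kl}$, and also $u^{\wcol}_{i_1j_1}\cdots u^{\wcol}_{i_kj_k}=v_{i_1j_1}\cdots v_{i_kj_k}z^{k}$ versus $u^{\bcol}_{i_1j_1}\cdots u^{\bcol}_{i_kj_k}=v_{i_1j_1}\cdots v_{i_kj_k}z^{-k}$, which agree in $C^{*}(\Z_{2k})$. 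Together with the evident inclusions $H\tiltimes\hat\Z\subset H\tilstar\hat\Z$ and $H\tiltimes\hat\Z_{2k}\subset H\tiltimes\hat\Z$ (the tensor product being a quotient of the free product, resp.\ $C^{*}(\Z)\to C^{*}(\Z_{2k})$), this gives ``$\subset$'' in both parts.

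For the first part let $G'$ be the quantum subgroup of $H\tilstar\hat\Z$ cut out by \eqref{eq.freetotensor}. By construction $G'$ is globally colourized, so by Proposition~\ref{P.globcol} the space $\Cat_{G'}(\emptyset,w)$ depends only on $|w|$ and $c(w)$. I claim $\Cat_{G'}(\emptyset,w)\supset\Cat_{H\tiltimes\hat\Z}(\emptyset,w)$ for every $w$; by Theorem~\ref{T.tenscomp2}(2) with $k=0$ the right-hand side is $\{0\}$ unless $c(w)=0$, in which case it is $\Cat_{H}(\emptyset,w)$, so only $c(w)=0$ needs attention. There global colourization identifies $\Cat_{G'}(\emptyset,w)$ with $\Cat_{G'}(\emptyset,(\wcol\bcol)^{|w|/2})$, which by $G'\subset H\tilstar\hat\Z$ (Proposition~\ref{P.subgr}) contains $\Cat_{H\tilstar\hat\Z}(\emptyset,(\wcol\bcol)^{|w|/2})\supset\Cat_{H}(\emptyset,(\wcol\bcol)^{|w|/2})=\Cat_{H}(\emptyset,w)$, where the second inclusion is Theorem~\ref{T.freecompid} (through its ``$k=1$, $l=0$'' clause, which is what is needed if $H$ happens to have degree of reflection one) and the equality is Remark~\ref{R.OGcat}. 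As the spaces $\Cat(\emptyset,\cdot)$ generate a representation category, this yields $\Cat_{G'}\supset\Cat_{H\tiltimes\hat\Z}$, i.e.\ $G'\subset H\tiltimes\hat\Z$, and hence $G'=H\tiltimes\hat\Z$.

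For the second part let $G''$ be the quantum subgroup of $H\tiltimes\hat\Z$ cut out by \eqref{eq.0tok}; being a quotient of the globally colourized $H\tiltimes\hat\Z$ it is itself globally colourized, and \eqref{eq.0tok} says exactly that $u^{\otimes\wcol^{k}}=u^{\otimes\bcol^{k}}$ in $C(G'')$, i.e.\ $1_{N^{k}}\in\Cat_{G''}(\wcol^{k},\bcol^{k})$. Fix $w$ with $c(w)\in 2k\Z$. Global colourization lets me permute the letters of $w$ freely, and the identity $u^{\otimes\wcol^{k}}=u^{\otimes\bcol^{k}}$ lets me replace a block $\wcol^{k}$ by $\bcol^{k}$; both are genuine equalities of matrices over $C(G'')$, so they leave $\Mor(1,u^{\otimes w})$ unchanged. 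Combining them, $w$ can be transformed (keeping its length) into a word $w'$ with $c(w')=0$, whence $\Cat_{G''}(\emptyset,w)=\Cat_{G''}(\emptyset,w')\supset\Cat_{H\tiltimes\hat\Z}(\emptyset,w')=\Cat_{H}(\emptyset,w')=\Cat_{H}(\emptyset,w)$ by $G''\subset H\tiltimes\hat\Z$ (Proposition~\ref{P.subgr}), Theorem~\ref{T.tenscomp2}(2) with $k=0$, and Remark~\ref{R.OGcat}. When $c(w)\notin 2k\Z$ the space $\Cat_{H\tiltimes\hat\Z_{2k}}(\emptyset,w)$ is $\{0\}$ by Theorem~\ref{T.tenscomp2}(2). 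In both cases $\Cat_{G''}(\emptyset,w)\supset\Cat_{H\tiltimes\hat\Z_{2k}}(\emptyset,w)$, so $\Cat_{G''}\supset\Cat_{H\tiltimes\hat\Z_{2k}}$, i.e.\ $G''\subset H\tiltimes\hat\Z_{2k}$, and hence equality.

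I expect the only delicate point to be the bookkeeping in the last paragraph: one must make sure that at each stage a block of $k$ equally coloured boxes is actually available for the substitution $\wcol^{k}\leftrightarrow\bcol^{k}$, which is where one uses that after permuting $w$ into the form $\wcol^{a}\bcol^{b}$ one has $a,b\ge 0$ (because $|c(w)|\le|w|$) and, since $c(w)\in 2k\Z$, enough white (or black) boxes to drive $c$ down to $0$ in steps of $2k$. Apart from this, the first part is just the degenerate case $k=1$ of the same mechanism, where global colourization by itself already collapses every relevant word to a power of $\wcol\bcol$.
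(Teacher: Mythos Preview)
Your proof is correct and follows essentially the same approach as the paper's. Both arguments check that the displayed relations hold in the target quantum group, and then show that every generating relation of $I_{H\tiltimes\hat\Z}$ (resp.\ $I_{H\tiltimes\hat\Z_{2k}}$)---equivalently, every intertwiner in $\Cat_{H\tiltimes\hat\Z}(\emptyset,w)$ (resp.\ $\Cat_{H\tiltimes\hat\Z_{2k}}(\emptyset,w)$)---can be obtained from the alternating relation $u^{\otimes(\wcol\bcol)^{l}}\xi=\xi$ by permuting colours via \eqref{eq.freetotensor} and, for the second part, swapping blocks of $k$ colours via \eqref{eq.0tok}. The only cosmetic difference is that the paper phrases this at the level of ideals and derives the relations directly, while you phrase it categorically and invoke Theorem~\ref{T.freecompid} for the inclusion $\Cat_{H}(\emptyset,(\wcol\bcol)^{l})\subset\Cat_{H\tilstar\hat\Z}(\emptyset,(\wcol\bcol)^{l})$; the paper instead checks this by the one-line computation $u^{\otimes(\wcol\bcol)^{l}}=(vz)(vz^{*})\cdots=v^{\otimes 2l}$.
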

Before proving the statement, note that Relations \eqref{eq.freetotensor} and~\eqref{eq.0tok} correspond to the two-coloured partitions $\idpart[b/w]\otimes\idpart[w/b]$ and $\idpart[b/w]^{\otimes k}$, respectively. Hence, those are exactly the same relations that were used to construct the quantum groups $H\tiltimes\hat\Z$ and $H\tiltimes\hat\Z_{2k}$ in \cite[Theorem~5.1]{Gro18}.
\begin{proof}
Relation \eqref{eq.freetotensor} is the relation of the global colourization (see Def.~\ref{D.globcol}) and it is obviously satisfied in $H\tiltimes\hat\Z$. We just need to show that imposing this relation is enough. By Corollary~\ref{C.idealgen} and Theorem~\ref{T.tenscomp2}, the ideal $I_{H\tiltimes\hat\Z}$ is generated by relations of the form $u^{\otimes w}\xi=\xi$, where $\xi\in\Cat_{H\tiltimes\hat\Z}(\emptyset,w)=\Cat_H(\emptyset,2l)$, $c(w)=0$, $l:=|w|/2$. In $H\tilstar\hat\Z$, we have a~relation of the form $u^{\otimes (\wcol\bcol)^l}\xi=\xi$. The former relation can surely be derived from the latter one and Relation~\eqref{eq.freetotensor} since it is obtained just by permuting the white and black circles.

The second statement is proven in a~similar way. If we denote $u_{ij}=v_{ij}z$, then Relations~\eqref{eq.0tok} say $v_{i_1j_1}\cdots v_{i_kj_k}z^k=v_{i_1j_1}\cdots v_{i_kj_k}z^{-k}$, which is surely satisfied in $H\tiltimes\hat\Z_{2k}$. For the converse, the ideal $I_{H\tiltimes\hat\Z_k}$ is generated by relations of the form $u^{\otimes w}\xi=\xi$, where $\xi\in\Cat_{H\tiltimes\hat\Z}(\emptyset,w)=\Cat_H(0,2l)$, where $c(w)$ is a~multiple of $2k$ and $l:=|w|/2$. Again, this relation can be derived from $u^{\otimes (\wcol\bcol)^l}\xi=\xi$ using Rel.~\eqref{eq.freetotensor} to permute colours and Rel.~\eqref{eq.0tok} to swap colours of $k$ consecutive white points to black or vice versa.
\end{proof}

\subsection{Free complexification and partition categories}

In sections \ref{secc.globcol}--\ref{secc.tenscomp}, we generalized results that were formulated in the language of categories of partitions in \cite{Gro18}. In contrast, Sections \ref{secc.freecomp}--\ref{secc.freecomp2} were rather new. Hence, it is interesting to look on the special case of easy and non-easy categories of partitions.

We will not recall the theory of partition categories here. Regarding the original definition of categories of partitions as defined in \cite{BS09}, we refer to the survey \cite{Web17}. See also \cite{TW18} for the definition of two-coloured categories of partitions and \cite{GW20} for the definition of linear categories of partitions. Also see \cite{GW19} for the definition of the two-coloured category $\Alt\Cat$ associated to a~given non-coloured category~$\Cat$. Alternatively, everything is summarized in the thesis~\cite{Gro20}.

\begin{prop}
\label{P.AltZ}
Let $\Kat\subset\Part\nlin$ be a~linear category of partitions and denote by $H\subset O_N^+$ the corresponding quantum group. Then $H\tilstar\hat\Z\subset U_N^+$ corresponds to the category $\Alt\Kat$. Moreover, the following holds.
\begin{enumerate}
\item If $0\neq p\in\Kat(0,l)$ for some $l$ odd, then $H\tilstar\hat\Z_k$ corresponds to the category $\langle\Alt\Kat,\tilde p^{\otimes k}\rangle$, where $\tilde p$ is the partition~$p$ with colour pattern $\wcol\bcol\wcol\bcol\cdots\wcol$.
\item If $\Kat(0,l)=\{0\}$ for all $l$ odd, then $H\tilstar\hat\Z_k=H\tilstar\hat\Z$ for all $k\in\N$.
\end{enumerate}
\end{prop}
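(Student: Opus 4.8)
The plan is to reduce everything to the representation-category statements of Theorem \ref{T.freecompid} and the general gluing machinery of Proposition \ref{P.repglue}, together with the functorial correspondence between (linear) categories of partitions and their associated intertwiner spaces. First I would recall that for $H\subset O_N^+$ corresponding to $\Kat$, the fixed-point spaces are $\Cat_H(\emptyset,2m)=\mathrm{span}\,\Kat(0,2m)$ (and are zero on odd objects exactly when $\Kat(0,l)=\{0\}$ for $l$ odd). Then I note the dichotomy of Proposition \ref{P.degrefOG}: either $H$ has degree of reflection two (the case $\Kat(0,l)=\{0\}$ for all odd $l$) or degree of reflection one (the case some $0\neq p\in\Kat(0,l)$, $l$ odd). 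The main claim $H\tilstar\hat\Z\leftrightarrow\Alt\Kat$ then follows by comparing generators: by Theorem \ref{T.freecompid}, $\Cat_{H\tilstar\hat\Z}$ is the wide subcategory of $\Cat_H$ generated by the $\Cat_H(\emptyset,(\wcol\bcol)^j)=\mathrm{span}\,\Kat(0,2j)$ with alternating colour pattern $\wcol\bcol\cdots$; by the definition of $\Alt\Kat$ in \cite{GW19}, this is precisely the category of partitions whose linear span is $\Cat_{H\tilstar\hat\Z}$. (One should check that Theorem \ref{T.freecompid} applies: if $H$ has degree of reflection one this is the clause "$k=1$ and $l=0$", and if degree of reflection two it is the generic $k\neq 1$ case.)

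For item (2), if $\Kat(0,l)=\{0\}$ for all odd $l$, then $H$ has degree of reflection $k=2\neq 1$, so Theorem \ref{T.freecompid} directly gives that all $H\tilstar\hat\Z_l$ coincide for $l\in\N_0\setminus\{1\}$, in particular $H\tilstar\hat\Z_k=H\tilstar\hat\Z$ for every $k\in\N$; this is essentially immediate.

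For item (1), the degree of reflection of $H$ is one, so Theorem \ref{T.freecompid} in the bare form does not control $H\tilstar\hat\Z_k$ for $k\in\N$. Here I would use Lemma \ref{L.Gtilstar0}: writing $H\tilstar\hat\Z=(H\tiltimes\hat\Z_k)\tilstar\hat\Z_l$ is not quite what is needed; instead, the clean route is that $H\tiltimes\hat\Z_{2k}$ has degree of reflection $2k\neq 1$, hence by Theorem \ref{T.freecompid} applied to $H\tiltimes\hat\Z_{2k}$ one gets $H\tilstar\hat\Z_k=(H\tiltimes\hat\Z_{2k})\tilstar\hat\Z_k$... — but one must be careful that the relevant complexification lands on the right cyclic group. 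The conceptually simplest approach is categorical: $H\tilstar\hat\Z_k$ is, by Proposition \ref{P.repglue}, the glued version of $H*\hat\Z_k$, and using the inductive description of $\Cat_{H*\hat\Z_k}$ from Proposition \ref{P.starind} (in the refined form of Lemma \ref{L.starind}) together with the presence of the odd fixed point $p$ (whose alternating-coloured version $\tilde p$ lives in $\Cat_{H\tilstar\hat\Z_k}(\emptyset,(\wcol\bcol)^{(l-1)/2}\wcol)$), one shows that $\Cat_{H\tilstar\hat\Z_k}$ is generated by the alternating even fixed points of $H$ together with $\tilde p^{\otimes k}$: indeed the relation $z^k=1$ in $\hat\Z_k$ translates, after gluing, exactly into the statement that the degree-$k$ power $\tilde p^{\otimes k}$ (a morphism of $c$-value $k\cdot l\equiv 0$) becomes a fixed point, and no weaker relation on $z$ is imposed. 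Translating back to partitions via $\langle\,\cdot\,\rangle$ gives $\langle\Alt\Kat,\tilde p^{\otimes k}\rangle$.

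The step I expect to be the main obstacle is making item (1) precise: one must verify that $\tilde p^{\otimes k}$ (and its rotations/contractions) genuinely suffices to generate all of $\Cat_{H\tilstar\hat\Z_k}$, i.e. that the only new intertwiners created by passing from $H\tilstar\hat\Z$ to $H\tilstar\hat\Z_k$ come from $z^k=1$ and nothing is missed. This amounts to running the induction of Theorem \ref{T.freecompid}'s proof but now allowing the blocks $w_i$ in the decomposition $\tilde w=w_0\tcol w_1\tcol\cdots\tcol w_l$ to have odd $c$-value (since $k=1$), and showing that each odd block forces an appearance of (a rotation of) $p$, which is alternating-colourable because $H$ is globally colourized as a subgroup of $O_N^+$; the combinatorial bookkeeping of how the $k$-fold product assembles is where the care is needed. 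A cleaner alternative, which I would try first, is to invoke Proposition \ref{P.tiltenrel}: it already identifies $H\tiltimes\hat\Z$ and $H\tiltimes\hat\Z_{2k}$ inside $H\tilstar\hat\Z$ by explicit relations corresponding to partitions, and since the odd $p$ lets us reach degree of reflection one, the relation $z^k=1$ inside $H\tilstar\hat\Z_k$ should correspond exactly to adjoining $\tilde p^{\otimes k}$ — the partition-level statement then matches \cite[Theorem~5.1]{Gro18} (or its analogue), completing the identification.
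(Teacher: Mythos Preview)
Your treatment of the base statement and of item~(2) matches the paper's: both follow directly from Theorem~\ref{T.freecompid} together with Proposition~\ref{P.degrefOG}, and you have identified this correctly.

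For item~(1), however, you are missing the key simplification that the paper uses. When $H$ has degree of reflection one, $H$ is in particular globally colourized (being a subgroup of $O_N^+$), so Proposition~\ref{P.Gfreeprodsim} applies and gives $H\tilstar\hat\Z_m\simeq H*\hat\Z_m$ for every $m\in\N_0$. Concretely, the odd fixed vector $\xi_p$ with alternating colour word $w=\wcol\bcol\cdots\wcol$ yields $\xi_p^*(vz)^{\otimes w}\xi_p=\xi_p^*v^{\otimes l}\xi_p\,z=\norm{\xi_p}^2z$, so $z$ lies in $C(H\tilstar\hat\Z)$. Hence $H\tilstar\hat\Z_k$ is simply the quantum subgroup of $H\tilstar\hat\Z$ determined by the single well-defined relation $z^k=1$. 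One then computes
\[
u^{\otimes w^k}\xi_p^{\otimes k}=(v^{\otimes kl}z^k)\xi_p^{\otimes k}=\xi_p^{\otimes k}z^k,
\]
so $z^k=1$ is equivalent to $u^{\otimes w^k}\xi_p^{\otimes k}=\xi_p^{\otimes k}$, which is precisely the relation encoded by $\tilde p^{\otimes k}$. This is the whole argument; no induction on the glued words is needed.

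Your two proposed routes for item~(1) both miss this. Your first route---re-running the induction of Theorem~\ref{T.freecompid} with odd blocks allowed---could in principle be made to work, but you correctly flag the bookkeeping as the obstacle, and it is genuinely more delicate than necessary. Your second route via Proposition~\ref{P.tiltenrel} is off target: that proposition locates the \emph{tensor} complexifications $H\tiltimes\hat\Z_{2k}$ inside $H\tilstar\hat\Z$, not the \emph{free} complexifications $H\tilstar\hat\Z_k$; the relations it produces are different, and it does not by itself give you that $z^k=1$ is expressible inside $C(H\tilstar\hat\Z)$. The missing ingredient in both routes is precisely Proposition~\ref{P.Gfreeprodsim}, which makes $z$ available and collapses the problem to a one-line computation.
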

\begin{proof}
The base statement that $H\tilstar\hat\Z$ corresponds to $\Alt\Kat$ follows directly from Theorem~\ref{T.freecompid}. By Proposition~\ref{P.degrefOG}, the distinction of the cases correspond to the situation that either (1)~$H$~has degree of reflection one or (2)~$H$~has degree of reflection two. So, item~(2) is also contained in Theorem~\ref{T.freecompid}.

For item~(1), denote by $w:=\wcol\bcol\cdots\wcol$ the word of length~$l$ with alternating colours. Since $H$ has degree of reflection one, we have $H\tilstar\hat\Z_m\simeq H*\hat\Z_m$ for every $m\in\N_0$ by Proposition~\ref{P.Gfreeprodsim}. We can actually prove this directly repeating the proof of Prop.~\ref{P.Gfreeprodsim}: Denote by~$v$ the fundamental representation of~$H$ and by~$z$ the fundamental representation of $\hat\Z_m$. Then since we have $v^{\otimes l}\xi_p=\xi_p$, we must have
$$C(H\tiltimes\hat\Z_m)\owns\xi_p^*u^{\otimes w}\xi_p=\xi_p^*(v^{\otimes l}z)\xi_p=z\norm{\xi_p}.$$
Now, $H\tilstar\hat\Z_k$ is just a~quantum subgroup of $H\tilstar\hat\Z$ with respect to the relation $z^k=1$. Note that
$$u^{\otimes w^k}\xi_p^{\otimes k}=(v^{\otimes kl}z^k)\xi_p^{\otimes k}=\xi_p^{\otimes k}z^k.$$
So, the relation $z^k=1$ can be written as $u^{\otimes w^k}\xi_p^{\otimes k}=\xi_p^{\otimes k}$, which is exactly the relation corresponding to~$\tilde p$.
\end{proof}

\begin{prop}
\label{P.AltZeasy}
Let $\Cat\subset\Part$ be a~category of partitions and denote by $G\subset O_N^+$ the corresponding easy quantum group. Then $G\tilstar\hat\Z$ is a~unitary easy quantum group corresponding to the category $\Alt\Cat$.
\begin{enumerate}
\item If $\singleton\not\in\Cat$, then $G\tilstar\hat\Z_k=G\tilstar\hat\Z$ for all $k\in\N$ and it corresponds to the category $\Alt\Cat$.
\item If $\singleton\in\Cat$, then $G\tilstar\hat\Z_k$ corresponds to the category $\langle\Alt\Cat,\singleton[w]^{\otimes k}\rangle$.
\end{enumerate}
\end{prop}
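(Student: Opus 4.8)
The plan is to deduce this from Proposition~\ref{P.AltZ} by specializing to the linear category $\Kat := \spanlin\Cat \subset \Part\nlin$. First I would note that the easy quantum group $G \subset O_N^+$ attached to $\Cat$ is literally the quantum group attached to $\Kat$, since $\Cat_G(w_1,w_2) = \spanlin\{T_p \mid p \in \Cat(|w_1|,|w_2|)\} = \{T_p \mid p \in \Kat(|w_1|,|w_2|)\}$. Proposition~\ref{P.AltZ} then already gives that $G\tilstar\hat\Z$ corresponds to $\Alt\Kat$. Because $\Kat$ is spanned by partitions, $\Alt\Kat$ is spanned by the alternating two-colourings of those partitions, i.e.\ $\Alt\Kat = \spanlin\Alt\Cat$; in particular $G\tilstar\hat\Z$ is a unitary easy quantum group whose category of partitions is $\Alt\Cat$. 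This settles the base statement and reduces the two items to identifying which case of Proposition~\ref{P.AltZ} applies.

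Next I would establish the equivalence
$$\Kat(0,l)\neq\{0\}\ \text{for some odd}\ l\quad\iff\quad\singleton\in\Cat.$$
Since $\Kat(0,l)=\spanlin\Cat(0,l)$, the left-hand side just says $\Cat(0,l)\neq\emptyset$ for some odd $l$. The implication ``$\Leftarrow$'' is immediate with $l=1$. For ``$\Rightarrow$'', given $p\in\Cat(0,l)$ with $l\geq 3$ odd, I would cap off two neighbouring legs of $p$ — composing with a rotated pair partition and deleting a possible closed loop — to obtain a partition in $\Cat(0,l-2)$, and iterate down to a nonzero element of $\Cat(0,1)$, which can only be the singleton. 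This elementary capping argument is the one genuinely combinatorial ingredient.

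Finally I would read off the two cases. If $\singleton\notin\Cat$, then $\Kat(0,l)=\{0\}$ for all odd $l$, so item~(2) of Proposition~\ref{P.AltZ} yields $G\tilstar\hat\Z_k=G\tilstar\hat\Z$ for every $k\in\N$, with category $\Alt\Cat$ by the first paragraph. If $\singleton\in\Cat$, I apply item~(1) of Proposition~\ref{P.AltZ} with the explicit choice $p=\singleton\in\Kat(0,1)$: the alternating colour pattern $\wcol\bcol\cdots\wcol$ on a one-point partition is simply $\wcol$, so $\tilde p=\singleton[w]$ and $\tilde p^{\otimes k}=\singleton[w]^{\otimes k}$; hence $G\tilstar\hat\Z_k$ corresponds to $\langle\Alt\Kat,\singleton[w]^{\otimes k}\rangle=\langle\Alt\Cat,\singleton[w]^{\otimes k}\rangle$, which is again generated by partition morphisms and therefore easy. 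The main obstacle here is purely organizational — keeping straight that $\Alt$ commutes with taking linear spans of partitions, and that the witnessing partition in case~(1) may be taken to be $\singleton$ — together with the capping lemma; no input beyond Proposition~\ref{P.AltZ} is required.
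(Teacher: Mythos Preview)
Your proposal is correct and follows exactly the paper's approach: the paper's proof consists of the single line ``This is just a reformulation of Proposition~\ref{P.AltZ} to the easy case. Note that $\Cat$ contains some partition of odd length if and only if it contains the singleton~$\singleton$.'' You have simply unpacked both assertions, supplying the capping argument for the odd-length $\Leftrightarrow$ singleton equivalence and making explicit the passage $\Kat=\spanlin\Cat$, $\Alt\Kat=\spanlin\Alt\Cat$; nothing further is needed.
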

\begin{proof}
This is just a reformulation of Proposition \ref{P.AltZ} to the easy case. Note that $\Cat$ contains some partition of odd length if and only if it contains the singleton~$\singleton$.
\end{proof}

\section{Ungluing}
\label{sec.unglue}

The purpose of this section is to reverse the gluing procedure from Definition~\ref{D.gluedver}. The motivating result is the one-to-one correspondence formulated in terms of partition categories in \cite[Theorem~4.10]{GW19}. According to \cite[Proposition~4.15]{GW19}, the functor~$F$ providing this correspondence translates to the quantum group language exactly in terms of gluing.

Recall from Def.~\ref{D.gluedver} that given a~quantum group $G\subset U^+(F)*\hat\Z_k$ with fundamental representation $u=v\oplus z$, we define its glued version to be the quantum group $\tilde G\subset U^+(F)$ with fundamental representation $\tilde u:=vz$ and underlying C*-algebra $C(\tilde G)\subset C(G)$ generated by the elements $v_{ij}z$.

\begin{defn}
Consider $\tilde G\subset U^+(F)$, $k\in\N_0$. Then any $G\subset U^+(F)* \hat\Z_k$ such that $\tilde G$ is a~glued version of~$G$ is called a~\emph{$\Z_k$-ungluing} of~$\tilde G$.
\end{defn}

In Section~\ref{secc.genunglue}, we are going to study the ungluings in general and show that they always exist. Unsurprisingly, an ungluing of a~quantum group is not defined uniquely. The ungluings introduced in Section~\ref{secc.genunglue} are universal, but not particularly interesting. In Section \ref{secc.OGunglue}, we are going to study more interesting ungluings of the form $G\subset O^+(F)*\hat\Z_2$, which allow us to generalize the above mentioned one-to-one correspondence. We formulate the result as Theorem~\ref{T.gluecorr}, which constitutes the main result of this section.

\subsection{General ungluings}
\label{secc.genunglue}

\begin{prop}
\label{P.unglueex}
There exists a~$\Z_k$-ungluing for every quantum group~$\tilde G$ and for every $k\in\N_0$. Namely, we have the \emph{trivial} $\Z_k$-ungluing $\tilde G\times E$, where $E\subset\hat\Z_k$ is the trivial group. Moreover, $\tilde G\times\hat\Z_k$ is an ungluing of $\tilde G$ whenever $k$ divides the degree of reflection of $G$.
\end{prop}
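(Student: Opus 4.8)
The plan is to verify each of the three claims directly from Definition~\ref{D.gluedver}, computing the relevant glued versions. First I would fix notation: let $\tilde G=(C(\tilde G),\tilde u)\subset U^+(F)$ with $\tilde u\in M_N$, and let $z$ denote the generator of $C^*(\Z_k)$. For the trivial ungluing, recall $E\subset\hat\Z_k$ is the trivial group, so $C(E)=\C$ with fundamental representation $1\in M_1(\C)$. Then $\tilde G\times E$ has underlying C*-algebra $C(\tilde G)\otimes_{\rm max}\C\simeq C(\tilde G)$ and fundamental representation $\tilde u\oplus 1$. Its glued version is, by definition, the quantum group with C*-algebra generated by the elements $\tilde u_{ij}\cdot 1=\tilde u_{ij}$ and fundamental representation $\tilde u\otimes 1=\tilde u$. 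But this is exactly $\tilde G$, so $\tilde G\times E$ is a $\Z_k$-ungluing of $\tilde G$. Strictly speaking one should note $\tilde G\times E\subset U^+(F)*\hat\Z_k$: since $C(E)=\C$ is a quotient of $C^*(\Z_k)$, we have $C(\tilde G)*_\C\C\simeq C(\tilde G)$ is a quotient of $C(\tilde G)*_\C C^*(\Z_k)$ compatibly with the comultiplications, hence $\tilde G\times E=\tilde G*E\subset U^+(F)*\hat\Z_k$.

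For the second claim, suppose $k$ divides the degree of reflection of $\tilde G$ (I read the "$G$'' in the statement as a typo for $\tilde G$, since $G$ is not yet named). By Lemma~\ref{L.degref}, items (0) and (4), this is equivalent to $\tilde G=\tilde G\tiltimes\hat\Z_k$, i.e.\ the map $\tilde u_{ij}\mapsto \tilde u_{ij}z$ extends to a $*$-isomorphism $C(\tilde G)\to C(\tilde G\tiltimes\hat\Z_k)\subset C(\tilde G)\otimes_{\rm max}C^*(\Z_k)$. Now consider $\tilde G\times\hat\Z_k=(C(\tilde G)\otimes_{\rm max}C^*(\Z_k),\tilde u\oplus z)$. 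Its glued version has C*-algebra generated by the elements $\tilde u_{ij}z$ and fundamental representation $\tilde u\otimes z$; by the very meaning of $\tilde G\tiltimes\hat\Z_k$ (Definition~\ref{D.gluedver} applied to $\tilde G\times\hat\Z_k$), the glued version of $\tilde G\times\hat\Z_k$ is precisely $\tilde G\tiltimes\hat\Z_k$, which under our hypothesis equals $\tilde G$ as a compact matrix quantum group. Hence $\tilde G\times\hat\Z_k$ is a $\Z_k$-ungluing of $\tilde G$.

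The argument is essentially a bookkeeping exercise, so there is no serious obstacle; the only point requiring a little care is keeping the distinction between a compact \emph{matrix} quantum group and an abstract compact quantum group straight, since the glued version $\tilde G$ and e.g.\ $\tilde G\times E$ agree as compact quantum groups but carry different fundamental representations — it is precisely the matching of fundamental representations ($\tilde u\otimes 1=\tilde u$, resp.\ $\tilde u\otimes z\leftrightarrow\tilde u$ via the isomorphism of Lemma~\ref{L.degref}) that makes them ungluings in the sense of the definition. One should also double-check that $\tilde G\times\hat\Z_k$ is genuinely a subgroup of $U^+(F)*\hat\Z_k$ as required by the definition of ungluing; this follows because $\tilde G\subset U^+(F)$ gives a surjection $C(U^+(F))\twoheadrightarrow C(\tilde G)$, which amalgamated with $\id_{C^*(\Z_k)}$ over $\C$ yields a surjection $C(U^+(F))*_\C C^*(\Z_k)\twoheadrightarrow C(\tilde G)\otimes_{\rm max}C^*(\Z_k)$ (the tensor product being a quotient of the free product) intertwining the comultiplications and the fundamental representations.
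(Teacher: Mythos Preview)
Your proof is correct and follows essentially the same approach as the paper's own proof, which simply notes that $\tilde G\tiltimes E=\tilde G$ for the first part and invokes Lemma~\ref{L.degref} (the equivalence $(0)\Leftrightarrow(4)$) to obtain $\tilde G\tiltimes\hat\Z_k=\tilde G$ for the second. You have spelled out the bookkeeping in more detail---in particular the verification that the candidate ungluings sit inside $U^+(F)*\hat\Z_k$ and the matching of fundamental representations---but the underlying argument is the same.
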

\begin{proof}
The first statement is obvious as we have $\tilde G\tiltimes E=\tilde G$. The second follows from Lemma~\ref{L.degref} as we have $\tilde G\tiltimes\hat\Z_k=\tilde G$.
\end{proof}

Let us denote by $\iota\colon\C\langle\tilde x_{ij},\tilde x_{ij}^*\rangle\to\C\langle x_{ij},x_{ij}^*,z,z^*\rangle$ the embedding $\tilde x_{ij}\mapsto x_{ij}z$. Consider $\tilde G\subset U^+(F)$ and $G$~its ungluing. The fact that $\tilde G$ is a~glued version of~$G$ is, according to Proposition~\ref{P.repglue}, characterized by the equality $\tilde I_{\tilde G}:=\iota(I_{\tilde G})=I_G\cap \iota(\C\langle\tilde x_{ij},\tilde x_{ij}^*\rangle)$. Consequently, we have $I_G\supset \tilde I_{\tilde G}$ for every ungluing~$G$ of a~quantum group~$\tilde G$.

\begin{defn}
\label{D.maxunglue}
Consider $\tilde G\subset U^+(F)$, $k\in\N_0$. Let $I_G\subset\C\langle x_{ij},x_{ij}^*,z,z^*\rangle$ be the $*$-ideal generated by $\tilde I_{\tilde G}$. Put $C(G):=C^*(\C\langle x_{ij},x_{ij}^*,z,z^*\rangle/I_G)$. Then $G:=(C(G),v\oplus z)$ is called the \emph{maximal $\Z_k$-ungluing} of~$\tilde G$.
\end{defn}

\begin{prop}
The maximal $\Z_k$-ungluing always exists. That is, considering the notation of Definition~\ref{D.maxunglue}, $G$~is indeed a~compact matrix quantum group and $\tilde G$ is indeed its glued version.
\end{prop}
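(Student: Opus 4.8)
The plan is to exhibit $O(G)$ as a Hopf-algebra quotient of $O(U^+(F)*\hat\Z_k)$ via an explicit ``gluing'' homomorphism, and then to use a one-sided inverse of that homomorphism, together with Proposition~\ref{P.repglue}, to recover $\tilde G$ as the glued version of $G$. Throughout write $u=v\oplus z$ for the fundamental representation of $U^+(F)*\hat\Z_k$, with $v$ the $N\times N$ block.

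\textbf{Step 1 ($G$ is a compact matrix quantum group).} First I would check that $\tilde u_{ij}\mapsto v_{ij}z$ defines a $*$-homomorphism $\Phi\colon O(U^+(F))\to O(U^+(F)*\hat\Z_k)$. Indeed, the matrix $(v_{ij}z)$ is unitary, and the image of the second distinguished unitary of $U^+(F)$ is the matrix with entries $z^*[F\bar vF^{-1}]_{ij}$ (using that $z$ commutes with the scalars $F_{ik},F^{-1}_{lj}$), which is again unitary because $F\bar vF^{-1}$ is; so the images satisfy the defining relations of $U^+(F)$. A short computation on generators shows $\Phi$ intertwines $\Delta$, $\epsilon$ and $S$ (using $\Delta(v_{ij}z)=\sum_m v_{im}z\otimes v_{mj}z$, $\epsilon(v_{ij}z)=\delta_{ij}$, and $S(v_{ij}z)=z^*v_{ji}^*$), so $\Phi$ is a morphism of Hopf $*$-algebras. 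Read on the free $*$-algebras, $\Phi$ is exactly the map $\iota$; hence $I_G$ — understood inside $O(U^+(F)*\hat\Z_k)$, i.e.\ with the relations $zz^*=z^*z=z^k=1$ present, as is forced by the requirement that a $\Z_k$-ungluing sit inside $U^+(F)*\hat\Z_k$ — is precisely the $*$-ideal generated by $\Phi(I_{\tilde G})$, where $I_{\tilde G}$ also denotes the Hopf $*$-ideal $\ker(O(U^+(F))\to O(\tilde G))$. Since $I_{\tilde G}$ is a Hopf $*$-ideal and $\Phi$ is a Hopf $*$-algebra map, the ideal it generates is again a Hopf $*$-ideal (a routine check of the $*$-ideal, coideal, counit, and antipode conditions, since all of $\Delta$, $\epsilon$, $S$ are intertwined by $\Phi$). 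Therefore $I_G$ determines a quantum subgroup $G\subset U^+(F)*\hat\Z_k$, in particular a compact matrix quantum group with fundamental representation $v\oplus z$.

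\textbf{Step 2 ($\tilde G$ is the glued version of $G$).} Consider the retraction $\Psi\colon O(U^+(F)*\hat\Z_k)\to O(U^+(F))$, $u_{ij}\mapsto\tilde u_{ij}$, $z\mapsto 1$; it is well defined and is a morphism of Hopf $*$-algebras (the free product of $\id_{U^+(F)}$ with the counit of $\hat\Z_k$), and it satisfies $\Psi\circ\Phi=\id$. I claim $\Phi^{-1}(I_G)=I_{\tilde G}$. The inclusion $\supseteq$ holds by construction; conversely, if $\Phi(h)\in I_G=\langle\Phi(I_{\tilde G})\rangle$, then $h=\Psi(\Phi(h))\in\Psi(\langle\Phi(I_{\tilde G})\rangle)\subseteq\langle\Psi\Phi(I_{\tilde G})\rangle=\langle I_{\tilde G}\rangle=I_{\tilde G}$. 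Now Proposition~\ref{P.repglue} identifies the ideal of the glued version $G^{\mathrm{gl}}$ of $G$ with $\{f\mid f(x_{ij}z,z^*x_{ij}^*)\in I_G\}$, i.e.\ (after descending modulo $I_{U^+(F)}$, which both sides contain) $O(G^{\mathrm{gl}})=O(U^+(F))/\Phi^{-1}(I_G)=O(U^+(F))/I_{\tilde G}=O(\tilde G)$; moreover this identification carries the fundamental representation of $G^{\mathrm{gl}}$, which is $v\otimes z=(v_{ij}z)=(\Phi(\tilde u_{ij}))$, to $(\tilde u_{ij})=(v_{ij})$, the fundamental representation of $\tilde G$. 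Hence $G^{\mathrm{gl}}=\tilde G$ as compact matrix quantum groups, which is the assertion.

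Most of the work is bookkeeping. The point that needs real care is fixing the algebra in which $I_G$ lives: the relations $zz^*=z^*z=z^k=1$ must be in force for $z$ to be a genuine one-dimensional unitary of order $k$, and this is precisely the hypothesis $G\subset U^+(F)*\hat\Z_k$ built into the definition of ungluing; closely related is the verification that $\Phi$ is well defined and respects the Hopf structure. Once $\Phi$ and $\Psi$ are set up, Step~1 is the standard ``the ideal generated by the image of a Hopf ideal under a Hopf map is a Hopf ideal'', and Step~2 is the one-line pull-back computation above combined with Proposition~\ref{P.repglue}.
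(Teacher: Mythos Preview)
Your proof is correct and follows essentially the same line as the paper's, though packaged differently. For Step~1, both arguments come down to ``the image of a Hopf $*$-ideal under a Hopf $*$-algebra map generates a Hopf $*$-ideal''; you make the Hopf map $\Phi$ explicit, while the paper phrases this as $\tilde I_{\tilde G}/I_{U^+(F)}$ being a coideal stable under the antipode. For Step~2, the paper argues indirectly via Proposition~\ref{P.unglueex}: the trivial ungluing $\tilde G\times E$ exists and has $I_{\tilde G\times E}\cap\iota(\C\langle\tilde x_{ij},\tilde x_{ij}^*\rangle)=\tilde I_{\tilde G}$; since $I_G\subset I_{\tilde G\times E}$ by minimality, the same intersection holds for $I_G$. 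Your retraction $\Psi$ (sending $z\mapsto 1$) is precisely the quotient map to $\tilde G\times E$ composed with the identification $C(\tilde G\times E)\cong C(\tilde G)$, so the two arguments are secretly identical --- your version is just more self-contained and does not invoke Proposition~\ref{P.unglueex}. Your closing remark about the relations $zz^*=z^*z=z^k=1$ needing to be in force is well taken; the paper's Definition~\ref{D.maxunglue} is slightly informal on this point.
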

\begin{proof}
First of all, note that $\tilde I_{\tilde G}$ contains the relations $vv^*=v^*v=1_N$ and $v^{\bcol}v^{\bcol *}=v^{\bcol *}v^{\bcol}=1_N$, where $v^{\bcol}=F\bar vF^{-1}$. So, if $G$ is well defined, then we must have $G\subset U^+(F)*\hat\Z_k$.

To prove that $G$ is well defined, we need to check that $I_G/I_{U^+(F)}$ is a Hopf $*$-ideal. Since $I_{\tilde G}/I_{U^+(F)}$ is a Hopf $*$-ideal, we have that $\tilde I_{\tilde G}/I_{U^+(F)}$ is a coideal invariant under the antipode. Consequently, the ideal generated by $\tilde I_{\tilde G}/I_{U^+(F)}$ must be a~Hopf $*$-ideal.

From the construction, it is clear that, if $\tilde G$ has some $\Z_k$-ungluing, then $G$ must be the maximal one (since we take the smallest possible ideal~$I_G$). But every quantum group has the trivial ungluing as mentioned in Prop.~\ref{P.unglueex}.
\end{proof}

\begin{rem}
We do not have to know explicitly the whole ideal $I_{\tilde G}$ to compute the maximal ungluing. Consider $\tilde G\subset U^+(F)$ and suppose that it is determined by a~set of relations $\tilde R$. That is, $I_{\tilde G}$ is generated by the coideal $\tilde R$. Then the maximal $\Z_k$-ungluing $G$ of $\tilde G$ is defined by the relations $R:=\iota(\tilde R)$. That is, taking the generating relations for $\tilde G$ and exchanging $\tilde v_{ij}$ for $v_{ij}z$ and $\tilde v_{ij}^*$ for $z^*v_{ij}^*$.
\end{rem}

Alternatively, we can describe the maximal ungluing by its representation category. Recall the definition of the gluing homomorphism mapping $\wcol\mapsto\qcol\tcol$, $\bcol\mapsto\Tcol\Qcol$. Given a word $w\in\W$, the image $\tilde w$ under this homomorphism is called the {\em glued version} of $w$ by Definition~\ref{D.gluew}. If $G$ is a quantum group and $\tilde G$ its glued version, then $\Cat_{\tilde G}$ is a full subcategory of $\Cat_G$ according to Proposition~\ref{P.repglue}. The full embedding is given exactly by the above mentioned gluing homomorphism. Consequently, the maximal ungluing $G$ of some $\tilde G$ should be a~quantum group with the minimal representation category containing $\Cat_{\tilde G}$ as a~full subcategory.

\begin{prop}
Consider $\tilde G\subset U^+(F)$ and $G\subset U^+(F)*\hat\Z_k$ its maximal $\Z_k$-ungluing. Then the $\Z_k$-extended representation category $\Cat_{G}$ is generated by the sets $C(\tilde w_1,\tilde w_2)=\Cat_{\tilde G}(w_1,w_2)$, where $\tilde w_1,\tilde w_2$ are glued versions of $w_1,w_2\in\W$. In addition, if $\Cat_G$ is generated by some $\tilde C_0$, then $\Cat_{\tilde G}$ is generated by $C_0(\tilde w_1,\tilde w_2)=\tilde C_0(w_1,w_2)$.
\end{prop}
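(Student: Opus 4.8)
The plan is to work entirely through the dictionary between representation categories and ideals: the Woronowicz--Tannaka--Krein duality (Theorem~\ref{T.TK}), its corollary that a generating family of a category yields a generating family of $I_G$, and the definition of the maximal ungluing (Definition~\ref{D.maxunglue}), which says precisely that $I_G$ is the ideal generated by $\iota(I_{\tilde G})$, where $\iota\colon\tilde x_{ij}\mapsto x_{ij}z$. The one computation I would use repeatedly is the identity $\iota(\tilde x^{\otimes w})=x^{\otimes\tilde w}$ for $w\in\W$, immediate from the definitions of $\iota$, of the colour inversion, and of the glued word $\tilde w$.

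For the first statement I would set $C(\tilde w_1,\tilde w_2):=\Cat_{\tilde G}(w_1,w_2)$; by Proposition~\ref{P.repglue} this is already a subcollection of $\Cat_G$, so $\langle C\rangle\subset\Cat_G$ and it only remains to see $\Cat_G\subset\langle C\rangle$. Letting $G'$ be the quantum group with $\Cat_{G'}=\langle C\rangle$, this amounts to $I_{G'}\subset I_G$. By the $\Z_k$-extended version of the proposition following Theorem~\ref{T.TK}, the ideal $I_{G'}$ is generated by the relations $[Tx^{\otimes\tilde w_1}-x^{\otimes\tilde w_2}T]_{\mathbf{ji}}$ with $T\in\Cat_{\tilde G}(w_1,w_2)$ (the remaining defining relations of $U^+(F)*\hat\Z_k$ being present in $I_G$ in any case). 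Each such relation is $\iota$ applied to $[T\tilde x^{\otimes w_1}-\tilde x^{\otimes w_2}T]_{\mathbf{ji}}\in I_{\tilde G}$, so it lies in $\iota(I_{\tilde G})\subset I_G$. Hence $I_{G'}\subset I_G$, i.e.\ $\Cat_G=\langle C\rangle$.

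For the second statement I would form the collection $C_0$ on $\W$ by $C_0(w_1,w_2):=\tilde C_0(\tilde w_1,\tilde w_2)$, let $\tilde G_0\subset U^+(F)$ be the quantum group with $\Cat_{\tilde G_0}=\langle C_0\rangle$, and let $G_0$ be its maximal $\Z_k$-ungluing. Since $C_0(w_1,w_2)=\tilde C_0(\tilde w_1,\tilde w_2)\subset\Cat_G(\tilde w_1,\tilde w_2)=\Cat_{\tilde G}(w_1,w_2)$, we get $\tilde G\subset\tilde G_0$; and because the maximal ungluing is order preserving ($I_{\tilde G}\supset I_{\tilde G_0}$ forces $\iota(I_{\tilde G})\supset\iota(I_{\tilde G_0})$, hence $I_G\supset I_{G_0}$) also $G\subset G_0$. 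But then every $T\in\tilde C_0$ is an intertwiner of $G_0$, so all the relations generating $I_G$ — which by hypothesis together with the proposition after Theorem~\ref{T.TK} are exactly the relations $[Tx^{\otimes\omega_1}-x^{\otimes\omega_2}T]_{\mathbf{ji}}$ read off from $\tilde C_0$ — lie in $I_{G_0}$; hence $I_G\subset I_{G_0}$ and, combined with the previous inclusion, $G=G_0$. Finally, $\tilde G_0$ is the glued version of its maximal ungluing $G_0=G$ (this is the content of the existence statement for the maximal ungluing), and $\tilde G$ is the glued version of $G$ as well, so $\tilde G=\tilde G_0$ and $\Cat_{\tilde G}=\langle C_0\rangle$.

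The routine part is the identity $\iota(\tilde x^{\otimes w})=x^{\otimes\tilde w}$ and the matching of generators. The point I expect to be delicate is the second statement: one cannot naively restrict a generating family of $\Cat_G$ to glued words, because a morphism between two glued words need not factor through glued words, so the glued part of $\tilde C_0$ could a priori generate a proper subcategory of $\Cat_{\tilde G}$. The detour through the two maximal ungluings $G$ and $G_0$ — identifying them via their ideals and then using that a glued version is recovered from the maximal ungluing — is what makes the argument go through, and getting that comparison right is the heart of it.
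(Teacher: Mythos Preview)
Your argument for the first assertion is essentially the paper's: both identify the ideal generators of $I_G$ (by the definition of the maximal ungluing, $I_G$ is generated by $\iota(I_{\tilde G})$) with the relations coming from $C$, and conclude via Tannaka--Krein. One cosmetic slip: you write that ``$\Cat_G\subset\langle C\rangle$ \ldots\ amounts to $I_{G'}\subset I_G$'', but larger category corresponds to \emph{larger} ideal, so the needed direction is $I_G\subset I_{G'}$. Your computation actually yields both: the relations you exhibit span $\iota(I_{\tilde G})$, which generates $I_G$ by definition and $I_{G'}$ by the proposition after Theorem~\ref{T.TK}, so $I_G=I_{G'}$. The content is correct; only the sentence stating what remains to be shown is misoriented.

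For the second assertion you have been misled by a typo in the statement. Read the defining formula $C_0(\tilde w_1,\tilde w_2)=\tilde C_0(w_1,w_2)$: here $\tilde C_0$ is indexed by $\W$ and $C_0$ by glued words in $\W_k$, so the intended claim (and the one the paper actually proves) is that if $\tilde C_0$ generates $\Cat_{\tilde G}$, then its image $C_0$ under gluing generates $\Cat_G$. That follows almost immediately from the first part: gluing $w\mapsto\tilde w$ is a monoid homomorphism, so the category operations on $\W$ transport to category operations on the glued objects, whence $\tilde C_0$ generating $\Cat_{\tilde G}$ forces $C_0$ to generate $C$; and $C$ generates $\Cat_G$ by the first part.

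You instead reversed the formula and tried to prove: if $\tilde C_0$ (on $\W_k$) generates $\Cat_G$, then its restriction to glued words generates $\Cat_{\tilde G}$. Your proof of this has a genuine gap at the sentence ``But then every $T\in\tilde C_0$ is an intertwiner of $G_0$''. At that point you only know $G\subset G_0$, i.e.\ $\Cat_{G_0}\subset\Cat_G$, and $\tilde C_0\subset\Cat_G$; nothing forces an element of $\tilde C_0$ sitting at non-glued words to lie in the smaller category $\Cat_{G_0}$. Without that, $I_G\subset I_{G_0}$ does not follow, and the identification $G=G_0$ collapses. This is exactly the ``delicate point'' you flagged: restricting a generating family of $\Cat_G$ to glued words need not give a generating family of the full subcategory, and the detour through $G_0$ does not repair it.
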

\begin{proof}
By Tannaka--Krein duality, $I_{\tilde G}$ is linearly spanned by relations of the form $[T\tilde v^{\otimes w_1}-v^{\otimes w_2}T]_{\mathbf{ji}}$. By definition of the maximal ungluing, the ideal $I_G$ is generated by elements of $\tilde I_{\tilde G}$, which are exactly the relations $[T u^{\otimes\tilde w_1}-u^{\otimes\tilde w_2}T]_{\mathbf{ji}}$ corresponding to the set $C$.

For the second statement, notice that if $\tilde C_0$ generates $\Cat_{\tilde G}$, then $C_0$ must generate $C$. This follows simply from the fact that gluing of words is a~monoid homomorphism (see also Prop.~\ref{P.repglue}). Consequently, by what was already proven, $C_0$ generates~$\Cat_G$.
\end{proof}

\begin{ex}
As an example, consider the quantum group $\tilde G_k:=O^+(F)\tiltimes\hat\Z_{2k}$, $k\in\N$. By Proposition~\ref{P.tiltenrel}, it is the quantum subgroup of $O^+(F)\tilstar\hat\Z=U^+(F)$ with respect to the relation
$$\tilde v_{i_1j_1}^{\wcol}\tilde v_{i_2j_2}^{\wcol}\cdots\tilde v_{i_kj_k}^{\wcol}=\tilde v_{i_1j_1}^{\bcol}\tilde v_{i_2j_2}^{\bcol}\cdots\tilde v_{i_kj_k}^{\bcol},$$
where $\tilde v$ denotes the fundamental representation of $\tilde G$. We can also take $\tilde G_0:=O^+(F)\tiltimes\hat\Z$, which is a~quantum subgroup of $U^+(F)$ given by
$$\tilde v_{ij}^{\wcol}\tilde v_{kl}^{\bcol}=\tilde v_{ij}^{\bcol}\tilde v_{kl}^{\wcol}.$$

Now, take arbitrary $l\in\N_0$. The maximal $\Z_l$-ungluing of $\tilde G_k$ is a quantum group $G_k\subset U^+(F)*\hat\Z_l$ with fundamental representation of the form $v\oplus z$ given by the same relations if we substitute $\tilde v_{ij}$ by $v_{ij}z$.
$$v_{i_1j_1}^{\wcol}zv_{i_2j_2}^{\wcol}z\cdots v_{i_kj_k}^{\wcol}z=z^*v_{i_1j_1}^{\bcol}z^*v_{i_2j_2}^{\bcol}\cdots z^*v_{i_kj_k}^{\bcol}.$$
The ungluing $G_0$ is defined by the relation
$$v_{ij}^{\wcol}v_{kl}^{\bcol}=z^*v_{ij}^{\bcol}v_{kl}^{\wcol}z.$$
We can also represent the relations diagrammatically. The defining relations of $O^+(F)\tiltimes\hat\Z_k$ and $O^+(F)\tiltimes\hat\Z$ are $(\idpart[b/w])^{\otimes k}$ and $\globcol$ respectively (see also \cite{TW17,Gro18}). To obtain the maximal ungluing, we can have to put a white triangle $\tcol$ after every white circle $\wcol$ and a black triangle $\Tcol$ in front of every black circle $\bcol$. So, the defining relation for $G_k$ corresponds to $(\Partition{\Pline (1,1)(2,0) \LT(1,0) \Ut(2,1)}[0b/w])^{\otimes k}$, for $G_0$, we have $\Partition{\Pline(2,1)(2,0) \Pline(3,1)(3,0) \LT(1,0) \Lt(4,0)}[0bw/0wb]$.

We can see that the result is quite something different than simply $O^+(F)\times\hat\Z_{2k}$ as we may have hoped.
\end{ex}

In general, the maximal ungluing never provides any useful decomposition into smaller pieces since we always have the trivial decomposition inside the maximal one $\tilde G\times E\subset G$.

\subsection{Orthogonal ungluings}
\label{secc.OGunglue}

As just mentioned, constructing large unitary ungluings $G\subset U^+(F)*\hat\Z_k$ may not be very useful. In this subsection, we study ungluings that are orthogonal, that is, of the form $G\subset O^+(F)*\hat\Z_2$. For the rest of this subsection, we assume $F\bar F=c\,1_N$.

For a~given quantum group $G\subset O^+(F)* \hat\Z_2$, we will denote by~$I_G$ the corresponding ideal inside $A:=\C\langle x_{ij}\rangle*\C\Z_2$ (instead of taking $\C\langle x_{ij},r\rangle$ or $\C\langle x_{ij},x_{ij}^*,r,r^*\rangle$). The generator of~$\C\Z_2$ will be denoted by~$r$. Note that we have to consider the non-standard involution $x_{ij}^*=[F^{-1}xF]_{ij}$ on~$A$. The algebra $A$ is $\Z_2$-graded (assigning all variables $x_{ij}$ and~$r$ degree one). We will denote by $\tilde A$ the even part of~$A$. Then $\tilde Ar$ is the odd part of~$A$.

\begin{lem}
The mapping $x_{ij}\mapsto x_{ij}r$ extends to an injective $*$\hbox{-}homomorphism $\iota_A\colon\C\langle x_{ij},x_{ij}^*\rangle\to A$. Its image $\iota_A(A)$ equals to $\tilde A$~-- the even part of $A$.
\end{lem}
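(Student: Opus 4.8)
The plan is to exhibit the homomorphism $\iota_A$ explicitly, verify the defining relations, and then use the $\Z_2$-grading on $A$ to pin down both injectivity and the identification of the image with $\tilde A$.

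First I would note that $\C\langle x_{ij},x_{ij}^*\rangle$ carries the non-standard involution $x_{ij}^*=[F^{-1}xF]_{ij}$ (so that it is the free $*$-algebra underlying $U^+(F)$, matching the convention for $A$). Since $r$ is a unitary with $r=r^*=r^{-1}$ in $A$, the elements $y_{ij}:=x_{ij}r$ satisfy $y_{ij}^*=(x_{ij}r)^*=r^*x_{ij}^*=r[F^{-1}xF]_{ij}=[F^{-1}(xr)F]_{ij}$, using that $r$ is central-looking with respect to the scalar matrix $F$ — more precisely $r$ commutes with entries of $F$ since those are scalars. Hence the assignment $x_{ij}\mapsto x_{ij}r$ is compatible with the involution, and by the universal property of the free $*$-algebra it extends to a $*$-homomorphism $\iota_A\colon\C\langle x_{ij},x_{ij}^*\rangle\to A$. (There are no relations to check beyond $*$-compatibility, since the source is free.)

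Next, for the image: every generator $x_{ij}r$ is $\Z_2$-homogeneous of degree $1+1=0$ in $A$, and likewise $x_{ij}^*\mapsto [F^{-1}(xr)F]_{ij}$ is a linear combination of degree-$0$ elements, so $\iota_A(\C\langle x_{ij},x_{ij}^*\rangle)\subseteq\tilde A$. Conversely, $\tilde A$ is spanned by even-length words in the $x_{ij}$, $x_{kl}^*$, and $r$; inserting $r^2=1$ between consecutive letters lets one write any such word as an alternating product of blocks $x_{ij}r$ and $r x_{kl}^*$ (or equivalently as $\iota_A$ applied to the corresponding word, using $\iota_A(x_{kl}^*)=[F^{-1}xrF]_{kl}=[F^{-1}xF]_{kl}r$ up to the scalar matrix $F$). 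Thus $\iota_A$ surjects onto $\tilde A$. I would organize this by writing a general monomial in $A$ as $w_0 r w_1 r\cdots$ and observing that evenness forces the number of $r$'s plus the total number of $x$-letters to be even, which is exactly what is needed to pair the letters up.

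For injectivity, the cleanest route is to use the free-product decomposition $A=\C\langle x_{ij}\rangle * \C\Z_2$ and the resulting normal form: $A\cong\tilde A\oplus\tilde A r$ as a vector space, and $\tilde A$ is freely spanned by the alternating words described above. One then checks that $\iota_A$ sends the basis of $\C\langle x_{ij},x_{ij}^*\rangle$ (reduced words in the $x_{ij},x_{ij}^*$, after rewriting $x_{ij}^*$ via $F$) bijectively onto this spanning set of $\tilde A$, with no collapse — distinct reduced words map to distinct alternating words because the $r$'s record exactly where the $*$'s and non-$*$'s sat. The main obstacle is the bookkeeping with the matrix $F$: because $x_{ij}^*$ is not a free generator but the entry $[F^{-1}xF]_{ij}$, one must be slightly careful that $\iota_A$ is well-defined and injective on the genuinely free algebra $\C\langle x_{ij}\rangle$ (which it is, trivially, since $x_{ij}\mapsto x_{ij}r$ just relabels free generators and $A$ restricted to even degree in the $x$-part recovers a free algebra on the $x_{ij}r$), and then transport the conclusion through the linear change of variables induced by $F$ — which is invertible precisely because $F\bar F=c\,1_N$ guarantees the involution behaves well. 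I would isolate the purely free-algebra injectivity statement first and treat the $F$-twist as a harmless invertible linear substitution at the end.
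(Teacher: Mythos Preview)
Your overall strategy---use the free-product normal form of $A=\C\langle x_{ij}\rangle*\C\Z_2$ to set up a bijection between monomials, and then absorb the $F$-twist as an invertible linear change of variables---is exactly the paper's approach. The paper phrases the bijection as an induction on word length, peeling off one $x$-letter (and the adjacent $r$, if present) at a time, but the content is the same.

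There is, however, a genuine misconception in your setup that propagates into an incorrect computation. You write that the source $\C\langle x_{ij},x_{ij}^*\rangle$ ``carries the non-standard involution $x_{ij}^*=[F^{-1}xF]_{ij}$''. It does not: this is the free $*$-algebra underlying $U^+(F)$, where $x_{ij}$ and $x_{ij}^*$ are $2N^2$ genuinely independent generators. The relation $x_{ij}^*=[F^{-1}xF]_{ij}$ holds only in the \emph{target} $A$ (reflecting the orthogonality relation in $O^+(F)$). This matters for your $*$-compatibility check: you compute $(x_{ij}r)^*=r[F^{-1}xF]_{ij}$ and then claim this equals $[F^{-1}(xr)F]_{ij}$. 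But $F$ has scalar entries, so $[F^{-1}(xr)F]_{ij}=[F^{-1}xF]_{ij}\,r$, with $r$ on the \emph{right}---and in the free product, $r$ does not commute with the $x_{kl}$. So the identity you need would fail, and if the source really did carry the orthogonal involution, $\iota_A$ would \emph{not} be a $*$-homomorphism. With the correct interpretation (free $*$-algebra as source), existence is immediate from the universal property and there is nothing to check; the map is determined by $x_{ij}\mapsto x_{ij}r$, hence $x_{ij}^*\mapsto r\,[F^{-1}xF]_{ij}$, and your later remarks about treating the $F$-twist as a harmless invertible substitution on the starred generators then finish the argument correctly.
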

\begin{proof}
The existence of the $*$\hbox{-}homomorphism $\iota_A$ follows from the fact that its domain is a~free algebra. Obviously, for any monomial $f\in\C\langle x_{ij},x_{ij}^*\rangle$, the image $\iota_A(f)$ has even degree. It remains to show that, for any monomial of even degree $\tilde f\in\tilde A$, there exists a~unique monomial $f\in\C\langle x_{ij},x_{ij}^*\rangle$ such that $\tilde f=\iota_A(f)$.

This is done easily by induction on the ``length'' of $\tilde f$ measured by the number of variables $x_{ij}$ or~$x_{ij}^*$ (ignoring the~$r$'s). Suppose $\tilde f$ is in the reduced form, that is, the variable~$r$ does not appear twice consecutively. If $\tilde f$ starts with the variable~$x_{ij}$, we can write $\tilde f(x_{ij},r)=x_{ij}r\tilde f_0(x_{ij},r)$ for some $f_0\in\tilde A$, so $\iota_A^{-1}(\tilde f)(x_{ij},x_{ij}^*)=x_{ij}\iota_A^{-1}(\tilde f_0)(x_{ij},x_{ij}^*)$. If $\tilde f$ starts with~$r$ followed by~$x_{ij}$, so $\tilde f(x_{ij},r)=rx_{ij}\tilde f_0(x_{ij},r)$ for some $f_0\in\tilde A$, then $\iota_A^{-1}(\tilde f)(x_{ij},x_{ij}^*)=x_{ij}^*\iota_A^{-1}(\tilde f_0)(x_{ij},x_{ij}^*)$.
\end{proof}

\begin{rem}
\label{R.glue}
$\tilde A$~is the $*$-subalgebra of~$A$ generated by the elements~$x_{ij}r$. Consequently, for any $G\subset O^+(F)*\hat\Z_2$, we can express the coordinate algebra associated to its glued version $\tilde G$ as
$$O(\tilde G)=\{f(v_{ij},r)\mid f\in\tilde A\}\subset O(G).$$
In addition, we can rephrase Proposition~\ref{P.repglue} by saying that a~quantum group $\tilde G\subset U^+(F)$ is a~glued version of $G\subset O^+(F)* \hat\Z_2$ if and only if we have $\tilde I_{\tilde G}:=\iota_A(I_{\tilde G})=I_G\cap\tilde A$.
\end{rem}

Recall the definition of quantum groups $\tilde G\subset U^+(F)$ invariant with respect to the colour inversion from Def.~\ref{D.colinv}.

\begin{prop}
\label{P.canon}
Consider a~compact matrix quantum group $\tilde G\subset U^+(F)$ with $F\bar F=c\,1_N$ invariant with respect to the colour inversion. Let $G'$ be its maximal $\Z_2$-ungluing. Then $G:=G'\cap(O^+(F)* \hat\Z_2)$ is also a~$\Z_2$-ungluing.
\end{prop}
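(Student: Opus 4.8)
The plan is to verify the criterion for being a $\Z_2$-ungluing recorded in Remark~\ref{R.glue}: writing $A=\C\langle x_{ij}\rangle*\C\Z_2$ for the $\Z_2$-graded $*$-algebra of Section~\ref{secc.OGunglue}, with even part $\tilde A$, odd part $\tilde Ar$, and $r$ the generator of $\C\Z_2$, one has to show that $\tilde I_{\tilde G}:=\iota_A(I_{\tilde G})$ equals $I_G\cap\tilde A$. By Remark~\ref{R.glue} this means exactly that $\tilde G$ is the glued version of $G$, i.e.\ that $G$ is a $\Z_2$-ungluing of $\tilde G$.

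The first step is to identify $I_G$ as an ideal of $A$. Since $G'$ is the \emph{maximal} $\Z_2$-ungluing, its ideal is generated by $\tilde I_{\tilde G}$ (Definition~\ref{D.maxunglue}); intersecting with $O^+(F)*\hat\Z_2$ amounts to passing from the ambient free $*$-algebra to $A$, and the orthogonality relations that this passage involves cost nothing, because $\tilde G\subset U^+(F)$ forces $I_{\tilde G}\supset I_{U^+(F)}$, so that applying $\iota_A$ to the unitarity relations (and conjugating by $r$ where needed) already produces the orthogonality relations $xx^*=x^*x=1$ of $O^+(F)$ inside $A$, where $x^*=[F^{-1}xF]$. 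Hence $I_G$ is exactly the ideal of $A$ generated by $\tilde I_{\tilde G}$. Note moreover that $\tilde I_{\tilde G}$, being the image of the ideal $I_{\tilde G}$ under the $*$-isomorphism $\iota_A$ onto $\tilde A$, is an ideal of $\tilde A$, and that the inclusion $\tilde I_{\tilde G}\subset I_G\cap\tilde A$ is then clear.

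The key step, and the only place the hypotheses enter, is to prove that $\tilde I_{\tilde G}$ is invariant under the $*$-automorphism $\gamma\colon a\mapsto rar$ of $A$. Let $\theta$ denote the colour-inversion $*$-automorphism $x_{ij}^{\wcol}\leftrightarrow x_{ij}^{\bcol}$ of $\C\langle x_{ij},x_{ij}^*\rangle$, which is well defined as such by the standing hypothesis $F\bar F=c\,1_N$. I would check on the generators that $\iota_A\circ\theta=\gamma\circ\iota_A$: indeed $\gamma(x_{ij}r)=rx_{ij}$, while, using $\iota_A(x_{kl}^*)=r[F^{-1}xF]_{kl}$, a direct computation gives $\iota_A(\theta(x_{ij}))=\iota_A([F\bar xF^{-1}]_{ij})=rx_{ij}$ as well; as both $\iota_A\theta$ and $\gamma\iota_A$ are $*$-homomorphisms, they agree on all of $\C\langle x_{ij},x_{ij}^*\rangle$. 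By Proposition~\ref{P.colinv}, invariance of $\tilde G$ with respect to the colour inversion means $\theta(I_{\tilde G})=I_{\tilde G}$, and therefore $\gamma(\tilde I_{\tilde G})=\gamma(\iota_A(I_{\tilde G}))=\iota_A(\theta(I_{\tilde G}))=\tilde I_{\tilde G}$.

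Finally I would exploit the grading to finish. Since $\tilde I_{\tilde G}$ is a $\gamma$-invariant ideal of $\tilde A$, a short check using $rg=\gamma(g)r$ for $g\in\tilde I_{\tilde G}$ (together with $r^2=1$ and $r\tilde A=\tilde Ar$) shows that $\tilde I_{\tilde G}\oplus\tilde I_{\tilde G}r$ is already a two-sided ideal of $A=\tilde A\oplus\tilde Ar$; being the smallest ideal of $A$ that contains $\tilde I_{\tilde G}$, it coincides with $I_G$. Its even part is $\tilde I_{\tilde G}$, so $I_G\cap\tilde A=\tilde I_{\tilde G}$, which is what was to be shown. The main obstacle is the identity $\iota_A\theta=\gamma\iota_A$ of the third step: this is the hinge on which the colour-inversion hypothesis and $F\bar F=c\,1_N$ act, and the computation must be carried out keeping in mind the non-standard involution of $A$.
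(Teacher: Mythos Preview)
Your proof is correct and follows essentially the same route as the paper's: identify $I_G$ as the ideal of $A$ generated by $\tilde I_{\tilde G}$, use the colour-inversion hypothesis via Proposition~\ref{P.colinv} to show that $\tilde I_{\tilde G}$ is stable under conjugation by $r$, and conclude that $I_G=\tilde I_{\tilde G}+\tilde I_{\tilde G}r$, whose even part is $\tilde I_{\tilde G}$. Your explicit verification of $\iota_A\circ\theta=\gamma\circ\iota_A$ on generators and your observation that $\tilde I_{\tilde G}$ is already an ideal of $\tilde A$ (so that closedness of $\tilde I_{\tilde G}\oplus\tilde I_{\tilde G}r$ under $A$-multiplication reduces to closedness under multiplication by $r$) are slightly more streamlined than the paper's direct check against each generator, but the argument is the same.
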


\begin{defn}
\label{D.canon}
The quantum group $G$ from Proposition~\ref{P.canon} will be called the \emph{canonical $\Z_2$-ungluing} of~$\tilde G$.
\end{defn}

Before proving the proposition, recall that $I_{G'}\subset\C\langle x_{ij},x_{ij}^*,\ppen r,r^*\rangle$ is defined as the smallest ideal containing $\iota(I_{\tilde G})$. Consequently, $I_G=I_{G'}/{(x^{\wcol}=x^{\bcol}, r^2=1)}$ is the smallest ideal of the algebra $A=\C\langle x_{ij},x_{ij}^*,r,r^*\rangle/{(x^{\wcol}=x^{\bcol}, r^2=1)}$ containing $\tilde I_{\tilde G}=\iota(I_{\tilde G})/{(x^{\wcol}=x^{\bcol}, r^2=1)}$. In other words, the canonical $\Z_2$-ungluing is determined by relations of the form $f(x_{ij}r,rx_{ij})$ with $f\in I_{\tilde G}\subset\C\langle x_{ij},x_{ij}^*\rangle$.

\begin{proof}
Adopting the notation introduced above, we need to prove that $I_G\cap\tilde A=\tilde I_{\tilde G}$. We prove that
$$I_G=\tilde I_{\tilde G}+\tilde I_{\tilde G}r=\spanlin\{f,fr\mid f\in\tilde I_{\tilde G}\}.$$
Then it will be clear that $I_G\cap\tilde A=(\tilde I_{\tilde G}+\tilde I_{\tilde G}r)\cap\tilde A=\tilde I_{\tilde G}$ since $\tilde I_{\tilde G}\subset\tilde A$, so $\tilde I_{\tilde G}r\cap\tilde A=\emptyset$.

To prove the equality, it is enough to show that the right-hand side is an ideal since then it must be the smallest one containing $\tilde I_{\tilde G}$, which is exactly~$I_G$. So, denote the right-hand side by~$I$. By Proposition~\ref{P.colinv}, $G$ being invariant with respect to the colour inversion means that $I_G$ is invariant with respect to interchanging $x_{ij}^{\wcol}\leftrightarrow x_{ij}^{\bcol}$. Applying $\iota_A$, we get that $\tilde I_{\tilde G}$ is invariant with respect to $x_{ij}r\leftrightarrow rx_{ij}$, so $\tilde I_{\tilde G}$ is invariant with respect to conjugation by~$r$, that is, $x\mapsto rxr$. We use that to prove that $I$ is an ideal. The subspace~$I$ is obviously invariant under right multiplication by~$r$. For the left multiplication, we can write $rx=(rxr)r$. For multiplication by~$x_{ij}$, we can write $xx_{ij}=(xx_{ij}r)r$ and $x_{ij}x=r((x_{ij}r)^{\bcol}x)$.
\end{proof}

\begin{prop}
\label{P.orthglue}
Consider $G\subset O^+(F)*\hat\Z_2$ and denote by~$k$ its degree of reflection. Then exactly one of the following situations occurs.
\begin{enumerate}
\item If $k=1$, then $C(\tilde G)=C(G)$ and hence $\tilde G\simeq G$.
\item If $k=2$, then $C(G)$ is $\Z_2$-graded and $C(\tilde G)$ is its even part.
\end{enumerate}
\end{prop}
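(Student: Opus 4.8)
The plan is to analyze the degree of reflection $k$ of $G\subset O^+(F)*\hat\Z_2$ using the fact that it must divide the degree of reflection of the ambient group $O^+(F)*\hat\Z_2$, which is $2$ (the factor $\hat\Z_2$ contributes a generator $r$ of order $2$ to the diagonal subgroup, while any fixed point of $(u^\wcol)^{\otimes w}$ forces $c(w)$ even as in Proposition~\ref{P.degrefOG}). Hence $k\in\{1,2\}$, and exactly one of the two stated cases occurs. The two cases are then separated by whether $G$ admits a nontrivial $\Z_2$-grading of $C(G)$, which by Lemma~\ref{L.degref} and Proposition~\ref{P.degref} is precisely the condition $k=2$.

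First I would treat case $k=2$. By Proposition~\ref{P.degref}(3), the ideal $I_G\subset A$ is $\Z_2$-homogeneous with respect to the grading assigning degree one to all the variables $x_{ij}$ and to $r$; hence $C(G)=C^*(A/I_G)$ inherits this $\Z_2$-grading via a grading automorphism $\alpha$ (cf.\ the preliminaries on graded C*-algebras). The even part of $A$ is exactly $\tilde A$, the $*$-subalgebra generated by the elements $x_{ij}r$ (by the Lemma preceding Remark~\ref{R.glue}), so the even part of $C(G)$ is the closure of the image of $\tilde A$, which is precisely $C(\tilde G)$ by Definition~\ref{D.gluedver} and Remark~\ref{R.glue}. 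This gives statement (2). In particular $C(\tilde G)\subsetneq C(G)$, since the odd part $\tilde A r$ is nonzero in $C(G)$: if it were zero, then $r$ itself would lie in the even part, forcing the degree of reflection to be odd, i.e.\ $k=1$, contrary to assumption.

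Next, case $k=1$. Here the homomorphism $\phi\colon C(G)\to C^*(\hat\Z_1)=\C$ of Lemma~\ref{L.degref}(1) collapses the diagonal, but more to the point, $k=1$ means $I_G$ is \emph{not} $\Z_2$-homogeneous; I would instead argue directly that $z:=r\in C(\tilde G)$. Since the degree of reflection of $G$ is one, Proposition~\ref{P.degrefOG} applied inside $O^+(F)*\hat\Z_2$ — or rather the analogue for $\Z_k$-extended categories — yields a nonzero fixed vector $\xi\in\Mor(1,u^{\otimes w})$ for some word $w$ with an odd number of circles together with an appropriate number of triangles so that the total $\Z_2$-degree is zero; concretely, after using Frobenius reciprocity one can arrange a relation of the form $\xi^*(vr)^{\otimes w'}\xi = c\, r$ with $c\neq 0$ and $w'$ a word of odd circle-length, exactly as in the proofs of Lemma~\ref{L.tiltimeszk} and Proposition~\ref{P.Gfreeprodsim}. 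Since the elements $v_{ij}r$ generate $C(\tilde G)$, this shows $r\in C(\tilde G)$, hence also $v_{ij}=(v_{ij}r)r^{-1}\in C(\tilde G)$, so $C(\tilde G)=C(G)$ and the identity map witnesses $\tilde G\simeq G$ (noting $C(G)$ is singly generated by $r$ over $C(\tilde G)$ is not even needed — we get all generators back). This is statement (1).

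The main obstacle I anticipate is the bookkeeping in case $k=1$: producing an explicit element of $C(\tilde G)$ equal to a scalar multiple of $r$. One must pass from the abstract statement ``degree of reflection one'' to a concrete fixed-point relation, and the cleanest route is probably to cite Proposition~\ref{P.degrefOG}/Proposition~\ref{P.degref} to get a word $w$ with $c(w)$ odd and $\Cat_G(\emptyset, \tilde w)\neq\{0\}$ (where $\tilde w$ is the glued word with the right triangle pattern so its $\Z_2$-degree vanishes), and then contract a nonzero vector there against $u^{\otimes\tilde w}$ exactly as in Lemma~\ref{L.tiltimeszk}. Everything else — the trichotomy $k\in\{1,2\}$, the identification of the even part in case $k=2$ — is a direct application of the already-established Lemma~\ref{L.degref} and the graded-algebra preliminaries.
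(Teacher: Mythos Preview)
Your overall structure matches the paper: the dichotomy $k\in\{1,2\}$ and the treatment of case $k=2$ are exactly as in the paper's proof (via Proposition~\ref{P.degrefOG} applied to the orthogonal $u=v\oplus r$, and Proposition~\ref{P.degref} together with Remark~\ref{R.glue}, respectively).

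For case $k=1$, your target $r\in C(\tilde G)$ is correct, but the proposed route has a gap. The relation $\xi^*(vr)^{\otimes w'}\xi = c\,r$ you aim for would require a fixed vector for tensor powers of $v$ alone, so that for an alternating word the $r$'s telescope as in Proposition~\ref{P.Gfreeprodsim}. But the hypothesis $k=1$ only furnishes a fixed vector of $u^{\otimes m}=(v\oplus r)^{\otimes m}$ for some odd $m$, and there is no reason this decomposes to give a nonzero fixed vector of $v^{\otimes m'}$ for odd $m'$. Lemma~\ref{L.tiltimeszk} does not help either, since its computation relies on the commutation of $v_{ij}$ with $z$ in the tensor product. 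The paper sidesteps this entirely: take a unit $\xi\in\Mor(1,u^{\otimes m})$ with $m$ odd (which exists by $k=1$), so that $\xi^*u^{\otimes m}\xi=1$ in $C(G)$; then
\[
r=(\xi^*u^{\otimes m}\xi)\,r,
\]
and the right-hand side is a polynomial of even degree $m+1$ in the generators $v_{ij}$ and $r$, hence lies in the image of $\tilde A$ and therefore in $C(\tilde G)$ by Remark~\ref{R.glue}. No Frobenius bookkeeping or manipulation of the glued representation $vr$ is required.
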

\begin{proof}
Since $G$ is orthogonal, its degree of reflection must be either one or two by Proposition~\ref{P.degrefOG}. First, let us assume that $k=1$. To show that $C(\tilde G)=C(G)$, it is enough to prove that $r\in C(\tilde G)$. The assumption $k=1$ means that there is a~vector $\xi\in\Mor(1,u^{\otimes k})$, $\norm{\xi}=1$ for some $k$ odd, so we have $\xi^*u^{\otimes k}\xi=1$ in~$C(G)$. Consequently, $r=\xi^*u^{\otimes k}\xi r$ holds in~$C(G)$, and, since it is an even polynomial, it must be an element of~$C(\tilde G)$ (see Remark~\ref{R.glue}).

If the degree of reflection equals two, then by Proposition~\ref{P.degref} the $\Z_2$-grading of~$A$ passes to~$O(G)$ and hence also~$C(G)$. As mentioned in Remark~\ref{R.glue}, $O(\tilde G)$~consists of even polynomials in the generators $v_{ij}$ and~$r$ and hence is the even part of~$O(G)$.
\end{proof}

The following theorem provides a~non-easy counterpart of Theorem \cite[Theorem~4.10]{GW19}.

\begin{thm}
\label{T.gluecorr}
There is a~one-to-one correspondence between
\begin{enumerate}
\item quantum groups $G\subset O^+(F)*\hat\Z_2$ with degree of reflection two and
\item quantum groups $\tilde G\subset U^+(F)$ that are invariant with respect to the colour inversion.
\end{enumerate}
This correspondence is provided by gluing and canonical $\Z_2$-ungluing.
\end{thm}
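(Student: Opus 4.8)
The plan is to verify that the two operations --- gluing $G\mapsto\tilde G$ and canonical $\Z_2$-ungluing $\tilde G\mapsto G$ --- are well defined on the two classes and are mutually inverse. Start with the direction (1)$\to$(2). Let $G\subset O^+(F)*\hat\Z_2$ have degree of reflection two and write its fundamental representation as $v\oplus r$, where $r=r^*$ is the generator of $C^*(\Z_2)$. By Proposition~\ref{P.orthglue}(2) the algebra $C(G)$ carries the $\Z_2$-grading obtained by declaring all $v_{ij}$ and $r$ homogeneous of degree~$\bar 1$, and $C(\tilde G)$ is its even part; the fundamental representation of $\tilde G$ is $\tilde u=vr$, and using $v=F\bar vF^{-1}$ one computes $\tilde u^{\wcol}_{ij}=v_{ij}r$ and $\tilde u^{\bcol}_{ij}=rv_{ij}$. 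Conjugation by~$r$, namely $x\mapsto rxr$, is a $*$-automorphism of $C(G)$ that preserves the $\Z_2$-grading (because $r$ is homogeneous and invertible), hence restricts to a $*$-automorphism of $C(\tilde G)$ exchanging $v_{ij}r$ and $rv_{ij}$; by Proposition~\ref{P.colinv} this says precisely that $\tilde G$ is invariant with respect to the colour inversion.

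For the direction (2)$\to$(1), Proposition~\ref{P.canon} already produces the canonical $\Z_2$-ungluing $G$ of a colour-inversion-invariant $\tilde G$, and by the definition of an ungluing its glued version is $\tilde G$. What still needs checking is that the degree of reflection of $G$ equals two rather than one. By the description recorded after Proposition~\ref{P.canon}, the ideal $I_G$ is generated by the defining relations of $O^+(F)*\hat\Z_2$ together with the relations $f(x_{ij}r,rx_{ij})$ for $f\in I_{\tilde G}$. All of these are $\Z_2$-homogeneous for the grading above: the unitarity and orthogonality relations and $r^2=1$ are homogeneous, and each $f(x_{ij}r,rx_{ij})$ is a polynomial in the degree-$\bar 0$ elements $x_{ij}r$, $rx_{ij}$, hence homogeneous of degree~$\bar 0$. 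Thus $I_G$ is $\Z_2$-homogeneous, so by Lemma~\ref{L.degref} the number $2$ divides the degree of reflection of $G$. Since $G\subset O^+(F)*\hat\Z_2$, Proposition~\ref{P.orthglue} applies and its degree of reflection is $1$ or $2$; combining the two facts it is exactly $2$.

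It remains to see that the operations are mutually inverse. Starting from a colour-inversion-invariant $\tilde G$, forming the canonical $\Z_2$-ungluing and then the glued version returns $\tilde G$ by the definition of ungluing. Starting from $G$ in class~(1), let $\tilde G$ be its glued version (in class~(2) by the first paragraph) and let $G'$ be the canonical $\Z_2$-ungluing of $\tilde G$; I claim $G'=G$. The inclusion $G\subset G'$ is clear, since each generating relation $f(x_{ij}r,rx_{ij})$ of $G'$ is the relation $f\in I_{\tilde G}$ evaluated at $(\tilde u^{\wcol}_{ij},\tilde u^{\bcol}_{ij})=(v_{ij}r,rv_{ij})$, which vanishes in $C(\tilde G)\subset C(G)$. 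For the converse, $G$ and $G'$ both have degree of reflection two --- the first by hypothesis, the second by the previous paragraph --- so by Proposition~\ref{P.orthglue}(2) the algebras $C(G)$ and $C(G')$ are $\Z_2$-graded with the same even part $C(\tilde G)$, and in each of them the odd part equals the even part times~$r$. The surjection $\pi\colon C(G')\to C(G)$ induced by $G\subset G'$ is graded and is the identity on $C(\tilde G)$ (it sends $v'_{ij}r'\mapsto v_{ij}r=\tilde u_{ij}$); if $br'\in\ker\pi$ with $b\in C(\tilde G)$, then $br=0$ in $C(G)$, hence $b=br^2=0$, hence $br'=0$. So $\pi$ is injective and $G'=G$.

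I expect the crux to be the step that rules out degree of reflection one for the canonical ungluing, i.e.\ the homogeneity argument of the second paragraph: it is exactly this that makes Proposition~\ref{P.orthglue}(2) applicable on both sides and thereby pins the odd part of $C(G)$ and of $C(G')$ down to their common even part $C(\tilde G)$ via multiplication by~$r$, which is what drives the final injectivity argument. Everything else is bookkeeping --- keeping the non-standard involution, the identity $\tilde u^{\bcol}_{ij}=rv_{ij}$, and the two presentations of $I_{\tilde G}$ (in the $\wcol/\bcol$ variables versus the $*$-variables) consistent throughout.
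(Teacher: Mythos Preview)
Your proof is correct and follows essentially the same approach as the paper: both hinge on the $\Z_2$-decomposition (written as $I_G=\tilde I_{\tilde G}+\tilde I_{\tilde G}r$ in the paper, or equivalently $C(G)=C(\tilde G)+C(\tilde G)\,r$ in your version) together with the observation that conjugation by~$r$ implements the colour inversion on~$\tilde G$. The only cosmetic difference is in the final step: the paper compares the ideals directly (both equal $\tilde I+\tilde Ir$), whereas you phrase the same fact as injectivity of the graded surjection $\pi\colon C(G')\to C(G)$; these are two sides of the same coin.
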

\begin{proof}
Almost everything follows from Proposition~\ref{P.canon}. The only remaining thing to prove is that, given $G\subset O^+(F)*\hat\Z_2$ and $\tilde G$~its glued version, then $\tilde G$ is invariant with respect to the colour inversion and $G$~is its canonical $\Z_2$-ungluing. The first assertion follows from the fact that $I_G$ and hence also $\tilde I:=I_G\cap\tilde A=\tilde I_{\tilde G}$ is invariant with respect to conjugation by~$r$. For the second assertion, we need to prove that $I_G=\tilde I+\tilde Ir$ (see the proof of Proposition~\ref{P.canon}). Since $G$ has degree of reflection two, we have that $I_G$ is $\Z_2$-graded and hence it decomposes into an even and odd part, which is precisely $\tilde I$ and~$\tilde Ir$.
\end{proof}

In~\cite{GW19}, we introduced the $\Z_2$-extensions $H\times_{2k}\hat\Z_2$ exactly to be the canonical $\Z_2$-ungluings of $H\tiltimes\hat\Z_{2k}$. In the following, we recall the definition of those products and provide a~direct proof for the fact that $H\times_{2k}\hat\Z_2$ are canonical $\Z_2$-ungluings of $H\tiltimes\hat\Z_{2k}$ for arbitrary $H\subset O^+(F)$.

\begin{defn}[\cite{GW19}]
\label{D.prods}
Let $G$ and $H$ be compact matrix quantum groups and denote by $u$ and $v$ their respective fundamental representations. We define the following quantum subgroups of $G*H$. The product $G\ttimes H$ is defined by taking the quotient of $C(G*H)$ by the relations
\begin{equation}
\label{eq.ttimes}
ab^*x=xab^*,\qquad a^*bx=xa^*b
\end{equation}
the product $G\timess H$ is defined by the relations
\begin{equation}
\label{eq.timess}
ax^*y=x^*ya,\qquad axy^*=xy^*a
\end{equation}
the product $G\times_0 H$ by the combination of the both pairs of relations and, finally, given $k\in\N$, the product $G\times_{2k}H$ is defined by the relations
\begin{equation}
\label{eq.times}
a_1x_1\cdots a_kx_k=x_1a_1\cdots x_ka_k,
\end{equation}
where $a,b,a_1,\dots,a_k\in\{u_{ij}\}$ and $x,y,x_1,\dots,x_k\in\{v_{ij}\}$. (Equivalently, we can assume $a,b,a_1,\dots,a_k\in\spanlin\{u_{ij}\}$ and $x,y,x_1,\dots,x_k\in\spanlin\{v_{ij}\}$.)
\end{defn}

\begin{prop}
\label{P.timeskcanon}
Consider $H\subset O^+(F)$ with degree of reflection two, $k\in\N_0$. Then the canonical $\Z_2$-ungluing of $H\tiltimes\hat\Z_{2k}$ is $H\times_{2k}\hat\Z_2$.
\end{prop}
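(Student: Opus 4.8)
The plan is to deduce the statement from the one‑to‑one correspondence of Theorem~\ref{T.gluecorr}. Put $G:=H\times_{2k}\hat\Z_2$; by construction $G$ is a~quantum subgroup of $H*\hat\Z_2\subset O^+(F)*\hat\Z_2$, with fundamental representation $u=v\oplus r$, where $v$ is the fundamental representation of~$H$ and $r$ the generator of $\C\Z_2$. It is enough to verify
(a) that $G$ has degree of reflection two, and
(b) that the glued version $\tilde G$ of~$G$ equals $H\tiltimes\hat\Z_{2k}$;
for then Theorem~\ref{T.gluecorr} identifies $G$ with the canonical $\Z_2$-ungluing of $\tilde G=H\tiltimes\hat\Z_{2k}$. (The canonical $\Z_2$-ungluing of $H\tiltimes\hat\Z_{2k}$ is defined in the first place because $H\tiltimes\hat\Z_{2k}$ is invariant with respect to the colour inversion: this follows from $H\subset O^+(F)$ together with the fact that $z^{2}\in C(H\tiltimes\hat\Z_{2k})$, cf.\ Lemma~\ref{L.tiltimeszk}.)

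For~(a): since $\hat\Z_2=O^+(1_1)$ we have $O^+(F)*\hat\Z_2\subset O^+(F\oplus 1_1)$, hence $\Cat_G(\emptyset,\wcol\wcol)=\Cat_G(\emptyset,\wcol\bcol)\ni\xi_{\pairpart[wb]}\neq 0$, and Proposition~\ref{P.degref}(2) forces the degree of reflection of~$G$ to divide~$2$. To exclude the value~$1$, recall that $H$ has degree of reflection two, so Lemma~\ref{L.degref} gives a~$*$-homomorphism $C(H)\to C^*(\Z_2)$, $v_{ij}\mapsto\delta_{ij}\zeta$. Free-producting it with $\id_{C^*(\Z_2)}$ and checking that the defining relations of~$G$ — \eqref{eq.times} for $k\in\N$, or \eqref{eq.ttimes} and~\eqref{eq.timess} for $k=0$ — are mapped to identities valid in $C^*(\Z_2)$ (both sides become scalars), we obtain a~$*$-homomorphism $C(G)\to C^*(\Z_2)$, $u_{ij}\mapsto\delta_{ij}\zeta$. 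By Lemma~\ref{L.degref} the degree of reflection of~$G$ is then divisible by~$2$, hence equals~$2$.

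For~(b): by Proposition~\ref{P.repglue} (in the form of Remark~\ref{R.glue}) it suffices to prove $I_{\tilde G}=I_{H\tiltimes\hat\Z_{2k}}$ inside $\C\langle\tilde x_{ij},\tilde x_{ij}^*\rangle$, i.e.\ that the images of the generators of $I_{H\tiltimes\hat\Z_{2k}}$ under $\tilde x^{\wcol}_{ij}\mapsto v_{ij}r$, $\tilde x^{\bcol}_{ij}\mapsto rv_{ij}$ (the second using $H\subset O^+(F)$) lie in $I_G$, and conversely. By Theorem~\ref{T.freecompid} and Proposition~\ref{P.tiltenrel}, $I_{H\tiltimes\hat\Z_{2k}}$ is generated over $I_{U^+(F)}$ by the alternating polynomials of~$I_H$ together with \eqref{eq.freetotensor} and~\eqref{eq.0tok}. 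Because $H$ has degree of reflection two, the relations of~$H$ have even length (Proposition~\ref{P.degrefOG}), so under the above substitution the letters~$r$ cancel in pairs and an alternating polynomial of~$I_H$ maps to the same polynomial in the block~$v$ — an element of~$I_H$, valid in $C(G)\subset C(H*\hat\Z_2)$, and these already generate $I_H$; relation~\eqref{eq.0tok} maps exactly to~\eqref{eq.times}; and relation~\eqref{eq.freetotensor} maps to
$$v_{ij}v_{kl}=r\,v_{ij}v_{kl}\,r.\qquad(\star)$$
Since the images of the alternating polynomials generate the ideal of relations of $H*\hat\Z_2$ in $A=\C\langle x_{ij}\rangle*\C\Z_2$, and the image of~\eqref{eq.0tok} then gives $I_G$, the whole statement reduces to verifying $(\star)$ in $C(G)$.

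The relation~$(\star)$ — equivalently, conjugating by~$r$ and using~(a), the assertion that $\tilde G$ is globally colourized — is the main obstacle. For $k=0$ it is immediate, because for $H\times_0\hat\Z_2$ the relations \eqref{eq.ttimes} say precisely that $r$ commutes with all $v_{ij}v_{kl}^*$ and $v_{ij}^*v_{kl}$, which span the same space as the $v_{ij}v_{kl}$ since $v^*$ is a~linear combination of entries of~$v$; for $k=1$ the relation~\eqref{eq.times} makes $r$ central and $(\star)$ is trivial. For $k\ge 2$ one must genuinely derive $(\star)$ from~\eqref{eq.times}. I would do this via the crossed‑product description of the canonical ungluing coming from the proof of Theorem~\ref{T.gluecorr}: its underlying algebra is $C(H\tiltimes\hat\Z_{2k})\oplus C(H\tiltimes\hat\Z_{2k})\,r$, the $\Z_2$-crossed product by the colour inversion~$\sigma$, with $v_{ij}$ corresponding to $\tilde u_{ij}r$; in the picture $C(H\tiltimes\hat\Z_{2k})\subset C(H)\otimes C^*(\Z_{2k})$, where $\tilde u_{ij}=v_{ij}\otimes z$ and $\sigma=\id\otimes(z\mapsto z^*)$, one computes $(\tilde u_{ij}r)(\tilde u_{kl}r)=v_{ij}v_{kl}\otimes 1$, manifestly $\sigma$-fixed, while $(\tilde u_{a_1b_1}r)r\cdots=v_{a_1b_1}\cdots v_{a_kb_k}\otimes z^{k}$ is $\sigma$-fixed exactly because $z^{2k}=1$, so both \eqref{eq.times} and~$(\star)$ hold there. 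This yields a~surjective $*$-homomorphism of $C(G)$ onto the crossed product respecting the fundamental representations; the remaining — and genuinely hard — point is its injectivity, i.e.\ that \eqref{eq.times} already forces~$(\star)$ in the universal algebra $C(G)$. I expect this to require a~normal‑form / spanning argument for $C(G)$ in the spirit of the Lemma preceding Remark~\ref{R.glue}, showing that the even part of $C(G)$ is linearly spanned by the images of a~basis of $C(H\tiltimes\hat\Z_{2k})$ and its odd part by those images times~$r$; alternatively, a~$d\times d$ matrix model for $C(G)$ analogous to the ones used in the proofs of Theorems~\ref{T.tenscomp} and~\ref{T.freecompchar} and of Lemma~\ref{L.Gtilstar0}.
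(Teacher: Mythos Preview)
Your approach differs from the paper's: the paper works \emph{forward}, computing the canonical $\Z_2$-ungluing of $H\tiltimes\hat\Z_{2k}$ directly from its defining relations. By the description preceding Proposition~\ref{P.canon}, one simply takes a generating set for $I_{H\tiltimes\hat\Z_{2k}}$ over $I_{U^+(F)}$ and substitutes $\tilde v_{ij}^{\wcol}\mapsto v_{ij}r$, $\tilde v_{ij}^{\bcol}\mapsto rv_{ij}$, then adds $v^\wcol=v^\bcol$ and $r^2=1$. Using the relations from Proposition~\ref{P.tiltenrel}, the unglued \eqref{eq.0tok} is precisely \eqref{eq.times}, and the alternating relations of $H$ collapse to the relations of $H*\hat\Z_2$. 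Your route --- verify degree of reflection two, compute the glued version, then invoke Theorem~\ref{T.gluecorr} --- is equivalent but longer.

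The point you flag as ``genuinely hard'' is not: it is a short categorical computation, and you do not need a normal-form argument or a matrix model. In $C(H\times_{2k}\hat\Z_2)$ set $\tilde v^{\wcol}_{ij}:=v_{ij}r$ and $\tilde v^{\bcol}_{ij}:=rv_{ij}$; then \eqref{eq.times} reads $\tilde v^{\otimes\wcol^k}=\tilde v^{\otimes\bcol^k}$, and $(\star)$ reads $\tilde v^{\otimes\wcol\bcol}=\tilde v^{\otimes\bcol\wcol}$. Tensoring the relation with its involution gives $1_N^{\otimes 2k}\in\Mor(\tilde v^{\otimes\wcol^k\bcol^k},\tilde v^{\otimes\bcol^k\wcol^k})$. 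Now pre- and post-compose with the duality morphisms at the middle positions:
\[
(1_N^{\otimes k-1}\otimes\xi_{\pairpart[bw]}^*\otimes 1_N^{\otimes k-1})\circ 1_N^{\otimes 2k}\circ(1_N^{\otimes k-1}\otimes\xi_{\pairpart[wb]}\otimes 1_N^{\otimes k-1})
=(\xi_{\pairpart[bw]}^*\xi_{\pairpart[wb]})\,1_N^{\otimes 2(k-1)},
\]
which is a nonzero scalar multiple of the identity and lies in $\Mor(\tilde v^{\otimes\wcol^{k-1}\bcol^{k-1}},\tilde v^{\otimes\bcol^{k-1}\wcol^{k-1}})$. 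Iterating $k-1$ times yields $1_N^{\otimes 2}\in\Mor(\tilde v^{\otimes\wcol\bcol},\tilde v^{\otimes\bcol\wcol})$, i.e.\ $(\star)$. Equivalently, in the two-coloured partition language this is the well-known fact that $(\idpart[b/w])^{\otimes k}$ generates $\globcol$; the paper relies on this implicitly when it unglues only \eqref{eq.0tok} for $k>0$. With this step filled in, your argument is complete.
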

\begin{proof}
From Proposition \ref{P.tiltenrel}, we can express $H\tiltimes\hat\Z_{2k}$ as a quantum subgroup of $H\tilstar\hat\Z=H\tilstar\hat\Z_2$ given by certain relations. Denoting by $\tilde v$ the fundamental representation of $H\tiltimes\hat\Z_{2k}$, we just have to ``unglue'' the relations substituting $\tilde v_{ij}$ by $v_{ij}r$. For $H\tiltimes\hat\Z$, we get
$$v_{ij}v_{kl}=rv_{ij}v_{ij}r,$$
which is obviously equivalent to $v_{ij}v_{kl}r=rv_{ij}v_{kl}$ -- the defining relation for $H\ttimes\hat\Z_2=H\times_0\hat\Z_2$. For $H\tiltimes\hat\Z_{2k}$, $k>0$, we get
$$v_{i_1j_1}zv_{i_2j_2}z\cdots v_{i_kj_k}z=zv_{i_1j_1}zv_{i_2j_2}\cdots zv_{i_kj_k},$$
which is exactly the defining relation for $H\times_{2k}\hat\Z_2$.
\end{proof}

\subsection{Irreducibles and coamenability for ungluings and $\Z_2$-extensions}
In this section, we will study properties of the gluing procedure and canonical $\Z_2$-ungluing. In many cases, we can view the $\Z_2$-extensions $H\times_{2k}\hat\Z_2$ as a motivating example. It remains an open question, whether one can generalize the statements for arbitrary products $H_1\times_{2k}H_2$.

For this section, we will assume that $G\subset O_N^+*\hat\Z_2$ has degree of reflection two.

For quantum groups $G$ and $H$, the property $O(H)\subset O(G)$ can be understood either as $H$ being a~quotient of $G$ or the discrete dual $\hat H$ being a~quantum subgroup of~$\hat G$. In that case, we can study the homogeneous space $\hat G/\hat H$ by defining
$$l^{\infty}(\hat G/\hat H):=\{x\in l^{\infty}(\hat G)\mid x(ab)=x(b)\hbox{ for all $a\in O(H)$ and $b\in O(G)$}\},$$
where $l^{\infty}(\hat G)$ is the space of all bounded functionals on~$O(G)$.

\begin{prop}
Consider $G\subset O_N^+*\Z_2$ with degree of reflection two. Then $\hat G/\hat{\tilde G}$ consists of two points. More precisely,
$$l^\infty(\hat G/\hat{\tilde G})=\{x\in l^{\infty}(\hat G)\text{ constant on the $\Z_2$-homogeneous parts of $O(G)$}\}\simeq\C^2.$$
\end{prop}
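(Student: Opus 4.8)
The statement asserts that the homogeneous space $\hat G/\hat{\tilde G}$ is the two-point set, which amounts to identifying the space of right-$O(\tilde G)$-invariant functionals on $O(G)$ with the functionals that are constant on the two $\Z_2$-homogeneous components $O(G)_0 = O(\tilde G)$ and $O(G)_1 = O(\tilde G)r$. The first step is to recall the grading: since $G$ has degree of reflection two, Proposition~\ref{P.orthglue}(2) tells us that $C(G)$, and hence $O(G)$, is $\Z_2$-graded with even part $O(\tilde G)$ and odd part $O(\tilde G)r$. So the abstract data is just: a $\Z_2$-graded Hopf $*$-algebra $O(G) = O(\tilde G) \oplus O(\tilde G)r$ with $O(\tilde G)$ a Hopf subalgebra.

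The second and main step is to characterize the invariance condition $x(ab) = x(b)$ for all $a \in O(\tilde G)$, $b \in O(G)$. First I would note that taking $a$ to be the unit $1$ (which is the counit's normalization, i.e.\ $\epsilon(1)=1$) gives nothing, so the content is really that $x$ annihilates $(O(\tilde G)\setminus\C 1)\cdot O(G)$. The key observation is that $O(\tilde G)$ as an algebra is generated by $1$ together with the span of the degree-zero matrix entries; more to the point, I claim $O(\tilde G)^+ \cdot O(G) = O(G)_0^+ \oplus O(G)_1$ where $O(G)_0^+ = \ker\epsilon \cap O(\tilde G)$ and the odd part is hit entirely because $O(\tilde G)r \ni (\text{anything even in }\ker\epsilon)\cdot r$ — wait, more carefully: $O(\tilde G)^+\cdot O(\tilde G)r$ spans $O(\tilde G)^+ r$, which need not be all of $O(\tilde G)r$. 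The correct argument is that $O(\tilde G)^+\cdot O(G)$ contains $O(\tilde G)^+$ (even part) and contains $O(\tilde G)^+ r$; combined with the fact that the quotient $O(G)/(O(\tilde G)^+ O(G))$ is exactly the space of coinvariants for the coaction dual to $\hat{\tilde G}\hookrightarrow \hat G$. By the standard theory of homogeneous spaces for compact quantum groups (used implicitly in the paragraph preceding the proposition), $l^\infty(\hat G/\hat{\tilde G})$ is the annihilator of $O(\tilde G)^+ O(G)$ in $l^\infty(\hat G)$, and $O(G)/O(\tilde G)^+O(G)$ has a basis given by the classes of $1$ and $r$. Indeed every homogeneous element of degree $0$ lies in $O(\tilde G)$, so its image is a scalar multiple of $[1]$; and for a homogeneous element $fr$ of degree $1$ with $f\in O(\tilde G)$, we can write $f = \epsilon(f)1 + f_0$ with $f_0\in O(\tilde G)^+$, so $fr \equiv \epsilon(f) r$ modulo $O(\tilde G)^+ O(G)$. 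Hence the quotient is two-dimensional, spanned by $[1]$ and $[r]$, and a functional is invariant precisely when it factors through this quotient, i.e.\ when it is determined by its values on $1$ and $r$ — equivalently, when it is constant on each $\Z_2$-homogeneous part.

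The third step is just bookkeeping: assemble the above into the displayed equality $l^\infty(\hat G/\hat{\tilde G}) = \{x \in l^\infty(\hat G) : x \text{ constant on } O(G)_0, O(G)_1\} \cong \C^2$, the last isomorphism sending $x$ to $(x(1), x(r))$, which is visibly a bijection onto $\C^2$ since $1$ and $r$ are linearly independent and their classes span the quotient. One should also check this is an algebra isomorphism for the convolution structure on $l^\infty(\hat G/\hat{\tilde G})$ induced from $l^\infty(\hat G)$, but since $\{1, r\}$ generates a copy of the group algebra $\C\Z_2 \cong \C^2$ inside $O(G)$ and the grading is exactly the $\Z_2$-grading, the induced product on the two-point space is the pointwise one on $\C^2$.

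\textbf{Main obstacle.} The routine part is the Hopf-algebraic manipulation; the one genuinely delicate point is justifying that $l^\infty(\hat G/\hat{\tilde G})$ equals the annihilator of $O(\tilde G)^+ O(G)$ and that this annihilator is dual to the quotient $O(G)/O(\tilde G)^+O(G)$, i.e.\ that the invariance condition in the definition of $l^\infty(\hat G/\hat{\tilde G})$ translates cleanly to ``factors through the quotient by $O(\tilde G)^+O(G)$''. This is standard for compact quantum group homogeneous spaces but requires invoking that $O(\tilde G)$ is a Hopf subalgebra (so that $O(\tilde G)^+O(G) = O(G)O(\tilde G)^+$ is a two-sided ideal and coideal, making the quotient a well-behaved coalgebra) — which holds here because $O(\tilde G)$ is the even part of a graded Hopf algebra, hence a Hopf subalgebra. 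I would cite the general theory (as referenced implicitly before the proposition) for this translation and spend the bulk of the written proof on the explicit two-dimensionality of the quotient.
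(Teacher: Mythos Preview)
Your proposal is correct, but it takes a considerably more elaborate route than the paper. The paper's proof is two lines: substitute $b:=1$ into the invariance condition $x(ab)=x(b)$ (for $a\in O(\tilde G)$) to obtain $x(a)=x(1)$ for every $a$ in the even part $O(\tilde G)$; then substitute $b:=r$ to obtain $x(ar)=x(r)$ for every $a\in O(\tilde G)$, which covers the entire odd part $O(\tilde G)r$. That is the whole argument.

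Your approach via the quotient $O(G)/O(\tilde G)^+O(G)$ is the standard Hopf-algebraic machinery for quantum homogeneous spaces, and it does buy something: it makes precise in what sense a \emph{linear} functional can be ``constant'' on a subspace (namely, it factors through the two-dimensional quotient spanned by $[1]$ and $[r]$), and it situates the computation in a general framework. The paper's proof, by contrast, exploits the very specific structure at hand --- that the odd part is literally a single translate $O(\tilde G)r$ of the even part --- and so reduces everything to two substitutions without invoking augmentation ideals or coinvariants. Both arguments ultimately rest on the same observation (the $\Z_2$-grading from Proposition~\ref{P.orthglue}), but the paper's is the minimal one.
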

\begin{proof}
Consider $x\in l^\infty(\hat G)$. By Proposition~\ref{P.orthglue}, $O(G)$ is $\Z_2$-graded. Putting $b:=1$ in the equality $x(ab)=x(b)$, we get that $x$ is constant on the even part. Putting $b:=r$, we get that $x$ is constant on the odd part.
\end{proof}

Recall that a compact quantum group $G$ is called \emph{coamenable} if the so-called \emph{Haar state} is faithful on the universal C*-algebra $C(G)$.

\begin{prop}
\label{P.unglueamen}
Consider $G\subset O_N^+*\hat\Z_2$ and $\tilde G\subset U_N^+$ its glued version. Then $G$ is coamenable if and only if $\tilde G$ is coamenable.
\end{prop}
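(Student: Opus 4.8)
The plan is to exploit the relation between $G$ and its glued version $\tilde G$ established in Remark~\ref{R.glue} and Proposition~\ref{P.orthglue}, namely that $C(G)$ is $\Z_2$-graded with even part $C(\tilde G)$, and to transport coamenability across this finite extension. The key structural fact is that $C(G)=C(\tilde G)\oplus C(\tilde G)r$ as a vector space, so $C(\tilde G)$ sits inside $C(G)$ as a unital $C^*$-subalgebra of ``index two''. I would first recall the standard criterion: a compact quantum group is coamenable iff the counit $\epsilon$ extends to a bounded character on the universal $C^*$-algebra, equivalently iff the Haar state is faithful on it. Denote the Haar states by $h_G$ and $h_{\tilde G}$.

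The first step is to observe that the restriction of $h_G$ to $C(\tilde G)=O(\tilde G)$-closure is exactly $h_{\tilde G}$: indeed $h_G|_{O(\tilde G)}$ is a state that is invariant under $\Delta_{\tilde G}=\Delta_G|_{O(\tilde G)}$, hence by uniqueness of the Haar state equals $h_{\tilde G}$. For the direction ``$G$ coamenable $\Rightarrow \tilde G$ coamenable'', I would argue that if $\epsilon_G$ extends to $C(G)$ then its restriction to the subalgebra $C(\tilde G)$ is a bounded character extending $\epsilon_{\tilde G}$, giving coamenability of $\tilde G$; alternatively, faithfulness of $h_G$ on $C(G)$ restricts to faithfulness of $h_{\tilde G}$ on $C(\tilde G)$ since a positive element of $C(\tilde G)\subset C(G)$ killed by $h_{\tilde G}=h_G|_{C(\tilde G)}$ is killed by $h_G$, hence zero.

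For the converse ``$\tilde G$ coamenable $\Rightarrow G$ coamenable'', the point is that $G$ is an extension of $\tilde G$ by the finite group $\Z_2$ via the grading automorphism $\alpha$ of $C(G)$ with fixed-point algebra $C(\tilde G)$. I would use the conditional expectation $E\colon C(G)\to C(\tilde G)$ given by $E=\frac12(\id+\alpha)$, which is faithful, and the identity $h_G = h_{\tilde G}\circ E$ (again by the invariance–uniqueness argument, since $h_{\tilde G}\circ E$ is a $\Delta_G$-invariant state). Then for $x\in C(G)$ positive with $h_G(x)=0$ we get $h_{\tilde G}(E(x))=0$, so $E(x)=0$ by faithfulness of $h_{\tilde G}$ on $C(\tilde G)$, so $x=0$ by faithfulness of $E$; hence $h_G$ is faithful and $G$ is coamenable. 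Equivalently, one may phrase this via amenability of the dual: $\hat{\tilde G}\subset\hat G$ with $\hat G/\hat{\tilde G}$ a two-point space (the previous proposition), and an extension of an amenable discrete quantum group by a finite quantum group is amenable.

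The main obstacle I anticipate is making precise the ``index two'' extension argument at the level of the universal $C^*$-algebras rather than just the Hopf $*$-algebras: one must check that the conditional expectation $E=\frac12(\id+\alpha)$ genuinely extends to $C(G)$ (which follows from $\alpha$ being a $*$-automorphism of $C(G)$, guaranteed by Proposition~\ref{P.orthglue} and the universal property) and that $C(\tilde G)$, which a priori is defined as the universal $C^*$-algebra of $O(\tilde G)$, really coincides with the norm-closure of $O(\tilde G)$ inside $C(G)$ when $G$ is coamenable — but this coincidence is automatic once we know $h_{\tilde G}$ is faithful, and in the other direction we only need the inclusion $O(\tilde G)\subset O(G)$ together with $h_G|_{O(\tilde G)}=h_{\tilde G}$, which is purely algebraic. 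So the argument reduces to the two faithfulness transfers above, neither of which requires any hard analysis beyond the standard Haar-state uniqueness.
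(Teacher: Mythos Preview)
Your argument is correct and follows the same strategy as the paper: identify $h_{\tilde G}$ with the restriction of $h_G$ and transfer faithfulness across the $\Z_2$-extension. The paper's own proof is much terser---it simply asserts that ``all positive elements of $C(G)$ are contained in $C(\tilde G)$'' and concludes. That assertion is not literally true (e.g.\ $1+r$ is positive but not in the even part), so your explicit use of the faithful conditional expectation $E=\tfrac12(\id+\alpha)$ together with $h_G=h_{\tilde G}\circ E$ is precisely the correct way to make the step rigorous. The subtlety you flag about $C(\tilde G)$ versus the closure of $O(\tilde G)$ inside $C(G)$ is handled as you indicate: for $G\Rightarrow\tilde G$ the counit restricts through the canonical map $C(\tilde G)\to C(G)$, and for $\tilde G\Rightarrow G$ faithfulness of $h_{\tilde G}$ forces that map to be injective, so the expectation argument goes through on the universal level.
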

\begin{proof}
It is easy to see that the Haar state on $\tilde G$ is given just by restriction of the Haar state $h$ on $G$. We also have that all positive elements of $C(G)$ are contained in $C(\tilde G)$. Hence, the faithfulness of $h$ on $C(G)$ is equivalent to the faithfulness on $C(\tilde G)$.
\end{proof}

Note that it is well known that the coamenability is preserved under tensor product of quantum groups, but it is not preserved under the (dual) free product. Hence, it is interesting to ask, whether it is preserved under the new interpolating products from Def.~\ref{D.prods}.

Let us also recall another well known fact that can be seen directly from the definition of coamenability: Let $G$ be a~coamenable compact quantum group. Then every its quotient, that is, every quantum group $H$ with $C(H)\subset C(G)$ is coamenable.

\begin{prop}
Consider $H\subset O_N^+$, $k\in\N_0$. Then $H\times_{2k}\hat\Z_2$ is co-amenable if and only if $H$ is co-amenable.
\end{prop}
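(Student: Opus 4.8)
The plan is to reduce the assertion, via the ungluing correspondence, to the analogous statement for the tensor complexification $H\tiltimes\hat\Z_{2k}$, and then to settle that statement by soft arguments about quotients, tensor products and closed quantum subgroups.

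First I would apply Proposition~\ref{P.timeskcanon}: since (under the standing assumption of this section) $H\subset O_N^+$ has degree of reflection two, the glued version of $G:=H\times_{2k}\hat\Z_2$ is $\tilde G=H\tiltimes\hat\Z_{2k}$. By Proposition~\ref{P.unglueamen}, $G$ is coamenable if and only if $\tilde G$ is. Hence it suffices to prove that $H\tiltimes\hat\Z_{2k}$ is coamenable if and only if $H$ is.

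For the ``if'' direction I would argue softly: the quantum group $\hat\Z_{2k}$ is coamenable (its discrete dual is the amenable group $\Z_{2k}$; for $k=0$ this reads $\hat\Z=\T$), so by stability of coamenability under tensor products the product $H\times\hat\Z_{2k}$ is coamenable; and $O(H\tiltimes\hat\Z_{2k})$ is, by construction, a Hopf $*$-subalgebra of $O(H\times\hat\Z_{2k})$, i.e.\ $H\tiltimes\hat\Z_{2k}$ is a quotient of $H\times\hat\Z_{2k}$ in the sense recalled above, so it is coamenable as well. For the ``only if'' direction I would invoke Theorem~\ref{T.tenscomp2}(1), which yields $I_{H\tiltimes\hat\Z_{2k}}\subset I_H$; the identity on generators therefore induces a surjective morphism of Hopf $*$-algebras $O(H\tiltimes\hat\Z_{2k})\to O(H)$, $u_{ij}\mapsto v_{ij}$, and hence a surjection of universal C*-algebras $C(H\tiltimes\hat\Z_{2k})\to C(H)$; in other words $H$ is a closed quantum subgroup of $H\tiltimes\hat\Z_{2k}$. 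Coamenability passes to closed quantum subgroups --- if the counit is bounded on $C(H\tiltimes\hat\Z_{2k})$, then, since it factors as $\epsilon_H$ followed by the above quotient map, $\epsilon_H$ is bounded on $C(H)$ --- so coamenability of $H\tiltimes\hat\Z_{2k}$ forces that of $H$. Together with the reduction, this completes the proof.

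This is largely a matter of assembling results already established, so I do not anticipate a genuine obstacle; the point to watch is the degree-of-reflection hypothesis. We work inside the standing assumption that $G=H\times_{2k}\hat\Z_2$, equivalently $H$, has degree of reflection two, which is exactly what makes Proposition~\ref{P.timeskcanon} available; were $H$ of degree of reflection one instead, then $H\tiltimes\hat\Z_{2k}\simeq H\times\hat\Z_{2k}$ by Proposition~\ref{P.tiltimesiso} and the equivalence with coamenability of $H$ would be immediate. (Alternatively, the ``only if'' direction follows from Banica's character criterion, by comparing $\norm{\chi_u+\chi_u^*}$ computed in $C(H\tiltimes\hat\Z_{2k})\subset C(H)\otimes_{\rm max}C^*(\Z_{2k})$ with $\norm{\chi_v+\chi_v^*}$ computed in $C(H)$: writing $u=vz$ and using that $z$ is central and $\chi_v^*=\chi_v$, the two differ precisely by the factor $\norm{z+z^*}=2$, which is also the ratio of the corresponding quantum dimensions.)
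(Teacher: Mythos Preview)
Your reduction to $H\tiltimes\hat\Z_{2k}$ via Propositions~\ref{P.timeskcanon} and~\ref{P.unglueamen} is correct when $H$ has degree of reflection two, and your ``if'' direction ($H$ coamenable $\Rightarrow H\times\hat\Z_{2k}$ coamenable $\Rightarrow H\tiltimes\hat\Z_{2k}$ coamenable) is exactly the paper's argument for that case.

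Two points deserve attention. First, your justification of the ``only if'' direction via the counit does not work as written: in the paper $C(G)$ always denotes the \emph{universal} C*-algebra, and the counit is automatically a bounded $*$-homomorphism on any universal C*-algebra, so ``the counit is bounded on $C(H\tiltimes\hat\Z_{2k})$'' is not a characterization of coamenability. The statement you want --- coamenability passes to closed quantum subgroups --- is true, but one needs to argue at the reduced level (or simply cite it). Your alternative via the character criterion is sound and closes this gap. The paper avoids the issue entirely by observing directly that $C(H)\subset C(H\times_{2k}\hat\Z_2)$, i.e.\ $H$ is a \emph{quotient} of $H\times_{2k}\hat\Z_2$, so coamenability passes down by the ``well known fact'' recalled just before Proposition~\ref{P.unglueamen}; this is simpler and bypasses the glued intermediate altogether.

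Second, your handling of the degree-one case is incomplete. The proposition as stated is not restricted by the section's standing assumption, and the paper's proof treats both cases. Your parenthetical shows $H\tiltimes\hat\Z_{2k}\simeq H\times\hat\Z_{2k}$ when $H$ has degree one, but the reduction ``$H\times_{2k}\hat\Z_2$ coamenable $\Leftrightarrow H\tiltimes\hat\Z_{2k}$ coamenable'' itself relied on Proposition~\ref{P.timeskcanon}, which needs degree two; so you have not connected $H\times_{2k}\hat\Z_2$ to anything in the degree-one case. The paper closes this by invoking \cite[Theorem~5.5]{GW19}, which gives $H\times_{2k}\hat\Z_2=H\times\hat\Z_2$ outright when $H$ has degree of reflection one.
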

\begin{proof}
The left-right implication follows from the fact that $H$ is a~quotient of $H\times_{2k}\hat\Z_2$. Now, let us prove the right-left implication. If the degree of reflection of~$H$ is one, then by \cite[Theorem~5.5]{GW19}, the new product actually coincides with the tensor product $H\times_{2k}\hat\Z_2=H\times\hat\Z_2$, so it indeed preserves the coamenability.

Now suppose $H$ has degree of reflection two. If $H$ is coamenable, then $H\times\hat\Z_{2k}$ must be coamenable. Consequently, its glued version $H\tiltimes\hat\Z_{2k}$ is coamenable since it is a~quotient quantum group. Finally, $H\times_{2k}\hat\Z_2$ is coamenable by Proposition~\ref{P.unglueamen}.
\end{proof}

Now, we are going to look on the irreducible representations of the ungluings.

\begin{prop}
\label{P.unglueirrep}
Consider $G\subset O^+(F)*\hat\Z_2$ with degree of reflection two and fundamental representation $v\oplus r$. Let $\tilde G\subset U_N^+$ be its glued version. Then the irreducibles of~$G$ are given by
$$\{u^\alpha,u^\alpha r\mid\alpha\in\Irr\tilde G\}.$$
\end{prop}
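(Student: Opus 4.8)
The plan is to use the $\Z_2$-grading on $C(G)$ established in Proposition~\ref{P.orthglue}(2): since $G$ has degree of reflection two, $I_G$ is $\Z_2$-homogeneous, so $O(G)=O(G)_0\oplus O(G)_1$ is $\Z_2$-graded with $O(\tilde G)=O(G)_0$ the even part, and (as in Section~\ref{secc.OGunglue} and Remark~\ref{R.glue}) the odd part is $O(G)_1=O(\tilde G)\,r=r\,O(\tilde G)$, using $r^*=r$ and $r^2=1$. By the discussion in Section~\ref{secc.grading}, every $\beta\in\Irr G$ can be represented by a unitary $u^\beta$ all of whose matrix entries are $\Z_2$-homogeneous of one and the same degree $d_\beta\in\{0,1\}$; this yields a partition $\Irr G=\Irr_0G\sqcup\Irr_1G$ according to the value of $d_\beta$.

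The key device is twisting a representation by $r$. For any representation $w\in M_n(C(G))$, the matrix $w\cdot r$ with entries $w_{ij}r$ is again a representation, because $\Delta$ is multiplicative and $\Delta(r)=r\otimes r$, so $\Delta(w_{ij}r)=\sum_k w_{ik}r\otimes w_{kj}r$. If $w$ is unitary then so is $w\cdot r$ (as $rr^*=r^*r=1$), and $\Mor_G(w\cdot r,w\cdot r)=\Mor_G(w,w)$ — multiply the intertwiner identity $T\,(w\cdot r)=(w\cdot r)\,T$ by $r$ on the right, using that the scalar matrix $T$ commutes with $r$ — so $w\cdot r$ is irreducible iff $w$ is. Moreover the entries of $w\cdot r$ are homogeneous of degree $d_w+1$, and $(w\cdot r)\cdot r=w$ since $r^2=1$.

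I would now identify the two pieces. For $\Irr_0G$: recall that for a quotient $\tilde G$ of $G$ one has $\Irr\tilde G=\{\beta\in\Irr G\mid[u^\beta]_{ij}\in C(\tilde G)\ \forall i,j\}$; since the entries of $u^\beta$ for $\beta\in\Irr_0G$ lie in $O(G)_0=O(\tilde G)\subseteq C(\tilde G)$, and conversely a representative with entries in $O(\tilde G)=O(G)_0$ forces $d_\beta=0$ by homogeneity, we get $\Irr_0G=\Irr\tilde G=\{u^\alpha\mid\alpha\in\Irr\tilde G\}$. For $\Irr_1G$: applying the twist to $u^\alpha$, $\alpha\in\Irr\tilde G$, gives an irreducible $u^\alpha r$ with entries in $O(\tilde G)\,O(G)_1=O(G)_1$, hence $d=1$, so $u^\alpha r\in\Irr_1G$; conversely, for $\beta\in\Irr_1G$ the twist $u^\beta r$ has entries in $O(G)_1O(G)_1=O(G)_0=O(\tilde G)$ and $\Delta$-image in $O(\tilde G)\otimes O(\tilde G)$, so it is an irreducible representation of $\tilde G$, i.e. some $u^\alpha$ with $\alpha\in\Irr\tilde G$, and then $u^\beta=(u^\beta r)\cdot r\simeq u^\alpha r$. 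These two assignments are mutually inverse, so $\Irr_1G=\{u^\alpha r\mid\alpha\in\Irr\tilde G\}$. Finally I would check mutual inequivalence of all the listed representations: distinct $\alpha,\alpha'$ give inequivalent $u^\alpha,u^{\alpha'}$ (the intertwiner equation lives in $O(\tilde G)$ and is unchanged by passing to $O(G)$), hence also inequivalent $u^\alpha r,u^{\alpha'}r$ (right-multiply an intertwiner by $r$), while $u^\alpha\not\simeq u^{\alpha'}r$ because their entries are homogeneous of different degrees, so an intertwiner equation would equate a degree-$0$ element with a degree-$1$ one and thus force a nonzero matrix of entries to vanish. Together with $\Irr G=\Irr_0G\sqcup\Irr_1G$ this proves the statement.

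The main obstacle is not conceptual but a matter of keeping the bookkeeping honest: one must be sure that "degree of reflection two'' for $G\subset O^+(F)*\hat\Z_2$ genuinely makes $O(G)$ a $\Z_2$-graded algebra with $O(\tilde G)$ its even part (this is exactly Proposition~\ref{P.orthglue}), and that twisting by $r$ interacts correctly with the comultiplication, with unitarity, and with the passage between $G$- and $\tilde G$-intertwiners even though $r$ is not central in the free product.
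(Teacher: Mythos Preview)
Your argument is correct and rests on the same core ingredients as the paper's proof: the $\Z_2$-grading on $O(G)$ with $O(\tilde G)$ as its even part (Proposition~\ref{P.orthglue}), the observation that $u^\alpha r$ is again an irreducible representation, and the degree argument for inequivalence between the two families.

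The one genuine difference is in the completeness step. You partition $\Irr G=\Irr_0 G\sqcup\Irr_1 G$ by degree, identify $\Irr_0 G=\Irr\tilde G$ via the quotient remark from Section~\ref{sec.prelim}, and then exhibit the twist by $r$ as an explicit bijection $\Irr_0 G\leftrightarrow\Irr_1 G$. The paper instead checks in one line that the matrix entries of the proposed list linearly span $O(\tilde G)+O(\tilde G)\,r=O(G)$ and invokes the fact that entries of a complete family of irreducibles form a basis of the coordinate algebra. The paper's route is shorter and sidesteps the bookkeeping of verifying that every degree-one irreducible actually arises from a twist; your route, on the other hand, makes the bijection $\Irr G\cong\Irr\tilde G\times\Z_2$ explicit and would transfer more directly to situations where the spanning argument is less immediate.
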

\begin{proof}
First, we prove that all the matrices are indeed representations of~$G$. Surely all $u^\alpha$ are representations. The $r$ is also a~representation. Hence $u^\alpha r=u^\alpha\otimes r$ must also be representations.

Secondly, we prove that the representations are mutually inequivalent. The representations~$u^\alpha$ are mutually inequivalent by definition. From this, it follows that the representations~$u^\alpha r$ are mutually inequivalent. Since $G$ has degree of reflection two, we have that $O(G)$ is graded with $O(\tilde G)$ being its even part. So, the entries of $u^\alpha$ are even, whereas the entries of $u^\alpha r$ are odd, so they cannot be equivalent.

Finally, we need to prove that those are all the representations. This can be proven using the fact that entries of irreducible representations form a~basis of the polynomial algebra. If we prove that the entries of the representations span the whole $O(G)$, we are sure that we have all the irreducibles. This is indeed true:
\[\spanlin\{u^\alpha_{ij},u^\alpha_{ij}r\mid\alpha\in\Irr\tilde G\}=O(\tilde G)+O(\tilde G)\,r=O(G).\qedhere\]
\end{proof}

\begin{prop}
Consider $H\subset O^+(F)$ with degree of reflection two, $k\in\N_0$. Then the complete set of mutually inequivalent irreducible representations of $H\times_{2k}\hat\Z_2$ is given by
\begin{equation}
u^{\alpha,i,\eta}=u^\alpha s^ir^\eta,\qquad \alpha\in\Irr H,\;i\in\{0,\dots,k-1\},\;\eta\in\{0,1\},
\end{equation}
where 
\begin{equation}
\label{eq.srep}
s=\sum_l v_{il}rv_{il}^*r=\sum_k v_{kj}^*rv_{kj}r\quad\hbox{for any $i,j=1,\dots,N$.}
\end{equation}
Here $v\oplus r$ denotes the fundamental representation of $H\times_{2k}\hat\Z_2$.
\end{prop}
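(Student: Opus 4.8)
The plan is to bootstrap from two results already proved: the description of the irreducibles of orthogonal $\Z_2$-ungluings in Proposition~\ref{P.unglueirrep} and that of tensor complexifications in Proposition~\ref{P.tiltimesirrep}. Write $G:=H\times_{2k}\hat\Z_2$. By Proposition~\ref{P.timeskcanon}, $G$ is the canonical $\Z_2$-ungluing of $\tilde G:=H\tiltimes\hat\Z_{2k}$, so by Theorem~\ref{T.gluecorr} the quantum group $\tilde G$ is the glued version of $G$ and $G$ has degree of reflection two. Hence Proposition~\ref{P.unglueirrep} applies: the irreducibles of $G$ are exactly $\sigma$ and $\sigma r$ with $\sigma$ ranging over $\Irr\tilde G$, each $\sigma$ chosen with matrix entries in $C(\tilde G)\subset C(G)$. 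On the other hand, applying Proposition~\ref{P.tiltimesirrep} to $H$ (which has degree of reflection two) and $l=2k$, and using $2k/\gcd(2,2k)=k$, we get $\Irr\tilde G=\{u^\alpha\zeta^{2i+d_\alpha}\mid\alpha\in\Irr H,\ i\in\{0,\dots,k-1\}\}$, where $\zeta$ is the generator of $C^*(\Z_{2k})$ and $d_\alpha\in\{0,1\}$ is the degree of $u^\alpha$; for $k=0$ the index $i$ runs over all of $\Z$, as in Proposition~\ref{P.tiltimesirrep}.

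It remains to transport the irreducibles of $\tilde G$ along the canonical $*$-isomorphism $\psi\colon C(\tilde G)\xrightarrow{\ \sim\ }C(H\tiltimes\hat\Z_{2k})$ determined by $v_{ij}r\mapsto v_{ij}\zeta$ (fundamental representation to fundamental representation) and to recognise the images inside $C(G)$. I would establish two identifications. First, writing $u^\alpha$ as a subrepresentation of $v^{\otimes m}$ with $m\equiv d_\alpha\pmod 2$: the alternating tensor power $\tilde u^{\otimes\wcol\bcol\wcol\cdots}$ of length $m$ of the fundamental representation $\tilde u=vr$ of $\tilde G$ has matrix entries $v_{i_1j_1}\cdots v_{i_mj_m}\,r^{m\bmod 2}$ (the inner occurrences of $r^2$ cancelling), while $\psi$ carries it to $(v\zeta)^{\otimes\wcol\bcol\cdots}$, whose entries are $v_{i_1j_1}\cdots v_{i_mj_m}\,\zeta^{m\bmod 2}$; cutting both down by the intertwiner realising $u^\alpha$ inside $v^{\otimes m}$ gives $\psi^{-1}(u^\alpha\zeta^{d_\alpha})=u^\alpha r^{d_\alpha}$. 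Second, $\psi^{-1}(\zeta^2)=s$; granting this, $\psi^{-1}(u^\alpha\zeta^{2i+d_\alpha})=(u^\alpha r^{d_\alpha})s^i=u^\alpha s^{\pm i}r^{d_\alpha}$, the sign arising because moving $s$ past $r^{d_\alpha}$ replaces it by $s^{-1}$ (conjugation by $r$ induces the colour inversion of $\tilde G$, which sends $\zeta$ to $\zeta^{-1}$). Since $i\mapsto\pm i$ permutes $\{0,\dots,k-1\}$, this yields $\{\psi^{-1}(\sigma)\mid\sigma\in\Irr\tilde G\}=\{u^\alpha s^i r^{d_\alpha}\mid\alpha\in\Irr H,\ i\in\{0,\dots,k-1\}\}$.

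Putting the pieces together with Proposition~\ref{P.unglueirrep}, $\Irr G=\{u^\alpha s^i r^{d_\alpha}\}\cup\{u^\alpha s^i r^{d_\alpha+1}\}=\{u^\alpha s^i r^\eta\mid\alpha\in\Irr H,\ i\in\{0,\dots,k-1\},\ \eta\in\{0,1\}\}$, since for each $\alpha$ the pair $d_\alpha,d_\alpha+1$ runs through $\{0,1\}$ modulo $2$; these representations are mutually inequivalent and exhaust $\Irr G$ because $\psi$ is an isomorphism and the analogous statements hold in Propositions~\ref{P.unglueirrep} and~\ref{P.tiltimesirrep} (alternatively, the $\Z_2$-grading of $C(G)$ separates $\eta$ from $\eta+1$, and on the even part $C(\tilde G)$ the $\Z_{2k}$-grading together with the inequivalence of the $u^\alpha$ separates the parameters $\alpha$ and $i$).

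The step I expect to be the main obstacle is the identification $\psi^{-1}(\zeta^2)=s$, equivalently that the two displayed formulas for $s$ agree and compute $\psi^{-1}(\zeta^2)$. The natural approach is to start from the expression $\zeta^2=\xi^*\,(v\zeta)^{\otimes\wcol\wcol}\,\xi$ furnished by Lemma~\ref{L.tiltimeszk} (with $\xi$ a normalised fixed point of $v^{\otimes\wcol\wcol}$, available because $H\subset O^+(F)$ has degree of reflection two), apply $\psi^{-1}$ to rewrite it in the generators $v_{ij}r$, and then simplify using $v=F\bar vF^{-1}$, $F\bar F=c\,1_N$, the unitarity of $v$, and, crucially, the defining $\times_{2k}$-relations of $G$ to commute the generator $r$ into position; the two formulas for $s$ correspond to the two natural normalisations, equivalently to using the unitarity of $v$ in its two forms. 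Once this is in hand, in particular $s$ is a one-dimensional unitary representation of $G$ with $s^k=1$ for $k\ge 1$, and the rest of the argument is purely formal.
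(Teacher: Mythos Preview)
Your proposal is correct and follows essentially the same route as the paper: invoke Proposition~\ref{P.timeskcanon} to identify $\tilde G$ with $H\tiltimes\hat\Z_{2k}$, apply Proposition~\ref{P.tiltimesirrep} to list $\Irr\tilde G$, then Proposition~\ref{P.unglueirrep} to pass to $G$, and finally rewrite everything using $s=\zeta^2$ and $rsr=s^{-1}$. Your treatment of the case $d_\alpha=1$ via the alternating tensor power is in fact exactly the paper's argument, phrased more transparently.

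There is one point where the paper is more direct and where your sketch would make you do unnecessary work. For the identification $\psi^{-1}(\zeta^2)=s$ you propose to start from $\zeta^2=\xi^*(v\zeta)^{\otimes\wcol\wcol}\xi$ via Lemma~\ref{L.tiltimeszk} and push backwards; this gives a certain averaged expression that you would then have to match with the displayed formula for $s$. The paper instead runs the computation forward: it rewrites $s=\sum_l v_{il}rv_{il}^*r$ in the glued variables as $[\tilde v(F^{-1}\tilde vF)^t]_{ii}$ (using $v=F\bar vF^{-1}$), and then in the $H\times\hat\Z_{2k}$ picture, where $\tilde v=v\zeta$ with $v$ and $\zeta$ commuting, this collapses to $[vv^*]_{ii}\,\zeta^2=\zeta^2$. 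The second displayed formula for $s$ is handled the same way. In particular, the defining $\times_{2k}$-relations of $G$ play no role in this step: the identity $s=\zeta^2$ lives entirely inside $C(\tilde G)$, and follows from orthogonality of $v$ and commutativity in the tensor-product model.
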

\begin{proof}
Denote $\tilde v:=vr$ the fundamental representation of the glued version of $H\times_{2k}\hat\Z_2$, which can be identified with $H\tiltimes\hat\Z_{2k}$ by Proposition~\ref{P.timeskcanon}. Thus, we can also write $\tilde v_{ij}=v_{ij}z\in C(H\tiltimes\hat\Z_{2k})\subset C(H\times\hat\Z_{2k})$. We can also express
\begin{equation}
\begin{split}
s&=\sum_l v_{il}rv_{il}^*r=[vrvr^*]_{ii}=[vr(F^{-1}vrF)^t]_{ii}\\&=[\tilde v(F^{-1}\tilde vF)^t]_{ii}=[v(F^{-1}vF)^tz^2]_{ii}=[vv^*]_{ii}z^2=z^2
\end{split}
\end{equation}
and similarly for the second expression in Eq.~\eqref{eq.srep}. In particular, we have that $s=z^2$ is a~representation of $H\tiltimes\hat\Z_{2k}$ and hence also of $H\times_{2k}\Z_2$.

According to Proposition~\ref{P.tiltimesirrep}, we know that irreducibles of $H\tiltimes\hat\Z_{2k}$ are of the form $u^{\alpha,i}:=u^\alpha z^{2i+d_\alpha}$, $\alpha\in\Irr H$, $i=0,\dots,k-1$ and $d_\alpha\in\{0,1\}$ is the degree of~$\alpha$. According to Proposition~\ref{P.unglueirrep}, we have that the set of irreducibles of $H\times_{2k}\hat\Z_{2k}$ is $u^{\alpha,i}r^\eta$. Now the only point is to express these in terms of $u^\alpha$, $s$, and~$r$.

Suppose first that $d_\alpha=0$. In this case, the situation is simple since we can use the above mentioned fact that $z^2=s$ to derive
$$u^{\alpha,i}r^\eta=u^\alpha z^{2i}r^\eta=u^\alpha s^ir^\eta.$$

In the situation $d_\alpha=1$, we need to prove that $u^\alpha z=u^\alpha r$. The left hand side is a~representation of $H\tiltimes\hat\Z_{2k}$~-- the glued version of $H\times\hat\Z_{2k}$~-- and the right-hand side is a~representation of the glued version of $H\times_{2k}\hat\Z_2$. As we already mentioned, these quantum groups coincide, so the equality makes sense. (Note that it is not possible to show that $z=r$. Not only that this is not true, the equality does not even make sense since $s$ and~$r$ are not elements of a common algebra.) Since $u^\alpha z$ is a~representation of $H\tiltimes\hat\Z_{2k}$, we have, in particular, that the entries of the representation $u^\alpha_{ab}z$ are elements of $O(H\tiltimes\hat\Z_{2k})$. That is, there are polynomials $f^\alpha_{ab}\in\C\langle x_{ij}^{\wcol},x_{ij}^\bcol\rangle$ of degree one such that $f^\alpha_{ab}(\tilde v_{ij}^\wcol,\tilde v^\bcol_{ij})=f^\alpha_{ab}(v_{ij}z,z^*v_{ij})=u^\alpha_{ab}z$. Since $v_{ij}$ commute with $z$, we can arrange the ``$\wcol\bcol$-pattern'' of the monomials in $f^\alpha_{ab}$ in an arbitrary way if we keep the property that they have degree one. In particular, we can say that every monomials of $f^\alpha_{ab}$ have an alternating colour pattern, that is, they are of the form $x_{i_1j_1}^\wcol x_{i_2j_2}^\bcol\cdots x_{i_nj_n}^\wcol$. Then we can express
$$u^\alpha_{ab}z=f^\alpha_{ab}(v_{ij}z,z^*v_{ij})=f^\alpha_{ab}(\tilde v_{ij}^\wcol,\tilde v^\bcol_{ij})=f^\alpha_{ab}(v_{ij}r,rv_{ij})=u^\alpha_{ab}r.$$

This is exactly what we wanted to prove. Now we have
$$u^{\alpha,i}r^\eta=u^\alpha zz^ir^\eta=u^\alpha rs^ir^\eta$$
To obtain the form in the statement, note just that $rsr=s^*=s^{k-1}$.
\end{proof}

\bibliographystyle{halpha}
\bibliography{mybase}

\end{document}